\newtheorem{theorem}{Theorem}[section]
\newtheorem{corollary}[theorem]{Corollary}
\newtheorem{lemma}[theorem]{Lemma}
\newtheorem{proposition}[theorem]{Proposition}
\newtheorem{definition}[theorem]{Definition}
\newtheorem{example}[theorem]{Example}
\newtheorem{notation}[theorem]{Notation}
\newtheorem{remark}[theorem]{Remark}
\begin{document}
\title[Local dimensions]{Local dimensions of measures of finite
type}
\author[K.~E.~Hare, K.~G.~Hare, K.~R.~Matthews]{Kathryn E. Hare, Kevin G.
Hare, and Kevin R. Matthews}
\thanks{ The research of all three authors is supported in part by NSERC.}

\maketitle

\begin{abstract}
We study the multifractal analysis of a class of equicontractive,
self-similar measures of finite type, whose support is an interval. Finite
type is a property weaker than the open set condition, but stronger than the
weak open set condition. Examples include Bernoulli convolutions with
contraction factor the inverse of a Pisot number and self-similar measures
associated with $m$-fold sums of Cantor sets with ratio of dissection $1/R$
for integer $R\leq m$.

We introduce a combinatorial notion called a loop class and prove that the
set of attainable local dimensions of the measure at points in a positive
loop class is a closed interval. We prove that the local dimensions at the
periodic points in the loop class are dense and give a simple formula for
those local dimensions. These self-similar measures have a distinguished
positive loop class called the essential class. The set of points in the
essential class has full Lebesgue measure in the support of the measure and
is often all but the two endpoints of the support. Thus many, but not all,
measures of finite type have at most one isolated point in their set of
local dimensions.

We give examples of Bernoulli convolutions whose sets of attainable local
dimensions consist of an interval together with an isolated point. As well,
we give an example of a measure of finite type that has exactly two distinct
local dimensions.
\end{abstract}

%\begin{classification}
%Primary 28A80; Secondary 28C10, 11K16
%\end{classification}
%\begin{keywords}
%local dimension, multifractal analysis, finite type, Bernoulli
%convolution, IFS
%\end{keywords}
%\thanks{This paper is in final form and no version of it will be submitted
%for publication elsewhere.}

\section{Introduction}

It is well known that if $\mu $ is a self-similar measure arising from an
IFS satisfying the open set condition, then the set of local dimensions of
the measure is a closed interval whose endpoints are easily computed.
Further, the Hausdorff dimension of the set of points whose local dimension
is a given value can be determined using the Legendre transform of the $%
L^{q} $-spectrum of the measure. This is known as the multifractal formalism
and we refer the reader to \cite{Fa} for more details.

For measures that do not satisfy the open set condition, the multifractal
analysis is more complicated and, in general, much more poorly understood.
In \cite{HL}, Hu and Lau discovered that the $3$-fold convolution of the
classical middle-third Cantor measure fails the multifractal formalism as
there is an isolated point in the set of local dimensions. Subsequently, in 
\cite{BHM, Sh, Te} further examples of this phenomena were explored and it
was shown, for example, that there is always an isolated point in the set of
local dimensions of the $m$-fold convolution of the Cantor measure
associated with a Cantor set with ratio of dissection $1/R,$ when the
integer $R\leq m$. More recently, it was proven in \cite{BH} that continuous
measures satisfying a weak technical condition have the property that a
suitably large convolution power admits an isolated point in its set of
local dimensions.

\ In \cite{NW}, Ngai and Wang introduced the notion of finite type (see
Section 2 for the definition). This property is stronger than the weak
separation condition introduced in \cite{LN}, but is satisfied by many
self-similar measures which fail to possess the open set condition. Examples
include Bernoulli convolutions, $\mu _{\varrho }$, with contraction factor $%
\varrho $ equal to the reciprocal of a Pisot number \cite{NW} and the
Cantor-like measures mentioned above.

Building on earlier work (c.f., \cite{FO,Hu,LN1,Po}), Feng undertook a study
of equicontractive, self-similar measures of finite type in \cite{F3,F1,F2}.
His main results were for Bernoulli convolutions. In particular, he proved
that despite the failure of the open set condition, the multifractal
formalism still holds for the Bernoulli convolutions whose contraction
factor was the reciprocal of a simple Pisot number (meaning, a Pisot number
whose minimal polynomial is of the form $x^{n}-x^{n-1}-\dots -x-1$). A
particularly interesting example is when the contraction factor is the
golden ratio with minimal polynomial $x^2-x-1$ (also called the golden mean).

In this\ paper we study the local dimension theory of equicontractive,
self-similar measures $\mu $ of finite type, whose support is a compact
interval and for which the underlying probabilities are regular. We first
give a simple formula for the value of the local dimension of $\mu $ at any
``periodic'' point of its support. The finite type condition leads naturally
to a combinatorial notion we call a ``loop class''. For a ``positive'' loop
class we prove that the set of attainable local dimensions of the measure is
a closed interval and that the set of local dimensions at periodic points in
the loop class is a dense subset of this interval.

These self-similar measures have a distinguished positive loop class that is
called the \textquotedblleft essential class\textquotedblright . Thus the
set of attainable local dimensions is the union of a closed interval
together with the local dimensions at points in loop classes external to the
essential class. We will say that a point is an essential point if it is in
the essential class. The set of essential points has full Lebesgue measure
on the support of the measure and in many interesting examples the set of
essential points is the interior of the support of the measure. This is the
case with many Bernoulli convolutions, $\mu _{\varrho }$, including when $%
\varrho ^{-1}$ is the golden ratio (c.f. Section \ref{ex:other Pisot}), and
with the $m$-fold convolution of the Cantor measure on a Cantor set with
ratio of dissection $1/R$ when $R \leq m$ (see Section \ref{sec:Cantor}).

When the essential set is the interior of the support of the measure $\mu ,$
then $\mu $ has no isolated point in its set of attainable local dimensions
if and only if $\dim \mu (0)$ coincides with the local dimension of $\mu $
at an essential point. In that case, the set of attainable local dimensions
of $\mu$ is a closed interval. The Bernoulli convolution $\mu _{\varrho }$,
with $\varrho ^{-1}$ a simple Pisot number has this property.

However, we construct other examples of Bernoulli convolutions (with
contraction factor a Pisot inverse) which do have an isolated point in their
set of attainable local dimensions (see Subsection \ref{BCisolatedpt}). As
far as we are aware, these are the first examples of Bernoulli convolutions
known to admit an isolated point. We also construct a Cantor-like measure of
finite type, whose set of local dimensions consists of (precisely) two
distinct points (see Example \ref{ex:isolated points}). In all of these
examples, the essential set is the interior of the support of the measure.

The convolution square of the Bernoulli convolution, $\mu _{\varrho }$, with 
$\varrho ^{-1}$ the golden ratio, is another example of a self-similar
measure to which our theory applies. It, too, has exactly one isolated point
in its set of attainable local dimensions, although in this case the set of
non-essential points is countably infinite (see Subsection \ref{ssec:golden
square}).

The computer was used to help obtain some of these results. In principle,
the techniques could be applied to other convolutions of Bernoulli
convolutions and other measures of finite type, however even with the simple
examples given here, the problem can become computationally difficult.

The paper is organized as follows: In Section \ref{sec:term}, we detail the
structure of self-similar measures of finite type, introduce terminology and
describe a number of examples that we will return to throughout the paper.
The notion of transition matrices and properties of local dimensions of
measures of finite type are discussed in Section \ref{sec:trans}. In Section %
\ref{sec:loop} we introduce the notion of loop class, essential class and
periodic points. A formula is given for the local dimension at a periodic
point and we prove that the essential class is always of positive type. In
Section \ref{sec:main} we prove that the set of local dimensions at periodic
points in a positive loop class is dense in the set of local dimensions at
all points in the loop class. We also show that the set of local dimensions
at the points of a positive loop class is a closed interval. In particular,
this implies that the set of local dimensions at the essential points is a
closed interval.

In Section \ref{sec:comp} we give a detailed description of our computer
algorithm by means of a worked example. We also explain our main techniques
for finding bounds on sets of local dimensions and illustrate these by
constructing a Cantor-like measure of finite type whose local dimension is
the union of two distinct points. In Section \ref{sec:Cantor} we show that
with our approach we can partially recover results from \cite{BHM, HL, Sh}
about the local dimensions of Cantor-like measures of finite type. We also
show that some facts about the endpoints of the interval portion of the
local dimension, that are known to be true for Cantor-like measures in the
\textquotedblleft small" overlap case, do not hold in general. Bernoulli
convolutions, $\mu _{\varrho }$, where $\varrho $ is the reciprocal of a
Pisot number of degree at most four, are studied in Section \ref{sec:Pisot}
and we see that two of these measures admit an isolated point. We also study
the convolution square of the Bernoulli convolution with the golden ratio in
this final section.

For the examples in this paper, we present only minimal information. 
A more detailed analysis of all of these examples can found as supplemental 
information appended to the arXiv version of the paper \cite{Homepage}.

\section{Terminology and examples}

\label{sec:term}

\subsection{Finite Type}

Consider the iterated function system (IFS) consisting of the contractions $%
S_{j}:\mathbb{R\rightarrow R}$, $j=0,\dots,m$, defined by 
\begin{equation}
S_{j}(x)=\varrho x+d_{j}
\end{equation}
where $0<\varrho <1$, $d_0<d_{1}<d_{2}<\cdot \cdot \cdot <d_{m}$ and $m\geq
1 $ is an integer. By the associated self-similar set, we mean the unique,
non-empty, compact set $K$ satisfying 
\begin{equation*}
K=\bigcup\limits_{j=0}^{m}S_{j}(K).
\end{equation*}

Suppose $p_{j}$, $j=0,\dots,m$ are probabilities, i.e., $p_{j}>0$ for all $j$
and $\sum_{j=0}^{m}p_{j}=1$. Our interest is in the self-similar measure $%
\mu $ associated to the family of contractions $\{S_{j}\}$ as above, which
satisfies the identity 
\begin{equation}
\mu =\sum_{j=0}^{m}p_{j}\mu \circ S_{j}^{-1}.  \label{ss}
\end{equation}%
These measures are sometimes known as \textit{equicontractive}, or \textit{$%
\varrho $-equicontractive} if we want to emphasize the contraction factor $%
\varrho $. They are non-atomic, probability measures whose support is the
self-similar set.

We put $\mathcal{A}=\{0,\dots,m\}$. Given an $n$-tuple $\sigma
=(j_{1},\dots,j_{n})$ $\in \mathcal{A}^{n}$, we write $S_{\sigma }$ for the
composition $S_{j_{1}}\circ \cdot \cdot \cdot \circ S_{j_{n}}$ and let 
\begin{equation*}
p_{\sigma }=p_{j_{1}}\cdot \cdot \cdot p_{j_{n}}\text{.}
\end{equation*}

\begin{definition}
The iterated function system, $\{S_{j}(x)=\varrho x+d_{j}:j=0,\dots,m\}$, is
said to be of \textbf{finite type}\textit{\ }if there is a finite set $%
F\subseteq \mathbb{R}^{+}$ such that for each positive integer $n$ and any
two sets of indices $\sigma =(j_{1},\dots,j_{n})$, $\sigma ^{\prime
}=(j_{1}^{\prime },\dots,j_{n}^{\prime })$ $\in \mathcal{A}^{n}$, either 
\begin{equation*}
\varrho ^{-n}\left\vert S_{\sigma }(0)-S_{\sigma ^{\prime }}(0)\right\vert
>c \text{ or }\varrho ^{-n}(S_{\sigma }(0)-S_{\sigma ^{\prime }}(0))\in F,
\end{equation*}
where $c=(1-\varrho )^{-1}(\max d_{j}-\min d_{j})$ is the diameter of $K$.

If $\{S_{j}\}$ is of finite type and $\mu $ is an associated self-similar
measure, we also say that $\mu $ is of finite type.
\end{definition}

It is worth noting here that the definition of finite type is independent of
the choice of probabilities.

In \cite{Ng}, Nguyen showed that any IFS satisfying the open set condition
is of finite type, but there are also examples of iterated function systems
of finite type that fail the open set condition. Indeed, recall that an
algebraic integer greater than $1$ is called a \textit{Pisot number} if all
its Galois conjugates are less than $1$ in absolute value. Examples include
integers greater than $1$ and the golden ratio, $(1+\sqrt{5})/2$. In \cite[%
Thm. 2.9]{NW}, Ngai and Wang showed that if $\varrho ^{-1}$ is a Pisot
number and all $d_{j}\in \mathbb{Q[\varrho }^{-1}]$, then the measure $\mu $
satisfying (\ref{ss}) is of finite type. This result allows us to produce
many examples of measures of finite type that do not satisfy the open set
condition.

The case when the IFS is generated by two contractions is of particular
interest.

\begin{notation}
We will use the notation $\mu _{\varrho }$ to denote the self-similar
measure 
\begin{equation*}
\mu _{\varrho }=\frac{1}{2}\mu _{\varrho }\circ S_{0}^{-1}+\frac{1}{2}%
\mu_{\varrho }\circ S_{1}^{-1},
\end{equation*}
where $S_{j}=\varrho x+j(1-\varrho )$ for $j=0,1$.
\end{notation}

\begin{example}
\label{ex:pisot} When $0<\varrho \leq 1/2,$ the measures, $\mu _{\varrho },$
are known as \textbf{Cantor measures} (or uniform Cantor measures). Their
support is the Cantor set with ratio of dissection $\varrho $ and they
satisfy the open set condition. When $1/2<\varrho <1,$ these measures are
called \textbf{Bernoulli convolutions}. They fail to satisfy the open set
condition, but are of finite type whenever $\varrho ^{-1}$ is a Pisot number.
\end{example}

Given two probability measures, $\mu ,\nu $, the convolution of $\mu $ and $%
\nu $ is defined as 
\begin{equation*}
\mu \ast \nu (E)=\mu \times \nu \{(x,y):x+y\in E\}.
\end{equation*}%
The name \textquotedblleft Bernoulli convolution\textquotedblright\ comes
from the fact that 
\begin{equation*}
\mu _{\varrho }=\ast _{n=1}^{\infty }\left( \frac{\delta _{0}+\delta
_{(1-\varrho )\varrho ^{n}}}{2}\right) ,
\end{equation*}%
where the infinite convolution is understood to converge in a weak sense.

Bernoulli convolutions, $\mu _{\varrho }$, with contraction factor $\varrho $
the inverse of a Pisot number, have been long studied. They have unusual
properties and are of interest in fractal geometry, number theory and
harmonic analysis. For example, although almost every Bernoulli convolution
is absolutely continuous with respect to Lebesgue measure, and even has an $%
L^{2}$ density function, those with a Pisot inverse as the contraction
factor are not only purely singular, but their Fourier transform, $\widehat{%
\mu _{\rho }}(y)$, does not even tend to zero as $y\rightarrow \pm \infty $.
We refer the reader to \cite{PSS} and \cite{So} for some of the interesting
history of these measures.

\begin{example}
\label{ex:convolution} Suppose $\mu $ and $\nu $ are $\varrho $%
-equicontractive measures, say $\mu =\sum_{i}p_{i}\mu \circ S_{i}^{-1}$ and $%
\nu =\sum_{j}q_{j}\mu \circ T_{j}^{-1}$ where $S_{i}(x)=\varrho x+d_{i}$ and 
$T_{j}(x)=\varrho x+e_{j}$. Then $\mu \ast \nu $ is the $\varrho $%
-equicontractive, self-similar measure satisfying 
\begin{equation*}
\mu \ast \nu =\sum_{t}r_{t}(\mu \ast \nu )\circ U_{t}^{-1}
\end{equation*}%
where $U_{t}(x)=\sum_{d_{i}+e_{j}=f_{t}}\varrho x+f_{t}$ and $%
r_{t}=\sum_{d_{i}+e_{j}=f_{t}}p_{i}q_{j}$. It follows directly from Ngai and
Wang's result \cite{NW} that any $m$-fold convolution power of the Bernoulli
convolution or Cantor measure, $\mu _{\varrho },$ is of finite type when $%
\varrho ^{-1}$ is Pisot.
\end{example}

\begin{example}
\label{ex:shmerkin} Another consequence of \cite{NW} is that the IFS 
\begin{equation*}
\left\{ S_{j}(x)=\frac{1}{R}x+\frac{j}{Rm}(R-1):j=0,\dots,m\right\} ,
\end{equation*}%
where $R\geq 2$ is an integer, is of finite type. The convex hull of the
self-similar set is $[0,1]$ and the self-similar set is the full interval $%
[0,1]$ when $m\geq R-1$. When $m\geq R$, the open set condition is not
satisfied. The $m$-fold convolutions of Cantor measures with contraction
factor $1/R$ are examples of self-similar measures associated with such an
IFS.

These Cantor-like measures were studied in \cite{BHM, Sh} using different
methods. In Section \ref{sec:Cantor} we will see how our approach relates to
some of their results.
\end{example}

\subsection{Standard Technical Assumptions}

We will refer to the following conditions on a self-similar measure $\mu $
as our standard technical assumptions.

\begin{enumerate}
\item The measure $\mu =\sum_{j}p_{j}\mu \circ S_{j}^{-1}$ is a $\varrho $%
-equicontractive, self-similar measure, as in (\ref{ss}), that is of finite
type.

\item The probabilities, $\{p_{j}\}_{j=0}^{m}$ satisfy $p_{0}=p_{m}=\min
p_{j}$ (we call these \textit{regular probabilities}).

\item The support of $\mu $ (equivalently, the underlying self-similar set)
is a closed interval. By rescaling the $d_{j}$ appropriately, we can assume
without loss of generality that this interval is $[0,1]$.
\end{enumerate}

We remark that supp$\mu =[0,1]$ if and only if (the rescaled) $\{d_{j}\}$
satisfy $d_{0}=0$, $d_{m}=1-\varrho $ and $d_{i+1}-d_{i}\leq \varrho $ for
all $i=0,\dots,m-1$. In this case, $c=1$ in the definition of finite type.

Although some of what we say is true more generally for self-similar
measures of finite type, we make use of the standard technical assumptions
at key points throughout the paper.

The Bernoulli convolutions $\mu _{\varrho }$ and the $m$-fold convolutions
of uniform Cantor measures $\mu _{\varrho }$ with $\varrho >1/(m+1)$ are
examples of measures satisfying the standard technical assumptions. (See
Examples \ref{ex:pisot} and \ref{ex:convolution}). The measures of Example %
\ref{ex:shmerkin}, where $m\geq R-1$, are also examples of such measures
when regular probabilities are chosen.

\subsection{Net intervals and Characteristic vectors}

As we have seen, measures that are of finite type need not satisfy the open
set condition. Our primary interest is in this case. The finite type
property is, however, stronger than the weak separation condition (see \cite%
{Ng} for a proof), and the multifractal analysis of self-similar measures of
finite type is somewhat more tractable because of their better structure.
This structure is explained in detail in \cite{F3, F1, F2}, but we will give
a quick overview here.

For each integer $n$, let $h_{1},\dots ,h_{s_{n}}$ be the collection of
elements of the set $\{S_{\sigma }(0),$ $S_{\sigma }(1):\sigma \in \mathcal{A%
}^{n}\}$, listed in increasing order. Put 
\begin{equation*}
\mathcal{F}_{n}=\{[h_{j},h_{j+1}]:1\leq j\leq s_{n}\}\text{.}
\end{equation*}%
Elements of $\mathcal{F}_{n}$ are called \textit{net intervals of level }$n$%
. For each $\Delta \in \mathcal{F}_{n}$, $n\geq 1$, there is a unique
element $\widehat{\Delta }\in \mathcal{F}_{n-1}$ which contains $\Delta $.
We call $\widehat{\Delta }$ the \textit{parent} of $\Delta $ and $\Delta $ a 
\textit{child} of $\widehat{\Delta }$. We denote the \textit{normalized
length} of $\Delta =[a,b]$ by 
\begin{equation*}
\ell _{n}(\Delta )=\varrho ^{-n}(b-a)\text{.}
\end{equation*}%
Note that by definition there is no $\sigma \in \mathcal{A}^{n}$ with $%
a<S_{\sigma }(0)<b$, nor can we have $a<S_{\sigma }(1)<b$. Furthermore,
there must be some $\sigma _{1},\sigma _{2}$ with $S_{\sigma
_{1}}(x)=a,S_{\sigma _{2}}(y)=b$ for suitable choices of $x,y$ $\in \{0,1\}$.

Next, we consider all $\sigma \in \mathcal{A}^{n}$ with $\Delta \subseteq
S_{\sigma }[0,1]$. As $S_{\sigma }[0,1]$ is a closed interval of length $%
\varrho ^{n},$ this is the same as the set of all $\sigma \in \mathcal{A}%
^{n} $ with $a-\varrho ^{n}<S_{\sigma }(0)\leq a$. We suppose 
\begin{equation*}
\{\varrho ^{-n}(a-S_{\sigma }(0)):\sigma \in \mathcal{A}^{n}\text{,
\thinspace }\Delta \subseteq S_{\sigma }[0,1]\}=\{a_{1},\dots ,a_{k}\}
\end{equation*}%
and assume $a_{1}<a_{2}<\cdot \cdot \cdot <a_{k}$. We define the \textit{%
neighbour set} of $\Delta $ as 
\begin{equation*}
V_{n}(\Delta )=(a_{1},\dots ,a_{k}).
\end{equation*}

Let $\widehat{\Delta }\in \mathcal{F}_{n-1}$ be the parent of $\Delta $, and 
$\Delta _{1},\dots,\Delta _{m}$ (listed in order from left to right) be all
the net intervals of level $n$ which are also children of $\widehat{\Delta }$
and have the same normalized length and neighbour set as $\Delta $. Define $%
r_{n}(\Delta )$ to be the integer $r$ with $\Delta _{r}=\Delta $. The 
\textit{characteristic} \textit{vector of }$\Delta $ is the triple 
\begin{equation*}
\mathcal{C}_{n}(\Delta )=(\ell _{n}(\Delta ),V_{n}(\Delta ),r_{n}(\Delta )).
\end{equation*}
We also speak of the pair of characteristic vectors, $\alpha ,\beta ,$ as
parent and child if $\alpha =\mathcal{C}_{n-1}(\widehat{\Delta })$ and $%
\beta =\mathcal{C}_{n}(\Delta )$ for a parent/child pair $\widehat{\Delta }
,\Delta $. The characteristic vector is important because it carries the
neighbourhood information about $\Delta $.

Put 
\begin{equation*}
\Omega =\{\mathcal{C}_{n}(\Delta ):n\in \mathbb{N}\text{, }\Delta \in 
\mathcal{F}_{n}\}\text{.}
\end{equation*}
If the measure is of finite type, then $\Omega $ will contain only finitely
many distinct characteristic vectors.

Suppose the net interval, $\widehat{\Delta },$ has two children, $\Delta
_{1} $ and $\Delta _{2},$ that differ only in the value of $r_{n}(\Delta
_{i})$, that is, they have the same length and the same neighbourhood set.
The characteristic vectors for the children of $\Delta _{1}$ and $\Delta _{2}
$, will be identical as they depend only on $\ell _{n}(\Delta _{i})$ and $%
V_{n}(\Delta _{i}),$ and not on $r_{n}(\Delta _{i})$. For notational
reasons, we find it convenient to take advantage of this when drawing the
directed graph relating parents to children by suppressing $r_{n}(\Delta
_{i}),$ equating these two children on the graph, and allowing multiple
edges from $\widehat{\Delta }$ to $\Delta _{1}$. We will call the
characteristic vectors where we have suppressed the numbers $r_{n}(\Delta
_{i})$ the reduced characteristic vectors, and we will call the resulting
graph the reduced transition graph.

\begin{example}
\label{ex:golden diagram} In \cite[Section 4]{F1} and \cite[Section 6]{F2},
Feng studied the Bernoulli convolution $\mu _{\varrho }$, with $\varrho
^{-1} $the golden ratio, and found that there were seven characteristic
vectors. Their normalized lengths and neighbourhood sets are given by:

\begin{itemize}
\item Characteristic vector 1: $(1, (0), 1)$

\item Characteristic vector 2: $(\varrho, (0), 1)$

\item Characteristic vectors 3a and 3b: $(1-\varrho, (0, \varrho), 1)$ and $%
(1-\varrho, (0, \varrho), 2)$

\item Characteristic vector 4: $(\varrho, (1-\varrho), 1)$

\item Characteristic vector 5: $(\varrho, (0, 1-\varrho),1)$

\item Characteristic vector 6: $(2 \varrho-1, (1-\varrho), 1)$
\end{itemize}

Notice there are only six reduced characteristic vectors; we label the two
characteristic vectors with identical length and neighbourhood set as 3a and
3b. In \cite{F1} these were labelled as 3 and 7. The directed graphs in
Figure \ref{fig:golden} show the parent/children relationships. The term
``essential class'', referred to in the Figure, is defined in Section 4.

\label{Ex:BC} 
\begin{figure}[tbp]
\begin{center}
\subfigure[Transition
graph]{\includegraphics[scale=0.65]{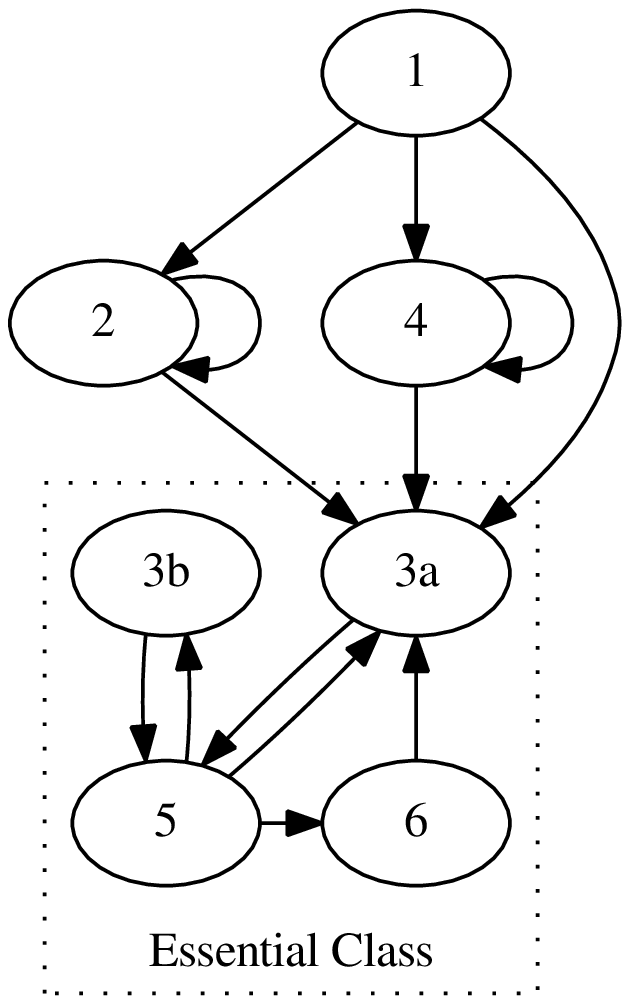}} 
\subfigure[Reduced
Transition graph]{\includegraphics[scale=0.65]{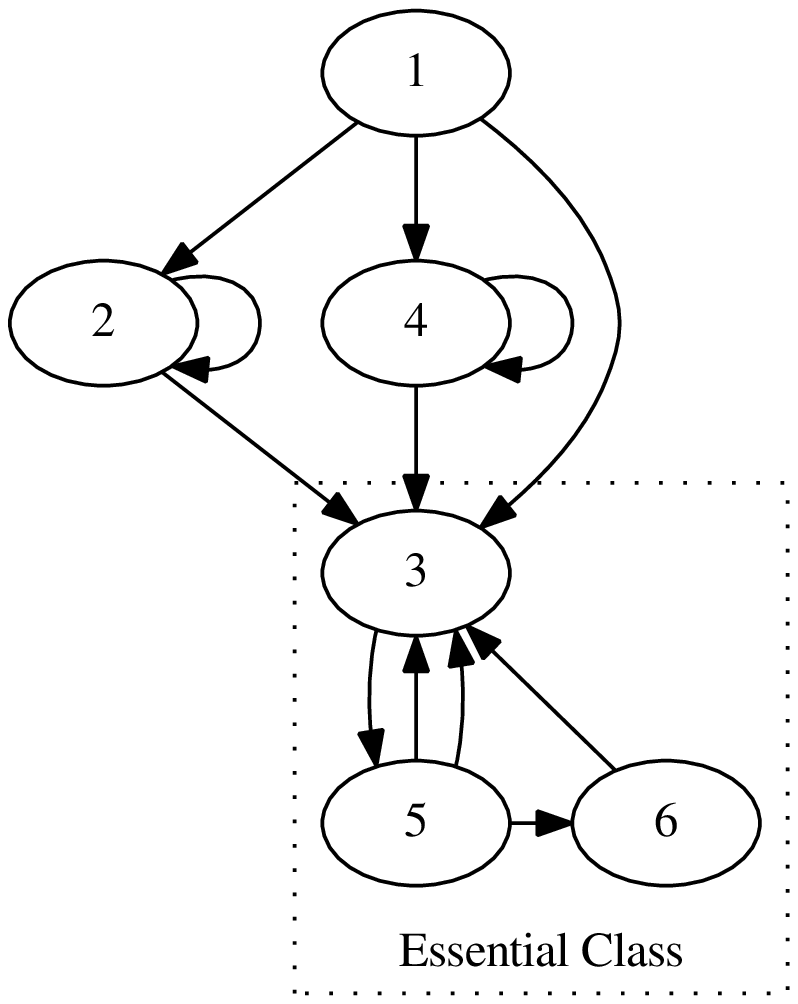}}
\end{center}
\caption{Transition graph for $\protect\mu_\protect\varrho$ with $\protect%
\varrho^{-1}$ the golden ratio}
\label{fig:golden}
\end{figure}
\end{example}

By an \textit{admissible path, }$\eta ,$\textit{\ of length }$L(\eta )=n,$
we will mean an ordered $n$-tuple, $\eta =(\gamma _{j})_{j=1}^{n},$ where $%
\gamma _{j}\in \Omega $ for all $j$ and the characteristic vector, $\gamma
_{j},$ is the parent of $\gamma _{j+1}$.

By the \textit{symbolic expression} of $\Delta \in \mathcal{F}_{n}$ we mean
an admissible path of length $n+1$, denoted 
\begin{equation*}
\lbrack \Delta ]=(\mathcal{C}_{0}(\Delta _{0}),\dots,\mathcal{C}_{n}(\Delta
_{n})),
\end{equation*}
where $\Delta =\Delta _{n}$ and for each $j<n$, $\Delta _{j}\in \mathcal{F}%
_{j}$. Here $\Delta _{0}$ is $[0,1]$. Feng \cite{F3} proved that the
symbolic expression uniquely determines $\Delta $.

For $x\in \lbrack 0,1]$, the symbolic representation for $x$, denoted $[x],$
will mean the sequence $(\mathcal{C}_{0}(\Delta _{0}),\mathcal{C}_{1}(\Delta
_{1}),\dots)$ of characteristic vectors where $x\in \Delta _{n}$ for all $n$
and $\Delta _{j}\in \mathcal{F}_{j}$ is the parent of $\Delta _{j+1}$. We
note that unless $x$ is an endpoint of a net interval (in which case there
are two representations of $x$), $[x]$ is unique. The notation $[x|N]$ will
mean the admissible path consisting of the first $N$ characteristic vectors
of $[x]$.

We frequently write $\Delta _{n}(x)$ for the net interval of level $n$
containing $x$. Thus $[x]$ is the sequence where the first $n+1$ terms gives
the symbolic representation of $\Delta _{n}(x)$ for each $n$.

\section{Transition matrices and local Dimensions}

\label{sec:trans}

\subsection{Local dimensions\label{sec:locdim} of measures of finite type}

\begin{definition}
Given a probability measure $\mu $, by the \textbf{upper local dimension} of 
$\mu $\textit{\ }at $x\in $ supp$\mu $, we mean the number 
\begin{equation*}
\overline{\dim }\mu (x)=\limsup_{r\rightarrow 0^{+}}\frac{\log \mu
([x-r,x+r])}{\log r}.
\end{equation*}%
Replacing the $\limsup $ by $\liminf $ gives the \textbf{lower local
dimension}, denoted \underline{$dim$}$\mu (x)$. If the limit exists, we call
the number the \textbf{local dimension} of $\mu $ at $x$ and denote this by $%
\dim \mu (x)$.
\end{definition}

Multifractal analysis refers to the study of the local dimensions of
measures.

For a $\varrho $-equicontractive measure $\mu $, it is easy to check that 
\begin{equation}
\dim \mu (x)=\lim_{n\rightarrow \infty }\frac{\log \mu ([x-\varrho
^{n},x+\varrho ^{n}])}{n\log \varrho }\text{ for }x\in \text{supp}\mu ,
\label{locdim}
\end{equation}
and similarly for the upper and lower local dimensions.

Our first several lemmas will enable us to show that we can replace the
interval $[x-\varrho ^{n},x+\varrho ^{n}]$ by $\Delta _{n}(x)$.

\begin{lemma}
Suppose $\mu $ satisfies the standard technical assumptions. Let $\Delta
=[a,b]\in \mathcal{F}_{n},$ with $V_{n}(\Delta )=(a_{1},\dots ,a_{k})$. Then 
\begin{equation*}
\mu (\Delta )=\sum_{i=1}^{k}\mu \lbrack a_{i},a_{i}+\ell _{n}(\Delta )]\sum 
_{\substack{ \sigma \in \mathcal{A}^{n}  \\ \varrho ^{-n}(a-S_{\sigma
}(0))=a_{i}}}p_{\sigma }.
\end{equation*}
\end{lemma}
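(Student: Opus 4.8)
The plan is to iterate the self-similarity identity (\ref{ss}) exactly $n$ times and then evaluate the result on $\Delta$. A routine induction starting from $\mu = \sum_{j} p_{j}\,\mu \circ S_{j}^{-1}$ yields
\[
\mu = \sum_{\sigma \in \mathcal{A}^{n}} p_{\sigma}\, \mu \circ S_{\sigma}^{-1},
\]
so that $\mu(\Delta) = \sum_{\sigma \in \mathcal{A}^{n}} p_{\sigma}\,\mu\bigl(S_{\sigma}^{-1}(\Delta)\bigr)$. The whole argument then reduces to identifying each preimage $S_{\sigma}^{-1}(\Delta)$ and discarding the terms that contribute nothing to the sum.

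Since each $S_{\sigma}$ is an increasing affine map with $S_{\sigma}(x) = \varrho^{n}x + S_{\sigma}(0)$, I would compute directly that $S_{\sigma}^{-1}(y) = \varrho^{-n}(y - S_{\sigma}(0))$, so that for $\Delta = [a,b]$,
\[
S_{\sigma}^{-1}(\Delta) = \bigl[\varrho^{-n}(a - S_{\sigma}(0)),\, \varrho^{-n}(b - S_{\sigma}(0))\bigr] = \bigl[t_{\sigma},\, t_{\sigma} + \ell_{n}(\Delta)\bigr],
\]
where $t_{\sigma} := \varrho^{-n}(a - S_{\sigma}(0))$ and I have used $\varrho^{-n}(b-a) = \ell_{n}(\Delta)$. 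Thus every preimage is an interval of the fixed normalized length $\ell_{n}(\Delta)$, and $\mu\bigl(S_{\sigma}^{-1}(\Delta)\bigr) = \mu[t_{\sigma}, t_{\sigma} + \ell_{n}(\Delta)]$ depends on $\sigma$ only through the value $t_{\sigma}$.

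The crux, and the step I expect to be the only real obstacle, is to show that $\mu\bigl(S_{\sigma}^{-1}(\Delta)\bigr) = 0$ unless $\Delta \subseteq S_{\sigma}[0,1]$, equivalently unless $t_{\sigma} \in \{a_{1},\dots,a_{k}\}$. Here I would argue by contraposition: if $S_{\sigma}^{-1}(\Delta)$ meets the open interval $(0,1)$, then $\Delta$ meets $(S_{\sigma}(0), S_{\sigma}(1))$, where $S_{\sigma}(0)$ and $S_{\sigma}(1) = S_{\sigma}(0)+\varrho^{n}$ are both level-$n$ net-interval endpoints. The defining property of net intervals forbids either endpoint from lying strictly inside $\Delta = [a,b]$; a short case check (if $a < S_{\sigma}(0)$ the overlap forces $S_{\sigma}(0) < b$, and if $b > S_{\sigma}(1)$ it forces $a < S_{\sigma}(1)$, each putting a net endpoint strictly inside $\Delta$) therefore gives $S_{\sigma}(0) \le a$ and $b \le S_{\sigma}(1)$, that is $\Delta \subseteq S_{\sigma}[0,1]$. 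Consequently, if $\Delta \not\subseteq S_{\sigma}[0,1]$ then $S_{\sigma}^{-1}(\Delta) \cap (0,1) = \emptyset$, so the interval $S_{\sigma}^{-1}(\Delta)$ meets $\operatorname{supp}\mu = [0,1]$ in at most a single endpoint; non-atomicity of $\mu$ then forces $\mu\bigl(S_{\sigma}^{-1}(\Delta)\bigr) = 0$.

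Finally I would keep only the $\sigma$ with $\Delta \subseteq S_{\sigma}[0,1]$, for which $t_{\sigma} = \varrho^{-n}(a - S_{\sigma}(0)) \in \{a_{1},\dots,a_{k}\}$ by the definition of $V_{n}(\Delta)$, and group these terms by their common value $t_{\sigma} = a_{i}$. Since $t_{\sigma} = a_{i}$ pins down $S_{\sigma}(0)$, all such $\sigma$ produce the identical preimage $[a_{i}, a_{i} + \ell_{n}(\Delta)]$; collecting the associated probabilities $p_{\sigma}$ yields exactly
\[
\mu(\Delta) = \sum_{i=1}^{k} \mu[a_{i}, a_{i} + \ell_{n}(\Delta)] \sum_{\substack{\sigma \in \mathcal{A}^{n} \\ \varrho^{-n}(a - S_{\sigma}(0)) = a_{i}}} p_{\sigma},
\]
as claimed.
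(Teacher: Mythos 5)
Your proof is correct and follows essentially the same route as the paper: iterate the self-similarity identity $n$ times, use non-atomicity together with the net-interval property (no $S_{\sigma}(0)$ or $S_{\sigma}(1)$ lies strictly inside $\Delta$) to reduce to those $\sigma$ with $\Delta \subseteq S_{\sigma}[0,1]$, identify each preimage with $[a_{i},a_{i}+\ell_{n}(\Delta)]$, and group by the neighbour-set values. The only difference is that you spell out the case check behind the implication $S_{\sigma}(0,1)\cap\Delta\neq\emptyset \Rightarrow \Delta\subseteq S_{\sigma}[0,1]$, which the paper states without detail.
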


\begin{proof}
This argument can basically be found in Feng \cite{F3}, but we give the
details here for completeness. Iterating (\ref{ss}) $n$ times gives
\begin{equation*}
\mu (\Delta )=\sum_{\sigma \in \mathcal{A}^{n}}p_{\sigma }\mu (S_{\sigma
}^{-1}(\Delta )).
\end{equation*}
Since $\mu $ is a non-atomic measure supported on $[0,1]$, we have
\begin{align*}
\mu (\Delta )
    &=\sum_{\substack{\sigma \in \mathcal{A}^{n}\\ S_{\sigma }(0,1)\cap\Delta \neq \emptyset}}
      p_{\sigma }\mu (S_{\sigma }^{-1}(\Delta ))
\end{align*}

Now, $S_\sigma (0,1) \cap \Delta \neq \emptyset$ implies that $\Delta \subset S_\sigma [0,1]$, hence by  definition of the neighbourhood set $\varrho^{-n}(a-S_{\sigma}(0))=a_{i}$ for some $i$. Thus $S_\sigma(0) = a-a_i \varrho^n$, so  $S_\sigma[0,1] = [a - a_i \varrho^n, a - a_i + \varrho^n]$. This implies that
\begin{align*}
\mu (\Delta )
    &=\sum_{i=1}^{k} \sum_{\substack{\sigma \in \mathcal{A}^{n}\\ \varrho ^{-n}(a-S_{\sigma}(0))=a_{i}}}
      p_{\sigma }\mu (S_{\sigma }^{-1}(\Delta ))
\end{align*}
We observe that $S_\sigma(x) = \varrho^n x + S_\sigma(0)$, and hence
\begin{align*}
S_\sigma([a_i, a_i + \ell_n(\Delta)]) 
    & = [a_i \varrho^n + a - a_i \varrho^n, a_i \varrho^n + a - a_i \varrho^n + \ell_n(\Delta)\varrho^n] \\
    &  = [a, a+\ell_n(\Delta) \varrho^n] = [a,b] = \Delta.
\end{align*}
Hence
\begin{align*}
\mu (\Delta ) 
    &=\sum_{i=1}^{k}  \sum_{\substack{\sigma \in \mathcal{A}^{n}\\ \varrho ^{-n}(a-S_{\sigma}(0))=a_{i}}}
      p_{\sigma }\mu [a_i, a_i + \ell_n(\Delta)] \\
    &=\sum_{i=1}^{k} \mu [a_i, a_i + \ell_n(\Delta)]
      \sum_{\substack{\sigma \in \mathcal{A}^{n}\\ \varrho^{-n}(a-S_{\sigma}(0))=a_{i}}} p_{\sigma }
\end{align*}
as claimed.
\end{proof}

\begin{notation}
For $i=1,2,\dots,$card$(V_{n}(\Delta ))=K$, put 
\begin{equation*}
P_{n}^{i}(\Delta )=p_{0}^{-n}\sum_{\sigma \in \mathcal{A}^{n}:\varrho
^{-n}(a-S_{\sigma }(0))=a_{i}}p_{\sigma }\text{ }
\end{equation*}%
and 
\begin{equation*}
P_{n}(\Delta )=\sum_{i=1}^{K}P_{n}^{i}(\Delta ).
\end{equation*}
\end{notation}

Here we have chosen to normalize by multiplying by $p_{0}^{-n}$. This is
done so that the minimal non-zero entry in the transition matrices (defined
in the next subsection) is at least $1$.

\begin{corollary}
\label{cor}There is a constant $c>0$ such that for any $n$ and any $\Delta
\in \mathcal{F}_{n},$ 
\begin{equation*}
cp_{0}^{n}P_{n}(\Delta )\leq \mu (\Delta )\leq p_{0}^{n}P_{n}(\Delta ).
\end{equation*}
\end{corollary}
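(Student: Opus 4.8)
The plan is to begin directly from the formula in the preceding Lemma and recast it using the normalized quantities $P_n^i(\Delta)$. Since, by definition, $\sum_{\sigma\in\mathcal{A}^n:\,\varrho^{-n}(a-S_\sigma(0))=a_i}p_\sigma=p_0^n P_n^i(\Delta)$, the Lemma becomes
\[
\mu(\Delta)=p_0^n\sum_{i=1}^{k}\mu[a_i,a_i+\ell_n(\Delta)]\,P_n^i(\Delta),
\]
where $(a_1,\dots,a_k)=V_n(\Delta)$ and $k=\mathrm{card}(V_n(\Delta))=K$. Both desired inequalities will then reduce to controlling the factors $\mu[a_i,a_i+\ell_n(\Delta)]$, because each $P_n^i(\Delta)\ge 0$.

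For the upper bound I would simply use that $\mu$ is a probability measure, so $\mu[a_i,a_i+\ell_n(\Delta)]\le 1$ for every $i$. Replacing each such factor by $1$ and summing gives $\mu(\Delta)\le p_0^n\sum_{i=1}^k P_n^i(\Delta)=p_0^n P_n(\Delta)$, which is the right-hand inequality.

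The lower bound is where the finite type hypothesis is essential, and it is the real content of the statement. The idea is to obtain a uniform positive lower bound on $\mu[a_i,a_i+\ell_n(\Delta)]$ valid for all $n$, all $\Delta$, and all $i$. Because $\mu$ is of finite type, the set $\Omega$ of characteristic vectors is finite, and hence only finitely many pairs $(\ell_n(\Delta),V_n(\Delta))$ can occur as $\Delta$ ranges over all net intervals of all levels. Consequently the family of intervals $[a_i,a_i+\ell_n(\Delta)]$ obtained by letting $i$ run over the entries of $V_n(\Delta)$ is a \emph{finite} collection of subintervals of $[0,1]$. Each such interval is nondegenerate, since $\ell_n(\Delta)>0$, and is contained in $[0,1]=\mathrm{supp}\,\mu$, so it carries positive $\mu$-measure. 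Taking $c$ to be the minimum of these finitely many positive numbers yields $\mu[a_i,a_i+\ell_n(\Delta)]\ge c>0$ uniformly, whence $\mu(\Delta)\ge c\,p_0^n\sum_{i=1}^k P_n^i(\Delta)=c\,p_0^n P_n(\Delta)$.

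The step I expect to be the crux, and the one I would state carefully, is precisely this passage from ``finitely many characteristic vectors'' to ``finitely many intervals $[a_i,a_i+\ell_n(\Delta)]$, each of positive measure.'' The uniformity of $c$ rests entirely on the finiteness supplied by the finite type property, while the positivity of each $\mu[a_i,a_i+\ell_n(\Delta)]$ rests on the standard technical assumption that $\mathrm{supp}\,\mu$ is the full interval $[0,1]$. Everything else is a routine substitution, so once these two observations are in hand the corollary follows immediately.
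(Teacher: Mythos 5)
Your proof is correct and follows essentially the same route as the paper: the upper bound comes from bounding each $\mu[a_i,a_i+\ell_n(\Delta)]$ by $1$ in the Lemma's formula, and the lower bound uses that each such measure is strictly positive (since $\mathrm{supp}\,\mu=[0,1]$) together with the finite type property guaranteeing only finitely many such intervals occur, so their minimum measure serves as $c$. The only difference is that you spell out the routine substitutions more explicitly than the paper does.
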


\begin{proof}
The upper bound is clear from the lemma. For the lower bound we note that
each $\mu \left[ a_{i},a_{i}+\ell _{n}(\Delta )\right] >0$ as the support of 
$\mu $ is the full interval $[0,1]$. The finite type condition ensures there
are only finitely many choices for $[a_{i},a_{i}+\ell _{n}(\Delta )]$.
\end{proof}

\begin{lemma}
\label{Lem1} Suppose $\mu $ satisfies the standard technical assumptions.
There are constants $c_{1},c_{2}>0$ such that if $\Delta _{1}$, $\Delta _{2}$
are two adjacent net intervals of level $n$, then 
\begin{equation*}
c_{1}\frac{1}{n}P_{n}(\Delta _{2})\leq P_{n}(\Delta _{1})\leq
c_{2}nP_{n}(\Delta _{2}).
\end{equation*}
\end{lemma}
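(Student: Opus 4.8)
The plan is to rewrite $P_{n}$ as a sum over ``covering words'' and then reduce the whole statement to a comparison of one-sided cylinder masses at the shared endpoint of $\Delta _{1}$ and $\Delta _{2}$. Summing the definition of $P_{n}^{i}$ over all $i$, note first that
\[
P_{n}(\Delta )=p_{0}^{-n}\sum_{\sigma \in \mathcal{L}(\Delta )}p_{\sigma },\qquad \mathcal{L}(\Delta )=\{\sigma \in \mathcal{A}^{n}:\Delta \subseteq S_{\sigma }[0,1]\},
\]
since the entries $a_{i}$ of $V_{n}(\Delta )$ range over exactly the values $\varrho ^{-n}(a-S_{\sigma }(0))$ attached to the covering words. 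Write $\Delta _{1}=[a,b]$, $\Delta _{2}=[b,c]$. A word covering $\Delta _{1}$ has cylinder right endpoint $\geq b$, and since no cylinder endpoint can lie in the open net interval $(b,c)$, that endpoint is either exactly $b$ or at least $c$; as every net interval is contained in some level-$n$ cylinder we have $\ell _{n}\leq 1$, so every $\sigma $ with $S_{\sigma }(1)=b$ already covers $\Delta _{1}$. This yields disjoint decompositions $\mathcal{L}(\Delta _{1})=\mathcal{S}\sqcup \mathcal{E}$ and $\mathcal{L}(\Delta _{2})=\mathcal{S}\sqcup \mathcal{B}$, where $\mathcal{S}$ are the words straddling $b$, $\mathcal{E}=\{S_{\sigma }(1)=b\}$ and $\mathcal{B}=\{S_{\sigma }(0)=b\}$. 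Putting $\gamma =\sum_{\mathcal{S}}p_{\sigma }$, $a=\sum_{\mathcal{E}}p_{\sigma }$, $\beta =\sum_{\mathcal{B}}p_{\sigma }$ gives $p_{0}^{n}P_{n}(\Delta _{1})=\gamma +a$ and $p_{0}^{n}P_{n}(\Delta _{2})=\gamma +\beta $, so the Lemma reduces to the two inequalities $a\leq Cn(\gamma +\beta )$ and $\beta \leq Cn(\gamma +a)$.

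Second, I would analyse $a$, $\beta $, $\gamma $ by peeling the first letter. For $y\in [0,1]$ and $0\leq k\leq n$ set $a_{k}(y)=\sum_{S_{\sigma }(1)=y}p_{\sigma }$, $\beta _{k}(y)=\sum_{S_{\sigma }(0)=y}p_{\sigma }$ and $\gamma _{k}(y)=\sum_{S_{\sigma }(0)<y<S_{\sigma }(1)}p_{\sigma }$ over $\sigma \in \mathcal{A}^{k}$, so that $a=a_{n}(b)$, $\beta =\beta _{n}(b)$, $\gamma =\gamma _{n}(b)$. Writing $y_{j}=S_{j}^{-1}(y)$ and grouping words $\sigma =(j,\rho )$ by first letter turns these into one-step recursions,
\[
a_{k}(y)=\sum_{j:\,y_{j}\in [0,1]}p_{j}\,a_{k-1}(y_{j}),\qquad \gamma _{k}(y)=\sum_{j:\,y_{j}\in (0,1)}p_{j}\,\gamma _{k-1}(y_{j}),
\]
and likewise for $\beta _{k}$, with base data $a_{k}(1)=p_{m}^{k}$, $a_{k}(0)=0$, $\beta _{k}(0)=p_{0}^{k}$, $\beta _{k}(1)=0$ and $\gamma _{0}\equiv 1$ on $(0,1)$. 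Thus each mass is a weighted sum over paths $b=y^{(0)}\to y^{(1)}\to \cdots $ under the inverse maps, terminating at $1$, at $0$, or in the interior. Finite type guarantees that, up to the finitely many characteristic vectors, these paths live on a finite state set, so the recursion is driven by finitely many nonnegative matrices; regularity $p_{0}=p_{m}=\min _{j}p_{j}$ balances the two extreme (all-$0$ and all-$m$) paths and, with the existence of an interior preimage at each step, keeps the denominator from degenerating, e.g.\ $\gamma _{n}(b)\geq p_{0}^{\,n}$ in the overlapping case.

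The main obstacle is extracting the \emph{linear} factor $n$. Because the governing matrices are in general not primitive (in the tiling situation cylinders meet only at endpoints, so straddling mass can be small), one cannot bound $a_{n}(b)$ by a constant multiple of $\gamma _{n}(b)+\beta _{n}(b)$; yet a crude estimate gives only an unspecified polynomial in $n$. The plan is to track the discrepancy level by level and show it grows by at most an additive constant: a path of weight $p_{\sigma }$ that currently ends at $b$ should be matched, after descending one further level, with a path of comparable weight (comparable because $p_{0}=p_{m}=\min _{j}p_{j}$ controls the weight distortion) that straddles or starts at $b$, this matching being injective with multiplicity bounded uniformly in $n$ by finite type. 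Iterating over the $n$ levels then yields an accumulated defect of at most $Cn$ times the covering mass, which is the desired bound, and the companion inequality for $\beta $ follows by the mirror-image argument with the roles of $0$ and $m$ interchanged. Verifying that this per-level defect is genuinely bounded — equivalently, that the relevant matrices carry only the single communicating-class chain responsible for linear rather than higher-degree polynomial growth — is where the real work lies; the decomposition of the first paragraph and the recursion of the second are essentially bookkeeping.
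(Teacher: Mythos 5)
Your first paragraph is correct and is essentially the right bookkeeping: writing $p_{0}^{n}P_{n}(\Delta _{i})$ as the sum of $p_{\sigma }$ over covering words and splitting at the shared endpoint $b$ into straddlers $\mathcal{S}$, right-anchored words $\mathcal{E}$ and left-anchored words $\mathcal{B}$ mirrors the decomposition the paper performs one level up (its sets $D_{1},D_{2},E_{1}=E_{2}$ at the parent level). But the proof stops exactly where the lemma lives. You concede that crude estimates give ``an unspecified polynomial in $n$,'' propose a level-by-level matching, and then admit that verifying the per-level defect is bounded ``is where the real work lies.'' That verification \emph{is} the lemma. Moreover, the mechanism you sketch cannot produce the right growth rate: an injective matching with uniformly bounded multiplicity and comparable weights would prove $a\leq C(\gamma +\beta )$, a constant bound that is false in general (the factor $n$ in the statement is not an artifact), while a per-level comparison losing a multiplicative constant $C>1$ compounds to $C^{n}$, not $Cn$. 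To get linear growth the per-level factor must be exactly $1$ with an additive error, and nothing in your write-up forces that.

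The missing ingredient is the precise way regularity $p_{0}=p_{m}=\min p_{j}$ enters, which you invoke only vaguely. The paper argues by induction on the level using the parent/child structure: since $\Delta _{1},\Delta _{2}$ have adjacent parents $\widehat{\Delta _{1}},\widehat{\Delta _{2}}$ sharing the endpoint $b$, a level-$(n-1)$ word whose cylinder ends exactly at $b$ (your set $\mathcal{E}$ at the parent level) can contribute to $P_{n}(\Delta _{1})$ only through its single extension by the letter $m$, picking up the factor $p_{m}$, while every word whose cylinder crosses $b$ also covers $\widehat{\Delta _{2}}$. This yields $p_{0}^{n}P_{n}(\Delta _{1})\leq p_{m}\,p_{0}^{n-1}P_{n-1}(\widehat{\Delta _{1}})+p_{0}^{n-1}P_{n-1}(\widehat{\Delta _{2}})$, and because $p_{m}/p_{0}=1$ the induction hypothesis $P_{n-1}(\widehat{\Delta _{1}})\leq c_{2}(n-1)P_{n-1}(\widehat{\Delta _{2}})$ advances to $c_{2}n$ instead of being inflated by a constant; the step closes with $P_{n-1}(\widehat{\Delta _{2}})\leq P_{n}(\Delta _{2})$, which uses the assumption that the support of $\mu $ is all of $[0,1]$. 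Your first-letter recursion could in principle be pushed to the same end---the defect is injected precisely when some preimage $S_{j}^{-1}(b)$ hits $\{0,1\}$, where $a_{n-1}(1)=p_{m}^{n-1}$ has no compensating $\gamma +\beta $ mass, and one must then use $p_{m}^{n}=p_{0}^{n}\leq \gamma +\beta $ (valid since $P_{n}(\Delta _{2})\geq 1$) to bound each injection by the fixed target mass---but none of this appears in your proposal, so as written it is a plan whose decisive step is absent.
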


\begin{proof}
The proof is similar to that of \cite[Lemma 2.11]{F1} and proceeds by
induction on $n$. The base case holds as there are only finitely many
choices for $P_{1}(\Delta _{j})$ when $\Delta _{j}\in \mathcal{F}_{1}$. Now
assume the result for level $n-1$ and we will verify it holds for level $n$.

If $\Delta _{1},\Delta _{2}$ have the same parent $\widehat{\Delta }$, the
result follows easily from the observation that 
\begin{equation*}
P_{n-1}(\widehat{\Delta })\leq P_{n}(\Delta _{j})\leq mp_{0}^{-1}\max
p_{j}P_{n-1}(\widehat{\Delta })\text{.}
\end{equation*}

Otherwise, $\Delta _{1}$ and $\Delta _{2}$ are children of adjacent net
intervals of level $n-1$, $\widehat{\Delta _{1}},\widehat{\Delta _{2}}$
respectively, and we can suppose $\Delta _{1}$ is to the left of $\Delta
_{2} $. As in \cite{F1}, put
\begin{align*}
D_{1} &=\{\sigma \in \mathcal{A}^{n-1}:\widehat{\Delta _{1}}\subseteq
S_{\sigma }[0,1]\text{ and they share the same right endpoint\thinspace }\}
\\
D_{2} &=\{\sigma \in \mathcal{A}^{n-1}:\widehat{\Delta _{2}}\text{ }
\subseteq S_{\sigma }[0,1]\text{ and they share the same left
endpoint\thinspace }\} \\
E_{j} &=\{\sigma \in \mathcal{A}^{n-1}\diagdown D_{j}:\widehat{\Delta _{j}}
\subseteq S_{\sigma }[0,1]\}\text{, }j=1,2.
\end{align*}
The definitions ensure that $E_{1}=E_{2}$, 
\begin{equation*}
p_{0}^{n-1}P_{n-1}(\widehat{\Delta _{j}})=\sum_{\sigma \in D_{j}}p_{\sigma
}+\sum_{\sigma \in E_{j}}p_{\sigma }
\end{equation*}
and 
\begin{align*}
p_{0}^{n}P_{n}(\Delta _{1})
  &\leq \sum_{\sigma \in D_{1}}p_{\sigma}p_{m}+\sum_{\sigma \in E_{1}}p_{\sigma }\sum_{j=0}^{m}p_{j} \\
  &\leq p_m \sum_{\sigma \in D_{1}}p_{\sigma}+p_m \sum_{\sigma \in E_{1}}p_{\sigma } +
        \sum_{\sigma \in E_{2}}p_{\sigma } + \sum_{\sigma \in D_{2}}p_{\sigma} \\
  &\leq p_m p_0^{n-1} P_{n-1}(\widehat{\Delta_1}) + p_0^{n-1} P_{n-1}(\widehat{\Delta_2}) \\
\end{align*}
Applying the induction assumption gives
\begin{align*}
p_{0}P_{n}(\Delta _{1})
  &\leq p_m P_{n-1}(\widehat{\Delta_1}) + P_{n-1}(\widehat{\Delta_2}) \\
  &\leq p_m c_2(n-1) P_{n-1}(\widehat{\Delta_2}) + P_{n-1}(\widehat{\Delta_2}) \\
  &\leq (p_m c_2(n-1)+1) P_{n-1}(\widehat{\Delta_2})
\end{align*}
Taking $c_2 \geq 1/p_0 = 1/p_m \geq 1$ gives
\begin{align*}
p_{0}P_{n}(\Delta _{1})
  & \leq ((n-1)+1) P_{n-1} (\widehat{\Delta_2}) \\
  & \leq n P_{n-1}(\widehat{\Delta_2}) \\
  & \leq c_2 n P_{n}(\widehat{\Delta_2})
\end{align*}

The other inequality is similar.
\end{proof}

Note that in the proof the assumption that $\{p_{j}\}$ were regular
probabilities was important.

The following is immediate from the two previous results.

\begin{corollary}
There are constants $C_{1},C_{2}$ such that if $\Delta _{1},\Delta _{2}$ are
adjacent net intervals of level $n$, then 
\begin{equation*}
C_{1}\frac{1}{n}\mu (\Delta _{2})\leq \mu (\Delta _{1})\leq C_{2}n\mu
(\Delta _{2}).
\end{equation*}
\end{corollary}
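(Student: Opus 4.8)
The plan is to obtain both inequalities by chaining together the two-sided estimate of Corollary~\ref{cor}, which compares $\mu(\Delta)$ with $p_0^{n}P_{n}(\Delta)$, and the comparison of the $P_{n}$-values of adjacent net intervals supplied by Lemma~\ref{Lem1}. The key observation that makes this work cleanly is that the exponential factor $p_0^{n}$ occurs with the same power on both sides of Corollary~\ref{cor}, so when we pass from $\mu(\Delta_1)$ to $\mu(\Delta_2)$ through the $P_n$-estimates the factor $p_0^{n}$ cancels entirely, leaving only the fixed constants $c,c_1,c_2$ and a single factor of $n$ (or $1/n$) coming from Lemma~\ref{Lem1}.

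For the upper bound on $\mu(\Delta_1)$ I would first apply the upper estimate of Corollary~\ref{cor} to write $\mu(\Delta_1)\le p_0^{n}P_{n}(\Delta_1)$, then insert the upper estimate $P_{n}(\Delta_1)\le c_2 n\,P_{n}(\Delta_2)$ of Lemma~\ref{Lem1}, and finally use the lower estimate of Corollary~\ref{cor} for $\Delta_2$, rearranged as $p_0^{n}P_{n}(\Delta_2)\le c^{-1}\mu(\Delta_2)$. Composing these three steps yields $\mu(\Delta_1)\le (c_2/c)\,n\,\mu(\Delta_2)$, so one may take $C_2=c_2/c$.

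The lower bound is entirely symmetric. Starting from $\mu(\Delta_1)\ge c\,p_0^{n}P_{n}(\Delta_1)$, applying the lower estimate $P_{n}(\Delta_1)\ge c_1 n^{-1}P_{n}(\Delta_2)$ of Lemma~\ref{Lem1}, and then the upper estimate of Corollary~\ref{cor} for $\Delta_2$ rearranged as $P_{n}(\Delta_2)\ge p_0^{-n}\mu(\Delta_2)$, gives $\mu(\Delta_1)\ge c\,c_1\,n^{-1}\mu(\Delta_2)$, so $C_1=c\,c_1$ works.

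Because the whole argument is just the composition of inequalities already proved, there is no genuine obstacle, which is why the authors describe the result as immediate. The only point requiring care is bookkeeping: the four component inequalities must be applied in the correct orientation so that the factors $p_0^{n}$ cancel exactly, and one should check at the end that the resulting constants $C_1=c\,c_1$ and $C_2=c_2/c$ are strictly positive and independent of $n$, which follows since $c,c_1,c_2>0$ do not depend on $n$.
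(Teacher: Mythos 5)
Your proposal is correct and is exactly the argument the paper intends: the paper states the corollary is immediate from Corollary \ref{cor} and Lemma \ref{Lem1}, and your chaining of the two-sided estimate $cp_0^{n}P_{n}(\Delta)\leq \mu(\Delta)\leq p_0^{n}P_{n}(\Delta)$ with the adjacent-interval comparison of the $P_n$'s, with the $p_0^{n}$ factors cancelling, is precisely that deduction. The explicit constants $C_1=c\,c_1$ and $C_2=c_2/c$ are valid and independent of $n$, so there is nothing to add.
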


Together these results yield the following useful approach to computing
local dimensions.

\begin{corollary}
\label{cor1}Suppose $\mu $ satisfies the standard technical assumptions. Let 
$x\in $supp$\mu $ and $\Delta _{n}(x)$ denote a net interval of level $n$
containing $x$. Then 
\begin{align}
\overline{\dim }\mu (x)& =\limsup_{n\rightarrow \infty }\frac{\log \mu
(\Delta _{n}(x))}{n\log \varrho }  \notag \\
& =\frac{\log p_{0}}{\log \varrho }+\limsup_{n\rightarrow \infty }\frac{\log
P_{n}(\Delta _{n}(x))}{n\log \varrho }.
\end{align}%
A similar statement holds for the (lower) local dimensions.
\end{corollary}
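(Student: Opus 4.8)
The plan is to compare the measure of the ball $[x-\varrho^n,x+\varrho^n]$ appearing in (\ref{locdim}) with the measure of the single net interval $\Delta_n(x)$, and then to pass from $\mu(\Delta_n(x))$ to $P_n(\Delta_n(x))$ via Corollary \ref{cor}. The first equality is the substantive step; the second is essentially a restatement of Corollary \ref{cor}. Throughout, the argument will apply uniformly to $\limsup$ and $\liminf$, since all discrepancies introduced will vanish in the limit, which also covers the claimed statement for the lower local dimension.

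For the first equality, the key geometric observation is that finite type forces the normalized lengths $\ell_n(\Delta)$ to take only finitely many positive values, so there is a constant $c_0>0$ with $c_0\varrho^n \leq |\Delta| \leq \varrho^n$ for every $\Delta\in\mathcal{F}_n$; the upper bound holds because each net interval sits inside some $S_\sigma[0,1]$, an interval of length $\varrho^n$. Writing $\Delta_n(x)=[a,b]$, the bound $b-a\leq\varrho^n$ together with $a\leq x\leq b$ yields $x-\varrho^n\leq a$ and $x+\varrho^n\geq b$, so that
\[
\Delta_n(x)\subseteq [x-\varrho^n,x+\varrho^n],
\]
giving the lower bound $\mu(\Delta_n(x))\leq\mu([x-\varrho^n,x+\varrho^n])$ immediately.

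For the matching upper bound I would argue that the ball $[x-\varrho^n,x+\varrho^n]$, having length $2\varrho^n$, can meet at most $M:=2/c_0+2$ net intervals of level $n$, since each has length at least $c_0\varrho^n$; these form a chain of mutually adjacent net intervals, each within at most $M$ adjacency-steps of $\Delta_n(x)$. As $\mu$ is non-atomic and distinct net intervals overlap only at endpoints, $\mu([x-\varrho^n,x+\varrho^n])$ is bounded by the sum of the $\mu(\Delta)$ over these intervals, and iterating the preceding corollary (the one bounding $\mu(\Delta_1)$ against $\mu(\Delta_2)$ for adjacent $\Delta_1,\Delta_2$) at most $M$ times gives $\mu(\Delta)\leq (C_2 n)^M\mu(\Delta_n(x))$ for each such $\Delta$. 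Hence
\[
\mu(\Delta_n(x))\leq \mu([x-\varrho^n,x+\varrho^n])\leq M(C_2 n)^M\,\mu(\Delta_n(x)).
\]
Taking logarithms, dividing by $n\log\varrho$, and noting that the resulting discrepancy is $O((\log n)/n)\to 0$, the $\limsup$ (resp. $\liminf$) of $\log\mu([x-\varrho^n,x+\varrho^n])/(n\log\varrho)$ and of $\log\mu(\Delta_n(x))/(n\log\varrho)$ coincide; combined with (\ref{locdim}) this is the first displayed equality.

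The second equality is then immediate: applying Corollary \ref{cor} to $\Delta_n(x)$ gives $cp_0^nP_n(\Delta_n(x))\leq\mu(\Delta_n(x))\leq p_0^nP_n(\Delta_n(x))$, and after taking logarithms and dividing by $n\log\varrho$ the contribution of the constant $c$ is $O(1/n)$ and disappears in the limit, leaving the stated splitting into $\log p_0/\log\varrho$ plus the $\limsup$ of $\log P_n(\Delta_n(x))/(n\log\varrho)$. I expect the only genuine obstacle to lie in the first step: ensuring that the number of level-$n$ net intervals meeting the ball is bounded independently of $n$, and that the polynomial factor $n^M$ coming from the adjacency corollary is harmless. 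Both points rest on the finite type property, through the finiteness of the set of normalized lengths and their positive lower bound, and on the regular-probability hypothesis underlying the adjacency corollary.
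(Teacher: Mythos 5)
Your proposal is correct and follows essentially the same route as the paper: the containment $\Delta_n(x)\subseteq[x-\varrho^n,x+\varrho^n]$, a covering of the ball by a bounded number of mutually adjacent level-$n$ net intervals (bounded thanks to finite type), iteration of the adjacent-interval comparison to pick up only a polynomial-in-$n$ factor, and Corollary \ref{cor} to pass between $\mu(\Delta_n(x))$ and $p_0^nP_n(\Delta_n(x))$. The only differences are cosmetic: you compare the measures of adjacent net intervals and convert to $P_n$ at the very end, whereas the paper runs the same chain of inequalities directly in terms of $P_n$, and you spell out the lower bound $c_0\varrho^n$ on net-interval lengths that the paper leaves implicit in its appeal to the finite type property.
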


\begin{proof}
Since any net interval of level $n$ has length at most $\varrho ^{n}$, the
interval $[x-\varrho ^{n},x+\varrho ^{n}]$ contains $\Delta _{n}(x)$. The
finite type property ensures it is contained in a union of a bounded number
of $n$'th level net intervals, say $\bigcup\limits_{j=1}^{N}\Delta
_{n}(x_{j})$, where $\Delta _{n}(x_{j})$ is adjacent to $\Delta
_{n}(x_{j+1}) $ and for a suitable index $j$, $x_{j}=x$. Thus for constants 
$c$, $C$ (independent of the choice of $n$ and $x$),
\begin{align*}
cp_{0}^{n}P_{n}(\Delta _{n}(x)) &\leq \mu (\Delta _{n}(x))\leq \mu
([x-\varrho ^{n},x+\varrho ^{n}])\leq \sum_{j=1}^{N}\mu (\Delta _{n}(x_{j}))
\\
&\leq \sum_{j=1}^{N}p_{0}^{n}P_{n}(\Delta _{n}(x_{j}))\leq
Np_{0}^{n}C^{N}n^{N}P_{n}(\Delta _{n}(x)).
\end{align*}
Thus the limiting behaviour of the three expressions
\begin{equation*}
\frac{\log \mu ([x-\varrho ^{n},x+\varrho ^{n}])}{n\log \varrho },\text{ }
\frac{\log \mu (\Delta _{n}(x))}{n\log \varrho }
\end{equation*}
and 
\begin{equation*}
\frac{\log p_{0}}{\log \varrho }+\frac{\log P_{n}(\Delta _{n}(x))}{n\log
\varrho }
\end{equation*}
coincide.
\end{proof}

\begin{remark}
If $\Delta =[0,b]$, then $V_{n}(\Delta )=\{0\}$ and hence $P_{n}(\Delta )=1$%
. Consequently, \thinspace $\dim \mu (0)=\log p_{0}/\log \varrho $. More
generally, since $V_{n}(\Delta )$ is never empty and $p_{0}$ is the minimal
probability, it follows that $P_{n}(\Delta _{n}(x))\geq 1$ for all $n$ and $%
x $. Consequently, 
\begin{equation*}
\dim \mu (x)\leq \dim \mu (0)\text{ for all }x\in \text{supp}\mu .
\end{equation*}
\end{remark}

\subsection{Transition matrices}

The results of the previous subsection show that for studying the local
dimensions of these measures it will be helpful to make accurate estimates
of $P_{n}(\Delta _{n}(x))$. Towards this, slightly modifying \cite{F3} we
define \textit{primitive transition matrices}, $T(\mathcal{C}_{n-1}( 
\widehat{\Delta }),$ $\mathcal{C}_{n}(\Delta )),$ for a net interval $\Delta
=[a,b]$ of level $n$ and parent $\widehat{\Delta }=[c,d]$ as follows:

\begin{notation}
Suppose $V_{n}(\Delta )=(a_{1},\dots,a_{K})$ and $V_{n-1}(\widehat{\Delta }%
)=(c_{1},\dots,c_{J})$. For $j=1,\dots,J$ and $k=1,\dots,K,$ we set 
\begin{equation*}
T_{jk}:=\left( T(\mathcal{C}_{n-1}(\widehat{\Delta }),\mathcal{C}_{n}(\Delta
))\right) _{jk}=p_{0}^{-1}p_{\ell }\text{ }
\end{equation*}%
if $\ell \in \mathcal{A}$ and there exists $\sigma \in \mathcal{A}^{n-1}$
with $S_{\sigma }(0)=c-\varrho ^{n-1}c_{j}$ and $S_{\sigma \ell
}(0)=a-\varrho ^{n}a_{k}$. This is equivalent to saying 
\begin{equation*}
T_{jk}=p_{0}^{-1}p_{\ell }\text{ if }c-\varrho ^{n-1}c_{j}+\varrho
^{n-1}d_{\ell }=a-\varrho ^{n}a_{k}.
\end{equation*}%
We set $\left( T(\mathcal{C}_{n-1}(\widehat{\Delta }),\mathcal{C}_{n}(\Delta
))\right) _{jk}=0$ otherwise.
\end{notation}

As $\Omega $ is finite for a measure of finite type, there is an upper bound
on the size of these matrices. The entries are non-negative and all non-zero
entries are at least one. Each column has at least one non-zero entry
because $a_{k}\in V_{n}(\Delta )$ if and only if there is some $c_{j}\in
V_{n-1}(\widehat{\Delta })$ which ``contributes'' to it, in the sense
defined above.

It is also important to note that the standard technical assumption that supp%
$\mu =[0,1]$ guarantees that given $\sigma \in \mathcal{A}^{n-1}$ such that $%
S_{\sigma }(0)=c-\varrho ^{n-1}c_{j}$, there exists $\ell \in \mathcal{A}$
such that $S_{\sigma \ell }(0)=\varrho ^{n-1}d_{\ell }+S_{\sigma }(0)\in
(a-\varrho ^{n},a]$. This means that each row of the matrix $\left( T(%
\mathcal{C}_{n-1}(\widehat{\Delta }),\mathcal{C}_{n}(\Delta ))\right) $ also
has a non-zero entry.

For $K=$ card$(V_{n}(\Delta ))$, put 
\begin{equation*}
Q_{n}(\Delta )=(P_{n}^{1}(\Delta ),\dots,P_{n}^{K}(\Delta )).
\end{equation*}%
The same reasoning as in \cite[Theorem 3.3]{F3} shows that 
\begin{equation*}
Q_{n}(\Delta )=Q_{n-1}(\widehat{\Delta })\left( T(\mathcal{C}_{n-1}(\widehat{%
\Delta }),\mathcal{C}_{n}(\Delta ))\right) .
\end{equation*}%
Thus if $[\Delta ]=(\gamma _{0},\dots,\gamma _{n})$ (that is, $\gamma _{j}=%
\mathcal{C}_{j}(\Delta _{j})$ and $\gamma _{0}=\mathcal{C}_{0}([0,1])$),
then since $Q_{0}[0,1]=\left[ 1\right] $ we have 
\begin{equation*}
P_{n}(\Delta )=\left\Vert Q_{n}(\Delta )\right\Vert =\left\Vert T(\mathcal{%
\gamma }_{0},\gamma _{1})\cdot \cdot \cdot T(\gamma _{n-1},\gamma
_{n})\right\Vert ,
\end{equation*}%
where by the norm of matrix $M=(M_{jk})$ we mean 
\begin{equation*}
\left\Vert M\right\Vert :=\sum_{jk}\left\vert M_{jk}\right\vert .
\end{equation*}

Given an admissible path $\eta =(\gamma _{1},\dots,\gamma _{n})$, we write 
\begin{equation*}
T(\eta )=T(\gamma _{1},\dots,\gamma _{n})=T(\mathcal{\gamma }_{1},\gamma
_{2})\cdot \cdot \cdot T(\gamma _{n-1},\gamma _{n})
\end{equation*}
and refer to such a product as a \textit{transition matrix}.

With this notation, the results of the previous subsection can be stated as

\begin{corollary}
\label{corlocdim} Suppose $\mu $ satisfies the standard technical
assumptions. If $x\in $supp$\mu $, then 
\begin{equation}
\overline{\dim }\mu (x)=\frac{\log p_{0}}{\log \varrho }+\limsup_{n%
\rightarrow \infty }\frac{\log \left\Vert T([x|n])\right\Vert }{n\log
\varrho }  \label{F2}
\end{equation}%
and similarly for the (lower) local dimension.
\end{corollary}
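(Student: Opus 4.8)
The plan is to combine Corollary~\ref{cor1} with the transition-matrix factorization of $P_n$ established immediately above the statement; essentially the corollary is a repackaging of those two facts. Corollary~\ref{cor1} already supplies
\[
\overline{\dim }\mu (x)=\frac{\log p_{0}}{\log \varrho }+\limsup_{n\rightarrow \infty }\frac{\log P_{n}(\Delta _{n}(x))}{n\log \varrho },
\]
so the whole task reduces to rewriting $P_{n}(\Delta _{n}(x))$ in terms of $\left\Vert T([x|n])\right\Vert$ and then checking that the resulting off-by-one index shift does not affect the limit.

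First I would pin down the admissible path. By definition $[x|n]$ is the tuple of the first $n$ characteristic vectors of $[x]$, namely $(\mathcal{C}_{0}(\Delta _{0}),\dots ,\mathcal{C}_{n-1}(\Delta _{n-1}))$, which is precisely the symbolic expression $[\Delta _{n-1}(x)]$ of the net interval $\Delta _{n-1}(x)\in \mathcal{F}_{n-1}$. The identity $P_{n}(\Delta )=\left\Vert T([\Delta ])\right\Vert$ derived just above the statement therefore yields $\left\Vert T([x|n])\right\Vert =P_{n-1}(\Delta _{n-1}(x))$.

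Next I would absorb the index shift. Setting $m=n-1$, one has
\[
\frac{\log \left\Vert T([x|n])\right\Vert }{n\log \varrho }=\frac{m}{m+1}\cdot \frac{\log P_{m}(\Delta _{m}(x))}{m\log \varrho }.
\]
Because the primitive transition matrices have fixed size and entries bounded by $p_{0}^{-1}\max_{j}p_{j}$, the product $\left\Vert T([x|n])\right\Vert =P_{n-1}(\Delta _{n-1}(x))$ grows at most exponentially, so the sequence $\frac{\log P_{m}(\Delta _{m}(x))}{m\log \varrho }$ is bounded (it is also bounded above by $0$, since $P_m\ge 1$). Multiplying a bounded sequence by the factor $m/(m+1)\to 1$ leaves its $\limsup$ unchanged, so the $\limsup$ over $n$ of the left-hand side equals the $\limsup$ over $m$ of $\frac{\log P_{m}(\Delta _{m}(x))}{m\log \varrho }$; by Corollary~\ref{cor1} this is exactly $\overline{\dim }\mu (x)-\frac{\log p_{0}}{\log \varrho }$, giving \eqref{F2}. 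The lower local dimension follows identically with $\liminf$ replacing $\limsup$.

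The only genuine point requiring attention is the bookkeeping of this off-by-one index between $[x|n]$ and $\Delta _{n}(x)$; once the (already established) at-most-exponential growth of the transition products is invoked to guarantee boundedness of $\frac{\log P_{m}}{m}$, the shift is harmless and preserves both the $\limsup$ and the $\liminf$.
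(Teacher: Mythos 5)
Your proof is correct and takes essentially the same route as the paper: the paper offers no separate argument for this corollary, presenting it as an immediate restatement of Corollary \ref{cor1} combined with the identity $P_{n}(\Delta )=\left\Vert T([\Delta ])\right\Vert$ established just before the statement. Your only addition is the explicit bookkeeping of the off-by-one shift between $[x|n]$ and $\Delta _{n}(x)$, which you handle correctly by noting that $\log P_{m}(\Delta _{m}(x))/(m\log \varrho )$ is bounded (since $1\leq P_{m}\leq C^{m}$) so that the factor $m/(m+1)\rightarrow 1$ leaves the $\limsup$ and $\liminf$ unchanged.
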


\begin{example}
\label{ex:golden transition} Again, consider the Bernoulli convolution, $\mu
_{\varrho }$, with $\varrho ^{-1}$ the golden ratio. Feng \cite{F1} showed
that $0$ has symbolic representation $[1,2,2,\dots]$ and that $%
T(1,2,2,\dots)=\left[ 1\right] $. Applying Corollary \ref{corlocdim} gives
another proof that $\dim \mu _{\varrho }(0)=\log 2/\left\vert \log \varrho
\right\vert $.
\end{example}

Next, we give a useful simple lemma.

\begin{lemma}
\label{compare} Suppose $\mu $ satisfies the standard technical assumptions.
Let $A$ and $B$ be transition matrices. Then $\left\Vert B\right\Vert \leq
\left\Vert AB\right\Vert$ and $\left\Vert B\right\Vert \leq \left\Vert
BA\right\Vert$.
\end{lemma}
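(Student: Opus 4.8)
The plan is to exploit two structural features of transition matrices that are recorded in the text just before the statement: their entries are non-negative, every non-zero entry is at least $1$, and each row and each column contains at least one non-zero entry. Since the norm $\|M\|=\sum_{jk}|M_{jk}|$ is simply the sum of all entries (the entries being non-negative), these features translate directly into the statement that every row sum and every column sum of such a matrix is at least $1$, which is exactly what the two inequalities require.

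First I would check that these three properties, stated in the excerpt for primitive transition matrices, are inherited by an arbitrary transition matrix $A$, that is, a finite product of primitive ones. This is the only point requiring a small argument, and I would run it by induction on the number of factors. If $(AB)_{jl}=\sum_k A_{jk}B_{kl}\neq 0$, then some summand $A_{jk}B_{kl}$ is positive, so both factors are $\geq 1$ and hence $(AB)_{jl}\geq 1$; thus non-zero entries of a product are again $\geq 1$. Moreover, if column $l$ of $B$ has a non-zero entry $B_{k_0 l}$ and column $k_0$ of $A$ has a non-zero entry $A_{j_0 k_0}$, then $(AB)_{j_0 l}\geq A_{j_0 k_0}B_{k_0 l}\geq 1$, so every column of the product still has a non-zero entry; the symmetric argument handles rows, using that each row of a primitive matrix is non-zero (a fact resting on the standard assumption $\mathrm{supp}\,\mu=[0,1]$). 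Consequently every column sum and every row sum of $A$ is at least $1$.

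For the first inequality, writing $A=(A_{jk})$ and $B=(B_{kl})$, I would expand
\begin{equation*}
\|AB\|=\sum_{j,l}(AB)_{jl}=\sum_{j,l}\sum_{k}A_{jk}B_{kl}=\sum_{k}\Big(\sum_{j}A_{jk}\Big)\Big(\sum_{l}B_{kl}\Big).
\end{equation*}
Since each column sum $\sum_{j}A_{jk}\geq 1$ and every quantity in sight is non-negative, this is at least $\sum_{k}\sum_{l}B_{kl}=\|B\|$. For the second inequality the same computation applied to $BA$ gives $\|BA\|=\sum_{k}\big(\sum_{l}B_{lk}\big)\big(\sum_{m}A_{km}\big)$, and now the relevant factor is the row sum $\sum_{m}A_{km}\geq 1$ of $A$, which yields $\|BA\|\geq\sum_{k}\sum_{l}B_{lk}=\|B\|$.

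The only genuine obstacle is the inheritance step: one must confirm that products of primitive transition matrices retain both the ``non-zero entries are $\geq 1$'' property and the ``no zero row or column'' property, so that the column-sum bound (driving $\|B\|\leq\|AB\|$) and the row-sum bound (driving $\|B\|\leq\|BA\|$) are available for the factors of $A$. Once that is in hand, both inequalities follow immediately from the way $\|AB\|$ and $\|BA\|$ factor through the shared inner index.
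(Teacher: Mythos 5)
Your proof is correct and follows essentially the same route as the paper: expand $\left\Vert AB\right\Vert$ (resp. $\left\Vert BA\right\Vert$) over the shared inner index and use that every column (resp. row) sum of $A$ is at least $1$, the row case resting on $\mathrm{supp}\,\mu=[0,1]$. The only difference is that you explicitly verify by induction that products of primitive transition matrices inherit the ``non-zero entries $\geq 1$'' and ``no zero row or column'' properties, a step the paper leaves implicit.
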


\begin{proof}
We have 
\begin{equation*}
\left\Vert AB\right\Vert =\sum_{ij}\left( \sum_{k}A_{ik}B_{kj}\right)
=\sum_{jk}\left( \sum_{i}A_{ik}\right) B_{kj}.
\end{equation*}%
Since all the entries of each of the matrices is nonnegative and each column
of $A$ has an entry $\geq 1$, it follows that $\left\Vert AB\right\Vert \geq
\left\Vert B\right\Vert .$

The argument for the other inequality is similar, noting that each row of $A$ has an 
    entry $\geq 1$ as a consequence of the standard technical assumptions.
\end{proof}

An important consequence of this result is that the local dimension of $\mu $
at $x$ depends only on the tail of the symbolic representation of $x$.

\begin{corollary}
Suppose $[x]=(\gamma _{0},\gamma _{1},\dots)$. For any $N$, 
\begin{equation*}
\overline{\dim }\mu (x)=\frac{\log p_{0}}{\log \varrho }+\limsup_{n%
\rightarrow \infty }\frac{\log \left\Vert T(\gamma _{N},\gamma
_{N+1},\dots,\gamma _{N+n})\right\Vert }{n\log \varrho }
\end{equation*}%
and similarly for the (lower) local dimension.

If $[y]=(\gamma _{0},\beta _{1},\dots,\beta _{n},\gamma _{N},\gamma
_{N+1},\dots) $, then the (upper or lower) local dimensions of $\mu $ at $x$
and $y$ agree.
\end{corollary}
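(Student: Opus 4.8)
The plan is to derive both claims from Corollary \ref{corlocdim} by exploiting the multiplicative structure of transition matrices together with the sandwich bound of Lemma \ref{compare}. The guiding observation is that along the path $[x]=(\gamma_0,\gamma_1,\dots)$ the transition matrix factors, $T([x|m]) = A\,T(\gamma_N,\dots,\gamma_{m-1})$ with $A:=T(\gamma_0,\dots,\gamma_N)$ a fixed matrix, and that replacing the full product by its tail alters $\log\|T([x|m])\|$ by only a bounded amount; this additive error disappears once we divide by $m$ and let $m\to\infty$.

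For the first assertion, fix $N$. Lemma \ref{compare} applied to the factorization above gives the lower estimate $\|T(\gamma_N,\dots,\gamma_{m-1})\|\le\|T([x|m])\|$. For the reverse estimate I would invoke submultiplicativity of the norm on matrices with nonnegative entries, which is the one missing half of Lemma \ref{compare} and a one-line check: $\|AB\|=\sum_{ik}A_{ik}\sum_j B_{kj}\le\|A\|\,\|B\|$. This yields $\|T([x|m])\|\le\|A\|\,\|T(\gamma_N,\dots,\gamma_{m-1})\|$. Taking logarithms, the sequences $m^{-1}\log\|T([x|m])\|$ and $m^{-1}\log\|T(\gamma_N,\dots,\gamma_{m-1})\|$ differ by at most $m^{-1}\log\|A\|\to0$, so they share the same $\limsup$ and the same $\liminf$. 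Re-indexing by $m-1=N+n$ and noting $(N+n+1)/n\to1$, the limit in Corollary \ref{corlocdim} is unchanged if we replace $T([x|m])$ by $T(\gamma_N,\dots,\gamma_{N+n})$ and the denominator $m\log\varrho$ by $n\log\varrho$. This gives the formula for $\overline{\dim}\mu(x)$; repeating verbatim with $\liminf$ gives the lower local dimension.

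For the second assertion, note that $[x]=(\gamma_0,\dots,\gamma_{N-1},\gamma_N,\gamma_{N+1},\dots)$ and $[y]=(\gamma_0,\beta_1,\dots,\beta_n,\gamma_N,\gamma_{N+1},\dots)$ possess the identical tail $\gamma_N,\gamma_{N+1},\dots$. Applying the first assertion to $x$ at index $N$, and to $y$ at index $n+1$ (the position at which $\gamma_N$ first appears in $[y]$), expresses $\overline{\dim}\mu(x)$ and $\overline{\dim}\mu(y)$ by one and the same quantity,
\[
\frac{\log p_0}{\log\varrho}+\limsup_{k\to\infty}\frac{\log\|T(\gamma_N,\dots,\gamma_{N+k})\|}{k\log\varrho},
\]
and hence they coincide; the lower local dimensions agree by the same reasoning.

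The only genuine difficulty is the bookkeeping needed to confirm that neither the additive constant $\log\|A\|$ contributed by the discarded initial block nor the mismatch between the denominators $n$ and $N+n$ persists in the limit. Both are controlled by the finiteness of $\Omega$: there are finitely many characteristic vectors, so $1\le\|T(\gamma_N,\dots,\gamma_{N+k})\|\le M^{k}$ for a fixed constant $M$, keeping the relevant sequences bounded so that the elementary limit rule $\limsup(a_k b_k)=\limsup a_k$ applies with $b_k=(N+k+1)/k\to1$.
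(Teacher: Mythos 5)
Your proposal is correct and follows essentially the same route as the paper: the paper's proof is exactly the sandwich $\left\Vert T(\gamma_N,\dots,\gamma_{N+n})\right\Vert \leq \left\Vert T(\gamma_0,\dots,\gamma_{N+n})\right\Vert \leq \left\Vert T(\gamma_0,\dots,\gamma_N)\right\Vert \left\Vert T(\gamma_N,\dots,\gamma_{N+n})\right\Vert$, obtained from Lemma \ref{compare} on one side and submultiplicativity of the norm on the other, with the vanishing of the additive constant and the denominator reindexing left implicit. You simply make explicit the bookkeeping (and the one-line proof of submultiplicativity) that the paper omits, which is a fair reading of the same argument rather than a different one.
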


\begin{proof}
This holds since 
\begin{align*}
\left\Vert T(\gamma _{N},\gamma _{N+1},\dots,\gamma _{N+n})\right\Vert &\leq
\left\Vert T(\gamma _{0},\dots,\gamma _{N})T(\gamma _{N},\gamma
_{N+1},\dots,\gamma _{N+n})\right\Vert \\
&=\left\Vert T(\gamma _{0},\dots,\gamma _{N},\gamma _{N+1},\dots,\gamma
_{N+n})\right\Vert \\
&\leq \left\Vert T(\gamma _{0},\dots\gamma _{N})\right\Vert \left\Vert
T(\gamma _{N},\gamma _{N+1},\dots,\gamma _{N+n})\right\Vert .
\end{align*}
\end{proof}

\begin{notation}
By $sp(M)$ we mean the spectral radius of the square matrix $M,$ the largest
eigenvalue of $M$ in absolute value. Recall that 
\begin{equation*}
sp(M)=\lim_{n}\left\Vert M^{n}\right\Vert ^{1/n}.
\end{equation*}
\end{notation}

We will call a matrix $M$ \textit{positive }if all its entries are strictly
positive and write $M>0$. We record here some elementary facts about
positive matrices that will be useful later.

\begin{lemma}
\label{normsprod} Suppose $\mu $ satisfies the standard technical
assumptions. Assume $A,B,C$ are transition matrices and $B$ is positive.

\begin{enumerate}
\item Then $\left\Vert ABC\right\Vert \geq \left\Vert A\right\Vert
\left\Vert C\right\Vert $. \label{it2:1}

\item There is a constant $C_{1}=C_{1}(B)$ such that if $AB$ is a square
matrix, then 
\begin{equation*}
\left\Vert AB\right\Vert \leq C_{1}sp(AB)\text{.}
\end{equation*}
\label{it2:2}

\item Suppose $B$ is a square matrix. There is a constant $C_{2}=C_{2}(B)$
such that 
\begin{equation*}
sp(B^{n})\leq \left\Vert B^{n}\right\Vert \leq C_{2}sp(B^{n})\text{ for all }%
n\text{.}
\end{equation*}
\label{it2:3}
\end{enumerate}
\end{lemma}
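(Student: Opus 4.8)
The statement concerns a positive transition matrix $B$ and arbitrary transition matrices $A,C$, and asks for three facts: a lower bound $\|ABC\| \geq \|A\|\,\|C\|$, a reverse spectral bound $\|AB\| \leq C_1\, sp(AB)$, and the two-sided comparison $sp(B^n) \leq \|B^n\| \leq C_2\, sp(B^n)$. The key feature I would exploit throughout is that all transition matrices have nonnegative entries, all nonzero entries are at least $1$ (so every column and every row of a transition matrix has an entry $\geq 1$ by the standard technical assumptions, as used in Lemma~\ref{compare}), and that $B > 0$ means every entry of $B$ is strictly positive.

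**Part (1).** The plan is to write $(ABC)_{ij} = \sum_{k,\ell} A_{ik} B_{k\ell} C_{\ell j}$, so that $\|ABC\| = \sum_{i,j,k,\ell} A_{ik} B_{k\ell} C_{\ell j}$. Since $B > 0$, every $B_{k\ell} \geq 1$ (being a nonzero transition-matrix entry), hence I can bound $\|ABC\| \geq \sum_{i,j,k,\ell} A_{ik} C_{\ell j} = \left(\sum_{i,k} A_{ik}\right)\left(\sum_{\ell,j} C_{\ell j}\right) = \|A\|\,\|C\|$, provided the index ranges match up, i.e. the inner dimension of $B$ lets every pair $(k,\ell)$ occur. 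This is where I must be slightly careful: the sum factors only because $B_{k\ell} \geq 1$ for all relevant $k,\ell$, which holds precisely because $B$ is positive. That is the whole content, so Part (1) should be short.

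**Parts (2) and (3).** For Part (3), the lower bound $sp(B^n) \leq \|B^n\|$ is standard (the spectral radius is bounded by any submultiplicative norm, and indeed $sp(M) = \lim_n \|M^n\|^{1/n} \leq \|M\|$ applied to $M = B^n$ gives $sp(B^n) = sp(B)^n \leq \|B^n\|$). For the upper bound I would invoke Perron--Frobenius: since $B$ is a positive square matrix, $sp(B)$ is a simple eigenvalue with strictly positive left and right eigenvectors $u, v$, and $B^n / sp(B)^n \to v u^T / (u^T v)$ entrywise. Hence $\|B^n\| / sp(B)^n$ converges to the positive finite limit $\|vu^T\|/(u^Tv)$, so it is bounded by some constant $C_2 = C_2(B)$ for all $n$; combined with $sp(B^n) = sp(B)^n$ this yields $\|B^n\| \leq C_2\, sp(B^n)$. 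For Part (2), the hypothesis is that $AB$ is square but only $B$ (not $AB$) is assumed positive, so $AB$ need not be primitive and I cannot directly apply Perron--Frobenius to $AB$. Instead I would argue as follows: because $B > 0$ and $A$ has a nonzero (hence $\geq 1$) entry in each column, the product $AB$ is itself a positive matrix (each entry $(AB)_{ij} = \sum_k A_{ik} B_{kj}$ is a sum in which at least one $A_{ik} \geq 1$ multiplies a strictly positive $B_{kj}$). Then $AB$ is a positive square matrix, so by the same Perron--Frobenius reasoning as in Part (3) there is a constant depending on $AB$ with $\|AB\| \leq C_1(AB)\, sp(AB)$. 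The subtlety the statement demands is that $C_1$ depend only on $B$, not on $A$; I expect this is the main obstacle, and the resolution is that there are only finitely many transition matrices $A$ of each admissible size (since $\Omega$ is finite for a measure of finite type, the primitive transition matrices come from a finite set), so one can take $C_1 = C_1(B)$ to be the maximum of the finitely many constants $C_1(AB)$ over all valid $A$. I would state this finiteness explicitly as the crux of Part (2).

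**Main obstacle.** The genuinely delicate point is making $C_1$ in Part~(2) uniform in $A$; the clean way is the finiteness of the pool of transition matrices, which I would flag as the key use of the finite type hypothesis. Everything else reduces to entrywise positivity and a single application of Perron--Frobenius.
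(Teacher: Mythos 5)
Your Part (1) is exactly the paper's argument, and your Part (3) is correct and essentially matches the paper's route: the paper gets the bound from the Jordan decomposition of $B$ together with Perron--Frobenius, while you use the convergence of $B^{n}/sp(B)^{n}$ to the rank-one Perron projection; the two are interchangeable.

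The genuine gap is in Part (2), precisely at the point you yourself flagged as the crux. First, your finiteness claim is false: in this paper a ``transition matrix'' means $T(\eta )=T(\gamma _{1},\gamma _{2})\cdots T(\gamma _{n-1},\gamma _{n})$ for an admissible path $\eta $ of \emph{arbitrary} length, so although there are only finitely many primitive transition matrices and the dimensions are bounded, the set of transition matrices $A$ is infinite and their entries are unbounded; you cannot take a maximum of the constants $C_{1}(AB)$ over this pool. Second, the fact you fall back on is too weak to close the gap: for a single positive square matrix $M=AB$ the inequality $\left\Vert M\right\Vert \leq C(M)\,sp(M)$ is trivial (take $C(M)=\left\Vert M\right\Vert /sp(M)$), and there is \emph{no} uniform bound on this ratio depending only on the dimension and the property that all entries are $\geq 1$: the matrices $M_{N}=\bigl(\begin{smallmatrix}1 & N\\ 1 & 1\end{smallmatrix}\bigr)$ are positive with entries $\geq 1$, yet $\left\Vert M_{N}\right\Vert \sim N$ while $sp(M_{N})=1+\sqrt{N}$, so the ratio is unbounded. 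Hence ``$AB$ is positive, apply Perron--Frobenius'' cannot by itself yield a constant depending only on $B$. The paper instead exploits the two-sided bound $1\leq B_{lk}\leq b:=\max_{j,k}B_{jk}$ in a trace argument: with $A$ of size $q\times p$ (so $q$ is determined by $B$, being the number of columns of $B$), one has
\begin{equation*}
\left\Vert AB\right\Vert =\sum_{j,k,l}A_{jl}B_{lk}\leq bq\sum_{j,l}A_{jl}\leq bq\sum_{j,l}A_{jl}B_{lj}=bq\,\sum_{j}(AB)_{jj}\leq bq^{2}\,sp(AB),
\end{equation*}
the last step because the trace of $AB$ is the sum of its eigenvalues. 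This gives $C_{1}=bq^{2}$, depending only on $B$. (A smaller slip: to conclude $AB>0$ you need every \emph{row} of $A$ to contain a nonzero entry --- the fact the paper derives from $\mathrm{supp}\,\mu =[0,1]$ --- not every column; but this is moot, since positivity of $AB$ does not suffice in any case.)
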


\begin{proof}
To see (\ref{it2:1}), let $B=(B_{jk})$. As all $B_{jk}\geq 1,$ a simple calculation gives 
\begin{equation*}
\left\Vert ABC\right\Vert =\sum_{ijkl}A_{ij}B_{jk}C_{kl}\geq
\sum_{ijkl}A_{ij}C_{kl}=\left\Vert A\right\Vert \left\Vert C\right\Vert .
\end{equation*}

For (\ref{it2:2}), assume $A=(A_{jk})$ is a $q\times p$ matrix. Let $b=\max B_{jk}$.
As the entries of $A$
are non-negative and the entries of $B$ are at least 1,  it is easy to see that 
\begin{align*}
\left\Vert AB\right\Vert &=\sum_{j,k}(AB)_{jk}=\sum_{j,k,l}A_{jl}B_{lk}\leq
b\sum_{j,k=1}^{q}\sum_{l=1}^{p}A_{jl} \leq bq\sum_{j=1}^{q}\sum_{l=1}^{p}A_{jl}B_{lj} \\
&= bq\sum_{j=1}^{q}(AB)_{jj}\leq bq^2  sp(AB),
\end{align*}%
with the final inequality holding because the sum of the diagonal entries of 
$AB$ is the sum of the eigenvalues of $AB$, counted by multiplicity.

For (\ref{it2:3}), let $B=PJP^{-1}$ be the Jordan decomposition of $B$ and let $\beta
=sp(B)$. By the Perron-Frobenius theory, $\beta $ is a simple root of the
characteristic polynomial of $B$ and all other eigenvalues of $B$ are
strictly less than $\beta $ in modulus. Since all entries of $B$ are at
least $1$, it can be easily seen that $\beta >1$. As $\left\Vert
B^{n}\right\Vert \leq \left\Vert P\right\Vert \left\Vert J^{n}\right\Vert
\left\Vert P^{-1}\right\Vert $, it is enough to prove $\left\Vert
J^{n}\right\Vert \leq C_{1}\beta ^{n}$ where $C_{1}$ depends on $B$, but not 
$n$.

Assume $B$ is of size $d\times d$. Since the Jordan block for $\beta $ is $%
1\times 1$, all entries of $J^{n}$, other than the $(1,1)$ entry which is $%
\beta ^{n}$, are either $0$ or of the form $\binom{n}{j}\alpha ^{n-j}$ where 
$j\leq \max (d,n)$ and $\alpha $ is an eigenvalue of $B$ with $%
\left\vert \alpha \right\vert <\beta $. Thus 
\begin{equation*}
\left\Vert J^{n}\right\Vert \leq \beta ^{n}+d^{2}n^{d}\beta _{0}^{n}
\end{equation*}%
where $\beta _{0}<\beta $ is the maximum of $1$ and the modulus of the
eigenvalues of $B$ other than $\beta $. As $(\beta /\beta _{0})^{n}\geq
d^{2}n^{d}$ for all $n$ sufficiently large depending on $d,\beta ,\beta _{0}$%
, it follows that for some constant $C_{2},$ depending on $\beta ,\beta
_{0},d,$ (and hence depending only on $B$) we have $\left\Vert
J^{n}\right\Vert \leq C_{2}\beta ^{n}$ for all $n$. This proves the right
hand inequality.

The left hand inequality is obvious.
\end{proof}

\section{Loop classes and periodic points}

\label{sec:loop}

\subsection{Essential and Loop classes}

Feng in \cite{F2} also introduced the notion of an essential class for
measures of finite type. Here we introduce the more general definition of a
loop class, of which the essential class is a special case.

\begin{definition}
(i) A non-empty subset $\Omega ^{\prime }\subseteq \Omega $ is called a%
\textbf{\ loop class} if whenever $\alpha ,\beta \in \Omega ^{\prime }$,
then there are characteristic vectors $\gamma _{j}$, $j=1,\dots,n$, such
that $\alpha =\gamma _{1}$, $\beta =\gamma _{n}$ and $(\gamma
_{1},\dots,\gamma _{n}) $ is an admissible path with all $\gamma _{j}\in
\Omega ^{\prime }$.

(ii) A loop class $\Omega ^{\prime }\subseteq \Omega $ is called an\textbf{\
essential class} if, in addition, whenever $\alpha \in \Omega ^{\prime }$
and $\beta \in \Omega $ is a child of $\alpha $, then $\beta \in \Omega
^{\prime }$.

(iii) We call a loop class \textbf{maximal }if it is not properly contained
in any other loop class.
\end{definition}

The finite type property ensures that every element in the support of $\mu$
is contained in a loop class. Clearly every loop class is contained in a
unique maximal loop class. Feng in \cite[Lemma 6.4]{F2}, proved there is
always precisely one essential class. Of course, the essential class is a
maximal loop class.

\begin{notation}
We will denote the essential class by $\Omega _{0}$ and here-after speak of
``the'' essential class.
\end{notation}

\begin{definition}
If $[x]=(\gamma _{0},\gamma _{1},\gamma _{2},\dots)$ with $\gamma _{j}\in
\Omega _{0}$ for all large $j$, we will say that\textit{\ }$x$ is an \textbf{%
essential point} (or is \textbf{in the essential class}) and call $x$ a 
\textbf{non-essential point} otherwise. The phrase, $x$ is \textbf{in the
loop class }$\Omega ^{\prime },$ will have a similar meaning. An admissible
path will be said to be in a given loop class if all its members are in that
class.
\end{definition}

\begin{remark}
Note that if $[x]=(\gamma _{j})$ is non-essential, then none of the
characteristic vectors $\gamma _{j}$ belong to $\Omega _{0}$. A
non-essential point necessarily has its tail in some loop class external to
the essential class.
\end{remark}

We remark that the essential class is dense in $[0,1]$. This is because the
uniqueness of the essential class ensures that every net interval contains a
net subinterval of higher level whose characteristic vector is in the
essential class. In fact, we show next that the set of essential points has
full Lebesgue measure in $[0,1]$.

\begin{proposition}
Suppose $\mu $ satisfies the standard technical assumptions. Then the set of
non-essential points is a subset of a closed set of Lebesgue measure $0$.
\end{proposition}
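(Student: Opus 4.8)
The plan is to exhibit an explicit closed set containing every non-essential point and to show it has Lebesgue measure $0$; write $\lambda$ for Lebesgue measure. For each level $n$, set
\[
E_{n}=\bigcup\{\Delta\in\mathcal{F}_{n}:\mathcal{C}_{n}(\Delta)\notin\Omega_{0}\},
\]
a finite union of closed net intervals, and let $G=\bigcap_{n}E_{n}$, which is closed. Because $\Omega_{0}$ is absorbing (every child of an essential characteristic vector is essential), a net interval with non-essential characteristic vector must have a non-essential parent; hence $E_{n}\subseteq E_{n-1}$, and by the Remark preceding the statement a non-essential point $x$ has $\mathcal{C}_{n}(\Delta_{n}(x))\notin\Omega_{0}$ for every $n$, so $x\in\Delta_{n}(x)\subseteq E_{n}$ for all $n$ and thus $x\in G$. (The only points with a non-unique symbolic representation are the countably many net-interval endpoints, which carry no Lebesgue measure and are harmlessly absorbed into $G$.) It therefore suffices to prove $\lambda(E_{n})\to 0$.

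First I would record two consequences of the finiteness of $\Omega$. Since $\Omega$ is finite, the normalized lengths $\ell_{n}(\Delta)$ take only finitely many values, so there are constants $0<\ell_{\min}\le\ell_{\max}$ with $\ell_{\min}\varrho^{n}\le|\Delta|\le\ell_{\max}\varrho^{n}$ for every $\Delta\in\mathcal{F}_{n}$. Moreover, as noted before the proposition, every net interval has a descendant whose characteristic vector lies in $\Omega_{0}$; because children are determined by the (reduced) characteristic vector and $\Omega$ is finite, there is a single integer $N$ such that from any $\gamma\in\Omega\setminus\Omega_{0}$ one reaches $\Omega_{0}$ along an admissible path of length at most $N$.

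The central estimate is a uniform contraction of $\lambda(E_{n})$ across $N$ levels. Fix a level-$n$ net interval $\Delta$ with $\mathcal{C}_{n}(\Delta)\notin\Omega_{0}$. Realizing the path from $\mathcal{C}_{n}(\Delta)$ into $\Omega_{0}$ as a descending chain of net intervals, then continuing to descend to level exactly $n+N$ (still inside $\Omega_{0}$, by the absorbing property), one obtains a level-$(n+N)$ descendant $\Delta''$ with $\mathcal{C}_{n+N}(\Delta'')\in\Omega_{0}$ and
\[
|\Delta''|\ge\ell_{\min}\varrho^{n+N}\ge\delta\,|\Delta|,\qquad \delta:=\tfrac{\ell_{\min}}{\ell_{\max}}\varrho^{N}>0,
\]
with $\delta$ independent of $\Delta$ and $n$. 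Since $\Delta''$ is excluded from $E_{n+N}$, we get $\lambda(E_{n+N}\cap\Delta)\le(1-\delta)|\Delta|$. Summing over the level-$n$ components of $E_{n}$ yields $\lambda(E_{n+N})\le(1-\delta)\lambda(E_{n})$, hence $\lambda(E_{kN})\le(1-\delta)^{k}\lambda(E_{0})\to 0$, and since $G\subseteq E_{kN}$ for all $k$, $\lambda(G)=0$.

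The main obstacle is precisely this uniform contraction step: one must pin down a single level-increment $N$ and a single proportion $\delta$, both independent of the interval and of its level. This is where all the structural hypotheses converge — finiteness of $\Omega$ (the finite type condition), the fact that the children of a net interval depend only on its characteristic vector, the absorbing property of $\Omega_{0}$, and the two-sided bound on normalized lengths. Once these are in hand, the remaining geometric-series iteration is routine.
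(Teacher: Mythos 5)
Your proof is correct and follows essentially the same route as the paper's: both arguments rest on the fact that, by finite type, every non-essential net interval contains an essential descendant within a uniformly bounded number of generations occupying a uniformly bounded proportion of its length, and both then iterate this to produce a nested sequence of closed sets containing the non-essential points whose Lebesgue measures decay geometrically. Your version merely makes explicit what the paper leaves informal (the uniform increment $N$, the two-sided bound on normalized lengths, and the role of the absorbing property in the nesting $E_{n}\subseteq E_{n-1}$), which is a presentational rather than a substantive difference.
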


\begin{proof}
As we already observed, every net interval contains a descendent net
subinterval whose characteristic vector is in the essential class. (We will
abuse notation slightly and call such a net subinterval ``essential''.) The
finite type property ensures we can find such a net subinterval in a bounded
number of generations and that there exists some $\lambda >0$ such that the
proportion of the length of the net subinterval to the length of the
original interval is $\geq \lambda $.

We now exhibit a Cantor-like construction. We begin with $[0,1]$. Consider
the first level at which there is a net subinterval that is essential.
Remove the interiors of all the net subintervals of this level that are
essential. The resulting closed subset of $[0,1]$ is a finite union of
closed intervals, say $\mathcal{U}_{1}$, whose lengths total at most $%
1-\lambda $. We repeat the process of removing the interiors of the
essential net subintervals at the next level at which there are essential
net subintervals in each of the intervals of $\mathcal{U}_{1}$. The
resulting closed subset now has length at most $(1-\lambda )^{2}$.

After repeating this procedure $k$ times one can see that the non-essential
points are contained in a finite union of closed intervals, $C_{k},$ whose
is total length is at most $(1-\lambda )^{k}$. It follows that the
non-essential points are contained in the closed set $\bigcap\limits_{k=1}^{%
\infty }C_{k},$ and this set has measure $0$.
\end{proof}

\begin{remark}
It is worth remarking that the construction above may leave some essential
points within the Cantor-like construction. As the resulting set is measure $%
0$, the smaller set of just the non-essential points will also be measure $0 
$.
\end{remark}

\begin{example}
\label{ex:golden essential} From Figure \ref{fig:golden} one can see that
the Bernoulli convolution $\mu _{\varrho }$, with $\varrho ^{-1}$ the golden
ratio, has seven distinct loop classes: $\{3a,3b,5,6\}$, $\{3a,3b,5\}$, $%
\{3a,5,6\}$, $\{3a,5\}$, $\{3b,5\}$, $\{2\}$, and $\{4\}$. Of these, $\{2\}$%
, $[4\}$ and the essential class, $\{3a,3b,5,6\}$ (with 3 reduced elements)
are maximal. The two loop classes external to the essential class, $\{2\}$
and $\{4\},$ are associated to the two endpoints, $0$ and $1$. These are the
only two non-essential points, in other words, the set of essential points
is $(0,1)$.
\end{example}

\begin{definition}
We will say the loop class $\Omega ^{\prime }$ is of \textbf{positive type }%
if there is an admissible path $\eta $ in $\Omega ^{\prime }$ such that $%
T(\eta )$ is a positive matrix.
\end{definition}

\begin{remark}
We note that as there is a non-zero entry in each row and column of each
primitive transition matrix, then any loop class $\Omega ^{\prime }$ of
positive type has the property that for every $\delta ,\delta ^{\prime }\in
\Omega ^{\prime }$ there is an admissible path $\eta =(\delta ,\delta
_{1},\dots ,\delta _{r},\delta ^{\prime })$ in $\Omega ^{\prime }$ such that 
$T(\eta )$ is positive.
\end{remark}

\begin{example}
\label{ex:golden positive type} The loop classes $\{2\}$ and $\{4\}$ of
Example \ref{ex:golden essential} are of positive type since (as shown in 
\cite[Section 4]{F1}) $T(2,2)=T(4,4)=[1]$. As $T(5,6,3a)=\left[ 
\begin{array}{cc}
1 & 1 \\ 
1 & 1%
\end{array}%
\right] ,$ the essential class, $\{3a,3b,5,6\},$ is also of positive type.
However, the loop class, $\{3a,5\},$ is not of positive type since all
transition matrices from this loop class are of the form $\left[ 
\begin{array}{cc}
1 & 0 \\ 
n & 1%
\end{array}%
\right] $ for some $n$.
\end{example}

\begin{remark}
In\ Example \ref{ex:$x^3-x-1$)} we give a Bernoulli convolution where a
maximal loop class is not of positive type.
\end{remark}

However, the essential class is always of positive type under our standard
technical assumptions.

\begin{proposition}
\label{prop:pos} Suppose $\mu $ satisfies the standard technical
assumptions. Then the essential class is of positive type.
\end{proposition}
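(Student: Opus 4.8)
The plan is to recast positivity of a transition matrix as a reachability statement and then to locate, inside the essential class $\Omega_0$, a single ``bottleneck'' characteristic vector through which every neighbour index must funnel. First I would record the elementary closure property that underlies everything: since each primitive transition matrix has a nonzero entry in every row and in every column, and since for nonnegative matrices $A,B$ the relations $A_{ik_0}>0$ and $B_{k_0j_0}>0$ force $(AB)_{ij_0}>0$, a short induction shows that \emph{every} transition matrix $T(\eta)$ has a nonzero entry in each of its rows and in each of its columns. Consequently $T(\eta)$ is positive if and only if, reading $\eta=(\gamma_1,\dots,\gamma_n)$ as a layered graph whose vertices at layer $t$ are the neighbours of $\gamma_t$ and whose edges are the positive entries of $T(\gamma_t,\gamma_{t+1})$, every neighbour of $\gamma_1$ is joined by a directed path to every neighbour of $\gamma_n$.

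The key reduction is this: it suffices to exhibit a characteristic vector $\gamma^{\ast}\in\Omega_0$ whose neighbour set $V(\gamma^{\ast})$ is a singleton. Indeed, because $\Omega_0$ is a loop class, $\gamma^{\ast}$ has both a predecessor and a successor in $\Omega_0$, so there is an admissible path $\eta=(\delta_0,\dots,\gamma^{\ast},\dots,\delta_r)$ in $\Omega_0$ with $\gamma^{\ast}$ occurring in its interior. Splitting $T(\eta)=T(\delta_0,\dots,\gamma^{\ast})\,T(\gamma^{\ast},\dots,\delta_r)$, the left factor is a column vector (its codomain $V(\gamma^{\ast})$ is a single index) all of whose rows are nonzero, hence strictly positive, while the right factor is similarly a strictly positive row vector; their product is a strictly positive matrix. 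This is exactly the mechanism behind $T(5,6,3a)=\left[\begin{smallmatrix}1&1\\1&1\end{smallmatrix}\right]$ in Example \ref{ex:golden positive type}, where the singleton is characteristic vector $6$.

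It remains to produce the singleton, and this is where I expect the real work, and the main obstacle, to lie. Because $\Omega_0$ is absorbing, every descendant of an essential net interval is again essential; since essential net intervals occur at every level, it is enough to find one net interval covered by exactly one cylinder $S_\sigma[0,1]$ that descends from an essential net interval. Here I would invoke the standard technical assumption that $\operatorname{supp}\mu=[0,1]$, equivalently $d_0=0$, $d_m=1-\varrho$ and $d_{i+1}-d_i\le\varrho$: these inequalities bound the overlaps of consecutive level-$n$ cylinders and should let one exhibit a thin net interval lying just inside the right end of a cylinder whose right endpoint $S_\sigma(1)$ is interior to $[0,1]$ yet interior to no other level-$n$ cylinder, so that only $S_\sigma$ covers it. The delicate point is to rule out the pathological situation in which every interior net interval, at every level, is met by at least two cylinders; the average covering multiplicity in fact grows like $((m+1)\varrho)^n$ in the overlapping case, but the multiplicity is highly non-uniform (as the recurrence of the singleton vector $6$ in the golden example shows), and it is precisely the full-support geometry that must be used to force the minimum multiplicity down to $1$ somewhere below each essential net interval.

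Finally, if one wished to avoid singletons altogether, the same conclusion follows from the reachability picture of the first paragraph by showing that along some cycle in $\Omega_0$ the layered reachability graph is complete. Strong connectivity of the loop class routes any two neighbour indices toward one another, while full support supplies an index-crossing (off-diagonal) transition; iterating a cycle that combines these yields a square transition matrix that is primitive, and a sufficiently high power of it is positive. In either formulation the crux is the same, namely extracting from the full-support geometry enough index-mixing inside the absorbing class $\Omega_0$, and I would present the singleton route as the cleanest path to it.
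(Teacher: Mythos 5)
Your reduction mechanism is sound as far as it goes: if the essential class contained a characteristic vector $\gamma^{\ast}$ with $\mathrm{card}(V(\gamma^{\ast}))=1$, then splitting any admissible path of $\Omega_{0}$ through $\gamma^{\ast}$ writes its transition matrix as a product of a column vector and a row vector, each with all entries nonzero (every row and every column of a transition matrix has a nonzero entry under the standard technical assumptions), and such a product is positive --- this is exactly what happens with $T(5,6,3a)$ in the golden ratio example. The genuine gap is that the singleton you need does not exist in general, so this route cannot prove the proposition. In the worked example of Section \ref{sec:comp} (the IFS $S_j(x)=\frac{1}{3}x+\frac{2j}{15}$, $j=0,\dots,5$, of Example \ref{ex:isolated points}), the essential class is $\{4,5\}$ with neighbour sets $(0,2/5,4/5)$ and $(1/5,3/5)$: no characteristic vector in $\Omega_{0}$ has a singleton neighbour set, yet the essential class is of positive type (e.g.\ $T(5,5)$ is the all-ones $2\times 2$ matrix). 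You yourself flag the construction of the singleton as the place ``where the real work lies'' and leave it unfinished; in fact it is unfinishable.

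Your fallback argument also fails: the assertion that strong connectivity of the loop class together with full support ``routes any two neighbour indices toward one another'' and yields a primitive matrix is refuted by the paper's own Example \ref{ex:golden positive type}. The loop class $\{3a,5\}$ of the golden ratio Bernoulli convolution is strongly connected (it is a loop class) and the measure has full support, yet every transition matrix from this class has the lower-triangular form $\left[\begin{smallmatrix}1&0\\ n&1\end{smallmatrix}\right]$ and is never positive. So connectivity of the class graph plus full support does not force index-mixing; one must use a property that singles out the \emph{essential} class among loop classes, namely that it is absorbing. That missing ingredient is what the paper imports from Feng's Lemma 6.4 of \cite{F2}: for each row index $k\le\mathrm{card}(V(\delta))$ there is a cycle at $\delta$ inside $\Omega_{0}$ whose transition matrix has every entry of row $k$ nonzero. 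The paper then concatenates such cycles --- make row $1$ positive, find a column $k$ where row $2$ has a nonzero entry, append the cycle making row $k$ positive (which makes rows $1$ and $2$ positive in the product), and iterate until all rows are positive. Without this lemma, or an argument replacing it, neither of your routes closes.
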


\begin{proof}
Fix $\delta \in \Omega _{0}$. We recall that Feng in \cite[Lemma 6.4]{F2}
showed that given any positive integer $k\leq $ cardinality$(V(\delta ))$,
there is an admissible path $\gamma $ in the essential class, going from $%
\delta $ to $\delta ,$ such that all entries of the $k$'th row of $T(\gamma
) $ are non-zero.

Choose such a path, $\eta _{1},$ with all the entries of row $1$ of $T(\eta
_{1})$ non-zero. Row $2$ of $T(\eta _{1})$ has a non-zero entry in some
column, say $k,$ and, of course, the $(1,k)$ entry is also non-zero. Choose
an essential, admissible path $\eta _{2}$ from $\delta $ to $\delta ,$ with
all entries of row $k$ non-zero. Matrix multiplication shows that all
entries of both rows 1 and 2 of $T(\eta _{1})T(\eta _{2})=T(\eta _{1},\eta
_{2})$ are non-zero. By repeated application of this reasoning we can
construct an admissible path $\eta $ in $\Omega _{0},$ from $\delta $ to $%
\delta ,$ such that all entries of $T(\eta )$ are strictly positive.
\end{proof}

\subsection{Periodic points and their local dimensions}

\begin{definition}
We call $x\in $supp$\mu $ a \textbf{periodic point} if $x$ has symbolic
representation%
\begin{equation*}
\lbrack x]=(\gamma _{0},\dots,\gamma _{s},\theta ^{-},\theta ^{-},\dots)
\end{equation*}%
where $\theta $ is an admissible cycle (a non-trivial path with the same
first and last letter) and $\theta ^{-}$ is the path with the last letter of 
$\theta $ deleted. We refer to $\theta $ as a period of $x$.

We will say that periodic $x$ is\textit{\ }\textbf{positive} if the square
transition matrix $T(\theta )$ is positive.
\end{definition}

It is worth noting that $\theta $ and $\theta ^{-}$ are not uniquely
defined. For example, the path $(1,3a,5,3a,5,3a,\dots)$ from Example \ref%
{ex:golden essential} can be decomposed as $(\gamma _{0},\gamma _{1})=(1,3a)$
and $\theta ^{-}=(5,3a)$, or as $(\gamma _{0})=(1)$ and $\theta ^{-}=(3a,5)$%
, or as $(\gamma _{0})=(1)$ and $\theta ^{-}=(3a,5,3a,5)$, etc. In what
follows, it will not make any significant difference as to which choice is
made.

Observe that 
\begin{equation*}
T([x|N])=T(\gamma _{0},\dots ,\gamma _{s},\delta _{1})(T(\theta ))^{n}
\end{equation*}%
when $N=s+1+L(\theta ^{-})n$ and $\delta _{1}$ is the first letter of $%
\theta $. (Recall $L(\theta ^{-})$ is the length of path $\theta ^{-}$.) A
periodic $x$ is in the essential class if it has a period that is a path in
the essential class.

There is a simple formula for the local dimensions at periodic points.

\begin{proposition}
\label{periodic} Suppose $\mu $ satisfies the standard technical
assumptions. Let $x$ be the periodic point with period $\theta $. Then%
\begin{equation*}
\dim \mu (x)=\frac{\log p_{0}}{\log \varrho }+\frac{\log (sp(T(\theta )))}{%
L(\theta ^{-})\log \varrho }.
\end{equation*}
\end{proposition}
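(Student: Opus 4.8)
The plan is to apply Corollary~\ref{corlocdim}, which expresses the local dimension in terms of the limiting growth rate of $\|T([x|n])\|$, and then to reduce the norm computation to a spectral radius computation using the positivity established earlier. First I would record the structural observation already noted in the excerpt: if $x$ has period $\theta$ (starting with some characteristic vector $\delta_1$), then for $N = s+1+L(\theta^-)n$ we have
\begin{equation*}
T([x|N]) = T(\gamma_0,\dots,\gamma_s,\delta_1)\,(T(\theta))^n.
\end{equation*}
Write $A = T(\gamma_0,\dots,\gamma_s,\delta_1)$ and $B = T(\theta)$. Since $x$ is a periodic point, $B$ is a square transition matrix, and I may assume it is positive (if the period $\theta$ itself does not give a positive matrix, I can replace $\theta$ by $\theta$ traversed enough times, or invoke the remark following the positive-type definition; in any case the hypothesis that $x$ is a \emph{periodic point} guarantees a square $T(\theta)$ and the formula only involves $sp(T(\theta))$, which is unchanged under such replacements since $sp(B^k)=sp(B)^k$ and $L$ scales correspondingly).

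Next I would extract the growth rate of $\|AB^n\|$. The key tool is Lemma~\ref{normsprod}. By Lemma~\ref{compare} we have $\|B^n\| \le \|AB^n\|$, giving a lower bound on the norm in terms of $\|B^n\|$; for the matching upper bound I would use part~(\ref{it2:2}) of Lemma~\ref{normsprod}, which (for $B$ positive, so that $AB^n = (AB^{n-1})B$ has its last factor positive) yields $\|AB^n\| \le C_1\, sp(AB^n)$. Since the nonzero eigenvalues of $AB^n$ and of $B^n A$ coincide, and more simply since the trace argument in part~(\ref{it2:2}) bounds the norm by a constant times the spectral radius, I can compare $sp(AB^n)$ with $sp(B^n)$. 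Then part~(\ref{it2:3}) gives $sp(B^n)\le \|B^n\| \le C_2\, sp(B^n) = C_2\, sp(B)^n$, using $sp(B^n)=sp(B)^n$. Combining these sandwiches, both $\|AB^n\|$ and $\|B^n\|$ grow like $sp(B)^n$ up to multiplicative constants independent of $n$.

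With the growth rate pinned down, I would substitute into Corollary~\ref{corlocdim}. Taking $N = s+1+L(\theta^-)n$, the number of transition factors in $[x|N]$ is proportional to $L(\theta^-)n$, so
\begin{equation*}
\lim_{n\to\infty} \frac{\log \|T([x|N])\|}{N \log\varrho}
 = \lim_{n\to\infty}\frac{\log\bigl(sp(B)^n\bigr) + O(1)}{(s+1+L(\theta^-)n)\log\varrho}
 = \frac{\log sp(T(\theta))}{L(\theta^-)\log\varrho}.
\end{equation*}
The additive constants from the sandwich and the additive $s+1$ in the index both wash out in the limit since they are $O(1)$ while the dominant terms scale with $n$. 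Finally I would note that the limit along the subsequence $N = s+1+L(\theta^-)n$ agrees with the full limit: intermediate levels $N'$ with $s+1+L(\theta^-)n < N' < s+1+L(\theta^-)(n+1)$ differ from the endpoints by at most $L(\theta^-)$ additional transition factors, each changing the norm by a bounded factor (again by Lemma~\ref{compare} and the boundedness of entries), so the $\limsup$ and $\liminf$ coincide and equal the stated value. Adding the constant $\log p_0/\log\varrho$ from Corollary~\ref{corlocdim} gives the formula, and in particular the local dimension exists.

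The main obstacle I anticipate is the careful handling of the prefix matrix $A$ and of the intermediate levels $N'$ that are not of the exact form $s+1+L(\theta^-)n$: one must verify that these contribute only bounded multiplicative factors so that the $\limsup$ and $\liminf$ genuinely coincide, rather than merely bounding the limit along one subsequence. The positivity of $B=T(\theta)$ is what makes part~(\ref{it2:2}) of Lemma~\ref{normsprod} applicable and is essential for upgrading the norm growth to a clean spectral-radius statement; without it one would only get the lower bound $\|B^n\|\le\|AB^n\|$ and the argument would stall.
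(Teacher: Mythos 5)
There is a genuine gap: your reduction ``I may assume $T(\theta)$ is positive'' is false, and the rest of your argument leans on it. Replacing $\theta$ by $\theta$ traversed $k$ times replaces $T(\theta)$ by $T(\theta)^{k}$, and powers of a non-negative square matrix need never become positive: in the golden-ratio example of the paper, every cycle in the loop class $\{3a,5\}$ has transition matrix of the form $\left[\begin{smallmatrix}1&0\\ n&1\end{smallmatrix}\right]$, all of whose powers have a zero entry (see Example \ref{ex:golden positive type}). The remark following the definition of positive type cannot be invoked either, since it concerns loop classes \emph{of positive type}, and a periodic point need not lie in one. This matters because the proposition is stated for \emph{all} periodic points and is applied in the paper to non-positive ones (e.g., the periodic point with period $(5,3a,5)$ in Example \ref{ex:golden local dimension}). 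Since your upper bound $\Vert AB^{n}\Vert\leq C_{1}\,sp(AB^{n})$ comes from Lemma \ref{normsprod}(2), which requires the positive factor, and your use of Lemma \ref{normsprod}(3) likewise assumes $B>0$, your proof only covers positive periodic points. (A secondary soft spot: the step ``I can compare $sp(AB^{n})$ with $sp(B^{n})$'' is never actually carried out.)

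The repair is to drop positivity entirely, which is what the paper does. Keep your decomposition $T([x|N])=A\,(T(\theta))^{n}$ and the lower bound $\Vert (T(\theta))^{n}\Vert\leq\Vert A(T(\theta))^{n}\Vert$ from Lemma \ref{compare}, but get the upper bound from plain submultiplicativity of the entry-sum norm: $\Vert A(T(\theta))^{n}T(\delta_{1},\dots,\delta_{r})\Vert\leq\Vert A\Vert\,\Vert(T(\theta))^{n}\Vert\,\Vert T(\delta_{1},\dots,\delta_{r})\Vert\leq c\,\Vert(T(\theta))^{n}\Vert$, where $c$ is uniform because there are only finitely many suffixes with $r<L(\theta^{-})$ (this also disposes of your intermediate levels $N'$ in one stroke). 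Then conclude with Gelfand's formula $sp(M)=\lim_{n}\Vert M^{n}\Vert^{1/n}$, which the paper records in the Notation before Lemma \ref{normsprod} and which holds for every square matrix, positive or not; no Perron--Frobenius input is needed. With that substitution your argument becomes correct and coincides with the paper's proof.
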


\begin{proof}
Suppose $[x]=(\gamma _{0},\dots,\gamma _{s},\theta ^{-},\theta ^{-},\dots)$ and $%
\theta ^{-}=(\delta _{1},\dots,\delta _{L(\theta ^{-})})$. Let $S=T(\gamma
_{0},\dots,\gamma _{s},\delta _{1})$. According to Lemma \ref{compare}, there
is a constant $c>0$ such that 
\begin{equation*}
\left\Vert (T(\theta ))^{n}\right\Vert \leq \left\Vert S(T(\theta
))^{n}\right\Vert \leq \left\Vert S(T(\theta ))^{n}T(\delta _{1},\dots,\delta
_{r})\right\Vert \leq c \left\Vert (T(\theta ))^{n}\right\Vert
\end{equation*}%
for any $r<L(\theta ^{-})$. Thus Corollary \ref{corlocdim} implies that 
\begin{equation*}
\dim \mu (x)=\frac{\log p_{0}}{\log \varrho }+\lim_{n\rightarrow \infty }%
\frac{\log \left\Vert (T(\theta ))^{n}\right\Vert }{nL(\theta ^{-})\log
\varrho }.
\end{equation*}%
Since $\frac{1}{n}\log \left\Vert (T(\theta ))^{n}\right\Vert \rightarrow $ $%
\log (sp(T(\theta ))),$ the result follows.
\end{proof}

\section{Local dimensions of positive loop classes}

\label{sec:main}

In \cite{F1} and \cite{F2}, Feng showed that the set of local dimensions for
the Bernoulli convolution $\mu _{\varrho },$ with $\varrho ^{-1}$ the golden
ratio, was an interval and determined its endpoints. This is not true, in
general, for measures of finite type. For instance, it is known that the set
of local dimensions of the $m$-fold convolution of uniform Cantor measures
with contraction factor $1/R,$ for integer $R\leq m,$ is the union of an
interval and an isolated point (see \cite{BHM, HL, Sh}).

Feng also proved that the set of attainable local dimensions of $\mu
_{\varrho }$ was the closure of the set of local dimensions at periodic
points. In this section we will prove that if a loop class, $\Omega ^{\prime
},$ is of positive type, then the set of local dimensions at points in $%
\Omega ^{\prime }$ is a closed interval. Moreover, this interval is the
closure of the set of local dimensions at the periodic points in $\Omega
^{\prime }$. These statements hold, in particular, for the essential class, $%
\Omega _{0},$ since the essential class is a positive loop class according
to Proposition \ref{prop:pos}.

In Sections \ref{sec:comp}, \ref{sec:Cantor} and \ref{sec:Pisot} we give
examples to illustrate that the set of local dimensions attained at points
of a loop class external to the essential class can overlap, or may be
disjoint from the local dimensions of the essential class.

\subsection{Local dimensions at periodic points are dense}

\begin{theorem}
\label{periodicdense} Suppose $\mu $ satisfies the standard technical
assumptions. Assume that $\Omega ^{\prime }$ is a loop class of positive
type. The set of local dimensions of $\mu $ at positive, periodic points in
the loop class $\Omega ^{\prime }$ is dense in the set of all local
dimensions at points in $\Omega ^{\prime }$. It is also dense in the set of
all upper (or lower) local dimensions at points in $\Omega ^{\prime }$.
\end{theorem}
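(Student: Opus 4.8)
The plan is to show that the local dimension at any point $x\in\Omega'$ can be approximated arbitrarily well by the local dimension at a suitably chosen positive periodic point in $\Omega'$. Fix such an $x$ with symbolic representation $[x]=(\gamma_0,\gamma_1,\dots)$, so that by Corollary \ref{corlocdim},
\begin{equation*}
\overline{\dim}\mu(x)=\frac{\log p_0}{\log\varrho}+\limsup_{n\to\infty}\frac{\log\|T([x|n])\|}{n\log\varrho},
\end{equation*}
and similarly for the lower local dimension with $\liminf$. Since $\Omega'$ is of positive type, the Remark following the definition of positive type guarantees that between any two characteristic vectors of $\Omega'$ there is an admissible path whose transition matrix is positive. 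The core idea is to extract from $[x]$ a long initial segment that nearly realizes the relevant $\limsup$ (or $\liminf$), then splice on a short positive bridge that returns to the start of a repeating block, thereby closing the segment into an admissible cycle $\theta$ with $T(\theta)>0$. The resulting positive periodic point $y$ has a computable local dimension via Proposition \ref{periodic}, and I must control the error introduced by the splicing.

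The key estimates come from Lemma \ref{compare} and Lemma \ref{normsprod}. Suppose I have fixed a long admissible path $\eta=(\gamma_{N},\dots,\gamma_{N+L})$ appearing in $[x]$ whose normalized log-norm $\frac{\log\|T(\eta)\|}{L\log\varrho}$ is close to the target value. I bridge $\eta$ back to a cycle by choosing a positive path $\pi$ in $\Omega'$ from the last vector $\gamma_{N+L}$ to the first vector $\gamma_N$; let $\theta$ be the cycle $(\gamma_N,\dots,\gamma_{N+L},\dots)$ traversing $\eta$ followed by $\pi$, so $T(\theta)=T(\eta)\,T(\pi)$ is positive. By part \eqref{it2:1} of Lemma \ref{normsprod}, inserting the positive block makes the norm submultiplicative in a two-sided sense, and by parts \eqref{it2:2} and \eqref{it2:3} the spectral radius of $T(\theta)$ satisfies
\begin{equation*}
sp(T(\theta))\le\|T(\theta)\|\le C\,sp(T(\theta))
\end{equation*}
for a constant $C$ depending only on the fixed positive bridge. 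Combining these with $\|T(\eta)\|\le\|T(\theta)\|\le\|T(\eta)\|\,\|T(\pi)\|$ shows that $\frac{\log sp(T(\theta))}{L(\theta^-)\log\varrho}$ differs from $\frac{\log\|T(\eta)\|}{L\log\varrho}$ by $O\!\left(\frac{\log C+\log\|T(\pi)\|}{L}\right)$, which tends to $0$ as the segment length $L$ grows, since the bridge length and $\|T(\pi)\|$ stay bounded. Thus $\dim\mu(y)$, given by Proposition \ref{periodic}, approaches the target within any prescribed tolerance.

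The main obstacle is the direction of approximation for the $\limsup$ versus the $\liminf$: I must argue that I can choose the bridging points $N$ and $N+L$ so that the initial segment's normalized norm is genuinely close to the desired $\limsup$ (respectively $\liminf$), not merely bounded by it. For the upper local dimension, I select a subsequence $n_k$ along which $\frac{\log\|T([x|n_k])\|}{n_k\log\varrho}$ converges to $\overline{\dim}\mu(x)-\frac{\log p_0}{\log\varrho}$; because $\log\varrho<0$, maximizing the local dimension corresponds to making $\|T([x|n_k])\|$ as small as possible, and I must verify that truncating just before the bridge does not inadvertently inflate the norm in a way that pushes the dimension down. Lemma \ref{compare}, which gives $\|B\|\le\|AB\|$ and $\|B\|\le\|BA\|$, controls this: appending the bounded bridge can only increase the norm by a bounded multiplicative factor, whose logarithmic contribution is $O(1/L)$. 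The lower local dimension is handled symmetrically by selecting the subsequence realizing the $\liminf$. Finally, letting the tolerance shrink produces a sequence of positive periodic points whose local dimensions converge to the prescribed (upper, lower, or ordinary) local dimension at $x$, establishing density in all three sets simultaneously.
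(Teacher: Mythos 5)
Your proposal is correct and takes essentially the same route as the paper's own proof: truncate the symbolic expansion along a subsequence realizing the relevant $\limsup$ or $\liminf$, close that segment into an admissible cycle using a positive bridge path supplied by the positive-type hypothesis, and control the splicing error with Lemma \ref{compare} and Lemma \ref{normsprod} so that the resulting positive periodic point's dimension (computed via Proposition \ref{periodic}) lands within any prescribed tolerance of the target. The only cosmetic difference is that the paper first passes to a further subsequence on which the terminal characteristic vectors coincide, so a single fixed bridge works, whereas you invoke boundedness of the (finitely many) bridges; both devices play the same role.
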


\begin{remark}
We remark that in this theorem (and other results of this section) the
assumption that the loop class is of positive type may not be necessary for
particular IFS and particular loops.
\end{remark}

\begin{proof}
We will prove denseness in the set of lower local dimensions at points in
the loop class $\Omega ^{\prime }$. The arguments for the (upper) local
dimensions are the same.

Fix $x$ in $\Omega ^{\prime },$ say $[x]=(\gamma _{0},\gamma _{1},\gamma
_{2},\dots)$ with $\gamma _{k}\in \Omega ^{\prime }$ for all $k\geq M$. Choose
a subsequence $(n_{k})$ such that 
\begin{equation*}
\underline{\dim }\mu (x)=\frac{\log p_{0}}{\log \varrho }+\lim_{k\rightarrow
\infty }\frac{\log \left\Vert T(\gamma _{M},\dots,\gamma _{n_{k}})\right\Vert 
}{n_{k}\log \varrho }.
\end{equation*}%
As $\Omega ^{\prime }$ is finite, by passing to a further subsequence if
necessary (not renamed) we can assume all $\gamma _{n_{k}}=\delta \in \Omega
^{\prime }$. Put 
\begin{equation*}
D=\lim_{k\rightarrow \infty }\frac{\log \left\Vert T(\gamma _{M},\dots,\gamma
_{n_{k}})\right\Vert }{n_{k}\log \varrho }.
\end{equation*}

Let $\eta $ be an admissible path in $\Omega ^{\prime }$ going from $\delta $
to $\gamma _{M},$ such that $T(\eta )>0$. (We remark that it is important
that this transition matrix is independent of the choice of $k$.) Of course,
then $\theta _{k}=(\gamma _{M},\dots,\gamma _{n_{k}-1})\eta $ is an admissible
cycle in $\Omega ^{\prime }$.

As $T(\eta )$ is a positive matrix, Lemmas \ref{compare} and \ref{normsprod}%
(2) imply that there are constants, $K_{k}$, bounded above and bounded
below away from $0$ such that 
\begin{equation*}
\left\Vert T(\gamma _{M},\dots,\gamma _{n_{k}})\right\Vert =K_{k}sp(T(\gamma
_{M},\dots,\gamma _{n_{k}})T(\eta ))=K_{k}sp(T(\theta _{k})).
\end{equation*}

Consider the local dimension at the periodic point $y_{k}$ in $\Omega
^{\prime }$ given by 
\begin{equation*}
\lbrack y_{k}]=(\gamma _{0},\dots,\gamma _{M-1},\theta _{k}^{-},\theta
_{k}^{-},\dots).
\end{equation*}%
Since $T(\theta _{k})=T(\gamma _{M},\dots,\gamma _{n_{k}})T(\eta )$ is
positive, $y_{k}$ is a positive point. Furthermore, we have%
\begin{align*}
\dim \mu (y_{k}) &=\frac{\log p_{0}}{\log \varrho }+\frac{\log (sp(T(\theta
_{k}))}{L(\theta _{k}^{-})\log \varrho } \\
&=\frac{\log p_{0}}{\log \varrho }-\frac{\log K_{k}}{L(\theta _{k}^{-})\log
\varrho }+\frac{\log \left\Vert T(\gamma _{M},\dots,\gamma
_{n_{k}})\right\Vert }{L(\theta _{k}^{-})\log \varrho }.
\end{align*}%
Fix $\varepsilon >0$. The choice of $n_{k}$ and boundedness of $K_{k}$
ensures we can choose $k$ large enough so that
\begin{equation*}
\left\vert \frac{\log \left\Vert T(\gamma _{M},\dots,\gamma
_{n_{k}})\right\Vert }{n_{k}\log \varrho }-D\right\vert <\varepsilon ,
\end{equation*}%
\begin{equation*}
\left\vert \frac{\log K_{k}}{L(\theta _{k}^{-})\log \varrho }\right\vert
<\varepsilon \text{ and }\left\vert \frac{n_{k}}{L(\theta _{k}^{-})}%
-1\right\vert <\varepsilon \text{.}
\end{equation*}%
It is now easy to see that%
\begin{equation*}
\left\vert \dim \mu (x)-\dim \mu (y_{k})\right\vert <(D+2)\varepsilon .
\end{equation*}
\end{proof}

Since the essential class is of positive type, we immediately deduce the
following important fact about these measures of finite type.

\begin{corollary}
The set of local dimensions at essential periodic points is dense in the set
of local dimensions at all essential points.
\end{corollary}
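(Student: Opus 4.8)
The plan is to recognize this corollary as an immediate specialization of Theorem \ref{periodicdense} to the essential class, so the work reduces to checking that the hypotheses apply and that the terminology matches up. First I would invoke Proposition \ref{prop:pos}, which guarantees that, under the standard technical assumptions, the essential class $\Omega _{0}$ is a loop class of positive type. This is precisely the hypothesis required to apply Theorem \ref{periodicdense}, so no additional verification about positivity is needed.

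Next I would set $\Omega ^{\prime }=\Omega _{0}$ in Theorem \ref{periodicdense}. The only thing to confirm is that the two vocabularies coincide. By the definitions in Section \ref{sec:loop}, a point $x$ is essential exactly when its symbolic representation $[x]=(\gamma _{0},\gamma _{1},\dots)$ satisfies $\gamma _{j}\in \Omega _{0}$ for all large $j$, which is exactly the meaning of ``$x$ is in the loop class $\Omega _{0}$''. Likewise, an essential periodic point is precisely a periodic point lying in $\Omega _{0}$. Under these identifications, ``local dimensions at points in $\Omega ^{\prime }$'' becomes ``local dimensions at essential points'', and ``local dimensions at positive periodic points in $\Omega ^{\prime }$'' becomes ``local dimensions at (positive) essential periodic points''.

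With these identifications in hand, Theorem \ref{periodicdense} applied to $\Omega ^{\prime }=\Omega _{0}$ states exactly that the local dimensions at the positive periodic points in $\Omega _{0}$ are dense in the set of all local dimensions at points in $\Omega _{0}$, which is the desired conclusion.

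I expect essentially no obstacle here: the substantive content — namely establishing that the essential class is of positive type (Proposition \ref{prop:pos}) and proving the density statement for an arbitrary loop class of positive type (Theorem \ref{periodicdense}) — has already been carried out. The only care required is the routine check that the ``essential'' terminology faithfully translates into the ``loop class $\Omega _{0}$'' terminology, and this is immediate from the definitions.
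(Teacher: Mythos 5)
Your proposal is correct and matches the paper's own reasoning exactly: the paper deduces this corollary immediately from Theorem \ref{periodicdense} combined with Proposition \ref{prop:pos}, just as you do. The only point worth noting (which your parenthetical ``(positive)'' already acknowledges) is that the theorem gives density of local dimensions at \emph{positive} periodic points, a subset of all essential periodic points, so the stated density follows a fortiori.
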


Here is another needed elementary fact whose proof is an exercise for the
reader.

\begin{lemma}
\label{lem:cyclicshift} Let $\theta =(\delta _{1},\delta _{2},\dots,\delta
_{L},\delta _{1})$ be a cycle and let $\theta ^{\ast }=(\delta
_{k},\dots,\delta _{L},\delta _{1},\dots,\delta _{k})$ be any cyclic shift
of $\theta $. Then 
\begin{equation*}
sp(T(\theta ))=sp(T(\theta ^{\ast }))
\end{equation*}
\end{lemma}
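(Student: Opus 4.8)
The plan is to reduce the assertion to the elementary linear-algebra fact that $sp(AB)=sp(BA)$ for two matrices $A,B$ of compatible (and possibly non-square) shapes. First I would record how the transition matrix of a cyclic shift factors. With $\theta =(\delta _1,\delta _2,\dots ,\delta _L,\delta _1)$, set
\[
A=T(\delta _1,\delta _2,\dots ,\delta _k)=T(\delta _1,\delta _2)\cdots T(\delta _{k-1},\delta _k)
\]
and
\[
B=T(\delta _k,\delta _{k+1},\dots ,\delta _L,\delta _1)=T(\delta _k,\delta _{k+1})\cdots T(\delta _L,\delta _1).
\]
By the definition of the transition matrix of a path, $T(\theta )=AB$, whereas the cyclic shift $\theta ^{\ast }=(\delta _k,\dots ,\delta _L,\delta _1,\dots ,\delta _k)$ gives $T(\theta ^{\ast })=BA$. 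Here $A$ has $\mathrm{card}(V(\delta _1))$ rows and $\mathrm{card}(V(\delta _k))$ columns and $B$ has the transposed shape, so both $AB$ and $BA$ are square, though in general of different sizes.

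Next I would invoke the fact that $AB$ and $BA$ have exactly the same nonzero eigenvalues. The transfer argument is short: if $\lambda \neq 0$ and $ABv=\lambda v$ with $v\neq 0$, then $Bv\neq 0$ (otherwise $\lambda v=ABv=0$) and $(BA)(Bv)=B(ABv)=\lambda (Bv)$, so $\lambda $ is an eigenvalue of $BA$; the reverse inclusion is symmetric. Since the spectral radius is the largest modulus among all eigenvalues and the eigenvalue $0$ contributes nothing to it, the coincidence of the nonzero spectra forces $sp(AB)=sp(BA)$, both being $0$ in the degenerate case where neither product has a nonzero eigenvalue. Hence $sp(T(\theta ))=sp(AB)=sp(BA)=sp(T(\theta ^{\ast }))$, which is the claim.

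The only point requiring care is that $A$ and $B$ need not be square, so one cannot simply declare $AB$ and $BA$ similar; the eigenvector-transfer argument above is exactly what handles the rectangular case, and this is the single step I would make explicit rather than appealing to a similarity of matrices. Everything else is bookkeeping about how the product $T(\theta )$ splits at the index $k$ where the shift is performed.
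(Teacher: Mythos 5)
Your proof is correct. Note that the paper itself offers no argument for this lemma --- it is stated as ``an elementary fact whose proof is an exercise for the reader'' --- so there is no proof to compare against; your argument is surely the intended one. The factorization $T(\theta)=AB$, $T(\theta^{\ast})=BA$ with $A=T(\delta_1,\dots,\delta_k)$ and $B=T(\delta_k,\dots,\delta_L,\delta_1)$ is exactly right under the paper's path-product notation, and you correctly identify the one point needing care: since $A$ and $B$ are in general rectangular (of sizes governed by $\mathrm{card}(V(\delta_1))$ and $\mathrm{card}(V(\delta_k))$), one cannot cite similarity of $AB$ and $BA$, but the eigenvector-transfer argument for nonzero eigenvalues, together with the observation that the eigenvalue $0$ never affects the spectral radius once the nonzero spectra agree, closes the gap.
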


\begin{theorem}
\label{LimitPeriodic} Suppose $\mu $ satisfies the standard technical
assumptions. Assume that the loop class, $\Omega ^{\prime },$ is of positive
type and suppose $(x_{n})$ is a sequence of positive, periodic points in $%
\Omega ^{\prime }$. Then there is some $x$ in $\Omega ^{\prime }$ such that 
\begin{equation*}
\overline{\dim }\mu (x)=\limsup_{n}\dim \mu (x_{n})\ \ \mathrm{and}\ \ 
\underline{\dim }\mu (x)=\liminf_{n}\dim \mu (x_{n}).
\end{equation*}
\end{theorem}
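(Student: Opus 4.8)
The plan is to manufacture a single point $x\in\Omega'$ whose lower and upper local dimensions realize $\liminf_n\dim\mu(x_n)$ and $\limsup_n\dim\mu(x_n)$, by splicing together inside $\Omega'$ ever longer powers of the periods of a well-chosen subsequence of the $x_n$, glued by short positive bridges. It is convenient to pass to rates. Writing $d_0=\log p_0/\log\varrho$ and, for $y$ with symbolic representation $[y]$, $f_y(n)=\log\Vert T([y|n])\Vert/n$, Corollary \ref{corlocdim} gives (since $\log\varrho<0$)
\[
\overline{\dim}\mu(y)=d_0-\frac{\liminf_n f_y(n)}{|\log\varrho|},\qquad \underline{\dim}\mu(y)=d_0-\frac{\limsup_n f_y(n)}{|\log\varrho|}.
\]
By Proposition \ref{periodic}, each positive periodic $x_n$ has the constant rate $g_n=\log sp(T(\theta_n))/L(\theta_n^-)$, with $\dim\mu(x_n)=d_0-g_n/|\log\varrho|$. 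Because $d\mapsto\text{rate}$ is affine and order-reversing, the theorem is equivalent to producing $x\in\Omega'$ with
\[
\liminf_n f_x(n)=\liminf_n g_n=:g_-\quad\text{and}\quad \limsup_n f_x(n)=\limsup_n g_n=:g_+.
\]

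For the construction I first fix periods $\theta_{(k)}$ ($k\ge1$), each a positive cycle in $\Omega'$, with rate $g_{(k)}\to g_+$ along odd $k$ and $g_{(k)}\to g_-$ along even $k$; note $\rho_{(k)}:=sp(T(\theta_{(k)}))>1$ since every nonzero entry is at least $1$. Because $\Omega'$ is of positive type, between any two of its characteristic vectors there is an admissible path in $\Omega'$ with positive transition matrix (as noted after the definition of positive type); this supplies, for each $k$, a bridge $\pi_k$ in $\Omega'$ from the terminal vector of $\theta_{(k)}$ to the initial vector of $\theta_{(k+1)}$ with $T(\pi_k)$ positive. Only finitely many distinct bridges occur, so their norms are bounded by some $C_0$. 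I then set $[x]=\theta_{(1)}^{-M_1}\,\pi_1\,\theta_{(2)}^{-M_2}\,\pi_2\cdots$ (prefixed by a fixed admissible path into $\theta_{(1)}$ if needed), with repetition counts $M_k$ chosen recursively and enormous.

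The estimates are carried out at the levels $L_k$ ending the $k$th power-block. For the upper bound on $V_k:=\Vert T([x|L_k])\Vert$ I use submultiplicativity of $\Vert\cdot\Vert$; for the lower bound I apply Lemma \ref{normsprod}(1) repeatedly, using that each $T(\pi_j)$ is \emph{positive}, to get
\[
\prod_{j\le k}\bigl\Vert T(\theta_{(j)})^{M_j}\bigr\Vert\;\le\;V_k\;\le\;C_0^{\,k}\prod_{j\le k}\bigl\Vert T(\theta_{(j)})^{M_j}\bigr\Vert.
\]
By Lemma \ref{normsprod}(3), $\log\Vert T(\theta_{(j)})^{M_j}\Vert=M_j\log\rho_{(j)}+e_j$ with $|e_j|$ bounded by a constant depending only on $\theta_{(j)}$. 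Choosing $M_k$ so large that $M_kL(\theta_{(k)}^-)$ dwarfs both $L_{k-1}$ and $k\log C_0$ then forces $f_x(L_k)$ to within $\varepsilon_k$ of $g_{(k)}$, with $\varepsilon_k\to0$; hence $f_x(L_k)\to g_-$ along even $k$ and $\to g_+$ along odd $k$, giving $\liminf_n f_x(n)\le g_-$ and $\limsup_n f_x(n)\ge g_+$.

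The reverse inequalities are the crux, and amount to showing $f_x(n)\in[g_--\varepsilon,g_++\varepsilon]$ for all large $n$. Within a single power-block the running average after $m$ complete periods is a mediant $\tfrac{A+m\log\rho_{(k)}}{B+mL(\theta_{(k)}^-)}$, a monotone function of $m$ interpolating between its value at the block's start and the block rate $g_{(k)}$; once $k$ is large both endpoints lie in $[g_--\varepsilon,g_++\varepsilon]$ (the start value because it is close to $f_x$ at the previous block end, the bridge of bounded length and norm perturbing it by only $O(1/L_{k-1})$), so every intermediate value does too. Sub-period and bridge levels are handled by the monotonicity of $n\mapsto\Vert T([x|n])\Vert$ from Lemma \ref{compare}, which sandwiches $f_x(n)$ between nearby period-boundary values that differ negligibly because the gap is tiny relative to $n\ge L_{k-1}$. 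Letting $\varepsilon\to0$ along the tail yields $\liminf f_x=g_-$ and $\limsup f_x=g_+$; since every $\theta_{(k)}$ and $\pi_k$ lies in $\Omega'$ we have $x\in\Omega'$, and translating back through the affine relation finishes the proof. The principal difficulty is exactly this uniform, all-levels control of $f_x$: hitting the two target rates along subsequences is easy, but preventing $f_x$ from overshooting between block ends — simultaneously for the $\liminf$ and the $\limsup$ — is what forces the monotone-mediant argument and the careful recursive choice of the $M_k$.
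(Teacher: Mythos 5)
Your proposal is correct, and its skeleton is the same as the paper's: pass to a subsequence of the $x_n$ whose rates tend to the two extremes along odd/even indices, splice enormous powers of their periods together with positive bridge paths inside $\Omega'$, and choose the repetition counts recursively so that block-end levels realize the two target rates while no intermediate level overshoots. The block-end estimates (sandwiching $V_k$ between $\prod_j\Vert T(\theta_{(j)})^{M_j}\Vert$ and $C_0^k$ times it via Lemma \ref{normsprod}(1) and submultiplicativity, then invoking Lemma \ref{normsprod}(3)) are exactly the paper's. Where you genuinely diverge is the treatment of levels that fall strictly inside a block: the paper rewrites a partial block $(\theta_{n+1}^-)^{p}\theta_{n+1}'$ as $\theta_{n+1}'(\theta_{n+1}^{\ast-})^{p}$ for a cyclic shift $\theta_{n+1}^{\ast}$ and uses the invariance of the spectral radius under cyclic shifts (Lemma \ref{lem:cyclicshift}) together with estimates (\ref{2-1})--(\ref{2-4}) to trap the rate in a convex combination of the two adjacent block rates; you instead evaluate only at period boundaries, where the block contribution is exactly a power $T(\theta_{(k)})^m$, note that the boundary rates form a (near-)mediant $\frac{A+m\log\rho_{(k)}}{B+mL(\theta_{(k)}^-)}$ which monotonically interpolates between the previous block-end rate and $g_{(k)}$ (this is algebraically the same convex-combination fact the paper derives), and then control off-boundary levels by the monotonicity of $n\mapsto\Vert T([x|n])\Vert$ from Lemma \ref{compare}. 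Your route buys a cleaner argument that dispenses with Lemma \ref{lem:cyclicshift} entirely; what it costs is one bookkeeping point you should make explicit: since the periods $\theta_{(k)}$ may have unbounded lengths and unbounded constants $C_2(T(\theta_{(k)}))$ from Lemma \ref{normsprod}(3), the ``gap is tiny relative to $n\geq L_{k-1}$'' step requires $L_{k-1}$ to dominate the \emph{next} block's data, so the recursive choice of $M_{k-1}$ must look forward to $\theta_{(k)}$ (legitimate, as all periods and bridges are fixed before the recursion starts); your stated condition on $M_k$ only looks backward. The paper's conditions (\ref{2-2})--(\ref{2-3}) have exactly this forward dependence, so this is a gap in the write-up rather than in the method.
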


\begin{proof}
Assume $x_{n}$ has period $\theta _{n}$ in $\Omega ^{\prime }$ and that $%
T(\theta _{n})>0$. We put 
\begin{equation}
S:=\limsup_{n}\frac{\log (sp(T(\theta _{n}))}{L(\theta _{n}^{-})}\ \ \mathrm{%
and}\ \ I:=\liminf_{n}\frac{\log (sp(T(\theta _{n}))}{L(\theta _{n}^{-})},
\label{P6}
\end{equation}%
so 
\begin{equation*}
\limsup_{n}\dim \mu (x_{n})=\frac{\log p_{0}}{\log \varrho }+\frac{\log I}{%
\log \varrho }.
\end{equation*}%
and 
\begin{equation*}
\liminf_{n}\dim \mu (x_{n})=\frac{\log p_{0}}{\log \varrho }+\frac{\log S}{%
\log \varrho }.
\end{equation*}%
By passing to a subsequence, not renamed, we can assume all 
\begin{equation*}
\left\vert \frac{\log (sp(T(\theta _{2n})))}{L(\theta _{2n}^{-})}%
-S\right\vert <\varepsilon _{2n}\ \ \mathrm{and}\ \ \left\vert \frac{\log
(sp(T(\theta _{2n+1})))}{L(\theta _{2n+1}^{-})}-I\right\vert <\varepsilon
_{2n+1},
\end{equation*}%
where $\varepsilon _{n}$ is a decreasing sequence tending to 0. Further, we
can assume that all the even labelled paths, $\theta _{n}^{-}$, have the
same first letter, say $\alpha _{S}$, and the same last letter, $\beta _{S}$%
. Similarly, we can assume all the odd labelled paths have a common first
letter $\alpha _{I}$ and common last letter $\beta _{I}$.

Since $\Omega ^{\prime }$ is of positive type, we can certainly choose two
admissible paths in $\Omega ^{\prime }$, $\lambda _{SI}$ going from $\beta
_{S}$ to $\alpha _{I}$ and $\lambda _{IS}$ going from $\beta _{I}$ to $%
\alpha _{S}$, so that $T(\lambda _{SI})$ and $T(\lambda _{IS})$ are positive.

We want to inductively define a rapidly increasing subsequence $(k_{n})$
such that 
\begin{equation*}
\lbrack x]=(\eta ,\underbrace{\theta _{1}^{-},\dots,\theta _{1}^{-}}%
_{k_{1}},\lambda _{IS},\underbrace{\theta _{2}^{-},\dots,\theta _{2}^{-}}%
_{k_{2}},\lambda _{SI},\underbrace{\theta _{3}^{-},\dots,\theta _{3}^{-}}%
_{k_{3}},\dots)
\end{equation*}%
has the desired property, where $\eta $ is an admissible path beginning with 
$\mathcal{C}_{0}([0,1])$ and ending with the parent of $\alpha _{I}$.

Temporarily fix $n$. With abuse of notation we will write the truncated
product as 
\begin{equation*}
\lbrack x]_{n}=\eta \prod_{i=1}^{n}(\theta _{i}^{-})^{k_{i}}\lambda _{i}
\end{equation*}%
where $\lambda _{i}=\lambda _{SI}$ if $i$ is even, and $\lambda _{IS}$ if $i$
is odd.

Define $K_{n}$ as the maximal $K_{n}:=C_{2}(T(\theta _{n+1}^{\ast }))$ coming
from Lemma \ref{normsprod}(3) taken over all cyclic shifts of $\theta
_{n+1}^{\ast }$ of $\theta _{n+1}$. Let $\lambda _{n}^{\prime }$ be any
prefix of $\lambda _{n}$ and $\theta _{n+1}^{\prime }$ any prefix of $\theta
_{n+1}^{-}$. The elementary lemmas of the previous section imply that 
\begin{equation}
\lim_{k\rightarrow \infty }\left\vert \frac{\log (\left\Vert
T([x]_{n-1}(\theta _{n}^{-})^{k}\lambda _{n}^{\prime })\right\Vert )}{%
L([x]_{n-1}(\theta _{n}^{-})^{k}\lambda _{n}^{\prime })}-\frac{\log
(sp(T(\theta _{n})))}{L(\theta _{n}^{-})}\right\vert =0  \label{2-1}
\end{equation}%
and%
\begin{equation}
\lim_{k\rightarrow \infty }\left\vert \frac{\log (\left\Vert
T([x]_{n-1}(\theta _{n}^{-})^{k}\lambda _{n}\theta _{n+1}^{\prime
})\right\Vert )}{L([x]_{n-1}(\theta _{n}^{-})^{k}\lambda _{n}\theta
_{n+1}^{\prime })}-\frac{\log (sp(T(\theta _{n})))}{L(\theta _{n}^{-})}%
\right\vert =0.  \label{2-2}
\end{equation}%
Obviously, 
\begin{equation}
\lim_{k\rightarrow \infty }\frac{\log (K_{n})}{L([x]_{n-1}(\theta
_{n}^{-})^{k}\lambda _{n}\theta _{n+1}^{\prime })}=0  \label{2-3}
\end{equation}%
and%
\begin{equation}
\lim_{k\rightarrow \infty }\frac{L(\lambda _{n}\theta _{n+1})}{k}\cdot \frac{%
\log (sp(T(\theta _{n})))}{L(\theta _{n}^{-})}=0.  \label{2-4}
\end{equation}%
We pick $k_{n}$ such that the left hand side of these four limits are each
less than $\varepsilon _{n}$ for all choices of $\lambda _{n}^{\prime }$ and 
$\theta _{n+1}^{\prime }$.

Certainly this process defines an $x$ belonging to $\Omega ^{\prime }$. We
need to check that the upper and lower dimensions of $\mu $ at $x$ are
correct. By construction, the limiting behaviour (as $n\rightarrow \infty $)
of 
\begin{equation*}
\frac{\log (\left\Vert T([x]_{n-1}(\theta _{n}^{-})^{k_{n}}\lambda
_{n}^{\prime }))\right\Vert )}{L([x]_{n-1}(\theta _{n}^{-})^{k_{n}}\lambda
_{n}^{\prime }))}\ \ \mathrm{and}\ \ \frac{\log (\left\Vert
T([x]_{n-1}(\theta _{n}^{-})^{k_{n}}\lambda _{n}\theta _{n+1}^{\prime
})\right\Vert )}{L([x]_{n-1}(\theta _{n}^{-})^{k_{n}}\lambda _{n}\theta
_{n+1}^{\prime })}
\end{equation*}%
approach the values $I$ and $S$ in the infimum/supremum respectively. Hence it
remains to consider the case 
\begin{equation*}
\frac{\log (\left\Vert T([x]_{n-1}(\theta _{n}^{-})^{k_{n}}\lambda
_{n}(\theta _{n+1}^{-})^{p_{n+1}}\theta _{n+1}^{\prime })\right\Vert )}{%
L([x]_{n-1}(\theta _{n}^{-})^{k_{n}}\lambda _{n}(\theta
_{n+1}^{-})^{p_{n+1}}\theta _{n+1}^{\prime })}
\end{equation*}%
for $0<p_{n+1}<k_{n+1}$, and $\theta _{n+1}^{\prime }$ some prefix of $%
\theta _{n+1}$. This is equivalent to 
\begin{equation*}
\frac{\log (\left\Vert T([x]_{n-1}(\theta _{n}^{-})^{k_{n}}\lambda
_{n}\theta _{n+1}^{\prime }(\theta _{n+1}^{\ast -})^{p_{n+1}})\right\Vert )}{%
L([x]_{n-1}(\theta _{n}^{-})^{k_{n}}\lambda _{n}\theta _{n+1}^{\prime
}(\theta _{n+1}^{\ast -})^{p_{n+1}})}
\end{equation*}%
for some cyclic shift $\theta _{n+1}^{\ast }$ of $\theta _{n+1}$.

For ease of notation, let $L_{0}(n)=L_{0}:={L([x]_{n-1}(\theta
_{n}^{-})^{k_{n}}\lambda _{n}\theta _{n+1}^{\prime }(\theta _{n+1}^{\ast
-})^{p_{n+1}})}$. First, observe that:
\begin{align*}
E_{n} &:=\frac{\log (\left\Vert T([x]_{n-1}(\theta
_{n}^{-})^{k_{n}}\lambda _{n}\theta _{n+1}^{\prime }(\theta _{n+1}^{\ast
-})^{p_{n+1}})\right\Vert )}{L_{0}} \\ 
& \leq   \frac{\log (\left\Vert T([x]_{n-1}(\theta _{n}^{-})^{k_{n}}\lambda
_{n}\theta _{n+1}^{\prime }\right\Vert )}{L_{0}}+\frac{\log (\left\Vert
T(\theta _{n+1}^{\ast })^{p_{n+1}})\right\Vert )}{L_{0}} \\ 
& \leq   \frac{L([x]_{n-1}(\theta _{n}^{-})^{k_{n}}\lambda _{n}\theta
_{n+1}^{\prime })}{L_{0}}\cdot \frac{\log (sp(T(\theta _{n})))}{L(\theta
_{n}^{-})}+\frac{\log (K_{n}sp((T(\theta _{n+1}))^{p_{n}+1}))}{L_{0}}%
+\varepsilon _{n} \\ 
& \leq   \frac{L_{0}-L((\theta _{n+1}^{-})^{p_{n}+1})}{L_{0}}\cdot \frac{%
\log (sp(T(\theta _{n})))}{L(\theta _{n}^{-})}\\&\hspace{10pt} +\frac{L((\theta
_{n+1}^{-})^{p_{n}+1})}{L_{0}}\cdot \frac{\log (sp(T(\theta
_{n+1}))^{p_{n}+1})}{L((\theta _{n+1}^{-})^{p_{n}+1})}+\frac{\log K_{n}}{%
L_{0}}+\varepsilon _{n} \\ 
& \leq   \frac{L_{0}-L((\theta _{n+1}^{-})^{p_{n}+1})}{L_{0}}\cdot \frac{%
\log (sp(T(\theta _{n})))}{L(\theta _{n}^{-})}\\&\hspace{10pt} +\frac{L((\theta
_{n+1}^{-})^{p_{n}+1})}{L_{0}}\cdot \frac{\log (sp(T(\theta _{n+1})))}{%
L(\theta _{n+1}^{-})}+2\varepsilon _{n} \\ 
& =  (1-t_{n})\left( \frac{\log (sp(T(\theta _{n})))}{L(\theta _{n}^{-})}%
\right) +t_{n}\left( \frac{\log (sp(T(\theta _{n+1})))}{L(\theta _{n+1}^{-})}%
\right) +2\varepsilon _{n}.%
\end{align*}%
for 
\begin{equation*}
t_{n}=\frac{L((\theta _{n+1}^{-})^{p_{n}+1})}{L_{0}}.
\end{equation*}%
Here the second inequality comes from (\ref{2-2}), Lemma \ref{normsprod}%
(3) and Lemma \ref{lem:cyclicshift}, and the final inequality from (\ref{2-3}).

For the opposite inequality, we use the fact that $T(\lambda _{n}\theta
_{n+1}^{\prime })>0$ and Lemma \ref{normsprod}(1) for the first inequality
below, and (\ref{2-1}) and (\ref{2-4}) for the second and third inequalities
to obtain 
\begin{align*}
E_{n}&=\frac{\log (\left\Vert T([x]_{n-1}(\theta
_{n}^{-})^{k_{n}}\lambda _{n}\theta _{n+1}^{\prime }(\theta _{n+1}^{\ast
-})^{p_{n+1}})\right\Vert )}{L_{0}} \\ 
& \geq  \frac{\log (\left\Vert T([x]_{n-1}(\theta
_{n}^{-})^{k_{n}}\right\Vert )}{L_{0}}+\frac{\log (\left\Vert T((\theta
_{n+1}^{\ast })^{p_{n+1}})\right\Vert )}{L_{0}} \\ 
& \geq  \frac{L([x]_{n-1}(\theta _{n}^{-})^{k_{n}})}{L_{0}}\cdot \frac{\log
(sp(T(\theta _{n})))}{L(\theta _{n}^{-})}\\
& \hspace{10pt} +\frac{L((\theta
_{n+1}^{-})^{p_{n}+1})}{L_{0}}\cdot \frac{\log (sp(T(\theta
_{n+1}))^{p_{n}+1})}{L((\theta _{n+1}^{-})^{p_{n}+1})}-\varepsilon _{n} \\ 
& \geq  (1-t_{n})\left( \frac{\log (sp(T(\theta _{n})))}{L(\theta _{n}^{-})}%
\right) +t_{n}\left( \frac{\log (sp(T(\theta _{n+1})))}{L(\theta _{n+1}^{-})}%
\right) -2\varepsilon _{n}%
\end{align*}%
Together these estimates prove $E_{n}$ lies within $2\varepsilon _{n}$ of
the same convex combination of %\begin{equation*}
$\frac{\log (sp(T(\theta _{n})))}{L(\theta _{n}^{-})}$ %\text{ and }
and $\frac{\log (sp(T(\theta _{n+1})))}{L(\theta _{n+1}^{-})}$, 
%\end{equation*}%
and hence $\lim \sup \,E_{n}$ and $\lim \inf E_{n}$ both belong to the
interval $[I,S]$. That completes the proof.
\end{proof}

Combining Theorems \ref{periodicdense} and \ref{LimitPeriodic}, it follows
that these measures satisfy the following.

\begin{corollary}
The set of local dimensions at essential points coincides with the set of
lower (or upper) local dimensions at essential points. Moreover, 
\begin{equation*}
\{\dim \mu (x):x\text{ essential }\}=closure\{\dim \mu (x):x\text{
essential, positive periodic}\}.
\end{equation*}
\end{corollary}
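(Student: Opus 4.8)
The plan is to reduce everything to juggling inclusions among four sets, all attached to the loop class $\Omega ^{\prime }=\Omega _{0}$, which is of positive type by Proposition \ref{prop:pos}. Write
\begin{gather*}
D=\{\dim \mu (x):x\text{ essential}\},\quad
\underline{D}=\{\underline{\dim}\mu (x):x\text{ essential}\},\\
\overline{D}=\{\overline{\dim}\mu (x):x\text{ essential}\},\quad
P=\{\dim \mu (x):x\text{ essential, positive periodic}\}.
\end{gather*}
First I would record the trivial inclusions. For a positive periodic point the local dimension exists by Proposition \ref{periodic}, so $P\subseteq D$; and wherever a genuine local dimension exists it equals both the lower and the upper local dimension, so each value in $D$ lies in $\underline{D}$ and in $\overline{D}$. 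This gives the elementary chains $P\subseteq D\subseteq \underline{D}$ and $P\subseteq D\subseteq \overline{D}$.

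Next comes the density half, which is exactly the content of Theorem \ref{periodicdense}: applied to $\Omega _{0}$, it asserts that $P$ is dense in $D$, in $\underline{D}$, and in $\overline{D}$. Taking closures yields $D,\underline{D},\overline{D}\subseteq \overline{P}$. The reverse containment is the substantive point and is where Theorem \ref{LimitPeriodic} is used. Given $v\in \overline{P}$, I would choose positive periodic essential points $x_{n}$ with $\dim \mu (x_{n})\to v$; the theorem then produces an essential point $x$ with $\overline{\dim}\mu (x)=\limsup_{n}\dim \mu (x_{n})=v$ and $\underline{\dim}\mu (x)=\liminf_{n}\dim \mu (x_{n})=v$. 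Since the upper and lower local dimensions at $x$ coincide, $\dim \mu (x)$ exists and equals $v$, so $v\in D$; hence $\overline{P}\subseteq D$.

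Finally I would close the loop. Combining $\overline{P}\subseteq D$ with $D\subseteq \underline{D}\subseteq \overline{P}$ and $D\subseteq \overline{D}\subseteq \overline{P}$ forces
\begin{equation*}
\overline{P}=D=\underline{D}=\overline{D},
\end{equation*}
which is precisely the corollary (and the equality $D=\overline{P}$ incidentally shows $D$ is closed). The only genuine obstacle is the reverse inclusion $\overline{P}\subseteq D$: density by itself only places the three dimension sets \emph{inside} $\overline{P}$, and to see that every limit of periodic local dimensions is actually realized as a true local dimension at one essential point one needs the simultaneous control of $\limsup$ and $\liminf$ supplied by Theorem \ref{LimitPeriodic}. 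Everything else is bookkeeping with the inclusions above.
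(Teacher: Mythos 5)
Your proposal is correct and is essentially the paper's own argument: the paper states this corollary as an immediate consequence of combining Theorem \ref{periodicdense} (giving $D,\underline{D},\overline{D}\subseteq \overline{P}$) with Theorem \ref{LimitPeriodic} (giving $\overline{P}\subseteq D$ via constant-limit sequences), which is exactly the inclusion bookkeeping you carried out. Your write-up simply makes explicit the combination the paper leaves implicit.
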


\subsection{Set of local dimensions at points in a positive loop class is an
interval}

The final result of this section will be to show that the set of local
dimensions at points in a loop class of positive type is a closed interval.
Of course, in particular, this applies to the essential class.

\begin{theorem}
\label{Interval} Suppose $\mu $ satisfies the standard technical
assumptions. Further, suppose that the loop class $\Omega ^{\prime }$ is of
positive type. Assume $y$ and $z$ are periodic, positive points in $\Omega
^{\prime }$. Then the set of local dimensions of $\mu $ contains the closed
interval with endpoints $\dim \mu (y)$ and $\dim \mu (z)$.
\end{theorem}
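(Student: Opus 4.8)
The plan is to realize every value of the interval $[\dim \mu (y),\dim \mu (z)]$ as a genuine local dimension by interleaving the two periods in carefully chosen proportions. Write $\theta _{y},\theta _{z}$ for the periods of $y,z$, with first (and last) letters $\alpha ,\beta $ respectively, and set
$$
a=\frac{\log sp(T(\theta _{y}))}{L(\theta _{y}^{-})},\qquad b=\frac{\log sp(T(\theta _{z}))}{L(\theta _{z}^{-})},
$$
so that, by Proposition \ref{periodic}, $\dim \mu (y)=\frac{\log p_{0}}{\log \varrho }+\frac{a}{\log \varrho }$ and similarly for $z$. If $a=b$ the interval is a single point and there is nothing to prove, so I assume $a\neq b$. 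Since $\Omega ^{\prime }$ is of positive type, the Remark following the definition of positive type supplies admissible paths $\lambda _{yz}$ (from $\alpha $ to $\beta $) and $\lambda _{zy}$ (from $\beta $ to $\alpha $) in $\Omega ^{\prime }$ with $T(\lambda _{yz}),T(\lambda _{zy})>0$. For integers $p,q\geq 1$ I let $\Theta _{p,q}$ be the admissible cycle in $\Omega ^{\prime }$ that runs $\theta _{y}$ $p$ times, then $\lambda _{yz}$, then $\theta _{z}$ $q$ times, then $\lambda _{zy}$, so that
$$
T(\Theta _{p,q})=T(\theta _{y})^{p}\,T(\lambda _{yz})\,T(\theta _{z})^{q}\,T(\lambda _{zy}).
$$
Because $T(\theta _{y})$ and $T(\theta _{z})$ are positive, this product is positive, whence the periodic point $x_{p,q}$ with period $\Theta _{p,q}$ is a positive point of $\Omega ^{\prime }$.

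Next I would estimate the rate $\frac{\log sp(T(\Theta _{p,q}))}{L(\Theta _{p,q}^{-})}$ with constants uniform in $p,q$. For the length this is routine: $L(\Theta _{p,q}^{-})=pL(\theta _{y}^{-})+qL(\theta _{z}^{-})+O(1)$, the $O(1)$ being the bounded contribution of the two fixed connecting paths. For the spectral radius, submultiplicativity together with Lemma \ref{normsprod}(3) gives $\Vert T(\theta _{y})^{p}\Vert \leq C\,sp(T(\theta _{y}))^{p}$ and $\Vert T(\theta _{z})^{q}\Vert \leq C\,sp(T(\theta _{z}))^{q}$, hence an upper bound on $\Vert T(\Theta _{p,q})\Vert $; in the other direction, placing the positive factor $T(\lambda _{yz})$ in the middle and applying Lemma \ref{normsprod}(1) (together with Lemma \ref{compare} to absorb $T(\lambda _{zy})$) yields $\Vert T(\Theta _{p,q})\Vert \geq \Vert T(\theta _{y})^{p}\Vert \,\Vert T(\theta _{z})^{q}\Vert \geq sp(T(\theta _{y}))^{p}\,sp(T(\theta _{z}))^{q}$. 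Finally, writing $T(\Theta _{p,q})=A\,T(\lambda _{zy})$ with $T(\lambda _{zy})$ positive and fixed, Lemma \ref{normsprod}(2) gives $\Vert T(\Theta _{p,q})\Vert \leq C_{1}\,sp(T(\Theta _{p,q}))$ with $C_{1}$ independent of $p,q$. Combining these, $\log sp(T(\Theta _{p,q}))=p\log sp(T(\theta _{y}))+q\log sp(T(\theta _{z}))+O(1)$ uniformly in $p,q$, so that
$$
\frac{\log sp(T(\Theta _{p,q}))}{L(\Theta _{p,q}^{-})}=\frac{p\log sp(T(\theta _{y}))+q\log sp(T(\theta _{z}))+O(1)}{pL(\theta _{y}^{-})+qL(\theta _{z}^{-})+O(1)},
$$
which, as $p,q\rightarrow \infty $ along a fixed ratio $p/q$, converges to a convex combination of $a$ and $b$; as the ratio ranges over the positive rationals these limits fill a dense subset of $[\min (a,b),\max (a,b)]$.

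By Proposition \ref{periodic}, $\dim \mu (x_{p,q})$ equals $\frac{\log p_{0}}{\log \varrho }$ plus $\frac{1}{\log \varrho }$ times the rate just computed; since $t\mapsto \frac{\log p_{0}}{\log \varrho }+\frac{t}{\log \varrho }$ is an affine homeomorphism, the values $\dim \mu (x_{p,q})$ form a dense subset of the closed interval with endpoints $\dim \mu (y)$ and $\dim \mu (z)$. To promote density to full containment, I would fix any target $w$ in this interval, choose positive periodic points $x_{p_{n},q_{n}}$ with $\dim \mu (x_{p_{n},q_{n}})\rightarrow w$, and apply Theorem \ref{LimitPeriodic}: it produces $x\in \Omega ^{\prime }$ with $\overline{\dim }\mu (x)=\limsup _{n}\dim \mu (x_{p_{n},q_{n}})=w$ and $\underline{\dim }\mu (x)=\liminf _{n}\dim \mu (x_{p_{n},q_{n}})=w$, so $\dim \mu (x)$ exists and equals $w$. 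As the endpoints are attained by $y$ and $z$ themselves, the entire closed interval lies in the set of local dimensions of $\mu $.

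The main obstacle, and the step requiring the most care, is the two-sided control of $sp(T(\Theta _{p,q}))$ with constants uniform in $p$ and $q$: this is precisely what positivity of $\Omega ^{\prime }$ buys, for it is the positive connecting blocks that make Lemma \ref{normsprod} applicable and force the combined spectral radius to behave \emph{multiplicatively} rather than merely submultiplicatively. A secondary point worth flagging is that the interleaved periodic points yield only a dense set of values, so one genuinely needs Theorem \ref{LimitPeriodic} to realize each individual point of the interval as an honest local dimension, rather than settling for a limit of such.
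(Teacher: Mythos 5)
Your proposal is correct and takes essentially the same approach as the paper: the paper's proof likewise forms periodic points with transition matrix $B^{m_k}T(\eta_1)A^{n_k}T(\eta_2)$ (copies of one period, a positive connector, copies of the other, a positive connector back), uses Lemmas \ref{compare} and \ref{normsprod} to get the same two-sided bounds $C^{-1}\alpha^{n}\beta^{m}\leq sp \leq C\alpha^{n}\beta^{m}$, and then invokes Theorem \ref{LimitPeriodic} to turn the convergent sequence of periodic-point dimensions into an exact local dimension. The only cosmetic difference is that the paper fixes the convex weight $t$ in advance and chooses $n_k,m_k$ with the right ratio, whereas you first produce a dense set of values and then pass to each target; these amount to the same argument.
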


\begin{proof}
Suppose $y$ has period $\varphi $ and $z$ has period $\theta $ where $%
A=T(\varphi )$ and $B=T(\theta )$ are positive matrices with spectral radii $%
\alpha $ and $\beta $, respectively. With this notation, 
\begin{equation*}
\dim \mu (y)=\frac{\log p_{0}}{\log \varrho }+\frac{\log \alpha }{L(\varphi
^{-})\log \varrho }\text{ and }\dim \mu (z)=\frac{\log p_{0}}{\log \varrho }+%
\frac{\log \beta }{L(\theta ^{-})\log \varrho }.
\end{equation*}%
Let $0<t<1$. We want to prove that there exists a $x$ such that 
\begin{equation*}
   \dim \mu (x) = t\dim \mu (y)+(1-t)\dim \mu (z).
\end{equation*}

Appealing to Theorem \ref{LimitPeriodic}, we see it will be enough to show
that there is some sequence of periodic, positive points $x_{k},$ in the
loop class $\Omega ^{\prime },$ such that 
\begin{equation}
\lim_{k}\dim \mu (x_{k})=\frac{\log p_{0}}{\log \varrho }+\frac{t\log \alpha 
}{L(\varphi ^{-})\log \varrho }+\frac{(1-t)\log \beta }{L(\theta ^{-})\log
\varrho }.  \label{key}
\end{equation}

To do this, we start by choosing admissible paths in $\Omega ^{\prime }$, $%
\eta _{1}$ joining the last letter of $\theta $ to the first letter of $%
\varphi $ and $\eta _{2}$ doing the opposite, such that $T(\eta _{j})>0$.
Then for any positive integers, $n,m$, $B^{m}T(\eta _{1})A^{n}T(\eta _{2})$
is a square transition matrix.

Select two sequences of integers, $(n_{k})_{k=1}^{\infty }$, $%
(m_{k})_{k=1}^{\infty },$ tending to infinity, with 
\begin{equation*}
\frac{L(\varphi ^{-})n_{k}}{L(\theta ^{-})m_{k}+L(\varphi ^{-})n_{k}}%
\rightarrow t.
\end{equation*}%
We will prove that the periodic points with period $\theta _{k}$ satisfying%
\begin{equation*}
T(\theta _{k})=B^{m_{k}}T(\eta _{1})A^{n_{k}}T(\eta _{2})
\end{equation*}
work.

As $T(\eta _{j})>0$, Lemmas \ref{compare} and \ref{normsprod} combine to
imply that for all $k,$%
\begin{equation*}
sp\left( B^{m_{k}}T(\eta _{1})A^{n_{k}}T(\eta _{2})\right) \leq \left\Vert
T(\eta _{1})\right\Vert \left\Vert T(\eta _{2})\right\Vert \left\Vert
B^{m_{k}}\right\Vert \left\Vert A^{n_{k}}\right\Vert \leq C(A,B,\eta
_{1},\eta _{2})\alpha ^{n_{k}}\beta ^{m_{k}},
\end{equation*}%
and%
\begin{align*}
sp\left( B^{m_{k}}T(\eta _{1})A^{n_{k}}T(\eta _{2})\right) &\geq C(\eta
_{2})\left\Vert B^{m_{k}}T(\eta _{1})A^{n_{k}}T(\eta _{2})\right\Vert \\
&\geq C(\eta _{2})\left\Vert B^{m_{k}}\right\Vert \left\Vert
A^{n_{k}}\right\Vert \geq C(\eta _{2})\alpha ^{n_{k}}\beta ^{m_{k}}.
\end{align*}

Thus 
\begin{align*}
\lim_{k}\frac{\log (sp(B^{m_{k}}T(\eta _{1})A^{n_{k}}T(\eta _{2})))}{%
L(\theta ^{-})m_{k}+L(\varphi ^{-})n_{k}} &=\lim_{k}\frac{n_{k}\log \alpha
+m_{k}\log \beta }{L(\theta ^{-})m_{k}+L(\varphi ^{-})n_{k}} \\
&=\frac{t\log \alpha }{L(\varphi ^{-})}+\frac{(1-t)\log \beta }{L(\theta
^{-})}.
\end{align*}%
It follows that (\ref{key}) is satisfied.
\end{proof}

Combining the three theorems we deduce that the set of local dimensions at
the essential points of such measures is a closed interval whose endpoints
are given by the infimum and the supremum of the local dimensions at
essential, periodic points.

\begin{corollary}
\label{IntEnd}Let $I=\inf \{\dim \mu (x):x$ essential, positive periodic$\}$
and $S=\sup \{\dim \mu (x):x$ essential and positive periodic$\}.$ Then%
\begin{equation*}
\{\dim \mu (x):x\text{ essential}\}=[I,S]\text{.}
\end{equation*}
\end{corollary}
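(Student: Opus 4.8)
The plan is to read the statement off from the three theorems of this section, with essentially no new estimate required. Write $E=\{\dim\mu(x):x\text{ essential}\}$ and $P=\{\dim\mu(x):x\text{ essential, positive periodic}\}$, so that $I=\inf P$ and $S=\sup P$. First I would record three preliminary facts. The essential class $\Omega_0$ is of positive type by Proposition \ref{prop:pos}, so (by the remark following the definition of positive type, applied with $\delta=\delta'$) there is an admissible cycle $\theta$ in $\Omega_0$ with $T(\theta)>0$; the associated periodic point is essential and positive, whence $P\neq\emptyset$. By the remark following Corollary \ref{cor1}, every local dimension lies in $[0,\dim\mu(0)]$, so $P$ is bounded and $I,S$ are finite. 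Finally, the corollary immediately preceding this one asserts that $E$ equals the closure of $P$; in particular $E$ is closed.

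The inclusion $E\subseteq[I,S]$ is then immediate: $P\subseteq[I,S]$ by the very definition of $I$ and $S$, and $[I,S]$ is closed, so the closure of $P$, namely $E$, is contained in $[I,S]$.

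For the reverse inclusion I would first show $(I,S)\subseteq E$ and then use the closedness of $E$ to recover the endpoints at no extra cost. Fix $c\in(I,S)$. Since $c>\inf P$ there is an essential, positive, periodic $y$ with $\dim\mu(y)<c$, and since $c<\sup P$ there is such a $z$ with $\dim\mu(z)>c$. Applying Theorem \ref{Interval} with $\Omega'=\Omega_0$ to the pair $y,z$ produces, for every value in the closed interval with endpoints $\dim\mu(y)$ and $\dim\mu(z)$, a point attaining it as its local dimension; since that theorem realizes these values through Theorem \ref{LimitPeriodic}, the points lie in $\Omega_0$ and so are essential. As $c$ is trapped strictly between $\dim\mu(y)$ and $\dim\mu(z)$, this gives $c\in E$. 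Hence $(I,S)\subseteq E$, and because $E$ is closed it contains the closure $[I,S]$ of $(I,S)$, completing the equality.

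The work here is all bookkeeping rather than analysis; the one place to stay honest is the claim that the intermediate-value points furnished by Theorem \ref{Interval} are genuinely essential. This is exactly why I apply that theorem with the loop class taken to be $\Omega_0$: its proof constructs periodic positive points inside the given loop class and then extracts a limit point in the same class via Theorem \ref{LimitPeriodic}, so for $\Omega'=\Omega_0$ the output is an essential point. As an alternative to invoking closedness for the endpoints, one could instead apply Theorem \ref{LimitPeriodic} directly to a sequence of essential positive periodic points whose local dimensions converge to $S$ (respectively $I$): the resulting essential point then has upper and lower local dimensions both equal to $S$ (respectively $I$), placing each endpoint in $E$ directly.
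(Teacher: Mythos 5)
Your proposal is correct and follows essentially the same route the paper intends: the corollary is stated as a direct combination of Proposition \ref{prop:pos}, Theorems \ref{periodicdense}, \ref{LimitPeriodic} and \ref{Interval} together with the fact that $\{\dim\mu(x):x\text{ essential}\}$ is the closure of the local dimensions at essential positive periodic points, which is exactly the bookkeeping you carry out (including the key observation that the points produced by Theorem \ref{Interval} lie in $\Omega_{0}$ because its proof routes through Theorem \ref{LimitPeriodic} inside the given loop class). One tiny caveat: the step ``$E$ closed contains the closure $[I,S]$ of $(I,S)$'' degenerates when $I=S$, but that case is covered either by your alternative endpoint argument or simply by noting that the infimum and supremum of the nonempty bounded set $P$ always lie in $\overline{P}=E$.
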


Another immediate corollary is that if the set of essential points is $(0,1)$%
, then the measure admits at most one isolated point.

\begin{corollary}
If the set of essential points of $\mu $ is $(0,1)$, then the set of local
dimensions of $\mu $ consists of a closed interval together with $\dim \mu
(0)=\dim \mu (1)$.
\end{corollary}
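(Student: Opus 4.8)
The plan is to combine the interval result (Corollary~\ref{IntEnd}) for essential points with the endpoint analysis for the two non-essential points. Under the hypothesis that the set of essential points is exactly $(0,1)$, the only non-essential points are $0$ and $1$. By Corollary~\ref{IntEnd}, the set of local dimensions at essential points is a closed interval $[I,S]$, so it suffices to compute $\dim\mu(0)$ and $\dim\mu(1)$ and account for these two isolated contributions.

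First I would recall from the Remark following Corollary~\ref{cor1} that $\dim\mu(0)=\log p_0/\log\varrho$, since $0$ has symbolic representation consisting of net intervals of the form $[0,b]$, for which $V_n=\{0\}$ and hence $P_n\equiv 1$. Next I would argue that $\dim\mu(1)=\dim\mu(0)$. This follows from the symmetry of the standard technical assumptions: the regularity condition $p_0=p_m=\min p_j$ together with $d_0=0$, $d_m=1-\varrho$ makes the IFS symmetric under the reflection $x\mapsto 1-x$ (which sends $S_j\mapsto S_{m-j}$ and interchanges the roles of the left and right endpoints). Under this reflection the net interval structure at $1$ mirrors that at $0$, so the transition matrices along $[1]$ are trivial in the same way, giving $P_n(\Delta_n(1))\equiv 1$ and hence $\dim\mu(1)=\log p_0/\log\varrho=\dim\mu(0)$. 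Thus the local dimensions at the two non-essential points coincide and contribute the single value $\dim\mu(0)$.

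Finally, since every point of $\mathrm{supp}\,\mu=[0,1]$ is either essential or one of the two endpoints, the full set of attainable local dimensions is
\begin{equation*}
\{\dim\mu(x):x\in\mathrm{supp}\,\mu\}=[I,S]\cup\{\dim\mu(0)\},
\end{equation*}
which is a closed interval together with the point $\dim\mu(0)=\dim\mu(1)$, as claimed. (Whether this point is genuinely isolated, as opposed to lying in or adjacent to $[I,S]$, is not asserted here; the statement only describes the set as the union of the interval and this value.)

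The main obstacle I anticipate is verifying $\dim\mu(1)=\dim\mu(0)$ cleanly. Although intuitively forced by the left/right symmetry built into the standard technical assumptions, making the reflection argument rigorous requires checking that the neighbour set $V_n(\Delta_n(1))$ is a singleton for all $n$ --- equivalently, that the right endpoint $1$ is covered by only one $S_\sigma[0,1]$ at each level in the relevant sense. This reduces to the observation that $1=S_m^n(1)$ with $d_m=1-\varrho$, so only the all-$m$ index string $\sigma=(m,\dots,m)$ satisfies $\varrho^{-n}(1-S_\sigma(0))=0$ among indices with $\Delta_n(1)\subseteq S_\sigma[0,1]$; any other $\sigma$ places $S_\sigma(1)$ strictly left of $1$ because $d_m$ is the maximal translation. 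Establishing this singleton property, and hence $P_n(\Delta_n(1))\equiv 1$, is the one place where the specific form of the endpoint data must be used rather than invoked abstractly.
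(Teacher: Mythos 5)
Your decomposition is exactly the paper's: the paper states this corollary without proof, as an immediate consequence of Corollary \ref{IntEnd} (the local dimensions at essential points form the closed interval $[I,S]$) together with the endpoint computation $\dim \mu (0)=\dim \mu (1)=\log p_{0}/\log \varrho $, and those are precisely your two ingredients.

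One justification you offer, however, is false and should be deleted: the standard technical assumptions do \emph{not} make the IFS symmetric under $x\mapsto 1-x$. Regularity constrains only $p_{0}=p_{m}$ and the extreme translations $d_{0}=0$, $d_{m}=1-\varrho $; the interior data need not be symmetric (take $m=2$ with $d_{1}\neq (1-\varrho )/2$, or asymmetric interior probabilities), so in general there is no reflection carrying the IFS to itself. This does not sink your proof, because the direct argument in your final paragraph is the correct one and needs no symmetry: since $\mathrm{supp}\,\mu =[0,1]$ forces $S_{\sigma }[0,1]\subseteq \lbrack 0,1]$, any $\sigma $ with $\Delta _{n}(1)\subseteq S_{\sigma }[0,1]$ must have $S_{\sigma }(1)=1$, hence $S_{\sigma }(0)=1-\varrho ^{n}$, and strict maximality of $d_{m}$ then forces $\sigma =(m,\dots ,m)$; thus $V_{n}(\Delta _{n}(1))$ is a singleton, $P_{n}(\Delta _{n}(1))=p_{0}^{-n}p_{m}^{n}=1$ by regularity, and Corollary \ref{cor1} gives $\dim \mu (1)=\log p_{0}/\log \varrho =\dim \mu (0)$. (Note the small typo there: the condition singling out $\sigma =(m,\dots ,m)$ is $S_{\sigma }(1)=1$, equivalently $S_{\sigma }(0)=1-\varrho ^{n}$, not $\varrho ^{-n}(1-S_{\sigma }(0))=0$.) With the reflection claim removed and this direct argument promoted to the main justification, your proof is correct and coincides with the paper's.
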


\begin{example}
\label{ex:golden local dimension} As observed in Example \ref{ex:golden
essential}, this is the situation for the Bernoulli convolution $\mu
_{\varrho }$, with $\varrho ^{-1}$ the golden ratio. Further, since $%
sp(T(5,3a,5))=1=sp(T(2,2))=sp(T(4,4)),$ and $\{3a,5\}$ is in the essential
class, it follows that $\dim \mu (0)$ coincides with the local dimension at
an essential point. Consequently, the set of local dimensions of $\mu
_{\varrho }$ is equal to the closed interval $[I,S]$ consisting of the local
dimensions at the essential points of $\mu _{\varrho }$. In \cite{F1, Hu} it
is shown that $I=\log 2/|\log \rho |$ and $S=I+1/2$.
\end{example}

More generally, we have the following.

\begin{corollary}
If every maximal loop class is of positive type, then the set of local
dimensions of $\mu $ is a finite union of closed intervals.
\end{corollary}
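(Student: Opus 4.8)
The plan is to decompose the set of local dimensions as a finite union indexed by the maximal loop classes and to show that each piece is a closed interval. For a loop class $\Omega'$, write $D(\Omega')=\{\dim \mu (x):x\text{ is in }\Omega'\text{ and }\dim \mu (x)\text{ exists}\}$. The two ingredients I need are: (i) if $\Omega'$ is of positive type, then $D(\Omega')$ is a closed interval; and (ii) the full set of local dimensions equals $\bigcup_{M}D(M)$, where $M$ ranges over the maximal loop classes.

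For (i) I would observe that the reasoning already recorded for the essential class in Corollary \ref{IntEnd} used nothing about the essential class beyond its being of positive type. Indeed, Theorem \ref{periodicdense} shows the local dimensions at positive periodic points of $\Omega'$ are dense in $D(\Omega')$; Theorem \ref{Interval} shows that the whole interval between any two such periodic values is attained at points of $\Omega'$, so $D(\Omega')$ is an interval; and Theorem \ref{LimitPeriodic} shows its infimum and supremum are themselves attained. Hence $D(\Omega')=[I',S']$ with $I'=\inf$ and $S'=\sup$ taken over positive periodic points of $\Omega'$. Since, by hypothesis, every maximal loop class $M$ is of positive type, each $D(M)$ is a closed interval, nonempty (and possibly degenerate) because positive type forces the existence of positive periodic points via Proposition \ref{periodic}.

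For (ii), recall that finite type makes $\Omega$ finite, so there are only finitely many loop classes and hence finitely many maximal ones. The paper has already observed that every $x\in\operatorname{supp}\mu$ is contained in some loop class $\Omega_x$; concretely, one may take $\Omega_x$ to be the set of characteristic vectors occurring infinitely often in $[x]$, which is strongly connected, since after finitely many steps the tail of $[x]$ remains in this set and recurrence then supplies admissible paths between any two of its members. This $\Omega_x$ lies in a unique maximal loop class $M_x$, and as $\Omega_x\subseteq M_x$ the tail of $[x]$ lies in $M_x$ as well; therefore whenever $\dim \mu (x)$ exists it belongs to $D(M_x)$. This gives the inclusion of the set of local dimensions into $\bigcup_{M}D(M)$, and the reverse inclusion is immediate.

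Combining (i) and (ii), the set of local dimensions equals $\bigcup_{M}D(M)$, a finite union of closed intervals. The only point requiring care is the assignment in (ii) of each point to a single maximal loop class, i.e. that the set of characteristic vectors visited infinitely often by $[x]$ is strongly connected; this is the standard fact about infinite walks in a finite directed graph and is already implicit in the paper's statement that every point lies in a loop class, so I expect no serious obstacle.
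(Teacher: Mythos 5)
Your proposal is correct and is essentially the paper's own (implicit) argument: the corollary is stated there as an immediate consequence of Theorems \ref{periodicdense}, \ref{LimitPeriodic} and \ref{Interval} applied to each maximal loop class exactly as in Corollary \ref{IntEnd}, together with the facts that every point of the support lies in some maximal loop class and that there are only finitely many such classes because $\Omega$ is finite. Your extra care in verifying that the set of characteristic vectors visited infinitely often by $[x]$ forms a loop class simply makes explicit what the paper asserts when it says the finite type property ensures every point is contained in a loop class.
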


These intervals can be disjoint. Indeed, in Example \ref{ex:isolated points}
we construct a measure whose set of local dimensions consists of two points
(degenerate intervals).

\section{Algorithm}

\label{sec:comp}

\subsection{Algorithm for finding characteristic vectors, the essential set
and transition matrices}

Given a family of contractions, $S_{j}(x)=\varrho x+d_{j}$, and
probabilities $p_{j}$, we have implemented an algorithm to find $\Omega $
and all of the associated transition matrices. 
%The description in Section \ref{sec:term} for $\Omega $, although accurate, is not the most efficient
%for computing $\Omega $ and the associated transition matrices. 
We use a modified version of \cite{NW} to perform these calculations. We
give an overview here, by means of a worked example.

For this we consider the six contractions, the maps $S_{j}(x)=\frac{1}{3}%
x+d_{j}$ for $d_{j}=2j/15$, for $j=0,1,\dots,5.$ The corresponding
self-similar set is the $5$-fold sum of the middle-third Cantor set rescaled
to $[0,1]$. Suppose the maps have normalized probabilities $%
1,p_{1},p_{2},p_{3},p_{4},1$.

One can see that $\Delta =[0,1]\in \mathcal{F}_{0}$ has characteristic
vector $(1,(0),1)$. We will call this characteristic vector 1. Here the
first $1$ is the normalized length of the interval. The sequence $(0)$ is $%
V_{0}(\Delta )=(a_{1},a_{2},\dots,a_{k})$ is the neighbourhood set. The last 
$1 $ is $r_0(\Delta)$. For the questions we are interested in, $r_{n}(\Delta
) $ is not needed, hence we will suppress it in the future, instead allowing
multiple edges between nodes in the graph. This gives a reduced
characteristic vector of $(1, (0))$.

Instead of considering all intervals in $\mathcal{F}_{1}$, we only consider
those that arise from new reduced characteristic vectors found to be in $%
\Omega $. So, initially, we look at those children of reduced characteristic
vector 1. In this case, these two things are the same, but we will see later
on that this is not always the case. We first subdivide $[0,1]$ by
considering the maps $S_{j}(0)-a_{i}$ and and $S_{j}(1)-a_{i}$ for all $%
a_{i}\in V(1)$ and $j=0,1,2,3,4,5$. This partitions the interval at the
points $\{k/15:k=0,2,4,5,6,7,8,9,10,11,13,15\}$.

Consider first $\Delta =[0,2/15]$ coming from this subdivision. We see that
its normalized length is $3\cdot 2/15=2/5$ and it has neighbourhood set $(0)$%
. We label $(2/5,(0))$ as the reduced characteristic vector 2. The $d_{j}$
that contributes to $0$ is $d_{0}$ and is associated to the normalized
probability of $1$. Hence the transition matrix is $T(1,2)=\left[ 
\begin{array}{c}
1%
\end{array}%
\right] $.

Now, consider $\Delta =[2/15,4/15]$. This again has normalized length $2/5$
and neighbourhood set $(0,2/5)$. We label $(2/5,(0,2/5))$ as the reduced
characteristic vector 3. The $0$ comes from $d_{1}$ and the $2/5$ from $%
d_{0} $, hence the transition matrix is $T(1,3)=\left[ 
\begin{array}{cc}
p_{1} & 1%
\end{array}%
\right] $.

We continue in this fashion, noting that for $\Delta =[2j/15,(2j+1)/15]$,
where $j=2,\dots,5$, the characteristic vector is $(1/5,(0,2/5,5/4))$, which
we label as 4. In Feng's notation, we would distinguish these as four
different characteristic vectors by use of their third component, $%
r_{1}(\Delta )$. We will not distinguish these, but will instead allow
multiple maps from the reduced characteristic vector 1 to the reduced
characteristic vector 4. These are 
\begin{align*}
T(1,4)& =\left[ 
\begin{array}{ccc}
p_{2} & p_{1} & 1%
\end{array}%
\right] \text{ }\mathrm{or}\ \left[ 
\begin{array}{ccc}
p_{3} & p_{2} & p_{1}%
\end{array}%
\right] \\
& \mathrm{or}\ \left[ 
\begin{array}{ccc}
p_{4} & p_{3} & p_{2}%
\end{array}%
\right] \mathrm{or}\ \left[ 
\begin{array}{ccc}
p_{5} & p_{4} & p_{3}%
\end{array}%
\right] .
\end{align*}

Continuing in this fashion, we can compute the finite set of reduced
characteristic vectors obtaining

\begin{itemize}
\item Reduced characteristic vector 1: $(1, (0))$

\item Reduced characteristic vector 2: $(2/5, (0))$

\item Reduced characteristic vector 3: $(2/5, (0, 2/5))$

\item Reduced characteristic vector 4: $(1/5, (0, 2/5, 4/5))$

\item Reduced characteristic vector 5: $(1/5, (1/5, 3/5))$

\item Reduced characteristic vector 6: $(2/5, (1/5, 3/5))$

\item Reduced characteristic vector 7: $(2/5,(3/5))$
\end{itemize}

The characteristic vectors 4 and 5 comprise the essential set.

\begin{figure}[tbp]
\begin{center}
\includegraphics[scale=0.7]{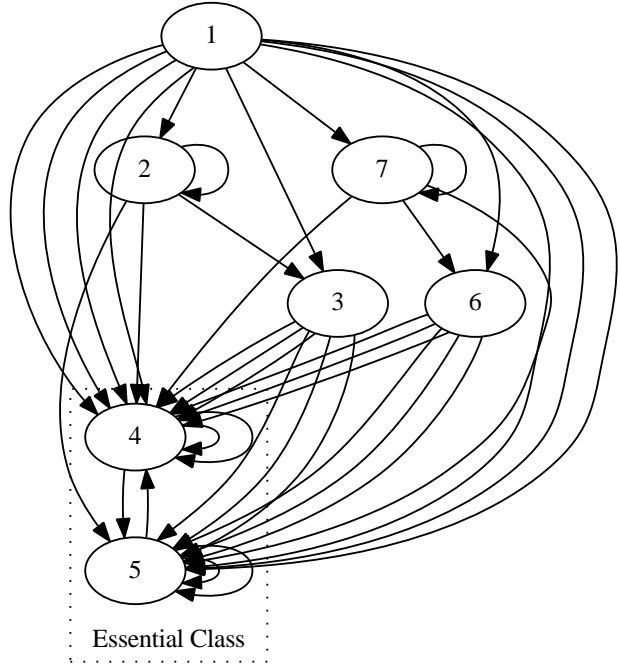}
\end{center}
\caption{Transition graph for $3x-1$, with $d_j \in \{2j/15: j=0,1,\dots,5\} 
$}
\label{fig:3x-1}
\end{figure}

For a complete list of transition matrices, see the supplementary information \cite{Homepage}.

\subsection{Algorithm for finding bounds on local dimensions\label{algdim}}

Given a loop class of positive type, there are two main ways we obtain good
estimates on the set of local dimensions associated to this loop class. The
first is to find explicit examples of periodic points within the loop class
and calculate their local dimensions by determining the spectral radius of
the transition matrix of the cycle. Applying Theorem \ref{periodicdense},
this will produce an interval contained in the set of local dimensions of
the loop class.

To find upper and lower bounds on the set of local dimensions we use the
family of pseudo-norms: 
\begin{align*}
\left\Vert T\right\Vert _{C,\min }& =\min \left\{ \sum_{j\in C}\left\vert
T_{jk}\right\vert :k\in C\right\} \\
\left\Vert T\right\Vert _{\min }& =\min \left\{ \sum_{k}\left\vert
T_{jk}\right\vert :j\right\} \\
\left\Vert T\right\Vert _{\max }& =\max \left\{ \sum_{k}\left\vert
T_{jk}\right\vert :j\right\} .
\end{align*}%
These are the sub-norm on indices $C$, the total sub-norm and the total
sup-norm respectively. The total sup-norm is actually a norm. Here $C$ is a
subset of the indices of the column vectors. Care must be take here that the
subset $C$ is valid for all matrices within the loop class one is
considering, as different transition matrices may have different dimensions.
For all matrices, $T_{1},T_{2}\geq 0$ we have 
\begin{align*}
\left\Vert T_{1}T_{2}\right\Vert _{C,\min }& \geq \left\Vert
T_{1}\right\Vert _{C,\min }\left\Vert T_{2}\right\Vert _{C,\min } \\
\left\Vert T_{1}T_{2}\right\Vert _{\min }& \geq \left\Vert T_{1}\right\Vert
_{\min }\left\Vert T_{2}\right\Vert _{\min } \\
\left\Vert T_{1}T_{2}\right\Vert _{\max }& \leq \left\Vert T_{1}\right\Vert
_{\max }\left\Vert T_{2}\right\Vert _{\max }
\end{align*}%
and 
\begin{equation*}
\left\Vert T\right\Vert _{\min },\left\Vert T\right\Vert _{C,\min }\leq
||T||\leq \eta \left\Vert T\right\Vert _{\max }
\end{equation*}%
where $\eta $ is the number of columns in $T$. We can thus obtain upper and
lower bounds for the local dimensions at points in the loop class by
calculating these pseudo-norms for all admissible products of primitive
transition matrices from the loop class up to some fixed length and using
formula (\ref{F2}).

We remark that pseudo-norms based on a subset of the indices of the row
vectors would serve, as well.

\begin{example}
\label{ex:isolated points} A measure whose local dimension is two isolated
points: We continue with the example described above, but now take the
normalized uniform weights $p_{i}=1$. Let $\mu $ denote the associated
self-similar measure.

The essential class, as mentioned above, is \{4, 5\} and its primitive
transition matrices are: 
\begin{align*}
T(4,4)& =\left[ 
\begin{array}{ccc}
1 & 0 & 0 \\ 
\noalign{\medskip}1 & 1 & 1 \\ 
\noalign{\medskip}0 & 1 & 1%
\end{array}%
\right] & T(4,5)& =\left[ 
\begin{array}{cc}
1 & 0 \\ 
\noalign{\medskip}1 & 1 \\ 
\noalign{\medskip}0 & 1%
\end{array}%
\right] \\
T(4,4)& =\left[ 
\begin{array}{ccc}
1 & 1 & 0 \\ 
\noalign{\medskip}1 & 1 & 1 \\ 
\noalign{\medskip}0 & 0 & 1%
\end{array}%
\right] & T(5,5)& =\left[ 
\begin{array}{cc}
1 & 1 \\ 
\noalign{\medskip}1 & 1%
\end{array}%
\right] \\
T(5,4)& =\left[ 
\begin{array}{ccc}
1 & 1 & 1 \\ 
\noalign{\medskip}1 & 1 & 1%
\end{array}%
\right] & T(5,5)& =\left[ 
\begin{array}{cc}
1 & 1 \\ 
\noalign{\medskip}1 & 1%
\end{array}%
\right] \\
& & &
\end{align*}

As the total column sub and sup-norms for all these matrices is $2$, it
follows that all $k$-fold products of primitive transition matrices in the
essential class will have their (usual) norm in the interval $[2^{k},3\cdot
2^{k}]$. It follows that the local dimension at any point in the essential
class is $(\log 6-\log 2)/\log 3=1$.

As the local dimension at $0$ and $1$ is equal to $\log 6/\log 3$, the set
of local dimensions of $\mu $ consists of the two distinct points, 
\begin{equation*}
\{\dim \mu (x):x\in \lbrack 0,1]\}=\{1\}\cup \left\{ 1+\log 2/\log 3\right\}
.
\end{equation*}
\end{example}

\section{Cantor-like measures}

\label{sec:Cantor}

In Example \ref{ex:shmerkin} we considered the IFS 
\begin{equation}
\left\{ S_{j}(x)=\frac{1}{R}x+\frac{j}{mR}(R-1):j=0,\dots,m\right\}
\label{Cantor}
\end{equation}%
for integers $R\geq 2,$ and the self-similar Cantor-like measures $\mu $
associated with this IFS and probabilities $\{p_{j}:j=0,\dots,m\}$. This
class of measures includes, for example, the $m$-fold convolution of the
uniform Cantor measure with contraction factor $1/R,$ rescaled to $[0,1]$.
The self-similar set is the full interval $[0,1]$ when $m\geq R-1$.

The results in \cite{BHM} and \cite{Sh}, which extend \cite{HL}, together
imply that if $m\geq R\geq 3$ and $p_{0},p_{m}<\min p_{j}$, $j\neq 0,m$,
then the set of local dimensions of $\mu $ consists of a closed interval and
one (or two) isolated points, the local dimensions at $0$ and $1$. These
facts can be recovered by our methods as well, when, in addition, $%
p_{0}=p_{m}$.

\begin{proposition}
Let $\mu $ be the self-similar measure associated with the IFS (\ref{Cantor}%
) with $m\geq R\geq 2$ and probabilities satisfying $p_{0}=p_{m}<\min_{j\neq
0,m}p_{j}$. Then the set of essential points is the open interval $(0,1)$
and $\dim \mu (0)$ is an isolated point in the set of all local dimensions.
Indeed, for $x\neq 0,1$, 
\begin{equation*}
\dim \mu (x)\leq \frac{\left\vert \log (\min \{2p_{0},p_{j}:j\neq
0,m\})\right\vert }{\log R}<\frac{\left\vert \log p_{0}\right\vert }{\log R}%
=\dim \mu (0).
\end{equation*}
\end{proposition}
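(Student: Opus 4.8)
The plan is to reduce the entire statement to a single growth estimate for $\mu(\Delta_n(x))$ and then isolate the one genuinely delicate point. Write $\varrho=1/R$, so $\log\varrho=-\log R$, and set $q=\min\{2p_0,\ \min_{j\neq0,m}p_j\}$. Since $p_0=p_m<\min_{j\neq0,m}p_j$ and the $p_j$ sum to $1$ over $m+1\ge R+1\ge3$ indices, we have $p_0<q\le 2p_0<1$, hence $|\log q|<|\log p_0|$; this already gives the strict middle inequality. Using Corollary~\ref{cor} together with $P_n(\Delta)=p_0^{-n}\sum_{\sigma\in\mathcal A^n:\,\Delta\subseteq S_\sigma[0,1]}p_\sigma$, one gets
\begin{equation*}
\mu(\Delta_n(x))\asymp W_n(x):=\sum_{\sigma\in\mathcal A^n:\ \Delta_n(x)\subseteq S_\sigma[0,1]}p_\sigma
\end{equation*}
with constants independent of $n,x$, so by Corollary~\ref{cor1}, $\dim\mu(x)=\lim_n \log W_n(x)/(n\log\varrho)$. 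For $x=0$ the net intervals are $[0,b]$ with $V_n=\{0\}$, so $W_n=p_0^{\,n}$ and $\dim\mu(0)=\log p_0/\log\varrho=|\log p_0|/\log R$, recovering the displayed value. The remaining content is the upper bound $\dim\mu(x)\le|\log q|/\log R$ for $x\neq0,1$, which, since $\log\varrho<0$, is \emph{equivalent} to the lower growth estimate $W_n(x)\ge c\,q^{\,n}$ for some $c>0$.

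First I would settle that the essential set is $(0,1)$. The IFS is invariant under $x\mapsto 1-x$, which sends $S_j$ to $S_{m-j}$, since $1-S_j(1-x)=\tfrac1Rx+\tfrac{(m-j)(R-1)}{mR}=S_{m-j}(x)$; thus $0$ and $1$ play symmetric roles. The symbolic representation of $0$ consists of characteristic vectors with neighbour set $(0)$, forming a loop class external to the essential class, and by symmetry the same holds at $1$. One then checks these two endpoint classes are the \emph{only} external loop classes: any net interval whose closure misses both $0$ and $1$ has a characteristic vector from which the unique absorbing essential class is reachable, so a point with $\Delta_n(x)\subseteq(0,1)$ for large $n$ is eventually essential. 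Hence the non-essential points are exactly $0,1$, the essential set is $(0,1)$, and by Corollary~\ref{IntEnd} the interior local dimensions fill a closed interval $[I,S]$. Once the upper bound is in hand, $S\le|\log q|/\log R<\dim\mu(0)=\dim\mu(1)$, exhibiting $\dim\mu(0)$ as an isolated point.

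For the growth estimate I would work with $f_n(y)=\sum_{\sigma\in\mathcal A^n:\ y\in S_\sigma[0,1]}p_\sigma$ (comparable to $W_n$ for interior $y$ off the net-interval endpoints) and the recursion
\begin{equation*}
f_n(y)=\sum_{\sigma'\in\mathcal A^{n-1}:\ y\in S_{\sigma'}[0,1]}p_{\sigma'}\,g\!\big(S_{\sigma'}^{-1}(y)\big),\qquad g(z):=\sum_{\ell:\ z\in S_\ell[0,1]}p_\ell .
\end{equation*}
The local input is a bound for $g$ on the core $C:=[\tfrac{R-1}{mR},\,1-\tfrac{R-1}{mR}]$. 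With $d_\ell=\ell(R-1)/(mR)$, the indices covering $z$ are those with $d_\ell\in[z-\tfrac1R,\,z]$; for $z\in C$, a single covering index cannot be $\ell=0$ (as $d_1=\tfrac{R-1}{mR}\le z$ forces $\ell=1$ to cover too) nor $\ell=m$ (symmetrically), so either at least two indices cover $z$, whence $g(z)\ge2p_0\ge q$, or a single interior index covers $z$, whence $g(z)\ge\min_{j\neq0,m}p_j\ge q$. The hypothesis $m\ge R$ guarantees the overlap making $C$ nonempty and the window of length $1/R$ at least one grid-spacing wide. Thus $g\ge q$ on $C$, while $g\ge p_0$ for every interior $z$.

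The main obstacle is propagating the bound $g\ge q$, valid only on $C$, to the global estimate $W_n\ge c\,q^{\,n}$: a term of the recursion whose preimage $S_{\sigma'}^{-1}(y)$ falls \emph{outside} $C$ contributes only the factor $p_0<q$. These are precisely the ``flush'' neighbours (the leftmost/rightmost covering $\sigma'$, with $S_{\sigma'}(0)$ equal to an endpoint of $\Delta$), whose transition row-sum is the minimal value $p_0$. A naive induction $\min_{y\in C}f_n(y)\ge c\,q^{\,n}$ would require that the core-landing preimages always carry total weight at least $q$, and this can fail when $m/(R-1)$ is small; the real point to exploit is that for interior $x$ the flush configurations are \emph{transient} — iterating the $\ell=0$ (resp.\ $\ell=m$) branch drives the preimage out of $[0,1]$ unless $y$ is the endpoint itself — so along $[x]$ the preimages return to $C$ with bounded gaps. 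I would make this quantitative by splitting $f_n(y)=\big(\sum_{S_{\sigma'}^{-1}y\in C}+\sum_{S_{\sigma'}^{-1}y\notin C}\big)p_{\sigma'}\,g(S_{\sigma'}^{-1}y)$, tracking how long a preimage can remain near an endpoint, and showing the core-landing weight forces an average multiplicative factor $q$ per step. With $W_n(x)\ge c\,q^{\,n}$ established, Corollary~\ref{cor1} yields $\overline{\dim}\,\mu(x)\le\log q/\log\varrho=|\log q|/\log R$ for all $x\neq0,1$, completing the proof.
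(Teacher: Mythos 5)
Your reduction of the dimension bound to a growth estimate for $W_n(x)$, the identification of $q=\min\{2p_0,\min_{j\neq 0,m}p_j\}$, the computation of $\dim\mu(0)$, and your local dichotomy for $g$ on the core $C$ (a lone covering index cannot be $0$ or $m$; two covering indices give at least $2p_0$) are all correct, and that dichotomy is indeed the one the paper exploits. However, there are two genuine gaps. The first is the essential-set step: your justification --- every interior net interval has a characteristic vector from which the essential class is reachable, hence interior points are eventually essential --- is a non sequitur. Reachability of the essential class from every characteristic vector holds for \emph{every} measure of finite type (this is exactly why the essential class is unique and why every net interval contains an essential descendant), and yet such measures routinely have interior non-essential points: in Section \ref{sec:Pisot} the Bernoulli convolutions with $\varrho^{-1}$ the Pisot roots of $x^{3}-x^{2}-1$ and $x^{3}-x-1$ have maximal loop classes of sizes $23$ and $6$, external to the essential class and not attached to the endpoints. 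What must be shown, and what the paper shows concretely, is that for $3R^{-n}<x<1-3R^{-n}$ the net interval $\Delta_n(x)$ has one of exactly two reduced characteristic vectors (according to whether its left endpoint is an iterate of $0$ or of $1$), and that every interval of either type has children of both types and of no other type; the pair is then a loop class closed under taking children, hence it \emph{is} the essential class.

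The second gap is that the dimension bound itself is never proved. You correctly isolate the obstacle --- flush parents, those $\sigma'$ with $S_{\sigma'}^{-1}(y)\notin C$, contribute only the factor $p_0<q$ in your row recursion --- but the repair you outline (flush configurations are transient, preimages return to $C$ ``with bounded gaps'') is not carried out, and as stated it is not true: the number of consecutive levels a preimage spends outside $C$ is governed by how close $x$ is to an iterate of $0$ or $1$ at the current scale, which is unbounded along $[x]$, and nothing in your sketch prevents most of the mass of $W_n$ from riding on flush parents over long stretches, during which your recursion yields only the factor $p_0$ per level. The paper closes exactly this gap by bookkeeping in the column direction instead of the row direction: for $n$ large it bounds every \emph{column} sum of the primitive transition matrix $T(\gamma_{n-1},\gamma_n)$. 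The key point, which is where $m\geq R$ enters, is that whenever the only contribution of a parent neighbour $c_j$ to a child neighbour $a_k$ comes from $S_0$ (an entry equal to $1$), there is an adjacent parent neighbour $c_{j+1}$, an iterate shifted by $(R-1)/(R^{n-1}m)$, contributing to the \emph{same} column via $S_R$ (an entry $p_Rp_0^{-1}\geq 1$), and symmetrically with $S_m$ and $S_{m-R}$; hence every column sum is at least $\min\{2,\,p_jp_0^{-1}:j\neq 0,m\}=q/p_0$. Supermultiplicativity of the column sub-norm over matrix products then gives $\Vert T([x|n])\Vert\geq c\,(q/p_0)^{n}$, i.e.\ $W_n(x)\geq c\,q^{n}$, which is your desired growth estimate. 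This second, same-column contribution is precisely what a row-based recursion in $y$ cannot see, and it is the missing idea in your proposal.
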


\begin{proof}
We note, first, that the iterates of $0$ at level $n$ (meaning the real
values $S_{\sigma }(0)$ for $\sigma \in \mathcal{A}^{n}$) occur every $%
(R-1)/(R^{n}m)$, beginning at $0$ and ending at $1-R^{-n}$. Similarly, the
iterates of $1$ are spaced the same distance apart, but start at $R^{-n}$
and end at $1$. In the subinterval $[R^{-n},1-R^{-n}]$ they alternate,
except in the special case that $m\equiv 0$ mod$(R-1)$, when they coincide.
We will assume $m\not\equiv 0$ mod$(R-1)$ and leave the easier case for the reader.

Adjacent $n$'th level net intervals contained in $[R^{-n},1-R^{-n}]$ are of
the form $[a,b]$, $[b,c]$ where if $a$ is an iterate of $0$, then 
\begin{equation*}
a=\frac{j(R-1)}{R^{n}m},b=\frac{1}{R^{n}}+\frac{k(R-1)}{R^{n}m}\text{ and }c=%
\frac{(j+1)(R-1)}{R^{n}m}
\end{equation*}%
for suitable integers $j,k$. There is a similar formula when $a$ is an
iterate of $1$. If $m=L(R-1)+r$ for integer $L$ and $r\in \{1,\dots,R-2\}$,
then it is easily seen that $L+k=j$. Thus $n$'th level net intervals have
either (non-normalized) length 
\begin{equation*}
\frac{r}{R^{n}m}\text{ or }\frac{R-1-r}{R^{n}m},
\end{equation*}%
and, in either case, have length between $1/(R^{n}m)$ and $1/R^{n}$.

Let $x\in (0,1)$ and assume $n$ is chosen so large that $3R^{-n}<x<1-3R^{-n}$%
. Let $[A,B]$ denote the $n$'th level net interval containing $x$. The
choice of $n$ ensures that all the numbers of the form $i(R-1)/(R^{n}m)$ for
integer $i$ with 
\begin{equation*}
0<A-i(R-1)/(R^{n}m)<R^{-n}
\end{equation*}
are iterates of $0$ at level $n$ and hence comprise the neighbour set of $%
[A,B]$. Thus the neighbour set depends only upon whether $A$ is an iterate
of $0$ or an iterate of $1$. It follows that there are only two reduced
characteristic vectors associated with these net intervals.

Furthermore, the length of these net intervals is sufficiently large to
ensure that that the interior of each such interval contains at least one $%
n+1$-iterate of $0$ and one $n+1$-iterate of $1$. Consequently, each such $n$%
'th level net interval contains both styles of net intervals of level $n+1$
(and no other children). It follows that their characteristic vectors belong
to the essential class and that proves the set of essential points is $(0,1)$%
.

To prove the upper bound given in the statement of the proposition for $\dim
\mu (x)$ with $0<x<1,$ we will show that if $[x]=(\gamma _{0},\gamma
_{1},\dots)$ and $n$ is sufficiently large, then $\left\Vert T(\gamma
_{n-1},\gamma _{n})\right\Vert _{\min }\geq \min \{2,p_{j}p_{0}^{-1}:j\neq
0,m\}$ where $\left\Vert \cdot \right\Vert _{\min }$ is the total column
sub-norm introduced in the preceding section.

We can assume $3R^{-N}<x<1-3R^{-N}$ and take $n>N$. Suppose $\gamma _{n}=%
\mathcal{C(}\Delta _{n})$, $\Delta _{n}=[a,b]$, $\widehat{\Delta _{n}}=[c,d]$%
, and the neighbour sets are $V_{n}(\Delta _{n})=(a_{1},\dots,a_{K})$ and $%
V_{n-1}(\Delta _{n-1})=(c_{1},\dots,c_{J})$ respectively, where $c_{1}<\cdot
\cdot \cdot <c_{J}$.

Temporarily fix $k$. By definition, $T(\gamma _{n-1},\gamma _{n})_{jk}=p_{\ell }p_{0}^{-1}$ when
there is some $\sigma \in \mathcal{A}^{n}$ and $\ell \in \mathcal{A}$ such that $%
S_{\sigma }(0)=c-R^{-(n-1)}c_{j}$ and $S_{\sigma \ell }(0)=a-R^{-n}a_{k},$
and $T(\gamma _{n-1},\gamma _{n})_{jk}=$ $0$ otherwise.

 Of course, there is some choice of $j$ with a valid
choice of $\ell $. If $\ell \neq 0,m$ we are done. So assume this $\ell =0$.
That means $a-R^{-n}a_{k}=c-R^{-(n-1)}c_{j}$. The bounds on $x$ ensure that
for some $\tau \in \mathcal{A}^{n-1}$, 
\begin{equation*}
S_{\tau }(0)=S_{\sigma }(0)-(R-1)/(R^{n-1}m).
\end{equation*}%
Furthermore, 
\begin{equation*}
0<c-S_{\tau }(0)\leq a-S_{\sigma }(0)+\frac{R-1}{R^{n-1}m}<\frac{1}{R^{n}}+%
\frac{R-1}{R^{n-1}m}\leq \frac{1}{R^{n-1}}.
\end{equation*}
This implies there is some $i$ such that $c_{i}=R^{-(n-1)}(c-S_{\tau }(0))$;
indeed, $i=j+1$.

Another routine calculation shows $S_{\tau R}(0)=a-R^{-n}a_{k}$, thus $%
T(\gamma _{n-1},\gamma _{n})_{j+1,k}=p_{R}p_{0}^{-1}$. 

If, instead, $\ell =m$, similar arguments prove $T(\gamma _{n-1},\gamma
_{n})_{j-1,k}=p_{m-R}p_{0}^{-1}$. This completes the proof.
\end{proof}

\begin{remark}
We are not claiming that these bounds on $\dim \mu (x)$ for $x\neq 0,1$ are
sharp, merely illustrating that we can recover the property that $\dim \mu
(0)$ is an isolated point with our approach.
\end{remark}

In \cite[Thm. 6.1]{BHM}, the minimum and maximum local dimensions, other
than at $0,1$, were investigated for the case of the $m$-fold convolution of
the uniform Cantor measure for small $m$. We show that there exists $m$
outside of these ranges where these formula do not hold.

\begin{example}
\label{ex:m-fold convolution} Consider the $m$-fold convolution of the
uniform Cantor measure with contraction factor $1/R$, for integer $R\geq 3$.
It is shown in \cite{BHM} that if $R\leq m\leq 2R-2$, then 
\begin{equation}
\min_{x}\dim \mu (x)=\dim \mu (x_{\min })=\frac{m\log 2-\log \binom{m}{%
\left\lfloor \frac{m}{2}\right\rfloor }}{\log R}  \label{eq:min}
\end{equation}%
where 
\begin{equation*}
x_{\min }=\frac{1}{m}\left\lfloor \frac{m}{2}\right\rfloor .
\end{equation*}%
It was also shown there that for all even $m$, $\min_{x}\dim \mu (x)=\dim
\mu (x_{\min })$.

Using the computer, we have checked these formulas for $R=3$ and $3\leq
m\leq 10$. We have found that the right hand side of (\ref{eq:min}) is not $%
\dim \mu (x_{\min })$ and is not the minimal local dimension for $m=5,\dots
,10.$ Moreover, for $m=5,7,9$ the minimal local dimension does not occur at
the point $x_{\min }$. In fact, the predicted value of the minimum local
dimension is greater than the maximum local dimension other than for $x=0,1$%
. See Tables \ref{tab:min} and \ref{tab:max}. We note that in Tables \ref%
{tab:min} and \ref{tab:max}, when the formula is known to hold for
theoretical reasons, we put the precise value, otherwise we put a range,
coming from the techniques of Section \ref{sec:comp}.

In \cite{BHM}, there was also a formula given for the maximum local
dimension other than at $x=0,1$. Let $r=\left\lfloor (m-R)/2\right\rfloor $, 
$\ell _{2j}=r+1$ and $\ell _{2j+1}=m-r-R$. Put 
\begin{equation*}
x_{\max }=\frac{1}{m}\sum_{j=1}^{\infty }(R-1)R^{-j}\ell _{j}.
\end{equation*}%
It was shown that for $R\leq m\leq 2R-1$, $\max_{x\neq 0,1}\dim \mu (x)=\dim
\mu (x_{\max })$ and 
\begin{equation}
\dim \mu (x_{\max })=-\frac{\log ((p_{r+R+1}+p_{r}+\sqrt{%
(p_{r+R+1}-p_{r})^{2}+4p_{r+1}p_{r+R}})/2)}{\log R}.  \label{eq:max}
\end{equation}%
Here $p_{j}$ should be understood as $0$ if $j\notin \{0,1,\dots ,m\}$.

Using our methods, we can show that for $R=3$, $m=6$, these statements
continue to be true, that is, $\dim \mu (x_{\max })=$ $\max_{x\neq 0,1}\dim
\mu (x)$ and $\dim \mu (x_{\max })$ is the value specified by the right hand
side of formula (\ref{eq:max}). But for $R=3$ and $m=7,\dots,10$, the
predicted maximum local dimension from the right hand side of \eqref{eq:max}
is too big. We have not been able to determine if $x_{\max }$ is the point
where the maximum dimension occurs.

We refer the reader to the supplementary information \cite{Homepage} where this example is worked out in
full detail.

\begin{table}[tb]
\begin{center}
\begin{tabular}{llll}
$m$ & Left hand side          & Actual min & $dim \mu(x_{min} $) \\
    & of Formula \ref{eq:min} &            &                     \\ \hline
3   & .892790 & .892790 & .892790 \\ 
4   & .892790 & .892790 & .892790 \\ 
5   & 1.05875 & [.972382, .972639] & .984145 \\ 
6   & 1.05875 & .976628 & .976628 \\ 
7   & 1.18029 & [.993576, .993848] & .997991 \\ 
8   & 1.18029 & .995246 & .995246 \\ 
9   & 1.27620 & [.998541, .998658] & .999739 \\ 
10  & 1.27620 & .999022 & .999022%
\end{tabular}%
\end{center}
\caption{Minimal local dimensions}
\label{tab:min}
\end{table}

\begin{table}[tb]
\begin{center}
\begin{tabular}{llll}
$m$ & Left hand side          & Actual max & $\dim\mu(x_{max})$ \\ 
    & of Formula \ref{eq:max} &            &                    \\ \hline
3  & 1.13355 & 1.13354 & 1.13354 \\ 
4  & 1.05875 & 1.05874 & 1.05874 \\ 
5  & 1.02757 & 1.02757 & 1.02757 \\ 
6  & 1.01434 & 1.01434 & 1.01434 \\ 
7  & 1.01434 & [1.00605, 1.00736] & 1.00605 \\ 
8  & 1.01434 & [1.00342, 1.00346] & 1.00343 \\ 
9  & 1.02721 & [1.00133, 1.00171] & 1.00133 \\ 
10 & 1.03074 & [1.00079, 1.00082] & 1.00079 \\ 
&  &  & 
\end{tabular}%
\end{center}
\caption{Maximal local dimensions}
\label{tab:max}
\end{table}
\end{example}

\section{Bernoulli convolutions with contraction factors Pisot inverses}

\label{sec:Pisot}

\subsection{Bernoulli convolutions with contraction factors Pisot inverses}

In Table \ref{tab:Pisot}, we list all Pisot numbers in the open interval $%
(1,2)$ of degree less than or equal to 4. For each of these, we give the
number of vertices of the reduced transition graph and the size of the
reduced essential class for the Bernoulli convolution with contraction
factor the inverse of this Pisot number. In the case where the size is
listed as 'Unknown', there are more than 10000 reduced characteristic
vectors.

\subsubsection{Bernoulli convolutions with no isolated point\label{ex:other
Pisot}}

It can be shown for Bernoulli convolutions that whenever there are precisely
three more elements in the reduced transition graph than in the essential
set, then the non-essential set consists of the characteristic vector of $%
[0,1]$ and the characteristic vectors of the right-most and left-most net
intervals of level $1$. The latter two are maximal loop classes
corresponding to the two endpoints of $[0,1]$, the only two non-essential
points. Thus, with the exception of $x^{3}-x^{2}-1,$ $x^{3}-x-1$ and
possibly $x^{4}-x^{3}-1$, for the examples listed in Table \ref{tab:Pisot}
that the open interval $(0,1)$ is the set of essential points. We have
checked that in all of these examples (where the essential set is known to
be $(0,1))$, the value of the local dimension of the measure at $0$ is also
the local dimension at an essential point, hence there is no isolated point.

\subsubsection{Bernoulli convolutions with an isolated point\label%
{BCisolatedpt}}

\begin{example}
\label{ex:$x^3-x^2-1$)}Minimal polynomial $P_{1}(x)=x^{3}-x^{2}-1$: The
uniform Bernoulli convolution $\mu _{\varrho },$ with $\varrho ^{-1}$ the
Pisot number with minimal polynomial $P_{1},$ has five maximal loop classes.
In addition to the essential class, there are two singletons (corresponding
to the points $0,1$), one doubleton and one of size 23. All are of positive
type. The set of local dimensions of $\mu _{\varrho }$ consists of an
isolated point ($\dim \mu _{\varrho }(0)$) and a closed interval which is
the union of the closed intervals generated by the three non-trivial maximal
loop classes. In particular, we have 
\[
\lbrack .970222,1.07770] \subset \{\dim \mu (x):x\text{ essential}\}
 \subset \lbrack .848302,1.53266]
\]
and 
\begin{align*}
\lbrack .970221,1.07771]\cup \{1.81336\}& \subset \{\dim \mu
(x):x\in \lbrack 0,1]\} \\
& \subset \lbrack .848302,1.53265]\cup \{1.81336\}
\end{align*}%
See the supplementary information \cite{Homepage} for this example worked out in full.
\end{example}

\begin{example}
\label{ex:$x^3-x-1$)}Minimal polynomial $P_{2}(x)=x^{3}-x-1$: The uniform
Bernoulli convolution $\mu _{\varrho },$ with $\varrho ^{-1}$ the Pisot
number with minimal polynomial $P_{2},$ has six maximal loop classes. In
addition to the essential class, there are four singletons (two
corresponding to the points $0,1$) and one of size 6. The two corresponding
to the points $0$ and $1$, as well as the maximal loop of size 6, are of
positive type. The other two singletons, although not positive type, are
easy to handle as each has only one transition matrix. The set of local
dimensions of $\mu _{\varrho }$ consists of an isolated point ($\dim \mu
_{\varrho }(0)$) and a closed interval which is the union of the closed
intervals generated by the four non-trivial maximal loop classes. In
particular, we have 
\[
\lbrack .997949,1.00853] \subset \{\dim \mu (x):x\text{ essential}\}
 \subset \lbrack .747924,1.97198]
\]
and 
\begin{align*}
\lbrack .997949,1.00853]\cup \{2.46497\}& \subset \{\dim \mu
(x):x\in \lbrack 0,1]\} \\
& \subset \lbrack .747923,1.97198]\cup \{2.46497\}
\end{align*}%
See the supplementary information \cite{Homepage} for this example worked out in full.
\end{example}

\begin{table}[tbp]
\begin{center}
\begin{tabular}{llll}
Minimal Polynomial of & Approx value of & Size of reduce & Size of Essential
\\ 
Pisot Number & $\varrho$ & Transition graph & set (reduced) \\ \hline
$x^2-x-1 $ & .618034 & 6 & 3 \\ 
$x^3-x^2-1 $ & .682328 & 152 & 46 \\ 
$x^3-x-1 $ & .754878 & 1809 & 1207 \\ 
$x^3-2 x^2+x-1 $ & .569840 & 30 & 27 \\ 
$x^3-x^2-x-1 $ & .543689 & 11 & 8 \\ 
$x^4-x^3-1 $ & .724492 & Unknown &  \\ 
$x^4-x^3-2 x^2+1 $ & .524889 & 538 & 535 \\ 
$x^4-2 x^3+x-1 $ & .535687 & 190 & 187 \\ 
$x^4-x^3-x^2-x-1 $ & .518790 & 14 & 11 \\ 
&  &  & 
\end{tabular}%
.
\end{center}
\caption{Facts about Bernoulli convolutions with $\protect\varrho^{-1}$ a
small degree Pisot number}
\label{tab:Pisot}
\end{table}

\subsection{The $2$-fold convolution of $\protect\mu_\protect\varrho$}

\label{ssec:golden square} In this subsection we study the rescaled measure $%
\nu _{\varrho }=\mu _{\varrho }\ast \mu _{\varrho },$ where $\mu _{\varrho }$
is the Bernoulli convolution with $\varrho ^{-1}$ the golden ratio. This is
the self-similar measure associated with the IFS of contractions $%
S_{1}(x)=\varrho x$, $S_{2}(x)=\varrho x+1/2-\varrho /2$ and $%
S_{3}(x)=\varrho x+1-\varrho $, with corresponding (regular) probabilities
\thinspace $(1/4,1/2,1/4)$. It is of finite type and has support $[0,1]$.

The reduced transition diagram has 40 reduced characteristic vectors. The
essential class can be naturally identified with those labelled by $%
\{28,29,30,33,34,\dots,40\}$. Two cycles in the essential class are $\eta
_{1}=(29,35,39,29)$ and $\eta _{2}=(28,33,28)$. The spectral radius of $%
T(\eta _{1})$ is approximately $2.46916$, while the spectral radius of $%
T(\eta _{2})$ is approximately $2.48119$. This shows that%
\begin{equation*}
\lbrack .992400,1.00250]\subseteq \{\dim \nu _{\rho }(x):x\text{essential}\}.
\end{equation*}

We have also been able to find upper and lower bounds on the local
dimensions from the essential class using the method described in Subsection %
\ref{algdim}.\ We obtain an upper bound by taking the column sub-norm with
the subset $C=$ $\{3,4\}$ and taking admissible products of up to 20
primitive transition matrices. We obtain a lower bound by using the total
column sup-norm with products of up to 10 primitive transition matrices.
These calculations give%
\begin{equation*}
\{\dim \nu _{\rho }(x):x\text{ essential}\}\subseteq \lbrack
.815721,1.40091].
\end{equation*}

There are four non-essential maximal loops, each of which is a singleton.
The maximal loop classes $\{2\}$ and \thinspace $\{6\}$ correspond to the
two endpoints of the support, $0,1$. The transition matrix in both cases is
the $1\times 1$ identity matrix and the points have local dimension 
\begin{equation*}
\dim \nu _{\rho }(0)=\dim \nu _{\rho }(1)=\frac{\log 4}{\left\vert \log \rho
\right\vert }\sim 2.88084.
\end{equation*}

The other two maximal loop classes are $\{25\}$ and $\{19\}$. The
characteristic vector of 25 is $(\varrho -1/2,(1-3/2\varrho ,1/2-1/2\varrho
,1-\varrho ,3/2-3/2\varrho ,1-1/2\varrho ,3/2-\varrho ))$. Its transition
matrix has the same spectral radius as $T(\eta _{2})$, hence the local
dimension at any point in the loop class \thinspace $\{25\}$ coincides with
the local dimension at some essential point. Similar statements hold for the
loop class $\{19\}$.

Thus the set of local dimensions of $\nu _{\rho }$ consists of an interval
and an isolated point, $\dim \nu _{\varrho }(0)$.

We recall that a point $x$ can have a most two symbolic representations, and
that this will occur only if $x\in \mathcal{F}_{n}$. Let $x^{(n)}$ be the
point with symbolic representation 
\begin{equation*}
x^{(n)}=(1,\underbrace{2,\dots ,2}_{n},7,8,10,19,19,19,\dots ).
\end{equation*}%
This point also has symbolic representation 
\begin{equation*}
x^{(n)}=(1,\underbrace{2,\dots ,2}_{n+1},7,9,12,25,25,25,\dots ).
\end{equation*}%
We see that both representations of these points are external to the loop
class, and hence we have a countable number of non-essential points.

Not all points with a symbolic representation in a loop class external to
the essential set need be non-essential. To see this we observe that 
\begin{equation*}
(1, 4, 14, 22, 30, 37, 30, 37, 30, 37, \dots) = (1, 6, 18, 16, 13, 19, 19,
19, 19, \dots)
\end{equation*}
are two symbolic representations for the same point, one of which is in the
essential class, and one of which is not. Hence this point is an essential
point.

\begin{remark}
The transition matrices of the cycles $\eta _{1}$ and $\eta _{2}$ give the
extreme values of spectral radii over all transition matrices of essential
cycles of length up to 10. It would be interesting to know if these give the
endpoints of the interval portion of the set of local dimensions.
\end{remark}

\section*{Acknowledgements}

We thank M. Ng for helping with some of the calculations on the Cantor-like
measures.

\vspace{2 in}

Dept. of Pure Mathematics,
University of Waterloo,
Waterloo, Ont., N2L 3G1,
Canada.
\texttt{kehare@uwaterloo.ca} \\ 

Dept. of Pure Mathematics,
University of Waterloo,
Waterloo, Ont., N2L 3G1,
Canada.
\texttt{kghare@uwaterloo.ca} \\ 

Dept. of Pure Mathematics,
University of Waterloo,
Waterloo, Ont., N2L 3G1,
Canada.
\texttt{kevinmatthews12@hotmail.com}

\newpage
\clearpage

\allowdisplaybreaks
\begin{center}
{\textbf{\large LOCAL DIMENSIONS OF MEASURES OF FINITE TYPE -- SUPPLEMENTARY INFORMATION}}

\vspace{1 in}

KATHRYN E. HARE, KEVIN G. HARE, AND KEVIN R. MATTHEWS
\end{center}

%\title[Supplementary Info]
%\author[K.~E.~Hare, K.~G.~Hare, K.~R.~Matthews]{Kathryn E. Hare, Kevin G. Hare, and Kevin R. Matthews}
%\thanks{The research of all three authors is supported in part by NSERC.} 
%\maketitle

\thispagestyle{empty}
\renewcommand{\rightmark}{SUPPLEMENTARY INFO}
\setcounter{equation}{0}
\setcounter{section}{0}
\setcounter{figure}{0}
\setcounter{table}{0}
\setcounter{page}{1}
\makeatletter
\renewcommand{\thefigure}{S\arabic{figure}}
\renewcommand{\thesection}{S\arabic{section}}

\section{Outline}
This is the supplementary information for the manuscript
    {\em Local Dimensions of Measures of Finite Type}
    by Kathryn E. Hare, Kevin G. Hare and Kevin R. Matthews.
This contains the details of the examples mentioned in this 
    manuscript.

By ``the probabilities are uniform'' we mean that 
    $p_0 = p_1 = \dots = p_m$.

The spectral range of a loop class is
    \[ \mathrm{closure}\{sp(T(\theta)^{1/L(\theta)}:  \theta \text{ is a cycle in the loop class}\}. \]
Recalling that the dimension of a periodic point with cycle 
    $\theta$ is 
\[ \dim \mu (x) = \frac{\log p_0}{\varrho} +
    \frac{\log(T(\theta))}{L(\theta) \log \varrho} \]
    allows an easy translation from the spectral range of a loop
    and the set of local dimensions of a loop class.

The outline of the examples are as follows.

\begin{itemize}
\item Examples 2.6, 3.11, 4.7, 4.10.
\begin{itemize}
\item Section \ref{sec:1} is {for the minimal polynomial $x^2+x-1$ with $d_i \in [0, 1-\varrho]$} (page \pageref{sec:1}).
\end{itemize}

\item Examples from Section 7.
\begin{itemize}
\item Section \ref{sec:2} is {for the minimal polynomial $3 x-1$ with 
    $d_i = \frac{i}{3} \cdot \frac{2}{3}, i=0,\dots,3$} (page \pageref{sec:2}).
\item Section \ref{sec:3} is {for the minimal polynomial $3 x-1$ with 
    $d_i = \frac{i}{4} \cdot \frac{2}{3}, i=0,\dots,4$} (page \pageref{sec:3}).
\item Section \ref{sec:4} is {for the minimal polynomial $3 x-1$ with 
    $d_i = \frac{i}{5} \cdot \frac{2}{3}, i=0,\dots,5$} (page \pageref{sec:4}).
\item Section \ref{sec:5} is {for the minimal polynomial $3 x-1$ with 
    $d_i = \frac{i}{6} \cdot \frac{2}{3}, i=0,\dots,6$} (page \pageref{sec:5}).
\item Section \ref{sec:6} is {for the minimal polynomial $3 x-1$ with 
    $d_i = \frac{i}{7} \cdot \frac{2}{3}, i=0,\dots,7$} (page \pageref{sec:6}).
\item Section \ref{sec:7} is {for the minimal polynomial $3 x-1$ with 
    $d_i = \frac{i}{8} \cdot \frac{2}{3}, i=0,\dots,8$} (page \pageref{sec:7}).
\item Section \ref{sec:8} is {for the minimal polynomial $3 x-1$ with 
    $d_i = \frac{i}{9} \cdot \frac{2}{3}, i=0,\dots,9$} (page \pageref{sec:8}).
\item Section \ref{sec:9} is {for the minimal polynomial $3 x-1$ with 
    $d_i = \frac{i}{10} \cdot \frac{2}{3}, i=0,\dots,10$} (page \pageref{sec:9}).
\end{itemize}

\item Examples from Section 8, including Table 3 and Examples 8.1 and 8.2
\begin{itemize}
\item Section \ref{sec:10} is {for the minimal polynomial $x^3+x-1$ with $d_i \in [0, 1-\varrho]$} (page \pageref{sec:10}).
\item Section \ref{sec:11} is {for the minimal polynomial $x^3+x^2-1$ with $d_i \in [0, 1-\varrho]$} (page \pageref{sec:11}).
\item Section \ref{sec:12} is {for the minimal polynomial $x^3-x^2+2 x-1$ with $d_i \in [0, 1-\varrho]$} (page \pageref{sec:12}).
\item Section \ref{sec:13} is {for the minimal polynomial $x^3+x^2+x-1$ with $d_i \in [0, 1-\varrho]$} (page \pageref{sec:13}).
\item Section \ref{sec:14} is {for the minimal polynomial $x^4-2 x^2-x+1$ with $d_i \in [0, 1-\varrho]$} (page \pageref{sec:14}).
\item Section \ref{sec:15} is {for the minimal polynomial $x^4-x^3+2 x-1$ with $d_i \in [0, 1-\varrho]$} (page \pageref{sec:15}).
\item Section \ref{sec:16} is {for the minimal polynomial $x^4+x^3+x^2+x-1$ with $d_i \in [0, 1-\varrho]$} (page \pageref{sec:16}).
\end{itemize}

\item Example in Section 8.2.
\begin{itemize}
\item Section \ref{sec:17} is {for the minimal polynomial $x^2+x-1$ with $d_i \in [0, 1/2-1/2 \varrho, 1-\varrho]$} (page \pageref{sec:17}).
\end{itemize}

\item Example 6.1
\begin{itemize}
\item Section \ref{sec:18} is {for the minimal polynomial $3 x-1$ with 
    $d_i = \frac{i}{3} \cdot \frac{2}{3}, i=0,\dots,3$} (page \pageref{sec:18}).
\item Section \ref{sec:19} is {for the minimal polynomial $3 x-1$ with 
    $d_i = \frac{i}{4} \cdot \frac{2}{3}, i=0,\dots,4$} (page \pageref{sec:19}).
\item Section \ref{sec:20} is {for the minimal polynomial $3 x-1$ with 
    $d_i = \frac{i}{5} \cdot \frac{2}{3}, i=0,\dots,5$} (page \pageref{sec:20}).
\end{itemize}
\end{itemize}

\section{Minimal polynomial $x^2+x-1$ with $d_i \in [0, 1-\varrho]$} 
\label{sec:1}
 
Consider $\varrho$, the root of $x^2+x-1$ and the maps $S_i(x) = \varrho x  + d_i$ with $d_{0} = 0$, and $d_{1} = 1-\varrho$.
The probabilities are uniform.
The reduced transition diagram has 6 reduced characteristic vectors.
The reduced characteristic vectors are:
\begin{itemize}
\item Reduced characteristic vector 1: $(1, (0))$ 
\item Reduced characteristic vector 2: $(\varrho, (0))$ 
\item Reduced characteristic vector 3: $(1-\varrho, (0, \varrho))$ 
\item Reduced characteristic vector 4: $(\varrho, (1-\varrho))$ 
\item Reduced characteristic vector 5: $(\varrho, (0, 1-\varrho))$ 
\item Reduced characteristic vector 6: $(2 \varrho-1, (1-\varrho))$ 
\end{itemize}
See Figure \ref{fig:Pic1} for the transition diagram.
\begin{figure}[H]
\includegraphics[scale=0.5]{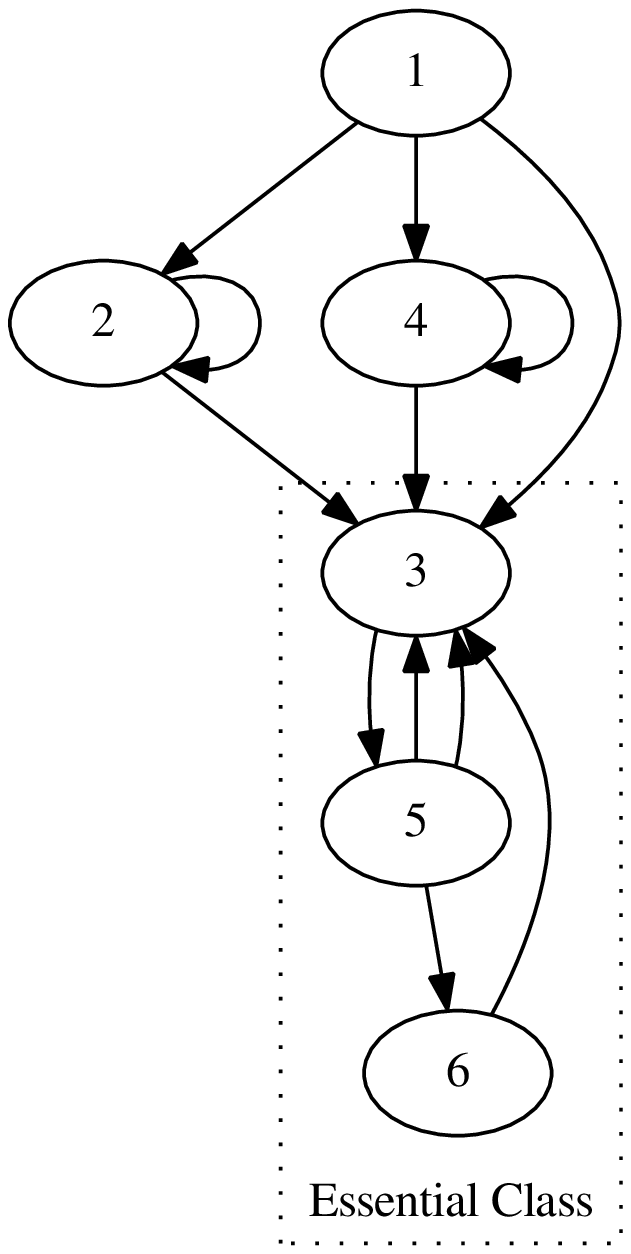}
\caption{$x^2+x-1$ with $d_i \in [0, 1-\varrho]$, Full set + Essential class}
\label{fig:Pic1}
\end{figure}
This has transition matrices:
\begin{align*}
T(1,2) & =  \left[ \begin {array}{c} 1\end {array} \right] & 
 T(1,3) & =  \left[ \begin {array}{cc} 1&1\end {array} \right] \\ 
T(1,4) & =  \left[ \begin {array}{c} 1\end {array} \right] & 
 T(2,2) & =  \left[ \begin {array}{c} 1\end {array} \right] \\ 
T(2,3) & =  \left[ \begin {array}{cc} 1&1\end {array} \right] & 
 T(3,5) & =  \left[ \begin {array}{cc} 1&0\\ 0&1\end {array} \right] \\ 
T(4,3) & =  \left[ \begin {array}{cc} 1&1\end {array} \right] & 
 T(4,4) & =  \left[ \begin {array}{c} 1\end {array} \right] \\ 
T(5,3) & =  \left[ \begin {array}{cc} 1&0\\ 1&1\end {array} \right] & 
 T(5,6) & =  \left[ \begin {array}{c} 1\\ 1\end {array} \right] \\ 
T(5,3) & =  \left[ \begin {array}{cc} 1&1\\ 0&1\end {array} \right] & 
 T(6,3) & =  \left[ \begin {array}{cc} 1&1\end {array} \right] \\ 
\end{align*}

The essential class is: [3, 5, 6].
The essential class is of positive type.
An example is the path [5, 6].
The essential class is not a simple loop.
This spectral range will include the interval $[1., 1.272019649]$.
The minimum comes from the loop $[3, 5, 3]$.
The maximum comes from the loop $[3, 5, 3, 5, 3]$.
These points will include points of local dimension [.9404200909, 1.440420090].
The Spectral Range is contained in the range $[1., 1.319507911]$.
The minimum comes from the total row sub-norm of length 10. 
The maximum comes from the total row sup-norm of length 10. 
These points will have local dimension contained in [.8642520535, 1.440420090].

There are 2 additional maximal loops.

Maximal Loop Class: [4].
The maximal loop class is a simple loop.
It's spectral radius is an isolated points of 1.
These points have local dimension 1.440420090.

Maximal Loop Class: [2].
The maximal loop class is a simple loop.
It's spectral radius is an isolated points of 1.
These points have local dimension 1.440420090.

\section{Minimal polynomial $3 x-1$ with $d_i \in [0, 2/9, 4/9, 2/3]$} 
\label{sec:2}
 
Consider $\varrho$, the root of $3 x-1$ and the maps $S_i(x) = \varrho x  + d_i$ with $d_{0} = 0$, $d_{1} = 2/9$, $d_{2} = 4/9$, and $d_{3} = 2/3$.
The probabilities are given by $p_{0} = 1/8$, $p_{1} = 3/8$, $p_{2} = 3/8$, and $p_{3} = 1/8$.
The reduced transition diagram has 5 reduced characteristic vectors.
The reduced characteristic vectors are:
\begin{itemize}
\item Reduced characteristic vector 1: $(1, (0))$ 
\item Reduced characteristic vector 2: $(2/3, (0))$ 
\item Reduced characteristic vector 3: $(1/3, (0, 2/3))$ 
\item Reduced characteristic vector 4: $(1/3, (1/3))$ 
\item Reduced characteristic vector 5: $(2/3, (1/3))$ 
\end{itemize}
See Figure \ref{fig:Pic2} for the transition diagram.
\begin{figure}[H]
\includegraphics[scale=0.5]{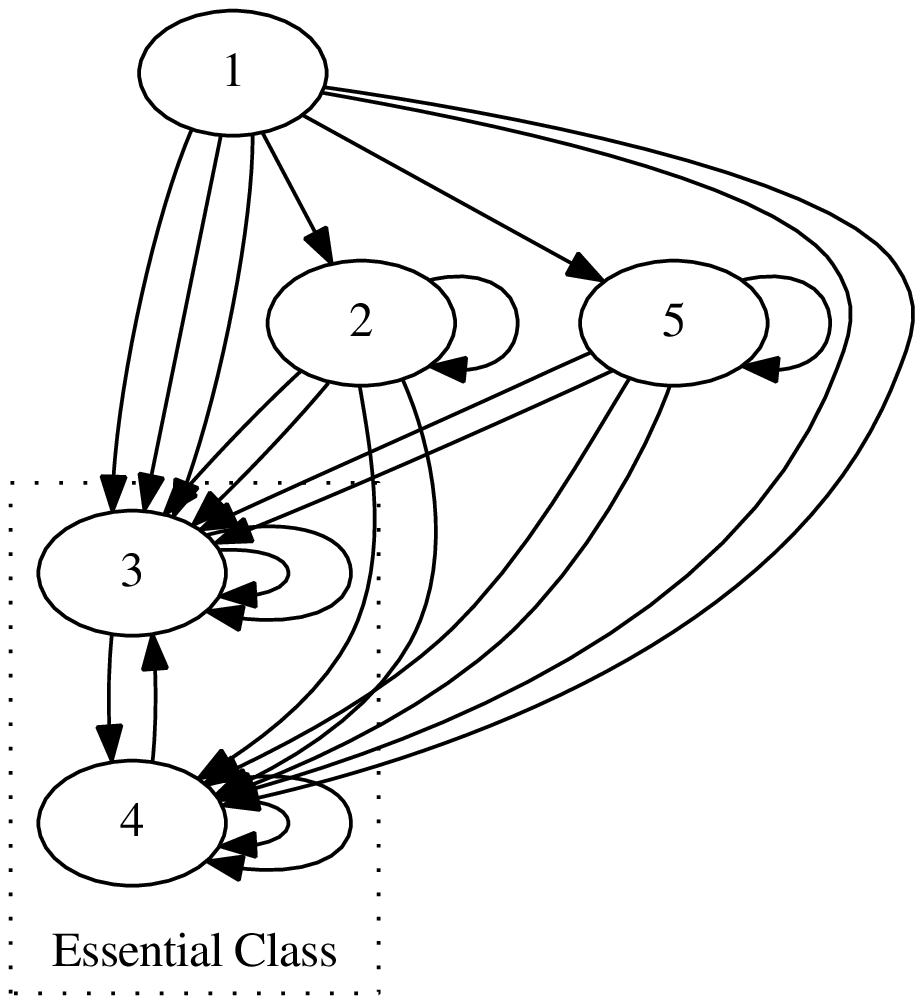}
\caption{$3 x-1$ with $d_i \in [0, 2/9, 4/9, 2/3]$, Full set + Essential class}
\label{fig:Pic2}
\end{figure}
This has transition matrices:
\begin{align*}
T(1,2) & =  \left[ \begin {array}{c} 1\end {array} \right] & 
 T(1,3) & =  \left[ \begin {array}{cc} 3&1\end {array} \right] \\ 
T(1,4) & =  \left[ \begin {array}{c} 3\end {array} \right] & 
 T(1,3) & =  \left[ \begin {array}{cc} 3&3\end {array} \right] \\ 
T(1,4) & =  \left[ \begin {array}{c} 3\end {array} \right] & 
 T(1,3) & =  \left[ \begin {array}{cc} 1&3\end {array} \right] \\ 
T(1,5) & =  \left[ \begin {array}{c} 1\end {array} \right] & 
 T(2,2) & =  \left[ \begin {array}{c} 1\end {array} \right] \\ 
T(2,3) & =  \left[ \begin {array}{cc} 3&1\end {array} \right] & 
 T(2,4) & =  \left[ \begin {array}{c} 3\end {array} \right] \\ 
T(2,3) & =  \left[ \begin {array}{cc} 3&3\end {array} \right] & 
 T(2,4) & =  \left[ \begin {array}{c} 3\end {array} \right] \\ 
T(3,3) & =  \left[ \begin {array}{cc} 1&0\\ 1&3\end {array} \right] & 
 T(3,4) & =  \left[ \begin {array}{c} 1\\ 1\end {array} \right] \\ 
T(3,3) & =  \left[ \begin {array}{cc} 3&1\\ 0&1\end {array} \right] & 
 T(4,4) & =  \left[ \begin {array}{c} 3\end {array} \right] \\ 
T(4,3) & =  \left[ \begin {array}{cc} 3&3\end {array} \right] & 
 T(4,4) & =  \left[ \begin {array}{c} 3\end {array} \right] \\ 
T(5,4) & =  \left[ \begin {array}{c} 3\end {array} \right] & 
 T(5,3) & =  \left[ \begin {array}{cc} 3&3\end {array} \right] \\ 
T(5,4) & =  \left[ \begin {array}{c} 3\end {array} \right] & 
 T(5,3) & =  \left[ \begin {array}{cc} 1&3\end {array} \right] \\ 
T(5,5) & =  \left[ \begin {array}{c} 1\end {array} \right] & 
 \end{align*}

The essential class is: [3, 4].
The essential class is of positive type.
An example is the path [3, 4].
The essential class is not a simple loop.
This spectral range will include the interval $[2.302775638, 3.]$.
The minimum comes from the loop $[3, 3, 3]$.
The maximum comes from the loop $[3, 3]$.
These points will include points of local dimension [.892789260, 1.133544891].
The Spectral Range is contained in the range $[2.260322470, 3.000000000]$.
The minimum comes from the total row sub-norm of length 5. 
The maximum comes from the total row sup-norm of length 5. 
These points will have local dimension contained in [.892789260, 1.150482354].

There are 2 additional maximal loops.

Maximal Loop Class: [5].
The maximal loop class is a simple loop.
It's spectral radius is an isolated points of 1.
These points have local dimension 1.892789260.

Maximal Loop Class: [2].
The maximal loop class is a simple loop.
It's spectral radius is an isolated points of 1.
These points have local dimension 1.892789260.

\section{Minimal polynomial $3 x-1$ with $d_i \in [0, 1/6, 1/3, 1/2, 2/3]$} 
\label{sec:3}
 
Consider $\varrho$, the root of $3 x-1$ and the maps $S_i(x) = \varrho x  + d_i$ with $d_{0} = 0$, $d_{1} = 1/6$, $d_{2} = 1/3$, $d_{3} = 1/2$, and $d_{4} = 2/3$.
The probabilities are given by $p_{0} = 1/16$, $p_{1} = 1/4$, $p_{2} = 3/8$, $p_{3} = 1/4$, and $p_{4} = 1/16$.
The reduced transition diagram has 4 reduced characteristic vectors.
The reduced characteristic vectors are:
\begin{itemize}
\item Reduced characteristic vector 1: $(1, (0))$ 
\item Reduced characteristic vector 2: $(1/2, (0))$ 
\item Reduced characteristic vector 3: $(1/2, (0, 1/2))$ 
\item Reduced characteristic vector 4: $(1/2, (1/2))$ 
\end{itemize}
See Figure \ref{fig:Pic3} for the transition diagram.
\begin{figure}[H]
\includegraphics[scale=0.5]{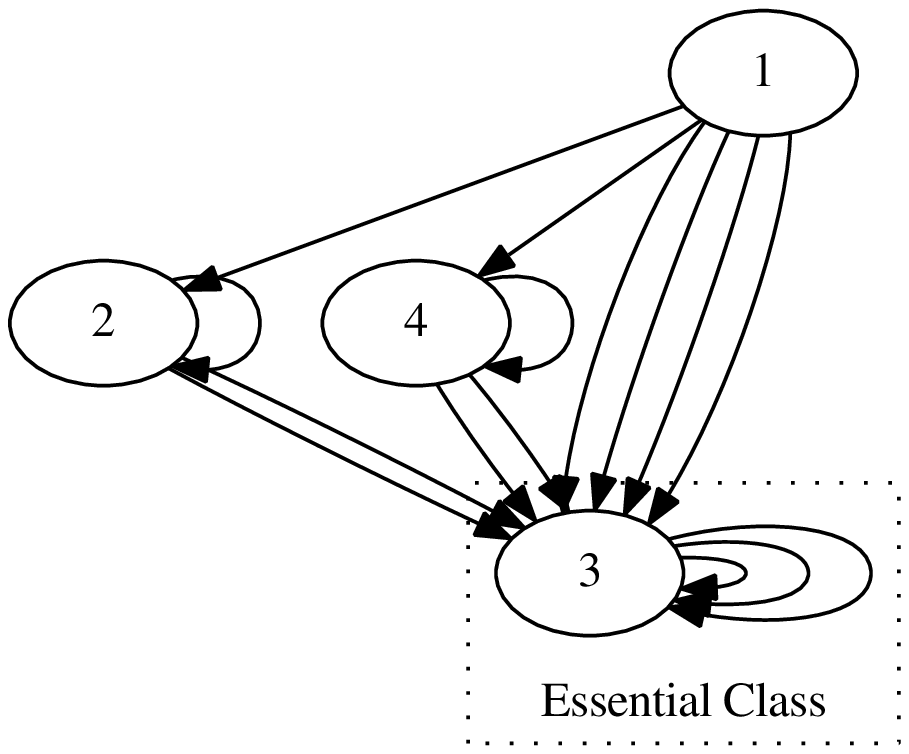}
\caption{$3 x-1$ with $d_i \in [0, 1/6, 1/3, 1/2, 2/3]$, Full set + Essential class}
\label{fig:Pic3}
\end{figure}
This has transition matrices:
\begin{align*}
T(1,2) & =  \left[ \begin {array}{c} 1\end {array} \right] & 
 T(1,3) & =  \left[ \begin {array}{cc} 4&1\end {array} \right] \\ 
T(1,3) & =  \left[ \begin {array}{cc} 6&4\end {array} \right] & 
 T(1,3) & =  \left[ \begin {array}{cc} 4&6\end {array} \right] \\ 
T(1,3) & =  \left[ \begin {array}{cc} 1&4\end {array} \right] & 
 T(1,4) & =  \left[ \begin {array}{c} 1\end {array} \right] \\ 
T(2,2) & =  \left[ \begin {array}{c} 1\end {array} \right] & 
 T(2,3) & =  \left[ \begin {array}{cc} 4&1\end {array} \right] \\ 
T(2,3) & =  \left[ \begin {array}{cc} 6&4\end {array} \right] & 
 T(3,3) & =  \left[ \begin {array}{cc} 1&0\\ 4&6\end {array} \right] \\ 
T(3,3) & =  \left[ \begin {array}{cc} 4&1\\ 1&4\end {array} \right] & 
 T(3,3) & =  \left[ \begin {array}{cc} 6&4\\ 0&1\end {array} \right] \\ 
T(4,3) & =  \left[ \begin {array}{cc} 4&6\end {array} \right] & 
 T(4,3) & =  \left[ \begin {array}{cc} 1&4\end {array} \right] \\ 
T(4,4) & =  \left[ \begin {array}{c} 1\end {array} \right] & 
 \end{align*}

The essential class is: [3].
The essential class is of positive type.
An example is the path [3, 3].
The essential class is not a simple loop.
This spectral range will include the interval $[5., 6.]$.
The minimum comes from the loop $[3, 3]$.
The maximum comes from the loop $[3, 3]$.
These points will include points of local dimension [.892789260, 1.058745493].
The Spectral Range is contained in the range $[5.000000000, 6.000000000]$.
The minimum comes from the total row sub-norm of length 5. 
The maximum comes from the total row sup-norm of length 5. 
These points will have local dimension contained in [.892789260, 1.058745493].

There are 2 additional maximal loops.

Maximal Loop Class: [4].
The maximal loop class is a simple loop.
It's spectral radius is an isolated points of 1.
These points have local dimension 2.523719013.

Maximal Loop Class: [2].
The maximal loop class is a simple loop.
It's spectral radius is an isolated points of 1.
These points have local dimension 2.523719013.

\section{Minimal polynomial $3 x-1$ with $d_i \in [0, 2/15, 4/15, 2/5, 8/15, 2/3]$} 
\label{sec:4}
 
Consider $\varrho$, the root of $3 x-1$ and the maps $S_i(x) = \varrho x  + d_i$ with $d_{0} = 0$, $d_{1} = 2/15$, $d_{2} = 4/15$, $d_{3} = 2/5$, $d_{4} = 8/15$, and $d_{5} = 2/3$.
The probabilities are given by $p_{0} = 1/32$, $p_{1} = 5/32$, $p_{2} = 5/16$, $p_{3} = 5/16$, $p_{4} = 5/32$, and $p_{5} = 1/32$.
The reduced transition diagram has 7 reduced characteristic vectors.
The reduced characteristic vectors are:
\begin{itemize}
\item Reduced characteristic vector 1: $(1, (0))$ 
\item Reduced characteristic vector 2: $(2/5, (0))$ 
\item Reduced characteristic vector 3: $(2/5, (0, 2/5))$ 
\item Reduced characteristic vector 4: $(1/5, (0, 2/5, 4/5))$ 
\item Reduced characteristic vector 5: $(1/5, (1/5, 3/5))$ 
\item Reduced characteristic vector 6: $(2/5, (1/5, 3/5))$ 
\item Reduced characteristic vector 7: $(2/5, (3/5))$ 
\end{itemize}
See Figure \ref{fig:Pic4} for the transition diagram.
\begin{figure}[H]
\includegraphics[scale=0.5]{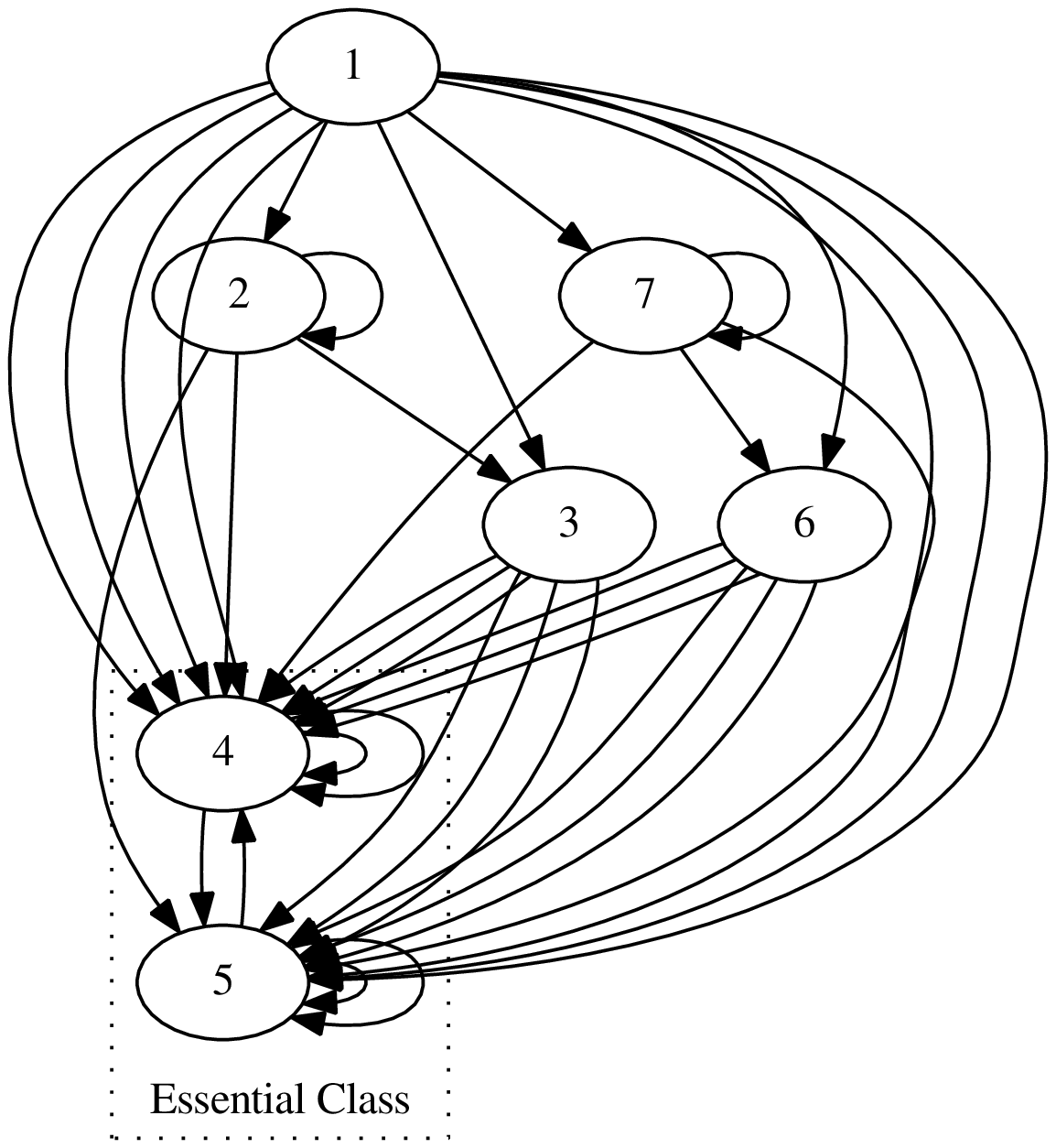}
\caption{$3 x-1$ with $d_i \in [0, 2/15, 4/15, 2/5, 8/15, 2/3]$, Full set + Essential class}
\label{fig:Pic4}
\end{figure}
This has transition matrices:
\begin{align*}
T(1,2) & =  \left[ \begin {array}{c} 1\end {array} \right] & 
 T(1,3) & =  \left[ \begin {array}{cc} 5&1\end {array} \right] \\ 
T(1,4) & =  \left[ \begin {array}{ccc} 10&5&1\end {array} \right] & 
 T(1,5) & =  \left[ \begin {array}{cc} 10&5\end {array} \right] \\ 
T(1,4) & =  \left[ \begin {array}{ccc} 10&10&5\end {array} \right] & 
 T(1,5) & =  \left[ \begin {array}{cc} 10&10\end {array} \right] \\ 
T(1,4) & =  \left[ \begin {array}{ccc} 5&10&10\end {array} \right] & 
 T(1,5) & =  \left[ \begin {array}{cc} 5&10\end {array} \right] \\ 
T(1,4) & =  \left[ \begin {array}{ccc} 1&5&10\end {array} \right] & 
 T(1,6) & =  \left[ \begin {array}{cc} 1&5\end {array} \right] \\ 
T(1,7) & =  \left[ \begin {array}{c} 1\end {array} \right] & 
 T(2,2) & =  \left[ \begin {array}{c} 1\end {array} \right] \\ 
T(2,3) & =  \left[ \begin {array}{cc} 5&1\end {array} \right] & 
 T(2,4) & =  \left[ \begin {array}{ccc} 10&5&1\end {array} \right] \\ 
T(2,5) & =  \left[ \begin {array}{cc} 10&5\end {array} \right] & 
 T(3,4) & =  \left[ \begin {array}{ccc} 1&0&0\\ 10&10&5\end {array} \right] \\ 
T(3,5) & =  \left[ \begin {array}{cc} 1&0\\ 10&10\end {array} \right] & 
 T(3,4) & =  \left[ \begin {array}{ccc} 5&1&0\\ 5&10&10\end {array} \right] \\ 
T(3,5) & =  \left[ \begin {array}{cc} 5&1\\ 5&10\end {array} \right] & 
 T(3,4) & =  \left[ \begin {array}{ccc} 10&5&1\\ 1&5&10\end {array} \right] \\ 
T(3,5) & =  \left[ \begin {array}{cc} 10&5\\ 1&5\end {array} \right] & 
 T(4,4) & =  \left[ \begin {array}{ccc} 1&0&0\\ 10&10&5\\ 0&1&5\end {array} \right] \\ 
T(4,5) & =  \left[ \begin {array}{cc} 1&0\\ 10&10\\ 0&1\end {array} \right] & 
 T(4,4) & =  \left[ \begin {array}{ccc} 5&1&0\\ 5&10&10\\ 0&0&1\end {array} \right] \\ 
T(5,5) & =  \left[ \begin {array}{cc} 5&1\\ 5&10\end {array} \right] & 
 T(5,4) & =  \left[ \begin {array}{ccc} 10&5&1\\ 1&5&10\end {array} \right] \\ 
T(5,5) & =  \left[ \begin {array}{cc} 10&5\\ 1&5\end {array} \right] & 
 T(6,5) & =  \left[ \begin {array}{cc} 5&1\\ 5&10\end {array} \right] \\ 
T(6,4) & =  \left[ \begin {array}{ccc} 10&5&1\\ 1&5&10\end {array} \right] & 
 T(6,5) & =  \left[ \begin {array}{cc} 10&5\\ 1&5\end {array} \right] \\ 
T(6,4) & =  \left[ \begin {array}{ccc} 10&10&5\\ 0&1&5\end {array} \right] & 
 T(6,5) & =  \left[ \begin {array}{cc} 10&10\\ 0&1\end {array} \right] \\ 
T(6,4) & =  \left[ \begin {array}{ccc} 5&10&10\\ 0&0&1\end {array} \right] & 
 T(7,5) & =  \left[ \begin {array}{cc} 5&10\end {array} \right] \\ 
T(7,4) & =  \left[ \begin {array}{ccc} 1&5&10\end {array} \right] & 
 T(7,6) & =  \left[ \begin {array}{cc} 1&5\end {array} \right] \\ 
T(7,7) & =  \left[ \begin {array}{c} 1\end {array} \right] & 
 \end{align*}

The essential class is: [4, 5].
The essential class is of positive type.
An example is the path [5, 4].
The essential class is not a simple loop.
This spectral range will include the interval $[10.34846923, 10.99217650]$.
The minimum comes from the loop $[5, 5, 5]$.
The maximum comes from the loop $[4, 4, 4]$.
These points will include points of local dimension [.972638047, 1.027566600].
The Spectral Range is contained in the range $[10.29826851, 10.99526948]$.
The minimum comes from the total row sub-norm of length 5. 
The maximum comes from the total row sup-norm of length 5. 
These points will have local dimension contained in [.972381959, 1.031992942].

There are 2 additional maximal loops.

Maximal Loop Class: [7].
The maximal loop class is a simple loop.
It's spectral radius is an isolated points of 1.
These points have local dimension 3.154648767.

Maximal Loop Class: [2].
The maximal loop class is a simple loop.
It's spectral radius is an isolated points of 1.
These points have local dimension 3.154648767.

\section{Minimal polynomial $3 x-1$ with $d_i \in [0, 1/9, 2/9, 1/3, 4/9, 5/9, 2/3]$} 
\label{sec:5}
 
Consider $\varrho$, the root of $3 x-1$ and the maps $S_i(x) = \varrho x  + d_i$ with $d_{0} = 0$, $d_{1} = 1/9$, $d_{2} = 2/9$, $d_{3} = 1/3$, $d_{4} = 4/9$, $d_{5} = 5/9$, and $d_{6} = 2/3$.
The probabilities are given by $p_{0} = 1/64$, $p_{1} = 3/32$, $p_{2} = 15/64$, $p_{3} = 5/16$, $p_{4} = 15/64$, $p_{5} = 3/32$, and $p_{6} = 1/64$.
The reduced transition diagram has 6 reduced characteristic vectors.
The reduced characteristic vectors are:
\begin{itemize}
\item Reduced characteristic vector 1: $(1, (0))$ 
\item Reduced characteristic vector 2: $(1/3, (0))$ 
\item Reduced characteristic vector 3: $(1/3, (0, 1/3))$ 
\item Reduced characteristic vector 4: $(1/3, (0, 1/3, 2/3))$ 
\item Reduced characteristic vector 5: $(1/3, (1/3, 2/3))$ 
\item Reduced characteristic vector 6: $(1/3, (2/3))$ 
\end{itemize}
See Figure \ref{fig:Pic5} for the transition diagram.
\begin{figure}[H]
\includegraphics[scale=0.5]{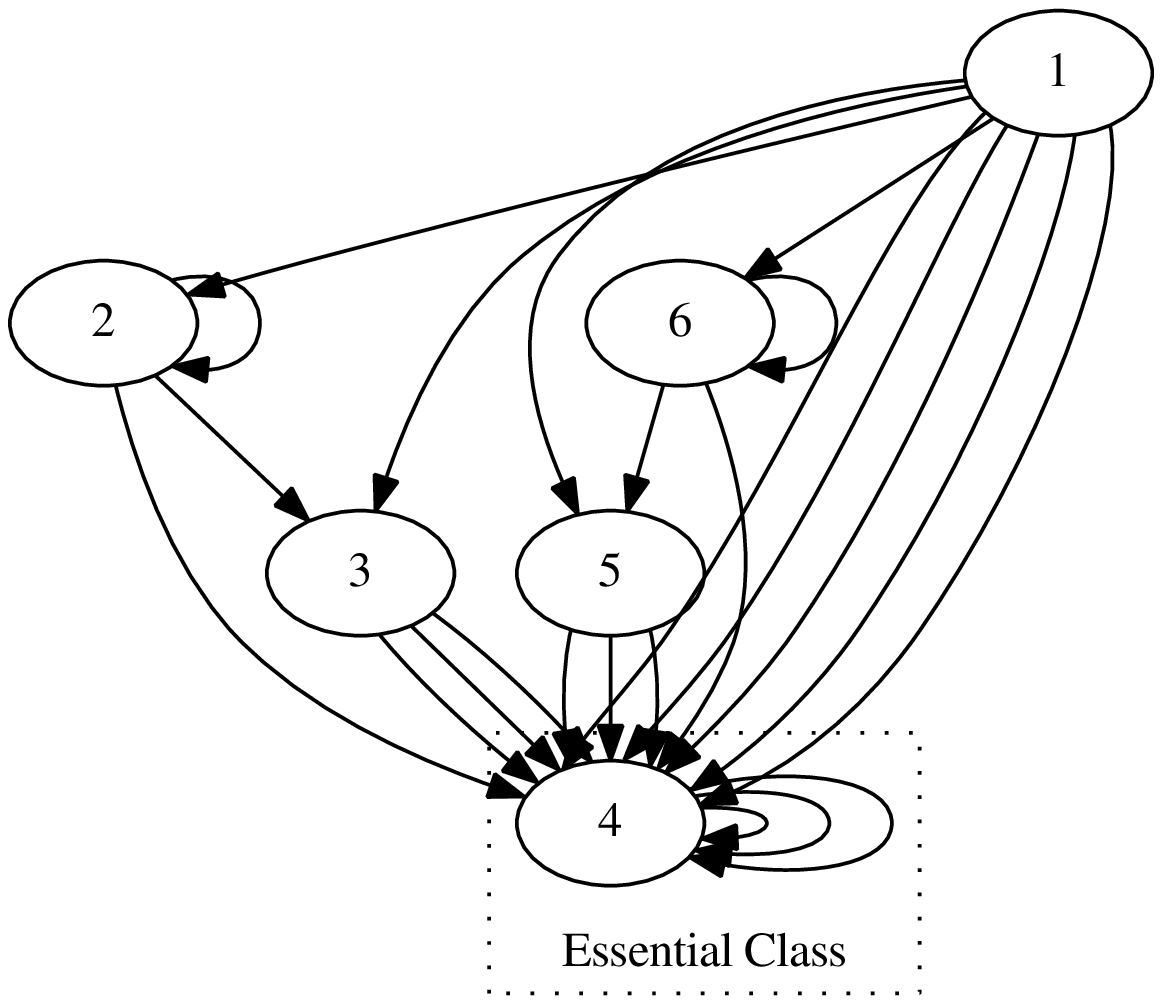}
\caption{$3 x-1$ with $d_i \in [0, 1/9, 2/9, 1/3, 4/9, 5/9, 2/3]$, Full set + Essential class}
\label{fig:Pic5}
\end{figure}
This has transition matrices:
\begin{align*}
T(1,2) & =  \left[ \begin {array}{c} 1\end {array} \right] & 
 T(1,3) & =  \left[ \begin {array}{cc} 6&1\end {array} \right] \\ 
T(1,4) & =  \left[ \begin {array}{ccc} 15&6&1\end {array} \right] & 
 T(1,4) & =  \left[ \begin {array}{ccc} 20&15&6\end {array} \right] \\ 
T(1,4) & =  \left[ \begin {array}{ccc} 15&20&15\end {array} \right] & 
 T(1,4) & =  \left[ \begin {array}{ccc} 6&15&20\end {array} \right] \\ 
T(1,4) & =  \left[ \begin {array}{ccc} 1&6&15\end {array} \right] & 
 T(1,5) & =  \left[ \begin {array}{cc} 1&6\end {array} \right] \\ 
T(1,6) & =  \left[ \begin {array}{c} 1\end {array} \right] & 
 T(2,2) & =  \left[ \begin {array}{c} 1\end {array} \right] \\ 
T(2,3) & =  \left[ \begin {array}{cc} 6&1\end {array} \right] & 
 T(2,4) & =  \left[ \begin {array}{ccc} 15&6&1\end {array} \right] \\ 
T(3,4) & =  \left[ \begin {array}{ccc} 1&0&0\\ 20&15&6\end {array} \right] & 
 T(3,4) & =  \left[ \begin {array}{ccc} 6&1&0\\ 15&20&15\end {array} \right] \\ 
T(3,4) & =  \left[ \begin {array}{ccc} 15&6&1\\ 6&15&20\end {array} \right] & 
 T(4,4) & =  \left[ \begin {array}{ccc} 1&0&0\\ 20&15&6\\ 1&6&15\end {array} \right] \\ 
T(4,4) & =  \left[ \begin {array}{ccc} 6&1&0\\ 15&20&15\\ 0&1&6\end {array} \right] & 
 T(4,4) & =  \left[ \begin {array}{ccc} 15&6&1\\ 6&15&20\\ 0&0&1\end {array} \right] \\ 
T(5,4) & =  \left[ \begin {array}{ccc} 20&15&6\\ 1&6&15\end {array} \right] & 
 T(5,4) & =  \left[ \begin {array}{ccc} 15&20&15\\ 0&1&6\end {array} \right] \\ 
T(5,4) & =  \left[ \begin {array}{ccc} 6&15&20\\ 0&0&1\end {array} \right] & 
 T(6,4) & =  \left[ \begin {array}{ccc} 1&6&15\end {array} \right] \\ 
T(6,5) & =  \left[ \begin {array}{cc} 1&6\end {array} \right] & 
 T(6,6) & =  \left[ \begin {array}{c} 1\end {array} \right] \\ 
\end{align*}

The essential class is: [4].
The essential class is of positive type.
An example is the path [4, 4, 4].
The essential class is not a simple loop.
This spectral range will include the interval $[21., 21.88819442]$.
The minimum comes from the loop $[4, 4]$.
The maximum comes from the loop $[4, 4]$.
These points will include points of local dimension [.976628124, 1.014334771].
The Spectral Range is contained in the range $[21.00000000, 21.91565240]$.
The minimum comes from the total row sub-norm of length 5. 
The maximum comes from the total row sup-norm of length 5. 
These points will have local dimension contained in [.975486976, 1.014334771].

There are 2 additional maximal loops.

Maximal Loop Class: [6].
The maximal loop class is a simple loop.
It's spectral radius is an isolated points of 1.
These points have local dimension 3.785578520.

Maximal Loop Class: [2].
The maximal loop class is a simple loop.
It's spectral radius is an isolated points of 1.
These points have local dimension 3.785578520.

\section{Minimal polynomial $3 x-1$ with $d_i \in [0, 2/21, 4/21, 2/7, 8/21, 10/21, 4/7, 2/3]$} 
\label{sec:6}
 
Consider $\varrho$, the root of $3 x-1$ and the maps $S_i(x) = \varrho x  + d_i$ with $d_{0} = 0$, $d_{1} = 2/21$, $d_{2} = 4/21$, $d_{3} = 2/7$, $d_{4} = 8/21$, $d_{5} = 10/21$, $d_{6} = 4/7$, and $d_{7} = 2/3$.
The probabilities are given by $p_{0} = 1/128$, $p_{1} = 7/128$, $p_{2} = 21/128$, $p_{3} = 35/128$, $p_{4} = 35/128$, $p_{5} = 21/128$, $p_{6} = 7/128$, and $p_{7} = 1/128$.
The reduced transition diagram has 9 reduced characteristic vectors.
The reduced characteristic vectors are:
\begin{itemize}
\item Reduced characteristic vector 1: $(1, (0))$ 
\item Reduced characteristic vector 2: $(2/7, (0))$ 
\item Reduced characteristic vector 3: $(2/7, (0, 2/7))$ 
\item Reduced characteristic vector 4: $(2/7, (0, 2/7, 4/7))$ 
\item Reduced characteristic vector 5: $(1/7, (0, 2/7, 4/7, 6/7))$ 
\item Reduced characteristic vector 6: $(1/7, (1/7, 3/7, 5/7))$ 
\item Reduced characteristic vector 7: $(2/7, (1/7, 3/7, 5/7))$ 
\item Reduced characteristic vector 8: $(2/7, (3/7, 5/7))$ 
\item Reduced characteristic vector 9: $(2/7, (5/7))$ 
\end{itemize}
See Figure \ref{fig:Pic6} for the transition diagram.
\begin{figure}[H]
\includegraphics[scale=0.5]{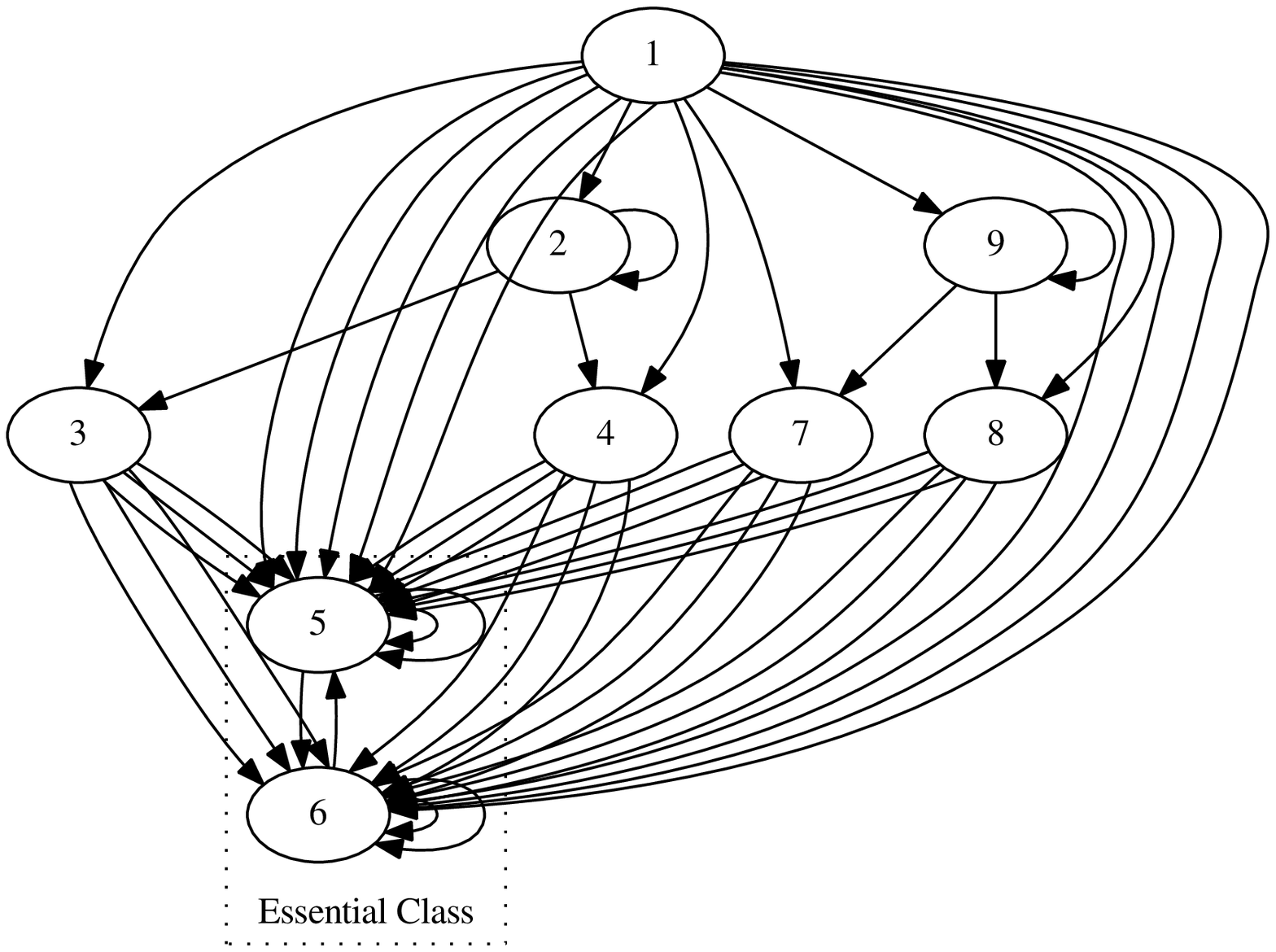}
\caption{$3 x-1$ with $d_i \in [0, 2/21, 4/21, 2/7, 8/21, 10/21, 4/7, 2/3]$, Full set + Essential class}
\label{fig:Pic6}
\end{figure}
This has transition matrices:
\begin{align*}
T(1,2) & =  \left[ \begin {array}{c} 1\end {array} \right] & 
 T(1,3) & =  \left[ \begin {array}{cc} 7&1\end {array} \right] \\ 
T(1,4) & =  \left[ \begin {array}{ccc} 21&7&1\end {array} \right] & 
 T(1,5) & =  \left[ \begin {array}{cccc} 35&21&7&1\end {array} \right] \\ 
T(1,6) & =  \left[ \begin {array}{ccc} 35&21&7\end {array} \right] & 
 T(1,5) & =  \left[ \begin {array}{cccc} 35&35&21&7\end {array} \right] \\ 
T(1,6) & =  \left[ \begin {array}{ccc} 35&35&21\end {array} \right] & 
 T(1,5) & =  \left[ \begin {array}{cccc} 21&35&35&21\end {array} \right] \\ 
T(1,6) & =  \left[ \begin {array}{ccc} 21&35&35\end {array} \right] & 
 T(1,5) & =  \left[ \begin {array}{cccc} 7&21&35&35\end {array} \right] \\ 
T(1,6) & =  \left[ \begin {array}{ccc} 7&21&35\end {array} \right] & 
 T(1,5) & =  \left[ \begin {array}{cccc} 1&7&21&35\end {array} \right] \\ 
T(1,7) & =  \left[ \begin {array}{ccc} 1&7&21\end {array} \right] & 
 T(1,8) & =  \left[ \begin {array}{cc} 1&7\end {array} \right] \\ 
T(1,9) & =  \left[ \begin {array}{c} 1\end {array} \right] & 
 T(2,2) & =  \left[ \begin {array}{c} 1\end {array} \right] \\ 
T(2,3) & =  \left[ \begin {array}{cc} 7&1\end {array} \right] & 
 T(2,4) & =  \left[ \begin {array}{ccc} 21&7&1\end {array} \right] \\ 
T(3,5) & =  \left[ \begin {array}{cccc} 1&0&0&0\\ 35&21&7&1\end {array} \right] & 
 T(3,6) & =  \left[ \begin {array}{ccc} 1&0&0\\ 35&21&7\end {array} \right] \\ 
T(3,5) & =  \left[ \begin {array}{cccc} 7&1&0&0\\ 35&35&21&7\end {array} \right] & 
 T(3,6) & =  \left[ \begin {array}{ccc} 7&1&0\\ 35&35&21\end {array} \right] \\ 
T(3,5) & =  \left[ \begin {array}{cccc} 21&7&1&0\\ 21&35&35&21\end {array} \right] & 
 T(3,6) & =  \left[ \begin {array}{ccc} 21&7&1\\ 21&35&35\end {array} \right] \\ 
T(4,5) & =  \left[ \begin {array}{cccc} 1&0&0&0\\ 35&21&7&1\\ 7&21&35&35\end {array} \right] & 
 T(4,6) & =  \left[ \begin {array}{ccc} 1&0&0\\ 35&21&7\\ 7&21&35\end {array} \right] \\ 
T(4,5) & =  \left[ \begin {array}{cccc} 7&1&0&0\\ 35&35&21&7\\ 1&7&21&35\end {array} \right] & 
 T(4,6) & =  \left[ \begin {array}{ccc} 7&1&0\\ 35&35&21\\ 1&7&21\end {array} \right] \\ 
T(4,5) & =  \left[ \begin {array}{cccc} 21&7&1&0\\ 21&35&35&21\\ 0&1&7&21\end {array} \right] & 
 T(4,6) & =  \left[ \begin {array}{ccc} 21&7&1\\ 21&35&35\\ 0&1&7\end {array} \right] \\ 
T(5,5) & =  \left[ \begin {array}{cccc} 1&0&0&0\\ 35&21&7&1\\ 7&21&35&35\\ 0&0&1&7\end {array} \right] & 
 T(5,6) & =  \left[ \begin {array}{ccc} 1&0&0\\ 35&21&7\\ 7&21&35\\ 0&0&1\end {array} \right] \\ 
T(5,5) & =  \left[ \begin {array}{cccc} 7&1&0&0\\ 35&35&21&7\\ 1&7&21&35\\ 0&0&0&1\end {array} \right] & 
 T(6,6) & =  \left[ \begin {array}{ccc} 7&1&0\\ 35&35&21\\ 1&7&21\end {array} \right] \\ 
T(6,5) & =  \left[ \begin {array}{cccc} 21&7&1&0\\ 21&35&35&21\\ 0&1&7&21\end {array} \right] & 
 T(6,6) & =  \left[ \begin {array}{ccc} 21&7&1\\ 21&35&35\\ 0&1&7\end {array} \right] \\ 
T(7,6) & =  \left[ \begin {array}{ccc} 7&1&0\\ 35&35&21\\ 1&7&21\end {array} \right] & 
 T(7,5) & =  \left[ \begin {array}{cccc} 21&7&1&0\\ 21&35&35&21\\ 0&1&7&21\end {array} \right] \\ 
T(7,6) & =  \left[ \begin {array}{ccc} 21&7&1\\ 21&35&35\\ 0&1&7\end {array} \right] & 
 T(7,5) & =  \left[ \begin {array}{cccc} 35&21&7&1\\ 7&21&35&35\\ 0&0&1&7\end {array} \right] \\ 
T(7,6) & =  \left[ \begin {array}{ccc} 35&21&7\\ 7&21&35\\ 0&0&1\end {array} \right] & 
 T(7,5) & =  \left[ \begin {array}{cccc} 35&35&21&7\\ 1&7&21&35\\ 0&0&0&1\end {array} \right] \\ 
T(8,6) & =  \left[ \begin {array}{ccc} 35&35&21\\ 1&7&21\end {array} \right] & 
 T(8,5) & =  \left[ \begin {array}{cccc} 21&35&35&21\\ 0&1&7&21\end {array} \right] \\ 
T(8,6) & =  \left[ \begin {array}{ccc} 21&35&35\\ 0&1&7\end {array} \right] & 
 T(8,5) & =  \left[ \begin {array}{cccc} 7&21&35&35\\ 0&0&1&7\end {array} \right] \\ 
T(8,6) & =  \left[ \begin {array}{ccc} 7&21&35\\ 0&0&1\end {array} \right] & 
 T(8,5) & =  \left[ \begin {array}{cccc} 1&7&21&35\\ 0&0&0&1\end {array} \right] \\ 
T(9,7) & =  \left[ \begin {array}{ccc} 1&7&21\end {array} \right] & 
 T(9,8) & =  \left[ \begin {array}{cc} 1&7\end {array} \right] \\ 
T(9,9) & =  \left[ \begin {array}{c} 1\end {array} \right] & 
 \end{align*}

The essential class is: [5, 6].
The essential class is of positive type.
An example is the path [6, 5, 5].
The essential class is not a simple loop.
This spectral range will include the interval $[42.38365876, 42.95601552]$.
The minimum comes from the loop $[5, 5, 5]$.
The maximum comes from the loop $[6, 6, 6]$.
These points will include points of local dimension [.993847946, 1.006057728].
The Spectral Range is contained in the range $[42.32331104, 42.96883688]$.
The minimum comes from the total row sub-norm of length 5. 
The maximum comes from the total row sup-norm of length 5. 
These points will have local dimension contained in [.993576302, 1.007354690].

There are 2 additional maximal loops.

Maximal Loop Class: [9].
The maximal loop class is a simple loop.
It's spectral radius is an isolated points of 1.
These points have local dimension 4.416508274.

Maximal Loop Class: [2].
The maximal loop class is a simple loop.
It's spectral radius is an isolated points of 1.
These points have local dimension 4.416508274.

\section{Minimal polynomial $3 x-1$ with $d_i \in [0, 1/12, 1/6, 1/4, 1/3, 5/12, 1/2, 7/12, 2/3]$} 
\label{sec:7}
 
Consider $\varrho$, the root of $3 x-1$ and the maps $S_i(x) = \varrho x  + d_i$ with $d_{0} = 0$, $d_{1} = 1/12$, $d_{2} = 1/6$, $d_{3} = 1/4$, $d_{4} = 1/3$, $d_{5} = 5/12$, $d_{6} = 1/2$, $d_{7} = 7/12$, and $d_{8} = 2/3$.
The probabilities are given by $p_{0} = 1/256$, $p_{1} = 1/32$, $p_{2} = 7/64$, $p_{3} = 7/32$, $p_{4} = 35/128$, $p_{5} = 7/32$, $p_{6} = 7/64$, $p_{7} = 1/32$, and $p_{8} = 1/256$.
The reduced transition diagram has 8 reduced characteristic vectors.
The reduced characteristic vectors are:
\begin{itemize}
\item Reduced characteristic vector 1: $(1, (0))$ 
\item Reduced characteristic vector 2: $(1/4, (0))$ 
\item Reduced characteristic vector 3: $(1/4, (0, 1/4))$ 
\item Reduced characteristic vector 4: $(1/4, (0, 1/4, 1/2))$ 
\item Reduced characteristic vector 5: $(1/4, (0, 1/4, 1/2, 3/4))$ 
\item Reduced characteristic vector 6: $(1/4, (1/4, 1/2, 3/4))$ 
\item Reduced characteristic vector 7: $(1/4, (1/2, 3/4))$ 
\item Reduced characteristic vector 8: $(1/4, (3/4))$ 
\end{itemize}
See Figure \ref{fig:Pic7} for the transition diagram.
\begin{figure}[H]
\includegraphics[scale=0.5]{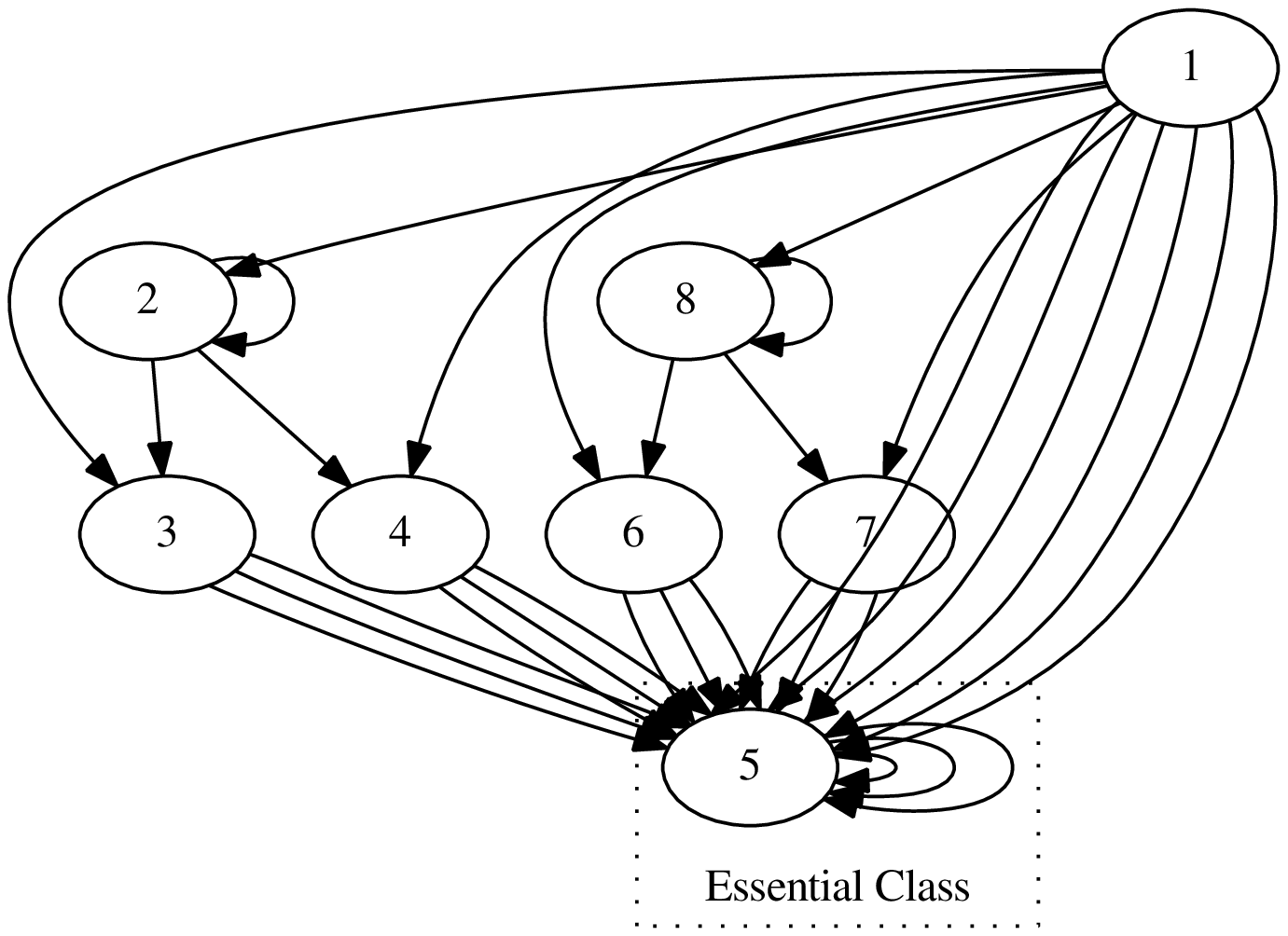}
\caption{$3 x-1$ with $d_i \in [0, 1/12, 1/6, 1/4, 1/3, 5/12, 1/2, 7/12, 2/3]$, Full set + Essential class}
\label{fig:Pic7}
\end{figure}
This has transition matrices:
\begin{align*}
T(1,2) & =  \left[ \begin {array}{c} 1\end {array} \right] & 
 T(1,3) & =  \left[ \begin {array}{cc} 8&1\end {array} \right] \\ 
T(1,4) & =  \left[ \begin {array}{ccc} 28&8&1\end {array} \right] & 
 T(1,5) & =  \left[ \begin {array}{cccc} 56&28&8&1\end {array} \right] \\ 
T(1,5) & =  \left[ \begin {array}{cccc} 70&56&28&8\end {array} \right] & 
 T(1,5) & =  \left[ \begin {array}{cccc} 56&70&56&28\end {array} \right] \\ 
T(1,5) & =  \left[ \begin {array}{cccc} 28&56&70&56\end {array} \right] & 
 T(1,5) & =  \left[ \begin {array}{cccc} 8&28&56&70\end {array} \right] \\ 
T(1,5) & =  \left[ \begin {array}{cccc} 1&8&28&56\end {array} \right] & 
 T(1,6) & =  \left[ \begin {array}{ccc} 1&8&28\end {array} \right] \\ 
T(1,7) & =  \left[ \begin {array}{cc} 1&8\end {array} \right] & 
 T(1,8) & =  \left[ \begin {array}{c} 1\end {array} \right] \\ 
T(2,2) & =  \left[ \begin {array}{c} 1\end {array} \right] & 
 T(2,3) & =  \left[ \begin {array}{cc} 8&1\end {array} \right] \\ 
T(2,4) & =  \left[ \begin {array}{ccc} 28&8&1\end {array} \right] & 
 T(3,5) & =  \left[ \begin {array}{cccc} 1&0&0&0\\ 56&28&8&1\end {array} \right] \\ 
T(3,5) & =  \left[ \begin {array}{cccc} 8&1&0&0\\ 70&56&28&8\end {array} \right] & 
 T(3,5) & =  \left[ \begin {array}{cccc} 28&8&1&0\\ 56&70&56&28\end {array} \right] \\ 
T(4,5) & =  \left[ \begin {array}{cccc} 1&0&0&0\\ 56&28&8&1\\ 28&56&70&56\end {array} \right] & 
 T(4,5) & =  \left[ \begin {array}{cccc} 8&1&0&0\\ 70&56&28&8\\ 8&28&56&70\end {array} \right] \\ 
T(4,5) & =  \left[ \begin {array}{cccc} 28&8&1&0\\ 56&70&56&28\\ 1&8&28&56\end {array} \right] & 
 T(5,5) & =  \left[ \begin {array}{cccc} 1&0&0&0\\ 56&28&8&1\\ 28&56&70&56\\ 0&1&8&28\end {array} \right] \\ 
T(5,5) & =  \left[ \begin {array}{cccc} 8&1&0&0\\ 70&56&28&8\\ 8&28&56&70\\ 0&0&1&8\end {array} \right] & 
 T(5,5) & =  \left[ \begin {array}{cccc} 28&8&1&0\\ 56&70&56&28\\ 1&8&28&56\\ 0&0&0&1\end {array} \right] \\ 
T(6,5) & =  \left[ \begin {array}{cccc} 56&28&8&1\\ 28&56&70&56\\ 0&1&8&28\end {array} \right] & 
 T(6,5) & =  \left[ \begin {array}{cccc} 70&56&28&8\\ 8&28&56&70\\ 0&0&1&8\end {array} \right] \\ 
T(6,5) & =  \left[ \begin {array}{cccc} 56&70&56&28\\ 1&8&28&56\\ 0&0&0&1\end {array} \right] & 
 T(7,5) & =  \left[ \begin {array}{cccc} 28&56&70&56\\ 0&1&8&28\end {array} \right] \\ 
T(7,5) & =  \left[ \begin {array}{cccc} 8&28&56&70\\ 0&0&1&8\end {array} \right] & 
 T(7,5) & =  \left[ \begin {array}{cccc} 1&8&28&56\\ 0&0&0&1\end {array} \right] \\ 
T(8,6) & =  \left[ \begin {array}{ccc} 1&8&28\end {array} \right] & 
 T(8,7) & =  \left[ \begin {array}{cc} 1&8\end {array} \right] \\ 
T(8,8) & =  \left[ \begin {array}{c} 1\end {array} \right] & 
 \end{align*}

The essential class is: [5].
The essential class is of positive type.
An example is the path [5, 5, 5].
The essential class is not a simple loop.
This spectral range will include the interval $[85.01281841, 85.78015987]$.
The minimum comes from the loop $[5, 5]$.
The maximum comes from the loop $[5, 5]$.
These points will include points of local dimension [.995246196, 1.003425326].
The Spectral Range is contained in the range $[85.01002497, 85.83207109]$.
The minimum comes from the total row sub-norm of length 5. 
The maximum comes from the total row sup-norm of length 5. 
These points will have local dimension contained in [.994695517, 1.003455237].

There are 2 additional maximal loops.

Maximal Loop Class: [8].
The maximal loop class is a simple loop.
It's spectral radius is an isolated points of 1.
These points have local dimension 5.047438027.

Maximal Loop Class: [2].
The maximal loop class is a simple loop.
It's spectral radius is an isolated points of 1.
These points have local dimension 5.047438027.

\section{Minimal polynomial $3 x-1$ with $d_i \in [0, 2/27, 4/27, 2/9, 8/27, 10/27, 4/9, 14/27, 16/27, 2/3]$} 
\label{sec:8}
 
Consider $\varrho$, the root of $3 x-1$ and the maps $S_i(x) = \varrho x  + d_i$ with $d_{0} = 0$, $d_{1} = 2/27$, $d_{2} = 4/27$, $d_{3} = 2/9$, $d_{4} = 8/27$, $d_{5} = 10/27$, $d_{6} = 4/9$, $d_{7} = 14/27$, $d_{8} = 16/27$, and $d_{9} = 2/3$.
The probabilities are given by $p_{0} = 1/512$, $p_{1} = 9/512$, $p_{2} = 9/128$, $p_{3} = 21/128$, $p_{4} = 63/256$, $p_{5} = 63/256$, $p_{6} = 21/128$, $p_{7} = 9/128$, $p_{8} = 9/512$, and $p_{9} = 1/512$.
The reduced transition diagram has 11 reduced characteristic vectors.
The reduced characteristic vectors are:
\begin{itemize}
\item Reduced characteristic vector 1: $(1, (0))$ 
\item Reduced characteristic vector 2: $(2/9, (0))$ 
\item Reduced characteristic vector 3: $(2/9, (0, 2/9))$ 
\item Reduced characteristic vector 4: $(2/9, (0, 2/9, 4/9))$ 
\item Reduced characteristic vector 5: $(2/9, (0, 2/9, 4/9, 2/3))$ 
\item Reduced characteristic vector 6: $(1/9, (0, 2/9, 4/9, 2/3, 8/9))$ 
\item Reduced characteristic vector 7: $(1/9, (1/9, 1/3, 5/9, 7/9))$ 
\item Reduced characteristic vector 8: $(2/9, (1/9, 1/3, 5/9, 7/9))$ 
\item Reduced characteristic vector 9: $(2/9, (1/3, 5/9, 7/9))$ 
\item Reduced characteristic vector 10: $(2/9, (5/9, 7/9))$ 
\item Reduced characteristic vector 11: $(2/9, (7/9))$ 
\end{itemize}
See Figure \ref{fig:Pic8} for the transition diagram.
\begin{figure}[H]
\includegraphics[scale=0.5]{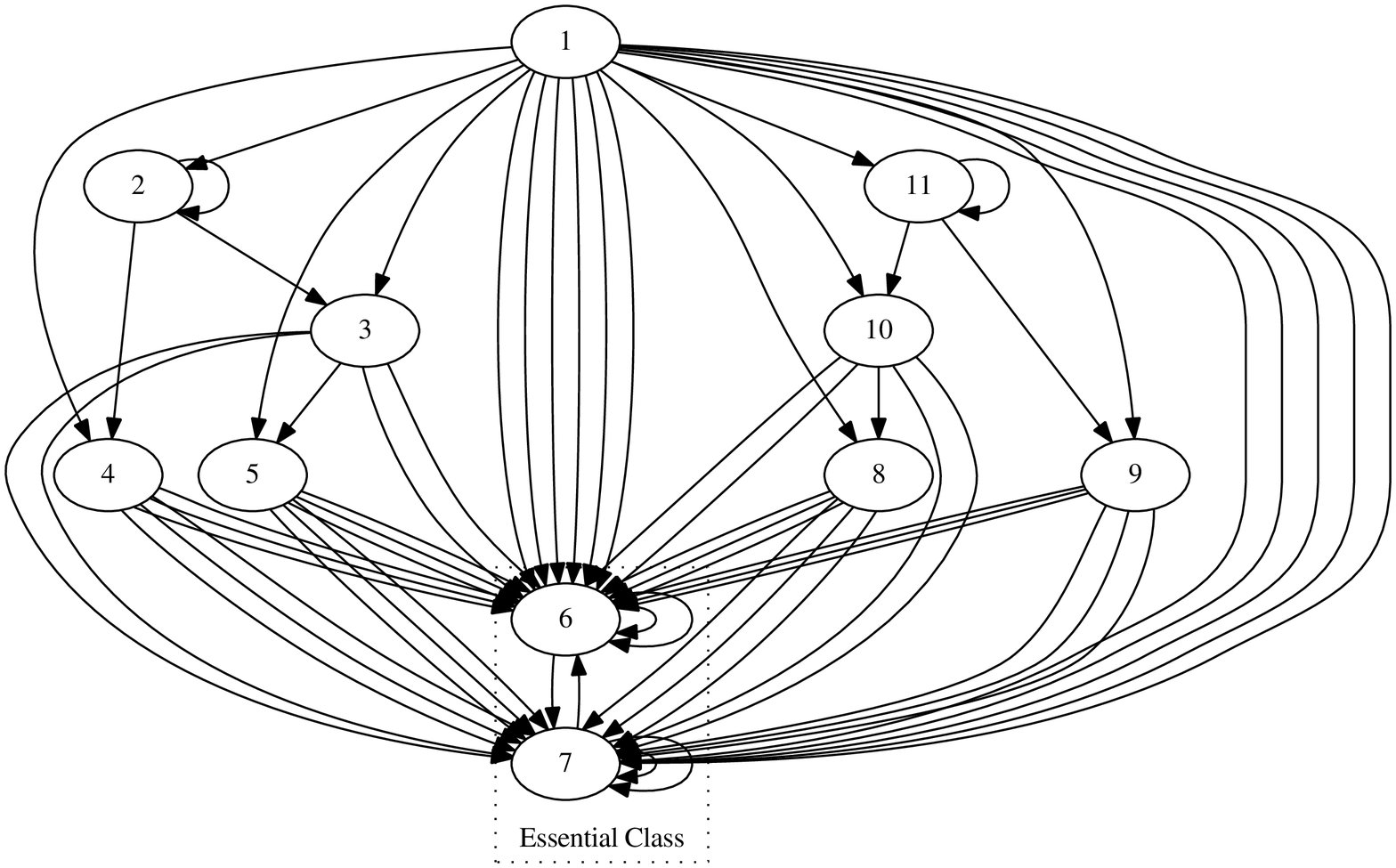}
\caption{$3 x-1$ with $d_i \in [0, 2/27, 4/27, 2/9, 8/27, 10/27, 4/9, 14/27, 16/27, 2/3]$, Full set + Essential class}
\label{fig:Pic8}
\end{figure}
This has transition matrices:
\begin{align*}
T(1,2) & =  \left[ \begin {array}{c} 1\end {array} \right] & 
 T(1,3) & =  \left[ \begin {array}{cc} 9&1\end {array} \right] \\ 
T(1,4) & =  \left[ \begin {array}{ccc} 36&9&1\end {array} \right] & 
 T(1,5) & =  \left[ \begin {array}{cccc} 84&36&9&1\end {array} \right] \\ 
T(1,6) & =  \left[ \begin {array}{ccccc} 126&84&36&9&1\end {array} \right] & 
 T(1,7) & =  \left[ \begin {array}{cccc} 126&84&36&9\end {array} \right] \\ 
T(1,6) & =  \left[ \begin {array}{ccccc} 126&126&84&36&9\end {array} \right] & 
 T(1,7) & =  \left[ \begin {array}{cccc} 126&126&84&36\end {array} \right] \\ 
T(1,6) & =  \left[ \begin {array}{ccccc} 84&126&126&84&36\end {array} \right] & 
 T(1,7) & =  \left[ \begin {array}{cccc} 84&126&126&84\end {array} \right] \\ 
T(1,6) & =  \left[ \begin {array}{ccccc} 36&84&126&126&84\end {array} \right] & 
 T(1,7) & =  \left[ \begin {array}{cccc} 36&84&126&126\end {array} \right] \\ 
T(1,6) & =  \left[ \begin {array}{ccccc} 9&36&84&126&126\end {array} \right] & 
 T(1,7) & =  \left[ \begin {array}{cccc} 9&36&84&126\end {array} \right] \\ 
T(1,6) & =  \left[ \begin {array}{ccccc} 1&9&36&84&126\end {array} \right] & 
 T(1,8) & =  \left[ \begin {array}{cccc} 1&9&36&84\end {array} \right] \\ 
T(1,9) & =  \left[ \begin {array}{ccc} 1&9&36\end {array} \right] & 
 T(1,10) & =  \left[ \begin {array}{cc} 1&9\end {array} \right] \\ 
T(1,11) & =  \left[ \begin {array}{c} 1\end {array} \right] & 
 T(2,2) & =  \left[ \begin {array}{c} 1\end {array} \right] \\ 
T(2,3) & =  \left[ \begin {array}{cc} 9&1\end {array} \right] & 
 T(2,4) & =  \left[ \begin {array}{ccc} 36&9&1\end {array} \right] \\ 
T(3,5) & =  \left[ \begin {array}{cccc} 1&0&0&0\\ 84&36&9&1\end {array} \right] & 
 T(3,6) & =  \left[ \begin {array}{ccccc} 9&1&0&0&0\\ 126&84&36&9&1\end {array} \right] \\ 
T(3,7) & =  \left[ \begin {array}{cccc} 9&1&0&0\\ 126&84&36&9\end {array} \right] & 
 T(3,6) & =  \left[ \begin {array}{ccccc} 36&9&1&0&0\\ 126&126&84&36&9\end {array} \right] \\ 
T(3,7) & =  \left[ \begin {array}{cccc} 36&9&1&0\\ 126&126&84&36\end {array} \right] & 
 T(4,6) & =  \left[ \begin {array}{ccccc} 1&0&0&0&0\\ 84&36&9&1&0\\ 84&126&126&84&36\end {array} \right] \\ 
T(4,7) & =  \left[ \begin {array}{cccc} 1&0&0&0\\ 84&36&9&1\\ 84&126&126&84\end {array} \right] & 
 T(4,6) & =  \left[ \begin {array}{ccccc} 9&1&0&0&0\\ 126&84&36&9&1\\ 36&84&126&126&84\end {array} \right] \\ 
T(4,7) & =  \left[ \begin {array}{cccc} 9&1&0&0\\ 126&84&36&9\\ 36&84&126&126\end {array} \right] & 
 T(4,6) & =  \left[ \begin {array}{ccccc} 36&9&1&0&0\\ 126&126&84&36&9\\ 9&36&84&126&126\end {array} \right] \\ 
T(4,7) & =  \left[ \begin {array}{cccc} 36&9&1&0\\ 126&126&84&36\\ 9&36&84&126\end {array} \right] & 
 T(5,6) & =  \left[ \begin {array}{ccccc} 1&0&0&0&0\\ 84&36&9&1&0\\ 84&126&126&84&36\\ 1&9&36&84&126\end {array} \right] \\ 
T(5,7) & =  \left[ \begin {array}{cccc} 1&0&0&0\\ 84&36&9&1\\ 84&126&126&84\\ 1&9&36&84\end {array} \right] & 
 T(5,6) & =  \left[ \begin {array}{ccccc} 9&1&0&0&0\\ 126&84&36&9&1\\ 36&84&126&126&84\\ 0&1&9&36&84\end {array} \right] \\ 
T(5,7) & =  \left[ \begin {array}{cccc} 9&1&0&0\\ 126&84&36&9\\ 36&84&126&126\\ 0&1&9&36\end {array} \right] & 
 T(5,6) & =  \left[ \begin {array}{ccccc} 36&9&1&0&0\\ 126&126&84&36&9\\ 9&36&84&126&126\\ 0&0&1&9&36\end {array} \right] \\ 
T(5,7) & =  \left[ \begin {array}{cccc} 36&9&1&0\\ 126&126&84&36\\ 9&36&84&126\\ 0&0&1&9\end {array} \right] & 
 T(6,6) & =  \left[ \begin {array}{ccccc} 1&0&0&0&0\\ 84&36&9&1&0\\ 84&126&126&84&36\\ 1&9&36&84&126\\ 0&0&0&1&9\end {array} \right] \\ 
T(6,7) & =  \left[ \begin {array}{cccc} 1&0&0&0\\ 84&36&9&1\\ 84&126&126&84\\ 1&9&36&84\\ 0&0&0&1\end {array} \right] & 
 T(6,6) & =  \left[ \begin {array}{ccccc} 9&1&0&0&0\\ 126&84&36&9&1\\ 36&84&126&126&84\\ 0&1&9&36&84\\ 0&0&0&0&1\end {array} \right] \\ 
T(7,7) & =  \left[ \begin {array}{cccc} 9&1&0&0\\ 126&84&36&9\\ 36&84&126&126\\ 0&1&9&36\end {array} \right] & 
 T(7,6) & =  \left[ \begin {array}{ccccc} 36&9&1&0&0\\ 126&126&84&36&9\\ 9&36&84&126&126\\ 0&0&1&9&36\end {array} \right] \\ 
T(7,7) & =  \left[ \begin {array}{cccc} 36&9&1&0\\ 126&126&84&36\\ 9&36&84&126\\ 0&0&1&9\end {array} \right] & 
 T(8,7) & =  \left[ \begin {array}{cccc} 9&1&0&0\\ 126&84&36&9\\ 36&84&126&126\\ 0&1&9&36\end {array} \right] \\ 
T(8,6) & =  \left[ \begin {array}{ccccc} 36&9&1&0&0\\ 126&126&84&36&9\\ 9&36&84&126&126\\ 0&0&1&9&36\end {array} \right] & 
 T(8,7) & =  \left[ \begin {array}{cccc} 36&9&1&0\\ 126&126&84&36\\ 9&36&84&126\\ 0&0&1&9\end {array} \right] \\ 
T(8,6) & =  \left[ \begin {array}{ccccc} 84&36&9&1&0\\ 84&126&126&84&36\\ 1&9&36&84&126\\ 0&0&0&1&9\end {array} \right] & 
 T(8,7) & =  \left[ \begin {array}{cccc} 84&36&9&1\\ 84&126&126&84\\ 1&9&36&84\\ 0&0&0&1\end {array} \right] \\ 
T(8,6) & =  \left[ \begin {array}{ccccc} 126&84&36&9&1\\ 36&84&126&126&84\\ 0&1&9&36&84\\ 0&0&0&0&1\end {array} \right] & 
 T(9,7) & =  \left[ \begin {array}{cccc} 126&84&36&9\\ 36&84&126&126\\ 0&1&9&36\end {array} \right] \\ 
T(9,6) & =  \left[ \begin {array}{ccccc} 126&126&84&36&9\\ 9&36&84&126&126\\ 0&0&1&9&36\end {array} \right] & 
 T(9,7) & =  \left[ \begin {array}{cccc} 126&126&84&36\\ 9&36&84&126\\ 0&0&1&9\end {array} \right] \\ 
T(9,6) & =  \left[ \begin {array}{ccccc} 84&126&126&84&36\\ 1&9&36&84&126\\ 0&0&0&1&9\end {array} \right] & 
 T(9,7) & =  \left[ \begin {array}{cccc} 84&126&126&84\\ 1&9&36&84\\ 0&0&0&1\end {array} \right] \\ 
T(9,6) & =  \left[ \begin {array}{ccccc} 36&84&126&126&84\\ 0&1&9&36&84\\ 0&0&0&0&1\end {array} \right] & 
 T(10,7) & =  \left[ \begin {array}{cccc} 36&84&126&126\\ 0&1&9&36\end {array} \right] \\ 
T(10,6) & =  \left[ \begin {array}{ccccc} 9&36&84&126&126\\ 0&0&1&9&36\end {array} \right] & 
 T(10,7) & =  \left[ \begin {array}{cccc} 9&36&84&126\\ 0&0&1&9\end {array} \right] \\ 
T(10,6) & =  \left[ \begin {array}{ccccc} 1&9&36&84&126\\ 0&0&0&1&9\end {array} \right] & 
 T(10,8) & =  \left[ \begin {array}{cccc} 1&9&36&84\\ 0&0&0&1\end {array} \right] \\ 
T(11,9) & =  \left[ \begin {array}{ccc} 1&9&36\end {array} \right] & 
 T(11,10) & =  \left[ \begin {array}{cc} 1&9\end {array} \right] \\ 
T(11,11) & =  \left[ \begin {array}{c} 1\end {array} \right] & 
 \end{align*}

The essential class is: [6, 7].
The essential class is of positive type.
An example is the path [7, 6, 6].
The essential class is not a simple loop.
This spectral range will include the interval $[170.4173766, 170.9186289]$.
The minimum comes from the loop $[7, 7, 7]$.
The maximum comes from the loop $[6, 6, 6]$.
These points will include points of local dimension [.998657167, 1.001330544].
The Spectral Range is contained in the range $[170.3465777, 170.9404180]$.
The minimum comes from the total row sub-norm of length 5. 
The maximum comes from the total row sup-norm of length 5. 
These points will have local dimension contained in [.998541136, 1.001708776].

There are 2 additional maximal loops.

Maximal Loop Class: [11].
The maximal loop class is a simple loop.
It's spectral radius is an isolated points of 1.
These points have local dimension 5.678367781.

Maximal Loop Class: [2].
The maximal loop class is a simple loop.
It's spectral radius is an isolated points of 1.
These points have local dimension 5.678367781.

\section{Minimal polynomial $3 x-1$ with $d_i \in [0, 1/15, 2/15, 1/5, 4/15, 1/3, 2/5, 7/15, 8/15, 3/5, 2/3]$} 
\label{sec:9}
 
Consider $\varrho$, the root of $3 x-1$ and the maps $S_i(x) = \varrho x  + d_i$ with $d_{0} = 0$, $d_{1} = 1/15$, $d_{2} = 2/15$, $d_{3} = 1/5$, $d_{4} = 4/15$, $d_{5} = 1/3$, $d_{6} = 2/5$, $d_{7} = 7/15$, $d_{8} = 8/15$, $d_{9} = 3/5$, and $d_{10} = 2/3$.
The probabilities are given by $p_{0} = 1/1024$, $p_{1} = 5/512$, $p_{2} = 45/1024$, $p_{3} = 15/128$, $p_{4} = 105/512$, $p_{5} = 63/256$, $p_{6} = 105/512$, $p_{7} = 15/128$, $p_{8} = 45/1024$, $p_{9} = 5/512$, and $p_{10} = 1/1024$.
The reduced transition diagram has 10 reduced characteristic vectors.
The reduced characteristic vectors are:
\begin{itemize}
\item Reduced characteristic vector 1: $(1, (0))$ 
\item Reduced characteristic vector 2: $(1/5, (0))$ 
\item Reduced characteristic vector 3: $(1/5, (0, 1/5))$ 
\item Reduced characteristic vector 4: $(1/5, (0, 1/5, 2/5))$ 
\item Reduced characteristic vector 5: $(1/5, (0, 1/5, 2/5, 3/5))$ 
\item Reduced characteristic vector 6: $(1/5, (0, 1/5, 2/5, 3/5, 4/5))$ 
\item Reduced characteristic vector 7: $(1/5, (1/5, 2/5, 3/5, 4/5))$ 
\item Reduced characteristic vector 8: $(1/5, (2/5, 3/5, 4/5))$ 
\item Reduced characteristic vector 9: $(1/5, (3/5, 4/5))$ 
\item Reduced characteristic vector 10: $(1/5, (4/5))$ 
\end{itemize}
See Figure \ref{fig:Pic9} for the transition diagram.
\begin{figure}[H]
\includegraphics[scale=0.5]{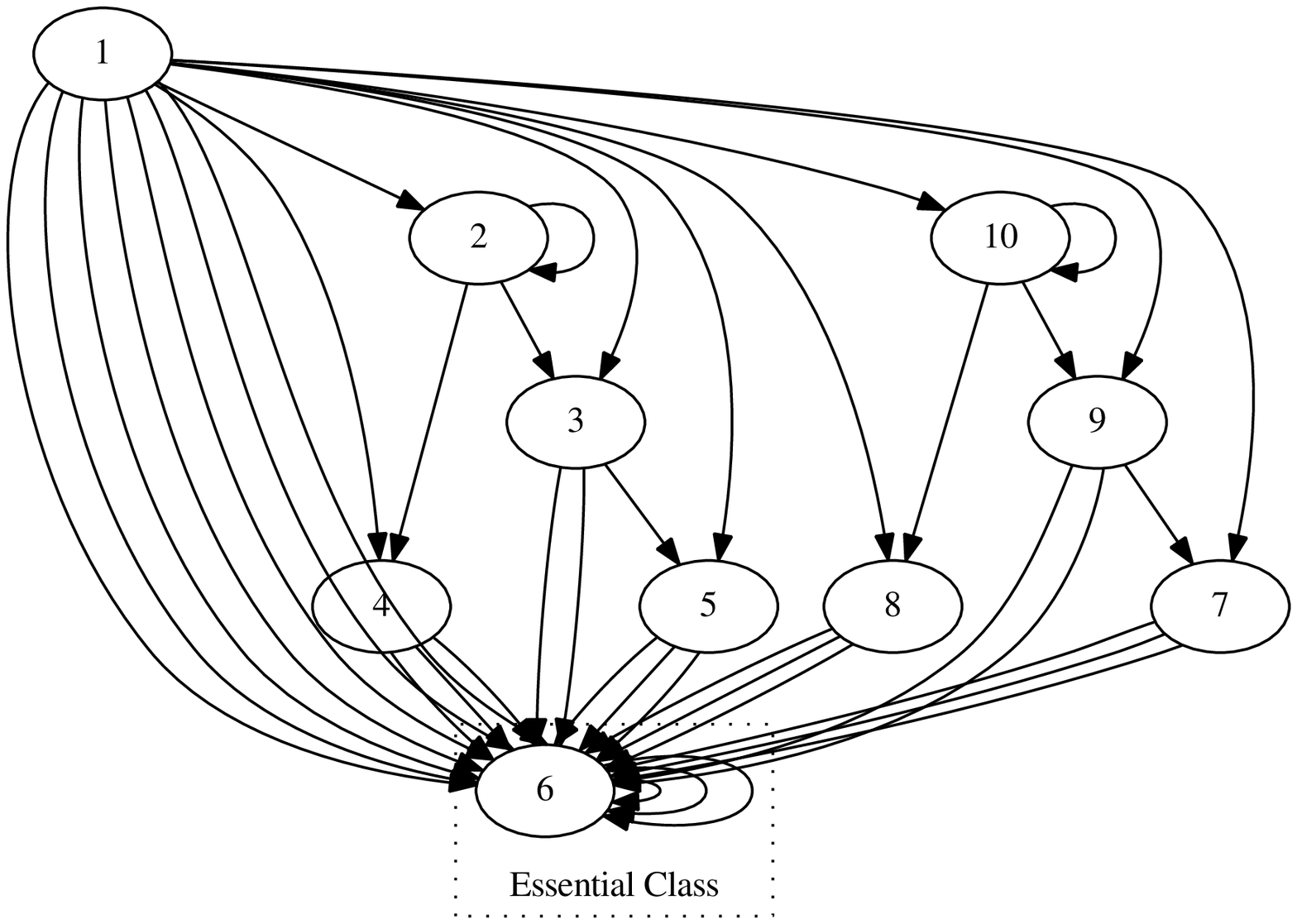}
\caption{$3 x-1$ with $d_i \in [0, 1/15, 2/15, 1/5, 4/15, 1/3, 2/5, 7/15, 8/15, 3/5, 2/3]$, Full set + Essential class}
\label{fig:Pic9}
\end{figure}
This has transition matrices:
\begin{align*}
T(1,2) & =  \left[ \begin {array}{c} 1\end {array} \right] & 
 T(1,3) & =  \left[ \begin {array}{cc} 10&1\end {array} \right] \\ 
T(1,4) & =  \left[ \begin {array}{ccc} 45&10&1\end {array} \right] & 
 T(1,5) & =  \left[ \begin {array}{cccc} 120&45&10&1\end {array} \right] \\ 
T(1,6) & =  \left[ \begin {array}{ccccc} 210&120&45&10&1\end {array} \right] & 
 T(1,6) & =  \left[ \begin {array}{ccccc} 252&210&120&45&10\end {array} \right] \\ 
T(1,6) & =  \left[ \begin {array}{ccccc} 210&252&210&120&45\end {array} \right] & 
 T(1,6) & =  \left[ \begin {array}{ccccc} 120&210&252&210&120\end {array} \right] \\ 
T(1,6) & =  \left[ \begin {array}{ccccc} 45&120&210&252&210\end {array} \right] & 
 T(1,6) & =  \left[ \begin {array}{ccccc} 10&45&120&210&252\end {array} \right] \\ 
T(1,6) & =  \left[ \begin {array}{ccccc} 1&10&45&120&210\end {array} \right] & 
 T(1,7) & =  \left[ \begin {array}{cccc} 1&10&45&120\end {array} \right] \\ 
T(1,8) & =  \left[ \begin {array}{ccc} 1&10&45\end {array} \right] & 
 T(1,9) & =  \left[ \begin {array}{cc} 1&10\end {array} \right] \\ 
T(1,10) & =  \left[ \begin {array}{c} 1\end {array} \right] & 
 T(2,2) & =  \left[ \begin {array}{c} 1\end {array} \right] \\ 
T(2,3) & =  \left[ \begin {array}{cc} 10&1\end {array} \right] & 
 T(2,4) & =  \left[ \begin {array}{ccc} 45&10&1\end {array} \right] \\ 
T(3,5) & =  \left[ \begin {array}{cccc} 1&0&0&0\\ 120&45&10&1\end {array} \right] & 
 T(3,6) & =  \left[ \begin {array}{ccccc} 10&1&0&0&0\\ 210&120&45&10&1\end {array} \right] \\ 
T(3,6) & =  \left[ \begin {array}{ccccc} 45&10&1&0&0\\ 252&210&120&45&10\end {array} \right] & 
 T(4,6) & =  \left[ \begin {array}{ccccc} 1&0&0&0&0\\ 120&45&10&1&0\\ 210&252&210&120&45\end {array} \right] \\ 
T(4,6) & =  \left[ \begin {array}{ccccc} 10&1&0&0&0\\ 210&120&45&10&1\\ 120&210&252&210&120\end {array} \right] & 
 T(4,6) & =  \left[ \begin {array}{ccccc} 45&10&1&0&0\\ 252&210&120&45&10\\ 45&120&210&252&210\end {array} \right] \\ 
T(5,6) & =  \left[ \begin {array}{ccccc} 1&0&0&0&0\\ 120&45&10&1&0\\ 210&252&210&120&45\\ 10&45&120&210&252\end {array} \right] & 
 T(5,6) & =  \left[ \begin {array}{ccccc} 10&1&0&0&0\\ 210&120&45&10&1\\ 120&210&252&210&120\\ 1&10&45&120&210\end {array} \right] \\ 
T(5,6) & =  \left[ \begin {array}{ccccc} 45&10&1&0&0\\ 252&210&120&45&10\\ 45&120&210&252&210\\ 0&1&10&45&120\end {array} \right] & 
 T(6,6) & =  \left[ \begin {array}{ccccc} 1&0&0&0&0\\ 120&45&10&1&0\\ 210&252&210&120&45\\ 10&45&120&210&252\\ 0&0&1&10&45\end {array} \right] \\ 
T(6,6) & =  \left[ \begin {array}{ccccc} 10&1&0&0&0\\ 210&120&45&10&1\\ 120&210&252&210&120\\ 1&10&45&120&210\\ 0&0&0&1&10\end {array} \right] & 
 T(6,6) & =  \left[ \begin {array}{ccccc} 45&10&1&0&0\\ 252&210&120&45&10\\ 45&120&210&252&210\\ 0&1&10&45&120\\ 0&0&0&0&1\end {array} \right] \\ 
T(7,6) & =  \left[ \begin {array}{ccccc} 120&45&10&1&0\\ 210&252&210&120&45\\ 10&45&120&210&252\\ 0&0&1&10&45\end {array} \right] & 
 T(7,6) & =  \left[ \begin {array}{ccccc} 210&120&45&10&1\\ 120&210&252&210&120\\ 1&10&45&120&210\\ 0&0&0&1&10\end {array} \right] \\ 
T(7,6) & =  \left[ \begin {array}{ccccc} 252&210&120&45&10\\ 45&120&210&252&210\\ 0&1&10&45&120\\ 0&0&0&0&1\end {array} \right] & 
 T(8,6) & =  \left[ \begin {array}{ccccc} 210&252&210&120&45\\ 10&45&120&210&252\\ 0&0&1&10&45\end {array} \right] \\ 
T(8,6) & =  \left[ \begin {array}{ccccc} 120&210&252&210&120\\ 1&10&45&120&210\\ 0&0&0&1&10\end {array} \right] & 
 T(8,6) & =  \left[ \begin {array}{ccccc} 45&120&210&252&210\\ 0&1&10&45&120\\ 0&0&0&0&1\end {array} \right] \\ 
T(9,6) & =  \left[ \begin {array}{ccccc} 10&45&120&210&252\\ 0&0&1&10&45\end {array} \right] & 
 T(9,6) & =  \left[ \begin {array}{ccccc} 1&10&45&120&210\\ 0&0&0&1&10\end {array} \right] \\ 
T(9,7) & =  \left[ \begin {array}{cccc} 1&10&45&120\\ 0&0&0&1\end {array} \right] & 
 T(10,8) & =  \left[ \begin {array}{ccc} 1&10&45\end {array} \right] \\ 
T(10,9) & =  \left[ \begin {array}{cc} 1&10\end {array} \right] & 
 T(10,10) & =  \left[ \begin {array}{c} 1\end {array} \right] \\ 
\end{align*}

The essential class is: [6].
The essential class is of positive type.
An example is the path [6, 6, 6].
The essential class is not a simple loop.
This spectral range will include the interval $[341.0358264, 341.7004294]$.
The minimum comes from the loop $[6, 6]$.
The maximum comes from the loop $[6, 6]$.
These points will include points of local dimension [.999021586, 1.000793713].
The Spectral Range is contained in the range $[341.0278683, 341.7686680]$.
The minimum comes from the total row sub-norm of length 5. 
The maximum comes from the total row sup-norm of length 5. 
These points will have local dimension contained in [.998839826, 1.000814954].

There are 2 additional maximal loops.

Maximal Loop Class: [10].
The maximal loop class is a simple loop.
It's spectral radius is an isolated points of 1.
These points have local dimension 6.309297535.

Maximal Loop Class: [2].
The maximal loop class is a simple loop.
It's spectral radius is an isolated points of 1.
These points have local dimension 6.309297535.

\section{Minimal polynomial $x^3+x-1$ with $d_i \in [0, 1-\varrho]$} 
\label{sec:10}
 
Consider $\varrho$, the root of $x^3+x-1$ and the maps $S_i(x) = \varrho x  + d_i$ with $d_{0} = 0$, and $d_{1} = 1-\varrho$.
The probabilities are uniform.
The reduced transition diagram has 152 reduced characteristic vectors.

The number of reduced characteristic vectors in the essential class is 46.
The essential class is of positive type.
An example is the path [129, 92, 91, 114, 112, 127, 140, 150, 75, 75, 75, 75, 75].
The essential class is not a simple loop.
This spectral range will include the interval $[1.324717958, 1.380277569]$.
The minimum comes from the loop $[75, 75]$.
The maximum comes from the loop $[77, 98, 100, 116, 120, 91, 114, 112, 128, 124, 77]$.
These points will include points of local dimension [.9702219294, 1.077705433].
The Spectral Range is contained in the range $[1.113263577, 1.446125550]$.
The minimum comes from the column sub-norm on the subset ${{2, 3, 4}}$ of length 15. 
The maximum comes from the total column sup-norm of length 10. 
These points will have local dimension contained in [.8483019061, 1.532658865].

There are 4 additional maximal loops.

Maximal Loop Class: [40, 41, 45, 46, 47, 51, 52, 53, 54, 61, 62, 63, 70, 71, 72, 82, 84, 86, 96, 106, 122, 135, 146].
The reduced characteristic vectors are:
\begin{itemize}
\item Reduced characteristic vector 40: $(\varrho^2-2 \varrho+1, (0, \varrho^2+\varrho-1, 2 \varrho-1, \varrho^2, \varrho, \varrho^2+2 \varrho-1))$ 
\item Reduced characteristic vector 41: $(-2 \varrho^2+1, (0, \varrho^2-2 \varrho+1, 2 \varrho^2-\varrho, \varrho^2, 2 \varrho^2-2 \varrho+1, \varrho^2-\varrho+1, 2 \varrho^2))$ 
\item Reduced characteristic vector 45: $(\varrho^2+\varrho-1, (0, \varrho^2-2 \varrho+1, 1-\varrho, -\varrho^2+1, -2 \varrho+2, \varrho^2-\varrho+1))$ 
\item Reduced characteristic vector 46: $(-2 \varrho^2+1, (0, \varrho^2+\varrho-1, 2 \varrho^2-\varrho, \varrho^2, \varrho, \varrho^2-\varrho+1, 2 \varrho^2))$ 
\item Reduced characteristic vector 47: $(\varrho^2+\varrho-1, (-2 \varrho^2+1, -\varrho^2+\varrho, 1-\varrho, -\varrho^2+1, -2 \varrho^2+\varrho+1, -\varrho^2-\varrho+2))$ 
\item Reduced characteristic vector 51: $(-2 \varrho^2+1, (0, \varrho^2+\varrho-1, 2 \varrho-1, \varrho^2, \varrho, \varrho^2+2 \varrho-1, 2 \varrho^2))$ 
\item Reduced characteristic vector 52: $(\varrho^2-2 \varrho+1, (-2 \varrho^2+1, -\varrho^2+\varrho, -2 \varrho^2+2 \varrho, -\varrho^2+1, -2 \varrho^2+\varrho+1, -\varrho^2+2 \varrho))$ 
\item Reduced characteristic vector 53: $(\varrho^2+\varrho-1, (0, -2 \varrho^2+1, -\varrho^2+\varrho, -\varrho^2+1, \varrho, -2 \varrho^2+\varrho+1))$ 
\item Reduced characteristic vector 54: $(\varrho^2-2 \varrho+1, (0, \varrho^2+\varrho-1, -\varrho^2+\varrho, 2 \varrho-1, \varrho, \varrho^2+2 \varrho-1, -\varrho^2+2 \varrho))$ 
\item Reduced characteristic vector 61: $(\varrho^2+\varrho-1, (-2 \varrho^2+1, -\varrho^2+\varrho, -2 \varrho^2-\varrho+2, -\varrho^2+1, -2 \varrho^2+\varrho+1, -\varrho^2-\varrho+2))$ 
\item Reduced characteristic vector 62: $(\varrho^2-2 \varrho+1, (0, -\varrho^2+\varrho, 2 \varrho-1, -\varrho^2+1, \varrho, -\varrho^2+2 \varrho))$ 
\item Reduced characteristic vector 63: $(\varrho^2+\varrho-1, (0, \varrho^2-2 \varrho+1, 1-\varrho, \varrho^2, -2 \varrho+2, \varrho^2-\varrho+1))$ 
\item Reduced characteristic vector 70: $(\varrho^2-2 \varrho+1, (0, -2 \varrho^2+1, -\varrho^2+\varrho, -\varrho^2+1, \varrho, -2 \varrho^2+\varrho+1, -\varrho^2+2 \varrho))$ 
\item Reduced characteristic vector 71: $(\varrho^2+\varrho-1, (\varrho^2-2 \varrho+1, -\varrho^2-2 \varrho+2, 1-\varrho, -2 \varrho+2, \varrho^2-\varrho+1, -\varrho^2-\varrho+2))$ 
\item Reduced characteristic vector 72: $(\varrho^2+\varrho-1, (0, -2 \varrho^2+1, -\varrho^2+\varrho, 1-\varrho, -\varrho^2+1, -2 \varrho^2+\varrho+1, -\varrho^2-\varrho+2))$ 
\item Reduced characteristic vector 82: $(\varrho^2+\varrho-1, (\varrho^2-2 \varrho+1, 1-\varrho, 2 \varrho^2-2 \varrho+1, -2 \varrho+2, \varrho^2-\varrho+1))$ 
\item Reduced characteristic vector 84: $(\varrho^2+\varrho-1, (0, -2 \varrho^2+1, 1-\varrho, -\varrho^2+1, \varrho^2-\varrho+1, -\varrho^2-\varrho+2))$ 
\item Reduced characteristic vector 86: $(\varrho^2+\varrho-1, (-2 \varrho^2+1, -\varrho^2+\varrho, -3 \varrho^2+\varrho+1, -\varrho^2+1, -2 \varrho^2+\varrho+1))$ 
\item Reduced characteristic vector 96: $(\varrho^2+\varrho-1, (0, \varrho^2-2 \varrho+1, 1-\varrho, -\varrho^2+1, -2 \varrho+2, \varrho^2-\varrho+1, -\varrho^2-\varrho+2))$ 
\item Reduced characteristic vector 106: $(-\varrho^2+\varrho, (0, \varrho^2-2 \varrho+1, 1-\varrho, \varrho^2, \varrho, \varrho^2-\varrho+1))$ 
\item Reduced characteristic vector 122: $(1-\varrho, (0, \varrho^2+\varrho-1, -\varrho^2+\varrho, \varrho^2, -\varrho^2+1, \varrho))$ 
\item Reduced characteristic vector 135: $(3 \varrho^2-2 \varrho, (-2 \varrho^2+1, -\varrho^2+\varrho, -2 \varrho^2+2 \varrho, -\varrho^2+1, -2 \varrho^2+\varrho+1, -\varrho^2+2 \varrho))$ 
\item Reduced characteristic vector 146: $(3 \varrho-2, (\varrho^2-2 \varrho+1, -\varrho^2-2 \varrho+2, 1-\varrho, -2 \varrho+2, \varrho^2-\varrho+1, -\varrho^2-\varrho+2))$ 
\end{itemize}
See Figure \ref{fig:Pic10} for the transition diagram.
\begin{figure}[H]
\includegraphics[scale=0.5]{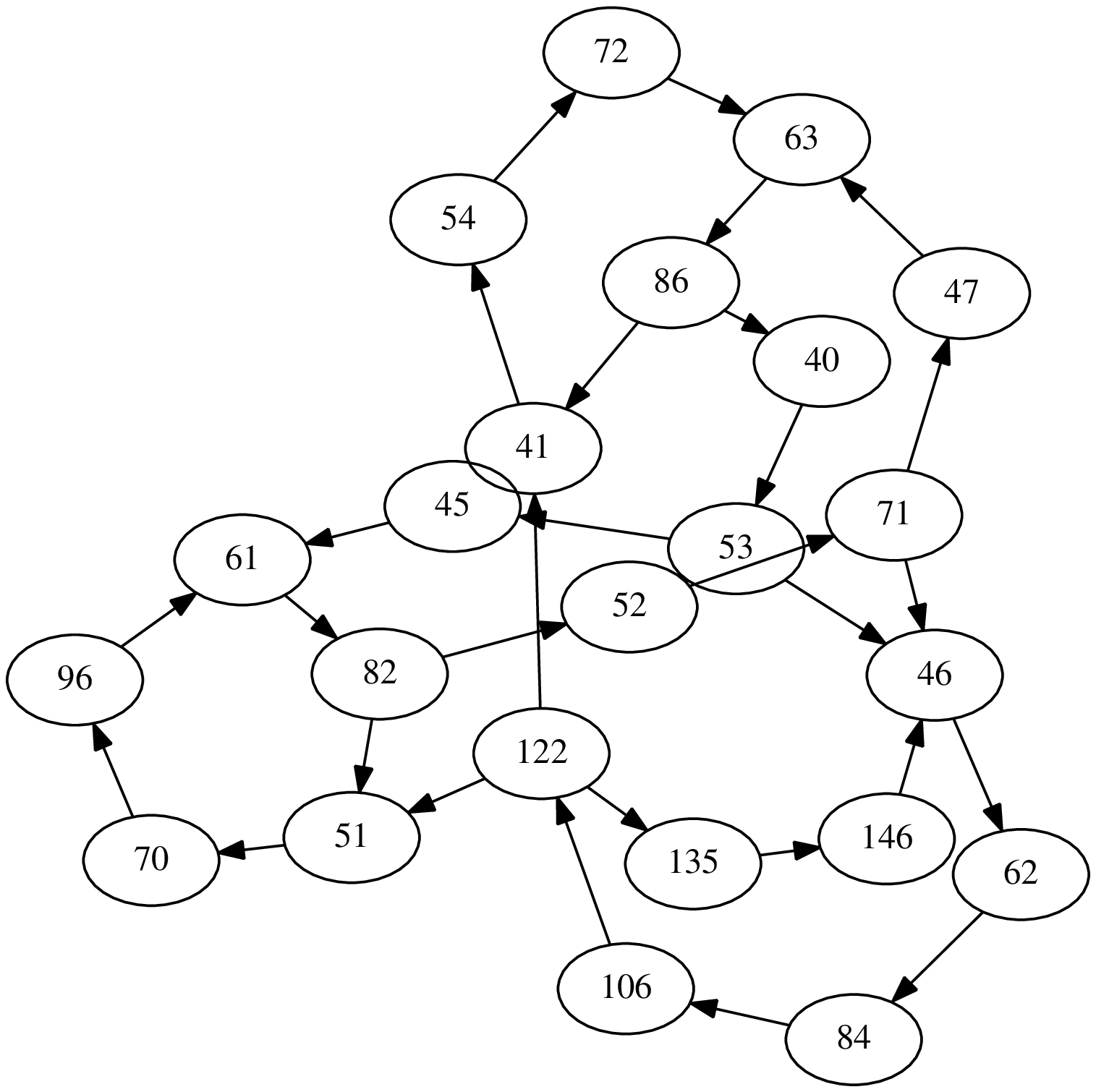}
\caption{$x^3+x-1$ with $d_i \in [0, 1-\varrho]$, Subloop [40, 41, 45, 46, 47, 51, 52, 53, 54, 61, 62, 63, 70, 71, 72, 82, 84, 86, 96, 106, 122, 135, 146]}
\label{fig:Pic10}
\end{figure}
This has transition matrices:
\begin{align*}
T(40,53) & =  \left[ \begin {array}{cccccc} 1&0&0&0&0&0\\ 0&0&1&0&0&0\\ 0&1&0&1&0&0\\ 0&0&1&0&1&0\\ 0&0&0&1&0&0\\ 0&0&0&0&0&1\end {array} \right] \\ 
T(41,54) & =  \left[ \begin {array}{ccccccc} 1&0&0&0&0&0&0\\ 0&1&0&0&0&0&0\\ 0&0&0&1&0&0&0\\ 0&0&1&0&1&0&0\\ 0&0&0&1&0&1&0\\ 0&0&0&0&1&0&0\\ 0&0&0&0&0&0&1\end {array} \right] \\ 
T(45,61) & =  \left[ \begin {array}{cccccc} 1&0&0&0&0&0\\ 0&1&0&0&0&0\\ 1&0&0&1&0&0\\ 0&0&1&0&0&1\\ 0&0&0&1&0&0\\ 0&0&0&0&1&0\end {array} \right] \\ 
T(46,62) & =  \left[ \begin {array}{cccccc} 1&0&0&0&0&0\\ 0&1&0&0&0&0\\ 0&0&1&0&0&0\\ 0&1&0&0&1&0\\ 0&0&0&1&0&0\\ 0&0&0&0&1&0\\ 0&0&0&0&0&1\end {array} \right] \\ 
T(47,63) & =  \left[ \begin {array}{cccccc} 0&1&0&0&0&0\\ 0&0&1&0&0&0\\ 1&0&0&1&0&0\\ 0&0&1&0&0&1\\ 0&0&0&0&1&0\\ 0&0&0&0&0&1\end {array} \right] \\ 
T(51,70) & =  \left[ \begin {array}{ccccccc} 1&0&0&0&0&0&0\\ 0&0&1&0&0&0&0\\ 0&1&0&1&0&0&0\\ 0&0&1&0&1&0&0\\ 0&0&0&1&0&0&0\\ 0&0&0&0&0&1&0\\ 0&0&0&0&0&0&1\end {array} \right] \\ 
T(52,71) & =  \left[ \begin {array}{cccccc} 1&0&0&0&0&0\\ 0&0&1&0&0&0\\ 0&1&0&1&0&0\\ 0&0&1&0&1&0\\ 0&0&0&1&0&0\\ 0&0&0&0&0&1\end {array} \right] \\ 
T(53,45) & =  \left[ \begin {array}{cccccc} 1&0&0&0&0&0\\ 0&1&0&0&0&0\\ 0&0&1&0&0&0\\ 0&0&1&0&0&1\\ 0&0&0&1&0&0\\ 0&0&0&0&1&0\end {array} \right] \\ 
T(53,46) & =  \left[ \begin {array}{ccccccc} 0&1&0&0&0&0&0\\ 0&0&1&0&0&0&0\\ 1&0&0&1&0&0&0\\ 0&0&0&1&0&0&1\\ 0&0&0&0&1&0&0\\ 0&0&0&0&0&1&0\end {array} \right] \\ 
T(54,72) & =  \left[ \begin {array}{ccccccc} 1&0&0&0&0&0&0\\ 0&0&1&0&0&0&0\\ 0&0&0&1&0&0&0\\ 0&1&0&0&1&0&0\\ 0&0&0&0&1&0&0\\ 0&0&0&0&0&1&0\\ 0&0&0&0&0&0&1\end {array} \right] \\ 
T(61,82) & =  \left[ \begin {array}{ccccc} 1&0&0&0&0\\ 0&1&0&0&0\\ 1&0&1&0&0\\ 0&1&0&0&1\\ 0&0&0&1&0\\ 0&0&0&0&1\end {array} \right] \\ 
T(62,84) & =  \left[ \begin {array}{cccccc} 1&0&0&0&0&0\\ 0&0&1&0&0&0\\ 0&1&0&1&0&0\\ 0&0&1&0&1&0\\ 0&0&0&1&0&0\\ 0&0&0&0&0&1\end {array} \right] \\ 
T(63,86) & =  \left[ \begin {array}{ccccc} 1&0&0&0&0\\ 0&1&0&0&0\\ 1&0&0&1&0\\ 0&0&1&0&1\\ 0&0&0&1&0\\ 0&0&0&0&1\end {array} \right] \\ 
T(70,96) & =  \left[ \begin {array}{ccccccc} 1&0&0&0&0&0&0\\ 0&1&0&0&0&0&0\\ 0&0&1&0&0&0&0\\ 0&0&1&0&0&1&0\\ 0&0&0&1&0&0&0\\ 0&0&0&0&1&0&0\\ 0&0&0&0&0&0&1\end {array} \right] \\ 
T(71,46) & =  \left[ \begin {array}{ccccccc} 0&1&0&0&0&0&0\\ 0&0&1&0&0&0&0\\ 1&0&0&1&0&0&0\\ 0&0&0&1&0&0&1\\ 0&0&0&0&1&0&0\\ 0&0&0&0&0&1&0\end {array} \right] \\ 
T(71,47) & =  \left[ \begin {array}{cccccc} 0&1&0&0&0&0\\ 0&0&1&0&0&0\\ 1&0&0&1&0&0\\ 0&0&0&1&0&0\\ 0&0&0&0&1&0\\ 0&0&0&0&0&1\end {array} \right] \\ 
T(72,63) & =  \left[ \begin {array}{cccccc} 1&0&0&0&0&0\\ 0&1&0&0&0&0\\ 0&0&1&0&0&0\\ 1&0&0&1&0&0\\ 0&0&1&0&0&1\\ 0&0&0&0&1&0\\ 0&0&0&0&0&1\end {array} \right] \\ 
T(82,51) & =  \left[ \begin {array}{ccccccc} 0&1&0&0&0&0&0\\ 1&0&0&1&0&0&0\\ 0&0&1&0&0&1&0\\ 0&0&0&1&0&0&1\\ 0&0&0&0&1&0&0\end {array} \right] \\ 
T(82,52) & =  \left[ \begin {array}{cccccc} 0&1&0&0&0&0\\ 1&0&0&1&0&0\\ 0&0&1&0&0&1\\ 0&0&0&1&0&0\\ 0&0&0&0&1&0\end {array} \right] & 
 T(84,106) & =  \left[ \begin {array}{cccccc} 1&0&0&0&0&0\\ 0&1&0&0&0&0\\ 1&0&0&1&0&0\\ 0&0&1&0&0&1\\ 0&0&0&0&1&0\\ 0&0&0&0&0&1\end {array} \right] \\ 
T(86,40) & =  \left[ \begin {array}{cccccc} 0&1&0&0&0&0\\ 0&0&1&0&0&0\\ 1&0&0&1&0&0\\ 0&0&1&0&0&1\\ 0&0&0&0&1&0\end {array} \right] & 
 T(86,41) & =  \left[ \begin {array}{ccccccc} 0&0&1&0&0&0&0\\ 1&0&0&1&0&0&0\\ 0&1&0&0&1&0&0\\ 0&0&0&1&0&0&1\\ 0&0&0&0&0&1&0\end {array} \right] \\ 
T(96,61) & =  \left[ \begin {array}{cccccc} 1&0&0&0&0&0\\ 0&1&0&0&0&0\\ 1&0&0&1&0&0\\ 0&0&1&0&0&1\\ 0&0&0&1&0&0\\ 0&0&0&0&1&0\\ 0&0&0&0&0&1\end {array} \right] \\ 
T(106,122) & =  \left[ \begin {array}{cccccc} 1&0&0&0&0&0\\ 0&1&0&0&0&0\\ 1&0&0&1&0&0\\ 0&0&1&0&0&1\\ 0&0&0&0&1&0\\ 0&0&0&0&0&1\end {array} \right] \\ 
T(122,51) & =  \left[ \begin {array}{ccccccc} 0&1&0&0&0&0&0\\ 0&0&1&0&0&0&0\\ 1&0&0&1&0&0&0\\ 0&0&1&0&0&1&0\\ 0&0&0&1&0&0&1\\ 0&0&0&0&1&0&0\end {array} \right] \\ 
T(122,135) & =  \left[ \begin {array}{cccccc} 0&1&0&0&0&0\\ 0&0&1&0&0&0\\ 1&0&0&1&0&0\\ 0&0&1&0&0&1\\ 0&0&0&1&0&0\\ 0&0&0&0&1&0\end {array} \right] \\ 
T(122,41) & =  \left[ \begin {array}{ccccccc} 0&0&1&0&0&0&0\\ 1&0&0&1&0&0&0\\ 0&1&0&0&1&0&0\\ 0&0&0&1&0&0&1\\ 0&0&0&0&1&0&0\\ 0&0&0&0&0&1&0\end {array} \right] \\ 
T(135,146) & =  \left[ \begin {array}{cccccc} 1&0&0&0&0&0\\ 0&0&1&0&0&0\\ 0&1&0&1&0&0\\ 0&0&1&0&1&0\\ 0&0&0&1&0&0\\ 0&0&0&0&0&1\end {array} \right] \\ 
T(146,46) & =  \left[ \begin {array}{ccccccc} 0&1&0&0&0&0&0\\ 0&0&1&0&0&0&0\\ 1&0&0&1&0&0&0\\ 0&0&0&1&0&0&1\\ 0&0&0&0&1&0&0\\ 0&0&0&0&0&1&0\end {array} \right] \\ 
\end{align*}
The maximal loop class is of positive type.
An example is the path [82, 52, 71, 46, 62, 84, 106, 122, 135].
The maximal loop class is not a simple loop.
This spectral range will include the interval $[1.368888709, 1.380277569]$.
The minimum comes from the loop $[41, 54, 72, 63, 86, 41]$.
The maximum comes from the loop $[40, 53, 45, 61, 82, 52, 71, 47, 63, 86, 40]$.
These points will include points of local dimension [.9702219294, .9918974638].
The Spectral Range is contained in the range $[1.212508617, 1.424590656]$.
The minimum comes from the total column sub-norm of length 15. 
The maximum comes from the total column sup-norm of length 15. 
These points will have local dimension contained in [.8875527238, 1.309253535].

Maximal Loop Class: [22, 30].
The reduced characteristic vectors are:
\begin{itemize}
\item Reduced characteristic vector 22: $(\varrho^2+\varrho-1, (-2 \varrho^2+1, -\varrho^2+\varrho, -\varrho^2+1, -2 \varrho^2+\varrho+1))$ 
\item Reduced characteristic vector 30: $(\varrho^2+\varrho-1, (\varrho^2-2 \varrho+1, 1-\varrho, -2 \varrho+2, \varrho^2-\varrho+1))$ 
\end{itemize}
See Figure \ref{fig:Pic11} for the transition diagram.
\begin{figure}[H]
\includegraphics[scale=0.5]{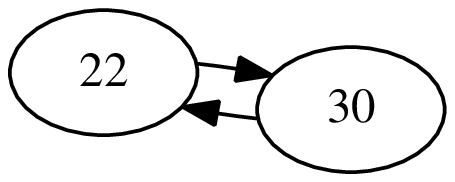}
\caption{$x^3+x-1$ with $d_i \in [0, 1-\varrho]$, Subloop [22, 30]}
\label{fig:Pic11}
\end{figure}
This has transition matrices:
\begin{align*}
T(22,30) & =  \left[ \begin {array}{cccc} 1&0&0&0\\ 0&1&0&0\\ 0&1&0&1\\ 0&0&1&0\end {array} \right] & 
 T(30,22) & =  \left[ \begin {array}{cccc} 0&1&0&0\\ 1&0&1&0\\ 0&0&1&0\\ 0&0&0&1\end {array} \right] \\ 
\end{align*}
The maximal loop class is a simple loop.
It's spectral radius is an isolated points of 1.380277569.
These points have local dimension .9702219294.

Maximal Loop Class: [4].
The reduced characteristic vectors are:
\begin{itemize}
\item Reduced characteristic vector 4: $(\varrho^2, (-\varrho^2+1))$ 
\end{itemize}
See Figure \ref{fig:Pic12} for the transition diagram.
\begin{figure}[H]
\includegraphics[scale=0.5]{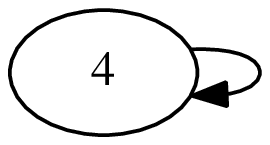}
\caption{$x^3+x-1$ with $d_i \in [0, 1-\varrho]$, Subloop [4]}
\label{fig:Pic12}
\end{figure}
This has transition matrices:
\begin{align*}
T(4,4) & =  \left[ \begin {array}{c} 1\end {array} \right] & 
 \end{align*}
The maximal loop class is a simple loop.
It's spectral radius is an isolated points of 1.
These points have local dimension 1.813357990.

Maximal Loop Class: [2].
The reduced characteristic vectors are:
\begin{itemize}
\item Reduced characteristic vector 2: $(\varrho^2, (0))$ 
\end{itemize}
See Figure \ref{fig:Pic13} for the transition diagram.
\begin{figure}[H]
\includegraphics[scale=0.5]{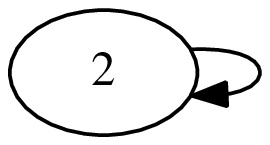}
\caption{$x^3+x-1$ with $d_i \in [0, 1-\varrho]$, Subloop [2]}
\label{fig:Pic13}
\end{figure}
This has transition matrices:
\begin{align*}
T(2,2) & =  \left[ \begin {array}{c} 1\end {array} \right] & 
 \end{align*}
The maximal loop class is a simple loop.
It's spectral radius is an isolated points of 1.
These points have local dimension 1.813357990.

\section{Minimal polynomial $x^3+x^2-1$ with $d_i \in [0, 1-\varrho]$} 
\label{sec:11}
 
Consider $\varrho$, the root of $x^3+x^2-1$ and the maps $S_i(x) = \varrho x  + d_i$ with $d_{0} = 0$, and $d_{1} = 1-\varrho$.
The probabilities are uniform.
The reduced transition diagram has 1809 reduced characteristic vectors.

The number of reduced characteristic vectors in the essential class is 1207.
The essential class is of positive type.
An example is the path [683, 631, 688, 624, 683, 631, 688, 624, 683, 631, 688, 624, 683, 631, 688, 624, 683, 631, 688, 624, 683, 631, 689, 625, 684, 632, 690, 509].
The essential class is not a simple loop.
This spectral range will include the interval $[1.506135680, 1.510626480]$.
The minimum comes from the loop $[1782, 1782]$.
The maximum comes from the loop $[624, 683, 631, 688, 624]$.
These points will include points of local dimension [.997948624, 1.008536243].
The Spectral Range is contained in the range $[1.148698355, 1.620656597]$.
The minimum comes from the column sub-norm on the subset ${{4, 5, 6, 7, 8, 12, 13, 16}}$ of length 15. 
The maximum comes from the total column sup-norm of length 10. 
These points will have local dimension contained in [.747923631, 1.971972204].

There are 5 additional maximal loops.

Maximal Loop Class: [388, 454, 460, 524, 531, 590].
The reduced characteristic vectors are:
\begin{itemize}
\item Reduced characteristic vector 388: $(-3 \varrho^2-3 \varrho+4, (0, -3 \varrho^2+\varrho+1, \varrho^2+2 \varrho-2, -\varrho^2+\varrho, 3 \varrho^2+2 \varrho-3, 3 \varrho-2, \varrho^2+\varrho-1, -2 \varrho^2+2 \varrho, 2 \varrho^2+3 \varrho-3, -\varrho^2+1, 2 \varrho-1, \varrho^2, -2 \varrho^2+\varrho+1, 2 \varrho^2+2 \varrho-2, -\varrho^2+3 \varrho-1, \varrho, \varrho^2+3 \varrho-2, -2 \varrho^2+2, 2 \varrho^2+\varrho-1, -\varrho^2+2 \varrho, 3 \varrho^2+3 \varrho-3))$ 
\item Reduced characteristic vector 454: $(-3 \varrho^2-3 \varrho+4, (0, 4 \varrho^2+\varrho-3, \varrho^2+2 \varrho-2, 5 \varrho^2+3 \varrho-5, 2 \varrho^2-1, 3 \varrho^2+2 \varrho-3, 4 \varrho^2-2, \varrho^2+\varrho-1, 5 \varrho^2+2 \varrho-4, 2 \varrho^2+3 \varrho-3, 3 \varrho^2+\varrho-2, 4 \varrho^2+3 \varrho-4, \varrho^2, 5 \varrho^2+\varrho-3, 2 \varrho^2+2 \varrho-2, 3 \varrho^2-1, \varrho, 4 \varrho^2+2 \varrho-3, 2 \varrho^2+\varrho-1, 6 \varrho^2+2 \varrho-4, 3 \varrho^2+3 \varrho-3))$ 
\item Reduced characteristic vector 460: $(4 \varrho^2+\varrho-3, (0, -3 \varrho^2-3 \varrho+4, \varrho^2-2 \varrho+1, -2 \varrho^2-\varrho+2, -\varrho^2-3 \varrho+3, -4 \varrho^2-2 \varrho+4, 1-\varrho, -3 \varrho^2+2, -2 \varrho^2-2 \varrho+3, -\varrho^2+1, -2 \varrho+2, -3 \varrho^2-\varrho+3, \varrho^2, -2 \varrho^2-3 \varrho+4, -\varrho^2-\varrho+2, \varrho, -3 \varrho^2-2 \varrho+4, \varrho^2-\varrho+1, -2 \varrho^2+2, -\varrho^2-2 \varrho+3, -4 \varrho^2-\varrho+4))$ 
\item Reduced characteristic vector 524: $(4 \varrho^2+\varrho-3, (0, -3 \varrho^2+\varrho+1, -2 \varrho^2-\varrho+2, -5 \varrho^2+3, -\varrho^2+\varrho, -4 \varrho^2-2 \varrho+4, -3 \varrho^2+2, -2 \varrho^2+2 \varrho, -5 \varrho^2-\varrho+4, -\varrho^2+1, -4 \varrho^2+\varrho+2, -3 \varrho^2-\varrho+3, -2 \varrho^2+\varrho+1, -\varrho^2-\varrho+2, -4 \varrho^2+3, \varrho, -3 \varrho^2+2 \varrho+1, -2 \varrho^2+2, -5 \varrho^2+\varrho+3, -\varrho^2+2 \varrho, -4 \varrho^2-\varrho+4))$ 
\item Reduced characteristic vector 531: $(-3 \varrho^2+\varrho+1, (0, 4 \varrho^2+\varrho-3, \varrho^2-2 \varrho+1, \varrho^2+2 \varrho-2, 2 \varrho^2-1, 3 \varrho^2+2 \varrho-3, 1-\varrho, 4 \varrho^2-2, \varrho^2+\varrho-1, 2 \varrho^2-\varrho, -\varrho^2+1, 3 \varrho^2+\varrho-2, \varrho^2, 2 \varrho^2+2 \varrho-2, -\varrho^2-\varrho+2, 3 \varrho^2-1, \varrho, 4 \varrho^2+2 \varrho-3, \varrho^2-\varrho+1, 2 \varrho^2+\varrho-1, 3 \varrho^2-\varrho))$ 
\item Reduced characteristic vector 590: $(-3 \varrho^2+\varrho+1, (0, \varrho^2-2 \varrho+1, 2 \varrho^2-1, -\varrho^2-3 \varrho+3, 3 \varrho^2-2 \varrho, 1-\varrho, 4 \varrho^2-2, \varrho^2-3 \varrho+2, 2 \varrho^2-\varrho, -2 \varrho+2, 4 \varrho^2-\varrho-1, \varrho^2, 2 \varrho^2-2 \varrho+1, -\varrho^2-\varrho+2, 3 \varrho^2-1, -3 \varrho+3, \varrho^2-\varrho+1, 2 \varrho^2-3 \varrho+2, 2 \varrho^2+\varrho-1, -\varrho^2-2 \varrho+3, 3 \varrho^2-\varrho))$ 
\end{itemize}
See Figure \ref{fig:Pic14} for the transition diagram.
\begin{figure}[H]
\includegraphics[scale=0.5]{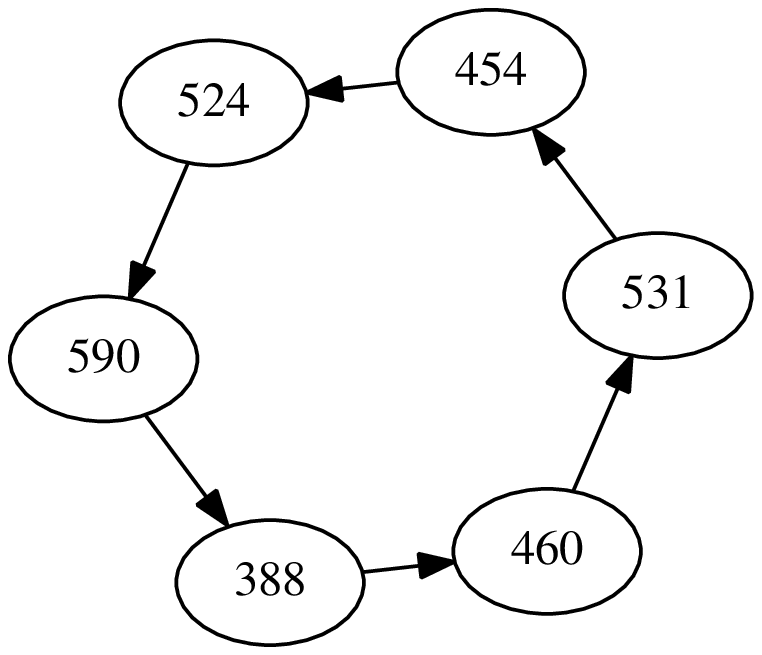}
\caption{$x^3+x^2-1$ with $d_i \in [0, 1-\varrho]$, Subloop [388, 454, 460, 524, 531, 590]}
\label{fig:Pic14}
\end{figure}
This has transition matrices:
\begin{align*}
T(388,460) & =  \left[ \begin {array}{ccccccccccccccccccccc} 1&0&0&0&0&0&0&0&0&0&0&0&0&0&0&0&0&0&0&0&0\\ 0&0&1&0&0&0&0&0&0&0&0&0&0&0&0&0&0&0&0&0&0\\ 0&0&0&1&0&0&0&0&0&0&0&0&0&0&0&0&0&0&0&0&0\\ 0&0&0&0&0&0&1&0&0&0&0&0&0&0&0&0&0&0&0&0&0\\ 0&0&0&0&0&0&0&1&0&0&0&0&0&0&0&0&0&0&0&0&0\\ 0&1&0&0&0&0&0&0&1&0&0&0&0&0&0&0&0&0&0&0&0\\ 0&0&0&1&0&0&0&0&0&1&0&0&0&0&0&0&0&0&0&0&0\\ 0&0&0&0&1&0&0&0&0&0&1&0&0&0&0&0&0&0&0&0&0\\ 0&0&0&0&0&1&0&0&0&0&0&1&0&0&0&0&0&0&0&0&0\\ 0&0&0&0&0&0&1&0&0&0&0&0&1&0&0&0&0&0&0&0&0\\ 0&0&0&0&0&0&0&0&1&0&0&0&0&0&1&0&0&0&0&0&0\\ 0&0&0&0&0&0&0&0&0&1&0&0&0&0&0&1&0&0&0&0&0\\ 0&0&0&0&0&0&0&0&0&0&1&0&0&0&0&0&0&1&0&0&0\\ 0&0&0&0&0&0&0&0&0&0&0&1&0&0&0&0&0&0&1&0&0\\ 0&0&0&0&0&0&0&0&0&0&0&0&0&1&0&0&0&0&0&1&0\\ 0&0&0&0&0&0&0&0&0&0&0&0&0&0&1&0&0&0&0&0&0\\ 0&0&0&0&0&0&0&0&0&0&0&0&0&0&0&0&1&0&0&0&0\\ 0&0&0&0&0&0&0&0&0&0&0&0&0&0&0&0&0&1&0&0&0\\ 0&0&0&0&0&0&0&0&0&0&0&0&0&0&0&0&0&0&1&0&0\\ 0&0&0&0&0&0&0&0&0&0&0&0&0&0&0&0&0&0&0&1&0\\ 0&0&0&0&0&0&0&0&0&0&0&0&0&0&0&0&0&0&0&0&1\end {array} \right] \\ 
T(454,524) & =  \left[ \begin {array}{ccccccccccccccccccccc} 1&0&0&0&0&0&0&0&0&0&0&0&0&0&0&0&0&0&0&0&0\\ 0&1&0&0&0&0&0&0&0&0&0&0&0&0&0&0&0&0&0&0&0\\ 0&0&1&0&0&0&0&0&0&0&0&0&0&0&0&0&0&0&0&0&0\\ 0&0&0&1&0&0&0&0&0&0&0&0&0&0&0&0&0&0&0&0&0\\ 0&0&0&0&1&0&0&0&0&0&0&0&0&0&0&0&0&0&0&0&0\\ 0&0&0&0&0&0&1&0&0&0&0&0&0&0&0&0&0&0&0&0&0\\ 0&1&0&0&0&0&0&1&0&0&0&0&0&0&0&0&0&0&0&0&0\\ 0&0&1&0&0&0&0&0&0&1&0&0&0&0&0&0&0&0&0&0&0\\ 0&0&0&1&0&0&0&0&0&0&1&0&0&0&0&0&0&0&0&0&0\\ 0&0&0&0&0&1&0&0&0&0&0&1&0&0&0&0&0&0&0&0&0\\ 0&0&0&0&0&0&1&0&0&0&0&0&1&0&0&0&0&0&0&0&0\\ 0&0&0&0&0&0&0&0&1&0&0&0&0&0&1&0&0&0&0&0&0\\ 0&0&0&0&0&0&0&0&0&1&0&0&0&0&0&1&0&0&0&0&0\\ 0&0&0&0&0&0&0&0&0&0&1&0&0&0&0&0&1&0&0&0&0\\ 0&0&0&0&0&0&0&0&0&0&0&1&0&0&0&0&0&1&0&0&0\\ 0&0&0&0&0&0&0&0&0&0&0&0&1&0&0&0&0&0&0&1&0\\ 0&0&0&0&0&0&0&0&0&0&0&0&0&1&0&0&0&0&0&0&0\\ 0&0&0&0&0&0&0&0&0&0&0&0&0&0&1&0&0&0&0&0&0\\ 0&0&0&0&0&0&0&0&0&0&0&0&0&0&0&0&0&1&0&0&0\\ 0&0&0&0&0&0&0&0&0&0&0&0&0&0&0&0&0&0&1&0&0\\ 0&0&0&0&0&0&0&0&0&0&0&0&0&0&0&0&0&0&0&0&1\end {array} \right] \\ 
T(460,531) & =  \left[ \begin {array}{ccccccccccccccccccccc} 1&0&0&0&0&0&0&0&0&0&0&0&0&0&0&0&0&0&0&0&0\\ 0&1&0&0&0&0&0&0&0&0&0&0&0&0&0&0&0&0&0&0&0\\ 0&0&0&1&0&0&0&0&0&0&0&0&0&0&0&0&0&0&0&0&0\\ 0&0&0&0&1&0&0&0&0&0&0&0&0&0&0&0&0&0&0&0&0\\ 0&0&0&0&0&1&0&0&0&0&0&0&0&0&0&0&0&0&0&0&0\\ 0&0&0&0&0&0&0&1&0&0&0&0&0&0&0&0&0&0&0&0&0\\ 1&0&0&0&0&0&0&0&1&0&0&0&0&0&0&0&0&0&0&0&0\\ 0&0&1&0&0&0&0&0&0&1&0&0&0&0&0&0&0&0&0&0&0\\ 0&0&0&0&1&0&0&0&0&0&0&1&0&0&0&0&0&0&0&0&0\\ 0&0&0&0&0&0&1&0&0&0&0&0&1&0&0&0&0&0&0&0&0\\ 0&0&0&0&0&0&0&0&1&0&0&0&0&1&0&0&0&0&0&0&0\\ 0&0&0&0&0&0&0&0&0&1&0&0&0&0&0&1&0&0&0&0&0\\ 0&0&0&0&0&0&0&0&0&0&1&0&0&0&0&0&1&0&0&0&0\\ 0&0&0&0&0&0&0&0&0&0&0&1&0&0&0&0&0&1&0&0&0\\ 0&0&0&0&0&0&0&0&0&0&0&0&1&0&0&0&0&0&0&1&0\\ 0&0&0&0&0&0&0&0&0&0&0&0&0&0&1&0&0&0&0&0&0\\ 0&0&0&0&0&0&0&0&0&0&0&0&0&0&0&1&0&0&0&0&0\\ 0&0&0&0&0&0&0&0&0&0&0&0&0&0&0&0&1&0&0&0&0\\ 0&0&0&0&0&0&0&0&0&0&0&0&0&0&0&0&0&0&1&0&0\\ 0&0&0&0&0&0&0&0&0&0&0&0&0&0&0&0&0&0&0&1&0\\ 0&0&0&0&0&0&0&0&0&0&0&0&0&0&0&0&0&0&0&0&1\end {array} \right] \\ 
T(524,590) & =  \left[ \begin {array}{ccccccccccccccccccccc} 1&0&0&0&0&0&0&0&0&0&0&0&0&0&0&0&0&0&0&0&0\\ 0&1&0&0&0&0&0&0&0&0&0&0&0&0&0&0&0&0&0&0&0\\ 0&0&1&0&0&0&0&0&0&0&0&0&0&0&0&0&0&0&0&0&0\\ 0&0&0&0&1&0&0&0&0&0&0&0&0&0&0&0&0&0&0&0&0\\ 0&0&0&0&0&1&0&0&0&0&0&0&0&0&0&0&0&0&0&0&0\\ 0&0&0&0&0&0&1&0&0&0&0&0&0&0&0&0&0&0&0&0&0\\ 0&1&0&0&0&0&0&0&1&0&0&0&0&0&0&0&0&0&0&0&0\\ 0&0&0&1&0&0&0&0&0&1&0&0&0&0&0&0&0&0&0&0&0\\ 0&0&0&0&1&0&0&0&0&0&1&0&0&0&0&0&0&0&0&0&0\\ 0&0&0&0&0&1&0&0&0&0&0&1&0&0&0&0&0&0&0&0&0\\ 0&0&0&0&0&0&0&1&0&0&0&0&1&0&0&0&0&0&0&0&0\\ 0&0&0&0&0&0&0&0&1&0&0&0&0&0&1&0&0&0&0&0&0\\ 0&0&0&0&0&0&0&0&0&1&0&0&0&0&0&0&1&0&0&0&0\\ 0&0&0&0&0&0&0&0&0&0&0&1&0&0&0&0&0&0&1&0&0\\ 0&0&0&0&0&0&0&0&0&0&0&0&1&0&0&0&0&0&0&0&1\\ 0&0&0&0&0&0&0&0&0&0&0&0&0&1&0&0&0&0&0&0&0\\ 0&0&0&0&0&0&0&0&0&0&0&0&0&0&0&1&0&0&0&0&0\\ 0&0&0&0&0&0&0&0&0&0&0&0&0&0&0&0&1&0&0&0&0\\ 0&0&0&0&0&0&0&0&0&0&0&0&0&0&0&0&0&1&0&0&0\\ 0&0&0&0&0&0&0&0&0&0&0&0&0&0&0&0&0&0&0&1&0\\ 0&0&0&0&0&0&0&0&0&0&0&0&0&0&0&0&0&0&0&0&1\end {array} \right] \\ 
T(531,454) & =  \left[ \begin {array}{ccccccccccccccccccccc} 0&1&0&0&0&0&0&0&0&0&0&0&0&0&0&0&0&0&0&0&0\\ 0&0&1&0&0&0&0&0&0&0&0&0&0&0&0&0&0&0&0&0&0\\ 0&0&0&1&0&0&0&0&0&0&0&0&0&0&0&0&0&0&0&0&0\\ 0&0&0&0&1&0&0&0&0&0&0&0&0&0&0&0&0&0&0&0&0\\ 0&0&0&0&0&1&0&0&0&0&0&0&0&0&0&0&0&0&0&0&0\\ 1&0&0&0&0&0&0&1&0&0&0&0&0&0&0&0&0&0&0&0&0\\ 0&1&0&0&0&0&0&0&1&0&0&0&0&0&0&0&0&0&0&0&0\\ 0&0&1&0&0&0&0&0&0&1&0&0&0&0&0&0&0&0&0&0&0\\ 0&0&0&0&1&0&0&0&0&0&1&0&0&0&0&0&0&0&0&0&0\\ 0&0&0&0&0&1&0&0&0&0&0&1&0&0&0&0&0&0&0&0&0\\ 0&0&0&0&0&0&1&0&0&0&0&0&0&1&0&0&0&0&0&0&0\\ 0&0&0&0&0&0&0&1&0&0&0&0&0&0&1&0&0&0&0&0&0\\ 0&0&0&0&0&0&0&0&0&0&1&0&0&0&0&0&0&1&0&0&0\\ 0&0&0&0&0&0&0&0&0&0&0&0&1&0&0&0&0&0&1&0&0\\ 0&0&0&0&0&0&0&0&0&0&0&0&0&1&0&0&0&0&0&1&0\\ 0&0&0&0&0&0&0&0&0&0&0&0&0&0&1&0&0&0&0&0&1\\ 0&0&0&0&0&0&0&0&0&0&0&0&0&0&0&1&0&0&0&0&0\\ 0&0&0&0&0&0&0&0&0&0&0&0&0&0&0&0&1&0&0&0&0\\ 0&0&0&0&0&0&0&0&0&0&0&0&0&0&0&0&0&1&0&0&0\\ 0&0&0&0&0&0&0&0&0&0&0&0&0&0&0&0&0&0&1&0&0\\ 0&0&0&0&0&0&0&0&0&0&0&0&0&0&0&0&0&0&0&0&1\end {array} \right] \\ 
T(590,388) & =  \left[ \begin {array}{ccccccccccccccccccccc} 1&0&0&0&0&0&0&0&0&0&0&0&0&0&0&0&0&0&0&0&0\\ 0&0&1&0&0&0&0&0&0&0&0&0&0&0&0&0&0&0&0&0&0\\ 0&0&0&1&0&0&0&0&0&0&0&0&0&0&0&0&0&0&0&0&0\\ 0&0&0&0&1&0&0&0&0&0&0&0&0&0&0&0&0&0&0&0&0\\ 0&0&0&0&0&1&0&0&0&0&0&0&0&0&0&0&0&0&0&0&0\\ 1&0&0&0&0&0&1&0&0&0&0&0&0&0&0&0&0&0&0&0&0\\ 0&1&0&0&0&0&0&1&0&0&0&0&0&0&0&0&0&0&0&0&0\\ 0&0&1&0&0&0&0&0&1&0&0&0&0&0&0&0&0&0&0&0&0\\ 0&0&0&1&0&0&0&0&0&0&1&0&0&0&0&0&0&0&0&0&0\\ 0&0&0&0&0&0&1&0&0&0&0&0&0&1&0&0&0&0&0&0&0\\ 0&0&0&0&0&0&0&1&0&0&0&0&0&0&1&0&0&0&0&0&0\\ 0&0&0&0&0&0&0&0&0&1&0&0&0&0&0&1&0&0&0&0&0\\ 0&0&0&0&0&0&0&0&0&0&1&0&0&0&0&0&1&0&0&0&0\\ 0&0&0&0&0&0&0&0&0&0&0&1&0&0&0&0&0&0&1&0&0\\ 0&0&0&0&0&0&0&0&0&0&0&0&1&0&0&0&0&0&0&1&0\\ 0&0&0&0&0&0&0&0&0&0&0&0&0&1&0&0&0&0&0&0&1\\ 0&0&0&0&0&0&0&0&0&0&0&0&0&0&0&1&0&0&0&0&0\\ 0&0&0&0&0&0&0&0&0&0&0&0&0&0&0&0&1&0&0&0&0\\ 0&0&0&0&0&0&0&0&0&0&0&0&0&0&0&0&0&1&0&0&0\\ 0&0&0&0&0&0&0&0&0&0&0&0&0&0&0&0&0&0&1&0&0\\ 0&0&0&0&0&0&0&0&0&0&0&0&0&0&0&0&0&0&0&1&0\end {array} \right] \\ 
\end{align*}
The maximal loop class is a simple loop.
It's spectral radius is an isolated points of 1.510392888.
These points have local dimension .998498570.

Maximal Loop Class: [104].
The reduced characteristic vectors are:
\begin{itemize}
\item Reduced characteristic vector 104: $(\varrho^2+2 \varrho-2, (-3 \varrho^2-3 \varrho+4, \varrho^2-2 \varrho+1, -2 \varrho^2-\varrho+2, -\varrho^2-3 \varrho+3, 1-\varrho, -2 \varrho^2-2 \varrho+3, -2 \varrho+2, -2 \varrho^2-3 \varrho+4, -\varrho^2-\varrho+2, -3 \varrho+3, -3 \varrho^2-2 \varrho+4, \varrho^2-\varrho+1, -\varrho^2-2 \varrho+3))$ 
\end{itemize}
See Figure \ref{fig:Pic15} for the transition diagram.
\begin{figure}[H]
\includegraphics[scale=0.5]{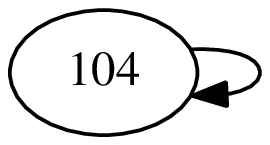}
\caption{$x^3+x^2-1$ with $d_i \in [0, 1-\varrho]$, Subloop [104]}
\label{fig:Pic15}
\end{figure}
This has transition matrices:
\begin{align*}
T(104,104) & =  \left[ \begin {array}{ccccccccccccc} 0&1&0&0&0&0&0&0&0&0&0&0&0\\ 0&0&1&0&0&0&0&0&0&0&0&0&0\\ 0&0&0&1&0&0&0&0&0&0&0&0&0\\ 0&0&0&0&1&0&0&0&0&0&0&0&0\\ 1&0&0&0&0&1&0&0&0&0&0&0&0\\ 0&0&0&1&0&0&1&0&0&0&0&0&0\\ 0&0&0&0&0&1&0&0&1&0&0&0&0\\ 0&0&0&0&0&0&1&0&0&0&0&1&0\\ 0&0&0&0&0&0&0&1&0&0&0&0&1\\ 0&0&0&0&0&0&0&0&1&0&0&0&0\\ 0&0&0&0&0&0&0&0&0&1&0&0&0\\ 0&0&0&0&0&0&0&0&0&0&1&0&0\\ 0&0&0&0&0&0&0&0&0&0&0&0&1\end {array} \right] \\ 
\end{align*}
The maximal loop class is a simple loop.
It's spectral radius is an isolated points of 1.506135679.
These points have local dimension 1.008536246.

Maximal Loop Class: [82].
The reduced characteristic vectors are:
\begin{itemize}
\item Reduced characteristic vector 82: $(\varrho^2+2 \varrho-2, (0, -2 \varrho^2-\varrho+2, 2 \varrho^2-1, -\varrho^2+\varrho, 1-\varrho, \varrho^2+\varrho-1, -\varrho^2+1, \varrho^2, -\varrho^2-\varrho+2, \varrho, \varrho^2-\varrho+1, -2 \varrho^2+2, 2 \varrho^2+\varrho-1))$ 
\end{itemize}
See Figure \ref{fig:Pic16} for the transition diagram.
\begin{figure}[H]
\includegraphics[scale=0.5]{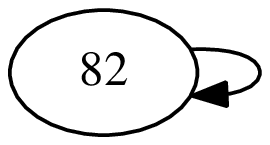}
\caption{$x^3+x^2-1$ with $d_i \in [0, 1-\varrho]$, Subloop [82]}
\label{fig:Pic16}
\end{figure}
This has transition matrices:
\begin{align*}
T(82,82) & =  \left[ \begin {array}{ccccccccccccc} 1&0&0&0&0&0&0&0&0&0&0&0&0\\ 0&0&1&0&0&0&0&0&0&0&0&0&0\\ 0&0&0&1&0&0&0&0&0&0&0&0&0\\ 0&0&0&0&1&0&0&0&0&0&0&0&0\\ 1&0&0&0&0&1&0&0&0&0&0&0&0\\ 0&1&0&0&0&0&1&0&0&0&0&0&0\\ 0&0&0&0&1&0&0&1&0&0&0&0&0\\ 0&0&0&0&0&0&1&0&0&1&0&0&0\\ 0&0&0&0&0&0&0&1&0&0&0&0&1\\ 0&0&0&0&0&0&0&0&1&0&0&0&0\\ 0&0&0&0&0&0&0&0&0&1&0&0&0\\ 0&0&0&0&0&0&0&0&0&0&1&0&0\\ 0&0&0&0&0&0&0&0&0&0&0&1&0\end {array} \right] \\ 
\end{align*}
The maximal loop class is a simple loop.
It's spectral radius is an isolated points of 1.506135679.
These points have local dimension 1.008536246.

Maximal Loop Class: [4].
The reduced characteristic vectors are:
\begin{itemize}
\item Reduced characteristic vector 4: $(\varrho^2+\varrho-1, (-\varrho^2-\varrho+2))$ 
\end{itemize}
See Figure \ref{fig:Pic17} for the transition diagram.
\begin{figure}[H]
\includegraphics[scale=0.5]{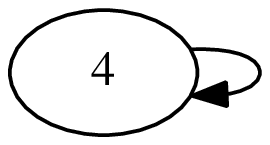}
\caption{$x^3+x^2-1$ with $d_i \in [0, 1-\varrho]$, Subloop [4]}
\label{fig:Pic17}
\end{figure}
This has transition matrices:
\begin{align*}
T(4,4) & =  \left[ \begin {array}{c} 1\end {array} \right] & 
 \end{align*}
The maximal loop class is a simple loop.
It's spectral radius is an isolated points of 1.
These points have local dimension 2.464965255.

Maximal Loop Class: [2].
The reduced characteristic vectors are:
\begin{itemize}
\item Reduced characteristic vector 2: $(\varrho^2+\varrho-1, (0))$ 
\end{itemize}
See Figure \ref{fig:Pic18} for the transition diagram.
\begin{figure}[H]
\includegraphics[scale=0.5]{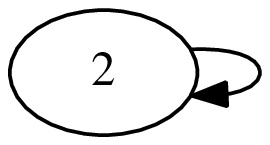}
\caption{$x^3+x^2-1$ with $d_i \in [0, 1-\varrho]$, Subloop [2]}
\label{fig:Pic18}
\end{figure}
This has transition matrices:
\begin{align*}
T(2,2) & =  \left[ \begin {array}{c} 1\end {array} \right] & 
 \end{align*}
The maximal loop class is a simple loop.
It's spectral radius is an isolated points of 1.
These points have local dimension 2.464965255.

\section{Minimal polynomial $x^3-x^2+2 x-1$ with $d_i \in [0, 1-\varrho]$} 
\label{sec:12}
 
Consider $\varrho$, the root of $x^3-x^2+2 x-1$ and the maps $S_i(x) = \varrho x  + d_i$ with $d_{0} = 0$, and $d_{1} = 1-\varrho$.
The probabilities are uniform.
The reduced transition diagram has 30 reduced characteristic vectors.
The reduced characteristic vectors are:
\begin{itemize}
\item Reduced characteristic vector 1: $(1, (0))$ 
\item Reduced characteristic vector 2: $(\varrho^2-\varrho+1, (0))$ 
\item Reduced characteristic vector 3: $(-\varrho^2+\varrho, (0, \varrho^2-\varrho+1))$ 
\item Reduced characteristic vector 4: $(\varrho^2-\varrho+1, (-\varrho^2+\varrho))$ 
\item Reduced characteristic vector 5: $(\varrho^2, (-\varrho^2+\varrho))$ 
\item Reduced characteristic vector 6: $(1-\varrho, (0, \varrho))$ 
\item Reduced characteristic vector 7: $(\varrho^2, (1-\varrho))$ 
\item Reduced characteristic vector 8: $(\varrho^2-\varrho+1, (0, -\varrho^2+\varrho))$ 
\item Reduced characteristic vector 9: $(\varrho^2, (0, 1-\varrho))$ 
\item Reduced characteristic vector 10: $(-\varrho^2+\varrho, (0, \varrho^2, \varrho^2-\varrho+1))$ 
\item Reduced characteristic vector 11: $(\varrho^2-2 \varrho+1, (-\varrho^2+\varrho, \varrho))$ 
\item Reduced characteristic vector 12: $(-\varrho^2+\varrho, (0, 1-\varrho, \varrho^2-\varrho+1))$ 
\item Reduced characteristic vector 13: $(\varrho^2, (-\varrho^2+\varrho, -\varrho^2+1))$ 
\item Reduced characteristic vector 14: $(\varrho^2-2 \varrho+1, (0, \varrho))$ 
\item Reduced characteristic vector 15: $(-\varrho^2+\varrho, (0, \varrho^2-2 \varrho+1, \varrho^2-\varrho+1))$ 
\item Reduced characteristic vector 16: $(\varrho^2, (-\varrho^2+\varrho, 1-\varrho))$ 
\item Reduced characteristic vector 17: $(-\varrho^2+\varrho, (0, \varrho, \varrho^2-\varrho+1))$ 
\item Reduced characteristic vector 18: $(\varrho^2-2 \varrho+1, (-\varrho^2+\varrho, -\varrho^2+2 \varrho))$ 
\item Reduced characteristic vector 19: $(\varrho^2, (0, -\varrho^2+\varrho))$ 
\item Reduced characteristic vector 20: $(1-\varrho, (0, \varrho^2, \varrho))$ 
\item Reduced characteristic vector 21: $(2 \varrho^2-\varrho, (-\varrho^2+\varrho, -\varrho^2+1))$ 
\item Reduced characteristic vector 22: $(1-\varrho, (0, -\varrho^2+\varrho, \varrho))$ 
\item Reduced characteristic vector 23: $(\varrho^2, (1-\varrho, -\varrho^2+1))$ 
\item Reduced characteristic vector 24: $(\varrho^2-2 \varrho+1, (0, -\varrho^2+\varrho, \varrho))$ 
\item Reduced characteristic vector 25: $(-\varrho^2+\varrho, (0, \varrho^2-2 \varrho+1, 1-\varrho, \varrho^2-\varrho+1))$ 
\item Reduced characteristic vector 26: $(\varrho^2, (-\varrho^2+\varrho, 1-\varrho, -\varrho^2+1))$ 
\item Reduced characteristic vector 27: $(2 \varrho-1, (1-\varrho))$ 
\item Reduced characteristic vector 28: $(\varrho^2, (0, -\varrho^2+\varrho, 1-\varrho))$ 
\item Reduced characteristic vector 29: $(-\varrho^2+\varrho, (0, \varrho^2, \varrho, \varrho^2-\varrho+1))$ 
\item Reduced characteristic vector 30: $(\varrho^2-2 \varrho+1, (-\varrho^2+\varrho, \varrho, -\varrho^2+2 \varrho))$ 
\end{itemize}
See Figure \ref{fig:Pic19} for the transition diagram.
\begin{figure}[H]
\includegraphics[scale=0.4]{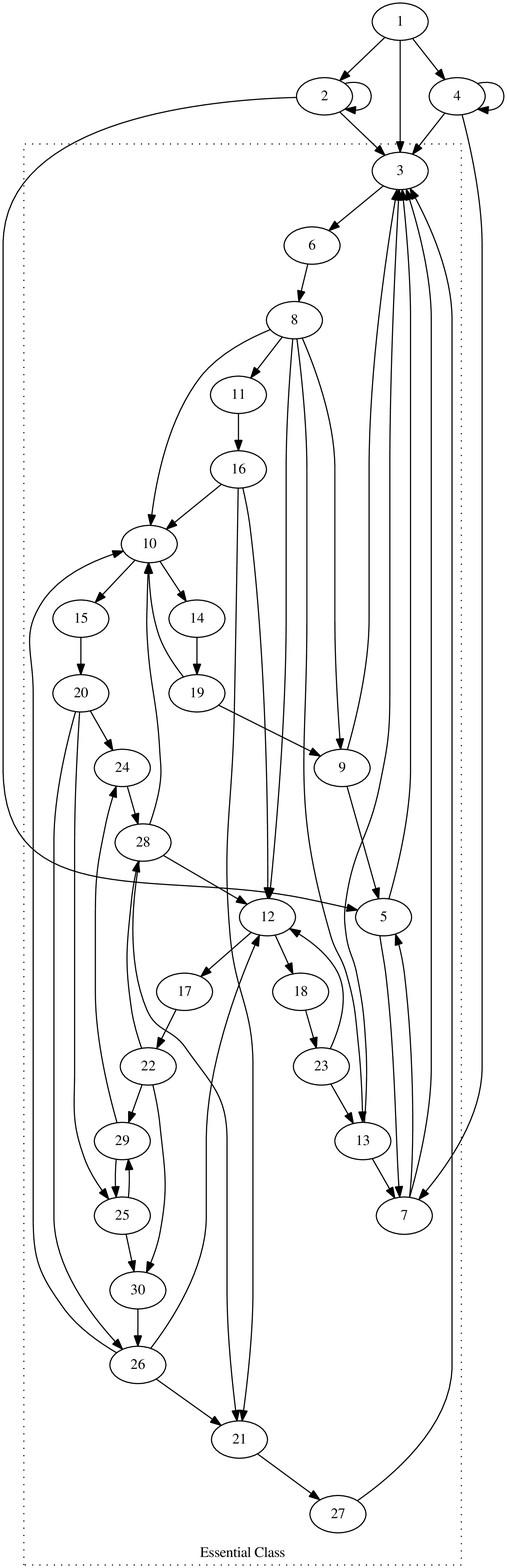}
\caption{$x^3-x^2+2 x-1$ with $d_i \in [0, 1-\varrho]$, Full set + Essential class}
\label{fig:Pic19}
\end{figure}
This has transition matrices:
\begin{align*}
T(1,2) & =  \left[ \begin {array}{c} 1\end {array} \right] & 
 T(1,3) & =  \left[ \begin {array}{cc} 1&1\end {array} \right] \\ 
T(1,4) & =  \left[ \begin {array}{c} 1\end {array} \right] & 
 T(2,2) & =  \left[ \begin {array}{c} 1\end {array} \right] \\ 
T(2,3) & =  \left[ \begin {array}{cc} 1&1\end {array} \right] & 
 T(2,5) & =  \left[ \begin {array}{c} 1\end {array} \right] \\ 
T(3,6) & =  \left[ \begin {array}{cc} 1&0\\ 0&1\end {array} \right] & 
 T(4,7) & =  \left[ \begin {array}{c} 1\end {array} \right] \\ 
T(4,3) & =  \left[ \begin {array}{cc} 1&1\end {array} \right] & 
 T(4,4) & =  \left[ \begin {array}{c} 1\end {array} \right] \\ 
T(5,7) & =  \left[ \begin {array}{c} 1\end {array} \right] & 
 T(5,3) & =  \left[ \begin {array}{cc} 1&1\end {array} \right] \\ 
T(6,8) & =  \left[ \begin {array}{cc} 1&0\\ 0&1\end {array} \right] & 
 T(7,3) & =  \left[ \begin {array}{cc} 1&1\end {array} \right] \\ 
T(7,5) & =  \left[ \begin {array}{c} 1\end {array} \right] & 
 T(8,9) & =  \left[ \begin {array}{cc} 1&0\\ 0&1\end {array} \right] \\ 
T(8,10) & =  \left[ \begin {array}{ccc} 0&1&0\\ 1&0&1\end {array} \right] & 
 T(8,11) & =  \left[ \begin {array}{cc} 0&1\\ 1&0\end {array} \right] \\ 
T(8,12) & =  \left[ \begin {array}{ccc} 1&0&1\\ 0&1&0\end {array} \right] & 
 T(8,13) & =  \left[ \begin {array}{cc} 1&0\\ 0&1\end {array} \right] \\ 
T(9,3) & =  \left[ \begin {array}{cc} 1&0\\ 1&1\end {array} \right] & 
 T(9,5) & =  \left[ \begin {array}{c} 1\\ 1\end {array} \right] \\ 
T(10,14) & =  \left[ \begin {array}{cc} 1&0\\ 0&1\\ 0&1\end {array} \right] & 
 T(10,15) & =  \left[ \begin {array}{ccc} 0&1&0\\ 1&0&1\\ 0&0&1\end {array} \right] \\ 
T(11,16) & =  \left[ \begin {array}{cc} 0&1\\ 1&0\end {array} \right] & 
 T(12,17) & =  \left[ \begin {array}{ccc} 1&0&0\\ 1&0&1\\ 0&1&0\end {array} \right] \\ 
T(12,18) & =  \left[ \begin {array}{cc} 1&0\\ 1&0\\ 0&1\end {array} \right] & 
 T(13,7) & =  \left[ \begin {array}{c} 1\\ 1\end {array} \right] \\ 
T(13,3) & =  \left[ \begin {array}{cc} 1&1\\ 0&1\end {array} \right] & 
 T(14,19) & =  \left[ \begin {array}{cc} 1&0\\ 0&1\end {array} \right] \\ 
T(15,20) & =  \left[ \begin {array}{ccc} 1&0&0\\ 0&1&0\\ 0&0&1\end {array} \right] & 
 T(16,12) & =  \left[ \begin {array}{ccc} 0&1&0\\ 1&0&1\end {array} \right] \\ 
T(16,21) & =  \left[ \begin {array}{cc} 0&1\\ 1&0\end {array} \right] & 
 T(16,10) & =  \left[ \begin {array}{ccc} 1&0&1\\ 0&1&0\end {array} \right] \\ 
T(17,22) & =  \left[ \begin {array}{ccc} 1&0&0\\ 0&1&0\\ 0&0&1\end {array} \right] & 
 T(18,23) & =  \left[ \begin {array}{cc} 1&0\\ 0&1\end {array} \right] \\ 
T(19,9) & =  \left[ \begin {array}{cc} 1&0\\ 0&1\end {array} \right] & 
 T(19,10) & =  \left[ \begin {array}{ccc} 0&1&0\\ 1&0&1\end {array} \right] \\ 
T(20,24) & =  \left[ \begin {array}{ccc} 1&0&0\\ 0&0&1\\ 0&1&0\end {array} \right] & 
 T(20,25) & =  \left[ \begin {array}{cccc} 0&1&0&0\\ 1&0&0&1\\ 0&0&1&0\end {array} \right] \\ 
T(20,26) & =  \left[ \begin {array}{ccc} 0&1&0\\ 1&0&0\\ 0&0&1\end {array} \right] & 
 T(21,27) & =  \left[ \begin {array}{c} 1\\ 1\end {array} \right] \\ 
T(22,28) & =  \left[ \begin {array}{ccc} 1&0&0\\ 0&0&1\\ 0&1&0\end {array} \right] & 
 T(22,29) & =  \left[ \begin {array}{cccc} 0&1&0&0\\ 1&0&0&1\\ 0&0&1&0\end {array} \right] \\ 
T(22,30) & =  \left[ \begin {array}{ccc} 0&1&0\\ 1&0&0\\ 0&0&1\end {array} \right] & 
 T(23,12) & =  \left[ \begin {array}{ccc} 1&0&1\\ 0&1&0\end {array} \right] \\ 
T(23,13) & =  \left[ \begin {array}{cc} 1&0\\ 0&1\end {array} \right] & 
 T(24,28) & =  \left[ \begin {array}{ccc} 1&0&0\\ 0&0&1\\ 0&1&0\end {array} \right] \\ 
T(25,29) & =  \left[ \begin {array}{cccc} 1&0&0&0\\ 0&1&0&0\\ 1&0&0&1\\ 0&0&1&0\end {array} \right] & 
 T(25,30) & =  \left[ \begin {array}{ccc} 1&0&0\\ 0&1&0\\ 1&0&0\\ 0&0&1\end {array} \right] \\ 
T(26,12) & =  \left[ \begin {array}{ccc} 0&1&0\\ 1&0&1\\ 0&1&0\end {array} \right] & 
 T(26,21) & =  \left[ \begin {array}{cc} 0&1\\ 1&0\\ 0&1\end {array} \right] \\ 
T(26,10) & =  \left[ \begin {array}{ccc} 1&0&1\\ 0&1&0\\ 0&0&1\end {array} \right] & 
 T(27,3) & =  \left[ \begin {array}{cc} 1&1\end {array} \right] \\ 
T(28,12) & =  \left[ \begin {array}{ccc} 1&0&0\\ 0&1&0\\ 1&0&1\end {array} \right] & 
 T(28,21) & =  \left[ \begin {array}{cc} 1&0\\ 0&1\\ 1&0\end {array} \right] \\ 
T(28,10) & =  \left[ \begin {array}{ccc} 0&1&0\\ 1&0&1\\ 0&1&0\end {array} \right] & 
 T(29,24) & =  \left[ \begin {array}{ccc} 1&0&0\\ 0&0&1\\ 0&1&0\\ 0&0&1\end {array} \right] \\ 
T(29,25) & =  \left[ \begin {array}{cccc} 0&1&0&0\\ 1&0&0&1\\ 0&0&1&0\\ 0&0&0&1\end {array} \right] & 
 T(30,26) & =  \left[ \begin {array}{ccc} 0&1&0\\ 1&0&0\\ 0&0&1\end {array} \right] \\ 
\end{align*}

The essential class is: [3, 5, 6, 7, 8, 9, 10, 11, 12, 13, 14, 15, 16, 17, 18, 19, 20, 21, 22, 23, 24, 25, 26, 27, 28, 29, 30].
The essential class is of positive type.
An example is the path [5, 3].
The essential class is not a simple loop.
This spectral range will include the interval $[1., 1.220744085]$.
The minimum comes from the loop $[5, 7, 5]$.
The maximum comes from the loop $[25, 29, 25]$.
These points will include points of local dimension [.8778224563, 1.232482628].
The Spectral Range is contained in the range $[1., 1.292392221]$.
The minimum comes from the total row sub-norm of length 10. 
The maximum comes from the total row sup-norm of length 10. 
These points will have local dimension contained in [.7764098609, 1.232482628].

There are 2 additional maximal loops.

Maximal Loop Class: [4].
The maximal loop class is a simple loop.
It's spectral radius is an isolated points of 1.
These points have local dimension 1.232482628.

Maximal Loop Class: [2].
The maximal loop class is a simple loop.
It's spectral radius is an isolated points of 1.
These points have local dimension 1.232482628.

\section{Minimal polynomial $x^3+x^2+x-1$ with $d_i \in [0, 1-\varrho]$} 
\label{sec:13}
 
Consider $\varrho$, the root of $x^3+x^2+x-1$ and the maps $S_i(x) = \varrho x  + d_i$ with $d_{0} = 0$, and $d_{1} = 1-\varrho$.
The probabilities are uniform.
The reduced transition diagram has 11 reduced characteristic vectors.
The reduced characteristic vectors are:
\begin{itemize}
\item Reduced characteristic vector 1: $(1, (0))$ 
\item Reduced characteristic vector 2: $(\varrho^2+\varrho, (0))$ 
\item Reduced characteristic vector 3: $(-\varrho^2-\varrho+1, (0, \varrho^2+\varrho))$ 
\item Reduced characteristic vector 4: $(\varrho^2+\varrho, (-\varrho^2-\varrho+1))$ 
\item Reduced characteristic vector 5: $(\varrho, (-\varrho^2-\varrho+1))$ 
\item Reduced characteristic vector 6: $(\varrho^2, (0, -\varrho^2+1))$ 
\item Reduced characteristic vector 7: $(\varrho, (\varrho^2))$ 
\item Reduced characteristic vector 8: $(\varrho^2, (-\varrho^2-\varrho+1))$ 
\item Reduced characteristic vector 9: $(\varrho, (0, 1-\varrho))$ 
\item Reduced characteristic vector 10: $(\varrho^2, (\varrho))$ 
\item Reduced characteristic vector 11: $(2 \varrho^2+2 \varrho-1, (-\varrho^2-\varrho+1))$ 
\end{itemize}
See Figure \ref{fig:Pic20} for the transition diagram.
\begin{figure}[H]
\includegraphics[scale=0.5]{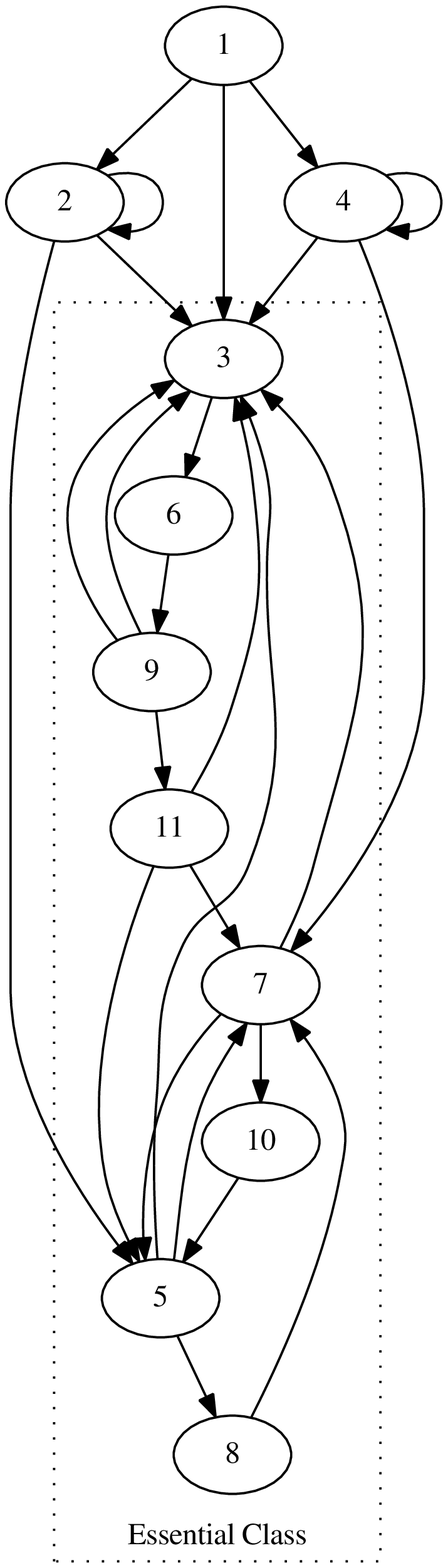}
\caption{$x^3+x^2+x-1$ with $d_i \in [0, 1-\varrho]$, Full set + Essential class}
\label{fig:Pic20}
\end{figure}
This has transition matrices:
\begin{align*}
T(1,2) & =  \left[ \begin {array}{c} 1\end {array} \right] & 
 T(1,3) & =  \left[ \begin {array}{cc} 1&1\end {array} \right] \\ 
T(1,4) & =  \left[ \begin {array}{c} 1\end {array} \right] & 
 T(2,2) & =  \left[ \begin {array}{c} 1\end {array} \right] \\ 
T(2,3) & =  \left[ \begin {array}{cc} 1&1\end {array} \right] & 
 T(2,5) & =  \left[ \begin {array}{c} 1\end {array} \right] \\ 
T(3,6) & =  \left[ \begin {array}{cc} 1&0\\ 0&1\end {array} \right] & 
 T(4,7) & =  \left[ \begin {array}{c} 1\end {array} \right] \\ 
T(4,3) & =  \left[ \begin {array}{cc} 1&1\end {array} \right] & 
 T(4,4) & =  \left[ \begin {array}{c} 1\end {array} \right] \\ 
T(5,7) & =  \left[ \begin {array}{c} 1\end {array} \right] & 
 T(5,3) & =  \left[ \begin {array}{cc} 1&1\end {array} \right] \\ 
T(5,8) & =  \left[ \begin {array}{c} 1\end {array} \right] & 
 T(6,9) & =  \left[ \begin {array}{cc} 1&0\\ 0&1\end {array} \right] \\ 
T(7,10) & =  \left[ \begin {array}{c} 1\end {array} \right] & 
 T(7,3) & =  \left[ \begin {array}{cc} 1&1\end {array} \right] \\ 
T(7,5) & =  \left[ \begin {array}{c} 1\end {array} \right] & 
 T(8,7) & =  \left[ \begin {array}{c} 1\end {array} \right] \\ 
T(9,3) & =  \left[ \begin {array}{cc} 1&0\\ 1&1\end {array} \right] & 
 T(9,11) & =  \left[ \begin {array}{c} 1\\ 1\end {array} \right] \\ 
T(9,3) & =  \left[ \begin {array}{cc} 1&1\\ 0&1\end {array} \right] & 
 T(10,5) & =  \left[ \begin {array}{c} 1\end {array} \right] \\ 
T(11,7) & =  \left[ \begin {array}{c} 1\end {array} \right] & 
 T(11,3) & =  \left[ \begin {array}{cc} 1&1\end {array} \right] \\ 
T(11,5) & =  \left[ \begin {array}{c} 1\end {array} \right] & 
 \end{align*}

The essential class is: [3, 5, 6, 7, 8, 9, 10, 11].
The essential class is of positive type.
An example is the path [5, 3].
The essential class is not a simple loop.
This spectral range will include the interval $[1., 1.189207115]$.
The minimum comes from the loop $[5, 7, 5]$.
The maximum comes from the loop $[3, 6, 9, 11, 3]$.
These points will include points of local dimension [.8531002132, 1.137466951].
The Spectral Range is contained in the range $[1., 1.231144413]$.
The minimum comes from the total row sub-norm of length 10. 
The maximum comes from the total row sup-norm of length 10. 
These points will have local dimension contained in [.7962268660, 1.137466951].

There are 2 additional maximal loops.

Maximal Loop Class: [4].
The maximal loop class is a simple loop.
It's spectral radius is an isolated points of 1.
These points have local dimension 1.137466951.

Maximal Loop Class: [2].
The maximal loop class is a simple loop.
It's spectral radius is an isolated points of 1.
These points have local dimension 1.137466951.

\section{Minimal polynomial $x^4-2 x^2-x+1$ with $d_i \in [0, 1-\varrho]$} 
\label{sec:14}
 
Consider $\varrho$, the root of $x^4-2 x^2-x+1$ and the maps $S_i(x) = \varrho x  + d_i$ with $d_{0} = 0$, and $d_{1} = 1-\varrho$.
The probabilities are uniform.
The reduced transition diagram has 538 reduced characteristic vectors.

The number of reduced characteristic vectors in the essential class is 535.
The essential class is of positive type.
An example is the path [5, 3].
The essential class is not a simple loop.
This spectral range will include the interval $[1., 1.150963925]$.
The minimum comes from the loop $[46, 46]$.
The maximum comes from the loop $[16, 22, 18, 28, 16]$.
These points will include points of local dimension [.8572351366, 1.075364982].
The Spectral Range is contained in the range $[1., 1.174618943]$.
The minimum comes from the total column sub-norm of length 10. 
The maximum comes from the total column sup-norm of length 10. 
These points will have local dimension contained in [.8256729656, 1.075364982].

There are 2 additional maximal loops.

Maximal Loop Class: [4].
The reduced characteristic vectors are:
\begin{itemize}
\item Reduced characteristic vector 4: $(-\varrho^3+2 \varrho, (\varrho^3-2 \varrho+1))$ 
\end{itemize}
See Figure \ref{fig:Pic21} for the transition diagram.
\begin{figure}[H]
\includegraphics[scale=0.5]{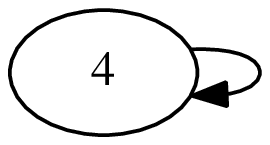}
\caption{$x^4-2 x^2-x+1$ with $d_i \in [0, 1-\varrho]$, Subloop [4]}
\label{fig:Pic21}
\end{figure}
This has transition matrices:
\begin{align*}
T(4,4) & =  \left[ \begin {array}{c} 1\end {array} \right] & 
 \end{align*}
The maximal loop class is a simple loop.
It's spectral radius is an isolated points of 1.
These points have local dimension 1.075364982.

Maximal Loop Class: [2].
The reduced characteristic vectors are:
\begin{itemize}
\item Reduced characteristic vector 2: $(-\varrho^3+2 \varrho, (0))$ 
\end{itemize}
See Figure \ref{fig:Pic22} for the transition diagram.
\begin{figure}[H]
\includegraphics[scale=0.5]{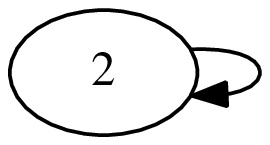}
\caption{$x^4-2 x^2-x+1$ with $d_i \in [0, 1-\varrho]$, Subloop [2]}
\label{fig:Pic22}
\end{figure}
This has transition matrices:
\begin{align*}
T(2,2) & =  \left[ \begin {array}{c} 1\end {array} \right] & 
 \end{align*}
The maximal loop class is a simple loop.
It's spectral radius is an isolated points of 1.
These points have local dimension 1.075364982.

\section{Minimal polynomial $x^4-x^3+2 x-1$ with $d_i \in [0, 1-\varrho]$} 
\label{sec:15}
 
Consider $\varrho$, the root of $x^4-x^3+2 x-1$ and the maps $S_i(x) = \varrho x  + d_i$ with $d_{0} = 0$, and $d_{1} = 1-\varrho$.
The probabilities are uniform.
The reduced transition diagram has 190 reduced characteristic vectors.

The number of reduced characteristic vectors in the essential class is 187.
The essential class is of positive type.
An example is the path [5, 3].
The essential class is not a simple loop.
This spectral range will include the interval $[1., 1.142219979]$.
The minimum comes from the loop $[5, 7, 5]$.
The maximum comes from the loop $[68, 90, 87, 68]$.
These points will include points of local dimension [.8974197439, 1.110448830].
The Spectral Range is contained in the range $[1., 1.245730940]$.
The minimum comes from the total row sub-norm of length 10. 
The maximum comes from the total row sup-norm of length 10. 
These points will have local dimension contained in [.7584448787, 1.110448830].

There are 2 additional maximal loops.

Maximal Loop Class: [4].
The reduced characteristic vectors are:
\begin{itemize}
\item Reduced characteristic vector 4: $(\varrho^3-\varrho^2+1, (-\varrho^3+\varrho^2))$ 
\end{itemize}
See Figure \ref{fig:Pic23} for the transition diagram.
\begin{figure}[H]
\includegraphics[scale=0.5]{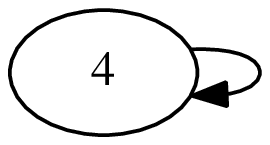}
\caption{$x^4-x^3+2 x-1$ with $d_i \in [0, 1-\varrho]$, Subloop [4]}
\label{fig:Pic23}
\end{figure}
This has transition matrices:
\begin{align*}
T(4,4) & =  \left[ \begin {array}{c} 1\end {array} \right] & 
 \end{align*}
The maximal loop class is a simple loop.
It's spectral radius is an isolated points of 1.
These points have local dimension 1.110448830.

Maximal Loop Class: [2].
The reduced characteristic vectors are:
\begin{itemize}
\item Reduced characteristic vector 2: $(\varrho^3-\varrho^2+1, (0))$ 
\end{itemize}
See Figure \ref{fig:Pic24} for the transition diagram.
\begin{figure}[H]
\includegraphics[scale=0.5]{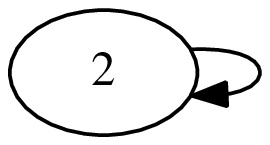}
\caption{$x^4-x^3+2 x-1$ with $d_i \in [0, 1-\varrho]$, Subloop [2]}
\label{fig:Pic24}
\end{figure}
This has transition matrices:
\begin{align*}
T(2,2) & =  \left[ \begin {array}{c} 1\end {array} \right] & 
 \end{align*}
The maximal loop class is a simple loop.
It's spectral radius is an isolated points of 1.
These points have local dimension 1.110448830.

\section{Minimal polynomial $x^4+x^3+x^2+x-1$ with $d_i \in [0, 1-\varrho]$} 
\label{sec:16}
 
Consider $\varrho$, the root of $x^4+x^3+x^2+x-1$ and the maps $S_i(x) = \varrho x  + d_i$ with $d_{0} = 0$, and $d_{1} = 1-\varrho$.
The probabilities are uniform.
The reduced transition diagram has 14 reduced characteristic vectors.
The reduced characteristic vectors are:
\begin{itemize}
\item Reduced characteristic vector 1: $(1, (0))$ 
\item Reduced characteristic vector 2: $(\varrho^3+\varrho^2+\varrho, (0))$ 
\item Reduced characteristic vector 3: $(-\varrho^3-\varrho^2-\varrho+1, (0, \varrho^3+\varrho^2+\varrho))$ 
\item Reduced characteristic vector 4: $(\varrho^3+\varrho^2+\varrho, (-\varrho^3-\varrho^2-\varrho+1))$ 
\item Reduced characteristic vector 5: $(\varrho^2+\varrho, (-\varrho^3-\varrho^2-\varrho+1))$ 
\item Reduced characteristic vector 6: $(\varrho^3, (0, -\varrho^3+1))$ 
\item Reduced characteristic vector 7: $(\varrho^2+\varrho, (\varrho^3))$ 
\item Reduced characteristic vector 8: $(\varrho^3+\varrho, (-\varrho^3-\varrho^2-\varrho+1))$ 
\item Reduced characteristic vector 9: $(\varrho^2, (0, -\varrho^2+1))$ 
\item Reduced characteristic vector 10: $(\varrho^3+\varrho, (\varrho^2))$ 
\item Reduced characteristic vector 11: $(\varrho^3+\varrho^2, (-\varrho^3-\varrho^2-\varrho+1))$ 
\item Reduced characteristic vector 12: $(\varrho, (0, 1-\varrho))$ 
\item Reduced characteristic vector 13: $(\varrho^3+\varrho^2, (\varrho))$ 
\item Reduced characteristic vector 14: $(2 \varrho^3+2 \varrho^2+2 \varrho-1, (-\varrho^3-\varrho^2-\varrho+1))$ 
\end{itemize}
See Figure \ref{fig:Pic25} for the transition diagram.
\begin{figure}[H]
\includegraphics[scale=0.5]{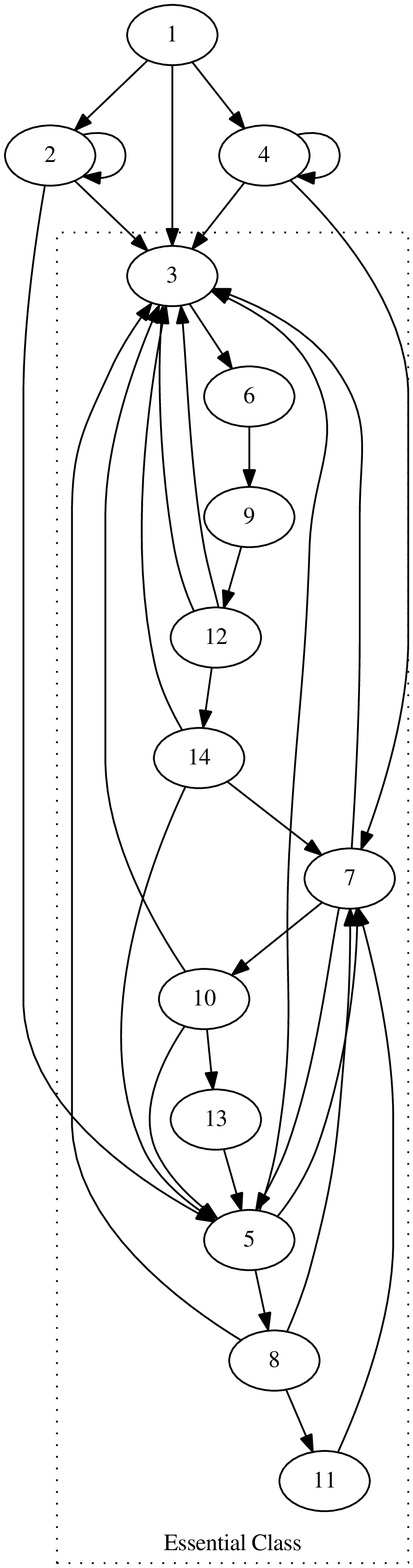}
\caption{$x^4+x^3+x^2+x-1$ with $d_i \in [0, 1-\varrho]$, Full set + Essential class}
\label{fig:Pic25}
\end{figure}
This has transition matrices:
\begin{align*}
T(1,2) & =  \left[ \begin {array}{c} 1\end {array} \right] & 
 T(1,3) & =  \left[ \begin {array}{cc} 1&1\end {array} \right] \\ 
T(1,4) & =  \left[ \begin {array}{c} 1\end {array} \right] & 
 T(2,2) & =  \left[ \begin {array}{c} 1\end {array} \right] \\ 
T(2,3) & =  \left[ \begin {array}{cc} 1&1\end {array} \right] & 
 T(2,5) & =  \left[ \begin {array}{c} 1\end {array} \right] \\ 
T(3,6) & =  \left[ \begin {array}{cc} 1&0\\ 0&1\end {array} \right] & 
 T(4,7) & =  \left[ \begin {array}{c} 1\end {array} \right] \\ 
T(4,3) & =  \left[ \begin {array}{cc} 1&1\end {array} \right] & 
 T(4,4) & =  \left[ \begin {array}{c} 1\end {array} \right] \\ 
T(5,7) & =  \left[ \begin {array}{c} 1\end {array} \right] & 
 T(5,3) & =  \left[ \begin {array}{cc} 1&1\end {array} \right] \\ 
T(5,8) & =  \left[ \begin {array}{c} 1\end {array} \right] & 
 T(6,9) & =  \left[ \begin {array}{cc} 1&0\\ 0&1\end {array} \right] \\ 
T(7,10) & =  \left[ \begin {array}{c} 1\end {array} \right] & 
 T(7,3) & =  \left[ \begin {array}{cc} 1&1\end {array} \right] \\ 
T(7,5) & =  \left[ \begin {array}{c} 1\end {array} \right] & 
 T(8,7) & =  \left[ \begin {array}{c} 1\end {array} \right] \\ 
T(8,3) & =  \left[ \begin {array}{cc} 1&1\end {array} \right] & 
 T(8,11) & =  \left[ \begin {array}{c} 1\end {array} \right] \\ 
T(9,12) & =  \left[ \begin {array}{cc} 1&0\\ 0&1\end {array} \right] & 
 T(10,13) & =  \left[ \begin {array}{c} 1\end {array} \right] \\ 
T(10,3) & =  \left[ \begin {array}{cc} 1&1\end {array} \right] & 
 T(10,5) & =  \left[ \begin {array}{c} 1\end {array} \right] \\ 
T(11,7) & =  \left[ \begin {array}{c} 1\end {array} \right] & 
 T(12,3) & =  \left[ \begin {array}{cc} 1&0\\ 1&1\end {array} \right] \\ 
T(12,14) & =  \left[ \begin {array}{c} 1\\ 1\end {array} \right] & 
 T(12,3) & =  \left[ \begin {array}{cc} 1&1\\ 0&1\end {array} \right] \\ 
T(13,5) & =  \left[ \begin {array}{c} 1\end {array} \right] & 
 T(14,7) & =  \left[ \begin {array}{c} 1\end {array} \right] \\ 
T(14,3) & =  \left[ \begin {array}{cc} 1&1\end {array} \right] & 
 T(14,5) & =  \left[ \begin {array}{c} 1\end {array} \right] \\ 
\end{align*}

The essential class is: [3, 5, 6, 7, 8, 9, 10, 11, 12, 13, 14].
The essential class is of positive type.
An example is the path [5, 3].
The essential class is not a simple loop.
This spectral range will include the interval $[1., 1.148698355]$.
The minimum comes from the loop $[5, 7, 5]$.
The maximum comes from the loop $[3, 6, 9, 12, 14, 3]$.
These points will include points of local dimension [.8449717217, 1.056214652].
The Spectral Range is contained in the range $[1., 1.196231199]$.
The minimum comes from the total row sub-norm of length 10. 
The maximum comes from the total row sup-norm of length 10. 
These points will have local dimension contained in [.7831871251, 1.056214652].

There are 2 additional maximal loops.

Maximal Loop Class: [4].
The maximal loop class is a simple loop.
It's spectral radius is an isolated points of 1.
These points have local dimension 1.056214652.

Maximal Loop Class: [2].
The maximal loop class is a simple loop.
It's spectral radius is an isolated points of 1.
These points have local dimension 1.056214652.

\section{Minimal polynomial $x^2+x-1$ with $d_i \in [0, 1/2-1/2 \varrho, 1-\varrho]$} 
\label{sec:17}
 
Consider $\varrho$, the root of $x^2+x-1$ and the maps $S_i(x) = \varrho x  + d_i$ with $d_{0} = 0$, $d_{1} = 1/2-1/2 \varrho$, and $d_{2} = 1-\varrho$.
The probabilities are given by $p_{0} = 1/4$, $p_{1} = 1/2$, and $p_{2} = 1/4$.
The reduced transition diagram has 40 reduced characteristic vectors.
The reduced characteristic vectors are:
\begin{itemize}
\item Reduced characteristic vector 1: $(1, (0))$ 
\item Reduced characteristic vector 2: $(1/2 \varrho, (0))$ 
\item Reduced characteristic vector 3: $(1/2 \varrho, (0, 1/2 \varrho))$ 
\item Reduced characteristic vector 4: $(1-\varrho, (0, 1/2 \varrho, \varrho))$ 
\item Reduced characteristic vector 5: $(1/2 \varrho, (1-\varrho, 1-1/2 \varrho))$ 
\item Reduced characteristic vector 6: $(1/2 \varrho, (1-1/2 \varrho))$ 
\item Reduced characteristic vector 7: $(1/2-1/2 \varrho, (0, 1/2 \varrho))$ 
\item Reduced characteristic vector 8: $(\varrho-1/2, (0, 1/2-1/2 \varrho, 1/2))$ 
\item Reduced characteristic vector 9: $(1/2-1/2 \varrho, (0, \varrho-1/2, 1/2 \varrho, \varrho))$ 
\item Reduced characteristic vector 10: $(1/2-1/2 \varrho, (0, 1/2-1/2 \varrho, 1/2 \varrho, 1/2, 1/2+1/2 \varrho))$ 
\item Reduced characteristic vector 11: $(\varrho-1/2, (0, 1/2-1/2 \varrho, 1-\varrho, 1/2, 1-1/2 \varrho))$ 
\item Reduced characteristic vector 12: $(1/2-1/2 \varrho, (0, \varrho-1/2, 1/2 \varrho, 1/2, \varrho, 1/2+1/2 \varrho))$ 
\item Reduced characteristic vector 13: $(1/2-1/2 \varrho, (0, 1/2-1/2 \varrho, 1/2 \varrho, 1/2, 1-1/2 \varrho, 1/2+1/2 \varrho))$ 
\item Reduced characteristic vector 14: $(\varrho-1/2, (1/2-1/2 \varrho, 1-\varrho, 1/2, 1-1/2 \varrho, 3/2-\varrho))$ 
\item Reduced characteristic vector 15: $(1/2-1/2 \varrho, (0, 1/2 \varrho, 1/2, \varrho, 1/2+1/2 \varrho))$ 
\item Reduced characteristic vector 16: $(1/2-1/2 \varrho, (1/2-1/2 \varrho, 1/2, 1-1/2 \varrho, 1/2+1/2 \varrho))$ 
\item Reduced characteristic vector 17: $(\varrho-1/2, (1-\varrho, 1-1/2 \varrho, 3/2-\varrho))$ 
\item Reduced characteristic vector 18: $(1/2-1/2 \varrho, (1/2, 1/2+1/2 \varrho))$ 
\item Reduced characteristic vector 19: $(\varrho-1/2, (0, 1/2-1/2 \varrho, 1/2 \varrho, 1/2, 1-1/2 \varrho, 1/2+1/2 \varrho))$ 
\item Reduced characteristic vector 20: $(1-3/2 \varrho, (0, \varrho-1/2, 1/2 \varrho, -1/2+3/2 \varrho, \varrho, 1/2+1/2 \varrho, 3/2 \varrho))$ 
\item Reduced characteristic vector 21: $(\varrho-1/2, (1-3/2 \varrho, 1/2-1/2 \varrho, 1-\varrho, 1/2, 1-1/2 \varrho, 3/2-\varrho))$ 
\item Reduced characteristic vector 22: $(1/2-1/2 \varrho, (0, 1/2-1/2 \varrho, 1/2 \varrho, 1/2, \varrho, 1/2+1/2 \varrho))$ 
\item Reduced characteristic vector 23: $(\varrho-1/2, (0, 1/2-1/2 \varrho, 1-\varrho, 1/2, 1-1/2 \varrho, 1/2+1/2 \varrho))$ 
\item Reduced characteristic vector 24: $(1-3/2 \varrho, (0, \varrho-1/2, 1/2 \varrho, 1/2, \varrho, 1/2+1/2 \varrho, 3/2 \varrho))$ 
\item Reduced characteristic vector 25: $(\varrho-1/2, (1-3/2 \varrho, 1/2-1/2 \varrho, 1-\varrho, 3/2-3/2 \varrho, 1-1/2 \varrho, 3/2-\varrho))$ 
\item Reduced characteristic vector 26: $(\varrho-1/2, (0, 1-3/2 \varrho, 1/2-1/2 \varrho, 1-\varrho, 1/2, 1-1/2 \varrho, 3/2-\varrho))$ 
\item Reduced characteristic vector 27: $(1/2-1/2 \varrho, (0, \varrho-1/2, 1/2-1/2 \varrho, 1/2 \varrho, 1/2, \varrho, 1/2+1/2 \varrho))$ 
\item Reduced characteristic vector 28: $(\varrho-1/2, (0, 1/2-1/2 \varrho, 1/2 \varrho, 1-\varrho, 1/2, 1-1/2 \varrho, 1/2+1/2 \varrho))$ 
\item Reduced characteristic vector 29: $(1-3/2 \varrho, (0, \varrho-1/2, 1/2 \varrho, -1/2+3/2 \varrho, 1/2, \varrho, 1/2+1/2 \varrho, 3/2 \varrho))$ 
\item Reduced characteristic vector 30: $(\varrho-1/2, (1-3/2 \varrho, 1/2-1/2 \varrho, 1-\varrho, 1/2, 3/2-3/2 \varrho, 1-1/2 \varrho, 3/2-\varrho))$ 
\item Reduced characteristic vector 31: $(1/2-1/2 \varrho, (0, 1/2-1/2 \varrho, 1/2 \varrho, 1/2, \varrho, 1-1/2 \varrho, 1/2+1/2 \varrho))$ 
\item Reduced characteristic vector 32: $(\varrho-1/2, (0, 1/2-1/2 \varrho, 1-\varrho, 1/2, 1-1/2 \varrho, 1/2+1/2 \varrho, 3/2-\varrho))$ 
\item Reduced characteristic vector 33: $(\varrho-1/2, (0, 1/2-1/2 \varrho, 1/2 \varrho, 1/2, \varrho, 1-1/2 \varrho, 1/2+1/2 \varrho))$ 
\item Reduced characteristic vector 34: $(1-3/2 \varrho, (0, \varrho-1/2, 1/2 \varrho, -1/2+3/2 \varrho, \varrho, 2 \varrho-1/2, 1/2+1/2 \varrho, 3/2 \varrho))$ 
\item Reduced characteristic vector 35: $(\varrho-1/2, (0, 1-3/2 \varrho, 1/2-1/2 \varrho, 1-\varrho, 1/2, 1-1/2 \varrho, 1/2+1/2 \varrho, 3/2-\varrho))$ 
\item Reduced characteristic vector 36: $(1-3/2 \varrho, (0, \varrho-1/2, 1/2-1/2 \varrho, 1/2 \varrho, 1/2, \varrho, 1/2+1/2 \varrho, 3/2 \varrho))$ 
\item Reduced characteristic vector 37: $(\varrho-1/2, (1-3/2 \varrho, 1/2-1/2 \varrho, 3/2-2 \varrho, 1-\varrho, 3/2-3/2 \varrho, 1-1/2 \varrho, 3/2-\varrho))$ 
\item Reduced characteristic vector 38: $(\varrho-1/2, (0, 1-3/2 \varrho, 1/2-1/2 \varrho, 1-\varrho, 1/2, 3/2-3/2 \varrho, 1-1/2 \varrho, 3/2-\varrho))$ 
\item Reduced characteristic vector 39: $(1/2-1/2 \varrho, (0, \varrho-1/2, 1/2-1/2 \varrho, 1/2 \varrho, 1/2, \varrho, 1-1/2 \varrho, 1/2+1/2 \varrho))$ 
\item Reduced characteristic vector 40: $(\varrho-1/2, (0, 1/2-1/2 \varrho, 1/2 \varrho, 1-\varrho, 1/2, 1-1/2 \varrho, 1/2+1/2 \varrho, 3/2-\varrho))$ 
\end{itemize}
See Figure \ref{fig:Pic26} for the transition diagram.
\begin{figure}[H]
\includegraphics[scale=0.4]{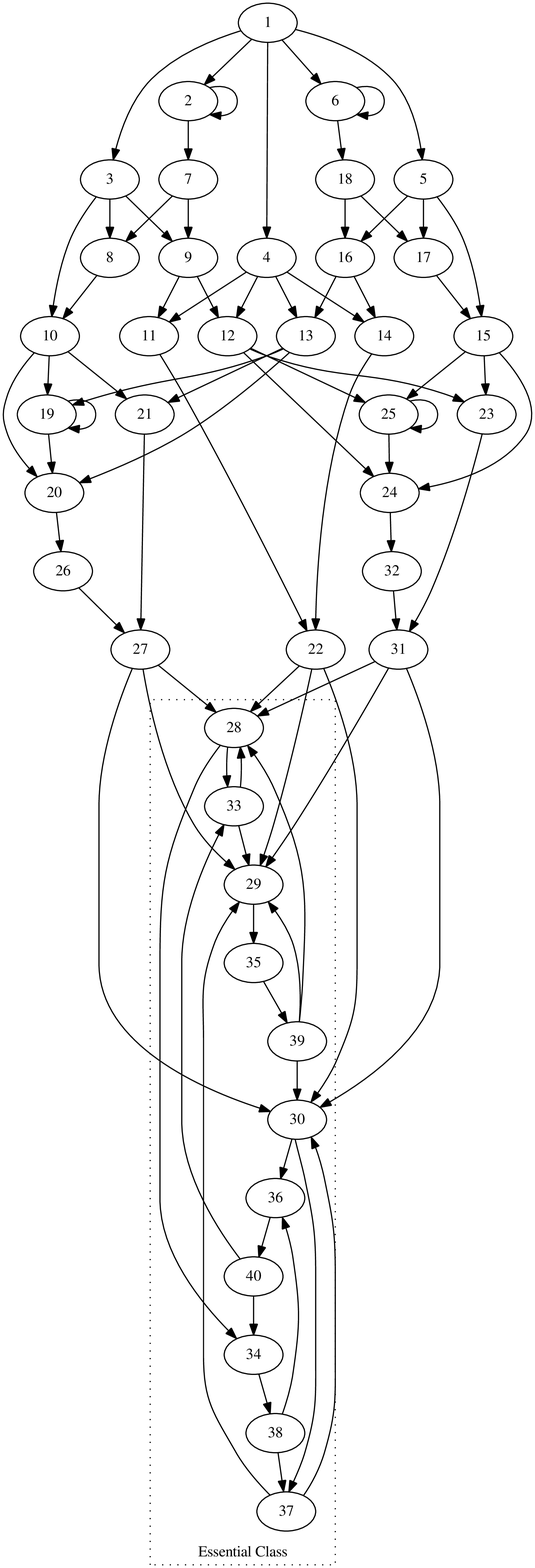}
\caption{$x^2+x-1$ with $d_i \in [0, 1/2-1/2 \varrho, 1-\varrho]$, Full set + Essential class}
\label{fig:Pic26}
\end{figure}
This has transition matrices:
\begin{align*}
T(1,2) & =  \left[ \begin {array}{c} 1\end {array} \right] & 
 T(1,3) & =  \left[ \begin {array}{cc} 2&1\end {array} \right] \\ 
T(1,4) & =  \left[ \begin {array}{ccc} 1&2&1\end {array} \right] & 
 T(1,5) & =  \left[ \begin {array}{cc} 1&2\end {array} \right] \\ 
T(1,6) & =  \left[ \begin {array}{c} 1\end {array} \right] & 
 T(2,2) & =  \left[ \begin {array}{c} 1\end {array} \right] \\ 
T(2,7) & =  \left[ \begin {array}{cc} 2&1\end {array} \right] & 
 T(3,8) & =  \left[ \begin {array}{ccc} 1&0&0\\ 0&2&1\end {array} \right] \\ 
T(3,9) & =  \left[ \begin {array}{cccc} 0&1&0&0\\ 1&0&2&1\end {array} \right] & 
 T(3,10) & =  \left[ \begin {array}{ccccc} 2&0&1&0&0\\ 0&1&0&2&1\end {array} \right] \\ 
T(4,11) & =  \left[ \begin {array}{ccccc} 1&0&0&0&0\\ 0&2&0&1&0\\ 0&0&1&0&2\end {array} \right] & 
 T(4,12) & =  \left[ \begin {array}{cccccc} 0&1&0&0&0&0\\ 1&0&2&0&1&0\\ 0&0&0&1&0&2\end {array} \right] \\ 
T(4,13) & =  \left[ \begin {array}{cccccc} 2&0&1&0&0&0\\ 0&1&0&2&0&1\\ 0&0&0&0&1&0\end {array} \right] & 
 T(4,14) & =  \left[ \begin {array}{ccccc} 2&0&1&0&0\\ 0&1&0&2&0\\ 0&0&0&0&1\end {array} \right] \\ 
T(5,15) & =  \left[ \begin {array}{ccccc} 1&2&0&1&0\\ 0&0&1&0&2\end {array} \right] & 
 T(5,16) & =  \left[ \begin {array}{cccc} 1&2&0&1\\ 0&0&1&0\end {array} \right] \\ 
T(5,17) & =  \left[ \begin {array}{ccc} 1&2&0\\ 0&0&1\end {array} \right] & 
 T(6,18) & =  \left[ \begin {array}{cc} 1&2\end {array} \right] \\ 
T(6,6) & =  \left[ \begin {array}{c} 1\end {array} \right] & 
 T(7,8) & =  \left[ \begin {array}{ccc} 1&0&0\\ 0&2&1\end {array} \right] \\ 
T(7,9) & =  \left[ \begin {array}{cccc} 0&1&0&0\\ 1&0&2&1\end {array} \right] & 
 T(8,10) & =  \left[ \begin {array}{ccccc} 1&0&0&0&0\\ 2&0&1&0&0\\ 0&1&0&2&1\end {array} \right] \\ 
T(9,11) & =  \left[ \begin {array}{ccccc} 1&0&0&0&0\\ 0&1&0&0&0\\ 0&2&0&1&0\\ 0&0&1&0&2\end {array} \right] & 
 T(9,12) & =  \left[ \begin {array}{cccccc} 0&1&0&0&0&0\\ 2&0&1&0&0&0\\ 1&0&2&0&1&0\\ 0&0&0&1&0&2\end {array} \right] \\ 
T(10,19) & =  \left[ \begin {array}{cccccc} 1&0&0&0&0&0\\ 2&0&1&0&0&0\\ 0&2&0&1&0&0\\ 0&1&0&2&0&1\\ 0&0&0&0&1&0\end {array} \right] & 
 T(10,20) & =  \left[ \begin {array}{ccccccc} 0&1&0&0&0&0&0\\ 0&2&0&1&0&0&0\\ 1&0&2&0&1&0&0\\ 0&0&1&0&2&0&1\\ 0&0&0&0&0&1&0\end {array} \right] \\ 
T(10,21) & =  \left[ \begin {array}{cccccc} 0&1&0&0&0&0\\ 0&2&0&1&0&0\\ 1&0&2&0&1&0\\ 0&0&1&0&2&0\\ 0&0&0&0&0&1\end {array} \right] & 
 T(11,22) & =  \left[ \begin {array}{cccccc} 1&0&0&0&0&0\\ 2&0&1&0&0&0\\ 1&0&2&0&1&0\\ 0&1&0&2&0&1\\ 0&0&0&1&0&2\end {array} \right] \\ 
T(12,23) & =  \left[ \begin {array}{cccccc} 1&0&0&0&0&0\\ 0&1&0&0&0&0\\ 0&2&0&1&0&0\\ 0&1&0&2&0&1\\ 0&0&1&0&2&0\\ 0&0&0&0&1&0\end {array} \right] \\ 
T(12,24) & =  \left[ \begin {array}{ccccccc} 0&1&0&0&0&0&0\\ 2&0&1&0&0&0&0\\ 1&0&2&0&1&0&0\\ 0&0&1&0&2&0&1\\ 0&0&0&1&0&2&0\\ 0&0&0&0&0&1&0\end {array} \right] \\ 
T(12,25) & =  \left[ \begin {array}{cccccc} 0&1&0&0&0&0\\ 2&0&1&0&0&0\\ 1&0&2&0&1&0\\ 0&0&1&0&2&0\\ 0&0&0&1&0&2\\ 0&0&0&0&0&1\end {array} \right] \\ 
T(13,19) & =  \left[ \begin {array}{cccccc} 1&0&0&0&0&0\\ 2&0&1&0&0&0\\ 0&2&0&1&0&0\\ 0&1&0&2&0&1\\ 0&0&0&1&0&2\\ 0&0&0&0&1&0\end {array} \right] \\ 
T(13,20) & =  \left[ \begin {array}{ccccccc} 0&1&0&0&0&0&0\\ 0&2&0&1&0&0&0\\ 1&0&2&0&1&0&0\\ 0&0&1&0&2&0&1\\ 0&0&0&0&1&0&2\\ 0&0&0&0&0&1&0\end {array} \right] \\ 
T(13,21) & =  \left[ \begin {array}{cccccc} 0&1&0&0&0&0\\ 0&2&0&1&0&0\\ 1&0&2&0&1&0\\ 0&0&1&0&2&0\\ 0&0&0&0&1&0\\ 0&0&0&0&0&1\end {array} \right] \\ 
T(14,22) & =  \left[ \begin {array}{cccccc} 2&0&1&0&0&0\\ 1&0&2&0&1&0\\ 0&1&0&2&0&1\\ 0&0&0&1&0&2\\ 0&0&0&0&0&1\end {array} \right] & 
 T(15,23) & =  \left[ \begin {array}{cccccc} 1&0&0&0&0&0\\ 0&2&0&1&0&0\\ 0&1&0&2&0&1\\ 0&0&1&0&2&0\\ 0&0&0&0&1&0\end {array} \right] \\ 
T(15,24) & =  \left[ \begin {array}{ccccccc} 0&1&0&0&0&0&0\\ 1&0&2&0&1&0&0\\ 0&0&1&0&2&0&1\\ 0&0&0&1&0&2&0\\ 0&0&0&0&0&1&0\end {array} \right] & 
 T(15,25) & =  \left[ \begin {array}{cccccc} 0&1&0&0&0&0\\ 1&0&2&0&1&0\\ 0&0&1&0&2&0\\ 0&0&0&1&0&2\\ 0&0&0&0&0&1\end {array} \right] \\ 
T(16,13) & =  \left[ \begin {array}{cccccc} 2&0&1&0&0&0\\ 0&1&0&2&0&1\\ 0&0&0&1&0&2\\ 0&0&0&0&1&0\end {array} \right] & 
 T(16,14) & =  \left[ \begin {array}{ccccc} 2&0&1&0&0\\ 0&1&0&2&0\\ 0&0&0&1&0\\ 0&0&0&0&1\end {array} \right] \\ 
T(17,15) & =  \left[ \begin {array}{ccccc} 1&2&0&1&0\\ 0&0&1&0&2\\ 0&0&0&0&1\end {array} \right] & 
 T(18,16) & =  \left[ \begin {array}{cccc} 1&2&0&1\\ 0&0&1&0\end {array} \right] \\ 
T(18,17) & =  \left[ \begin {array}{ccc} 1&2&0\\ 0&0&1\end {array} \right] & 
 T(19,19) & =  \left[ \begin {array}{cccccc} 1&0&0&0&0&0\\ 2&0&1&0&0&0\\ 0&2&0&1&0&0\\ 0&1&0&2&0&1\\ 0&0&0&1&0&2\\ 0&0&0&0&1&0\end {array} \right] \\ 
T(19,20) & =  \left[ \begin {array}{ccccccc} 0&1&0&0&0&0&0\\ 0&2&0&1&0&0&0\\ 1&0&2&0&1&0&0\\ 0&0&1&0&2&0&1\\ 0&0&0&0&1&0&2\\ 0&0&0&0&0&1&0\end {array} \right] \\ 
T(20,26) & =  \left[ \begin {array}{ccccccc} 1&0&0&0&0&0&0\\ 0&0&1&0&0&0&0\\ 0&0&2&0&1&0&0\\ 0&1&0&2&0&1&0\\ 0&0&0&1&0&2&0\\ 0&0&0&0&0&1&0\\ 0&0&0&0&0&0&1\end {array} \right] \\ 
T(21,27) & =  \left[ \begin {array}{ccccccc} 0&1&0&0&0&0&0\\ 2&0&0&1&0&0&0\\ 1&0&0&2&0&1&0\\ 0&0&1&0&2&0&1\\ 0&0&0&0&1&0&2\\ 0&0&0&0&0&0&1\end {array} \right] \\ 
T(22,28) & =  \left[ \begin {array}{ccccccc} 1&0&0&0&0&0&0\\ 2&0&1&0&0&0&0\\ 0&2&0&0&1&0&0\\ 0&1&0&0&2&0&1\\ 0&0&0&1&0&2&0\\ 0&0&0&0&0&1&0\end {array} \right] \\ 
T(22,29) & =  \left[ \begin {array}{cccccccc} 0&1&0&0&0&0&0&0\\ 0&2&0&1&0&0&0&0\\ 1&0&2&0&0&1&0&0\\ 0&0&1&0&0&2&0&1\\ 0&0&0&0&1&0&2&0\\ 0&0&0&0&0&0&1&0\end {array} \right] \\ 
T(22,30) & =  \left[ \begin {array}{ccccccc} 0&1&0&0&0&0&0\\ 0&2&0&1&0&0&0\\ 1&0&2&0&0&1&0\\ 0&0&1&0&0&2&0\\ 0&0&0&0&1&0&2\\ 0&0&0&0&0&0&1\end {array} \right] \\ 
T(23,31) & =  \left[ \begin {array}{ccccccc} 1&0&0&0&0&0&0\\ 2&0&1&0&0&0&0\\ 1&0&2&0&1&0&0\\ 0&1&0&2&0&0&1\\ 0&0&0&1&0&0&2\\ 0&0&0&0&0&1&0\end {array} \right] \\ 
T(24,32) & =  \left[ \begin {array}{ccccccc} 1&0&0&0&0&0&0\\ 0&1&0&0&0&0&0\\ 0&2&0&1&0&0&0\\ 0&1&0&2&0&1&0\\ 0&0&1&0&2&0&0\\ 0&0&0&0&1&0&0\\ 0&0&0&0&0&0&1\end {array} \right] \\ 
T(25,24) & =  \left[ \begin {array}{ccccccc} 0&1&0&0&0&0&0\\ 2&0&1&0&0&0&0\\ 1&0&2&0&1&0&0\\ 0&0&1&0&2&0&1\\ 0&0&0&1&0&2&0\\ 0&0&0&0&0&1&0\end {array} \right] \\ 
T(25,25) & =  \left[ \begin {array}{cccccc} 0&1&0&0&0&0\\ 2&0&1&0&0&0\\ 1&0&2&0&1&0\\ 0&0&1&0&2&0\\ 0&0&0&1&0&2\\ 0&0&0&0&0&1\end {array} \right] \\ 
T(26,27) & =  \left[ \begin {array}{ccccccc} 1&0&0&0&0&0&0\\ 0&1&0&0&0&0&0\\ 2&0&0&1&0&0&0\\ 1&0&0&2&0&1&0\\ 0&0&1&0&2&0&1\\ 0&0&0&0&1&0&2\\ 0&0&0&0&0&0&1\end {array} \right] \\ 
T(27,28) & =  \left[ \begin {array}{ccccccc} 1&0&0&0&0&0&0\\ 0&1&0&0&0&0&0\\ 2&0&1&0&0&0&0\\ 0&2&0&0&1&0&0\\ 0&1&0&0&2&0&1\\ 0&0&0&1&0&2&0\\ 0&0&0&0&0&1&0\end {array} \right] \\ 
T(27,29) & =  \left[ \begin {array}{cccccccc} 0&1&0&0&0&0&0&0\\ 2&0&1&0&0&0&0&0\\ 0&2&0&1&0&0&0&0\\ 1&0&2&0&0&1&0&0\\ 0&0&1&0&0&2&0&1\\ 0&0&0&0&1&0&2&0\\ 0&0&0&0&0&0&1&0\end {array} \right] \\ 
T(27,30) & =  \left[ \begin {array}{ccccccc} 0&1&0&0&0&0&0\\ 2&0&1&0&0&0&0\\ 0&2&0&1&0&0&0\\ 1&0&2&0&0&1&0\\ 0&0&1&0&0&2&0\\ 0&0&0&0&1&0&2\\ 0&0&0&0&0&0&1\end {array} \right] \\ 
T(28,33) & =  \left[ \begin {array}{ccccccc} 1&0&0&0&0&0&0\\ 2&0&1&0&0&0&0\\ 0&2&0&1&0&0&0\\ 1&0&2&0&1&0&0\\ 0&1&0&2&0&0&1\\ 0&0&0&1&0&0&2\\ 0&0&0&0&0&1&0\end {array} \right] \\ 
T(28,34) & =  \left[ \begin {array}{cccccccc} 0&1&0&0&0&0&0&0\\ 0&2&0&1&0&0&0&0\\ 1&0&2&0&1&0&0&0\\ 0&1&0&2&0&1&0&0\\ 0&0&1&0&2&0&0&1\\ 0&0&0&0&1&0&0&2\\ 0&0&0&0&0&0&1&0\end {array} \right] \\ 
T(29,35) & =  \left[ \begin {array}{cccccccc} 1&0&0&0&0&0&0&0\\ 0&0&1&0&0&0&0&0\\ 0&0&2&0&1&0&0&0\\ 0&1&0&2&0&1&0&0\\ 0&0&1&0&2&0&1&0\\ 0&0&0&1&0&2&0&0\\ 0&0&0&0&0&1&0&0\\ 0&0&0&0&0&0&0&1\end {array} \right] \\ 
T(30,36) & =  \left[ \begin {array}{cccccccc} 0&1&0&0&0&0&0&0\\ 2&0&0&1&0&0&0&0\\ 1&0&0&2&0&1&0&0\\ 0&0&1&0&2&0&1&0\\ 0&0&0&1&0&2&0&1\\ 0&0&0&0&1&0&2&0\\ 0&0&0&0&0&0&1&0\end {array} \right] \\ 
T(30,37) & =  \left[ \begin {array}{ccccccc} 0&1&0&0&0&0&0\\ 2&0&0&1&0&0&0\\ 1&0&0&2&0&1&0\\ 0&0&1&0&2&0&1\\ 0&0&0&1&0&2&0\\ 0&0&0&0&1&0&2\\ 0&0&0&0&0&0&1\end {array} \right] \\ 
T(31,28) & =  \left[ \begin {array}{ccccccc} 1&0&0&0&0&0&0\\ 2&0&1&0&0&0&0\\ 0&2&0&0&1&0&0\\ 0&1&0&0&2&0&1\\ 0&0&0&1&0&2&0\\ 0&0&0&0&1&0&2\\ 0&0&0&0&0&1&0\end {array} \right] \\ 
T(31,29) & =  \left[ \begin {array}{cccccccc} 0&1&0&0&0&0&0&0\\ 0&2&0&1&0&0&0&0\\ 1&0&2&0&0&1&0&0\\ 0&0&1&0&0&2&0&1\\ 0&0&0&0&1&0&2&0\\ 0&0&0&0&0&1&0&2\\ 0&0&0&0&0&0&1&0\end {array} \right] \\ 
T(31,30) & =  \left[ \begin {array}{ccccccc} 0&1&0&0&0&0&0\\ 0&2&0&1&0&0&0\\ 1&0&2&0&0&1&0\\ 0&0&1&0&0&2&0\\ 0&0&0&0&1&0&2\\ 0&0&0&0&0&1&0\\ 0&0&0&0&0&0&1\end {array} \right] \\ 
T(32,31) & =  \left[ \begin {array}{ccccccc} 1&0&0&0&0&0&0\\ 2&0&1&0&0&0&0\\ 1&0&2&0&1&0&0\\ 0&1&0&2&0&0&1\\ 0&0&0&1&0&0&2\\ 0&0&0&0&0&1&0\\ 0&0&0&0&0&0&1\end {array} \right] \\ 
T(33,28) & =  \left[ \begin {array}{ccccccc} 1&0&0&0&0&0&0\\ 2&0&1&0&0&0&0\\ 0&2&0&0&1&0&0\\ 0&1&0&0&2&0&1\\ 0&0&0&1&0&2&0\\ 0&0&0&0&1&0&2\\ 0&0&0&0&0&1&0\end {array} \right] \\ 
T(33,29) & =  \left[ \begin {array}{cccccccc} 0&1&0&0&0&0&0&0\\ 0&2&0&1&0&0&0&0\\ 1&0&2&0&0&1&0&0\\ 0&0&1&0&0&2&0&1\\ 0&0&0&0&1&0&2&0\\ 0&0&0&0&0&1&0&2\\ 0&0&0&0&0&0&1&0\end {array} \right] \\ 
T(34,38) & =  \left[ \begin {array}{cccccccc} 1&0&0&0&0&0&0&0\\ 0&0&1&0&0&0&0&0\\ 0&0&2&0&1&0&0&0\\ 0&1&0&2&0&0&1&0\\ 0&0&0&1&0&0&2&0\\ 0&0&0&0&0&1&0&2\\ 0&0&0&0&0&0&1&0\\ 0&0&0&0&0&0&0&1\end {array} \right] \\ 
T(35,39) & =  \left[ \begin {array}{cccccccc} 1&0&0&0&0&0&0&0\\ 0&1&0&0&0&0&0&0\\ 2&0&0&1&0&0&0&0\\ 1&0&0&2&0&1&0&0\\ 0&0&1&0&2&0&0&1\\ 0&0&0&0&1&0&0&2\\ 0&0&0&0&0&0&1&0\\ 0&0&0&0&0&0&0&1\end {array} \right] \\ 
T(36,40) & =  \left[ \begin {array}{cccccccc} 1&0&0&0&0&0&0&0\\ 0&1&0&0&0&0&0&0\\ 2&0&1&0&0&0&0&0\\ 0&2&0&0&1&0&0&0\\ 0&1&0&0&2&0&1&0\\ 0&0&0&1&0&2&0&0\\ 0&0&0&0&0&1&0&0\\ 0&0&0&0&0&0&0&1\end {array} \right] \\ 
T(37,29) & =  \left[ \begin {array}{cccccccc} 0&1&0&0&0&0&0&0\\ 2&0&1&0&0&0&0&0\\ 0&2&0&1&0&0&0&0\\ 1&0&2&0&0&1&0&0\\ 0&0&1&0&0&2&0&1\\ 0&0&0&0&1&0&2&0\\ 0&0&0&0&0&0&1&0\end {array} \right] \\ 
T(37,30) & =  \left[ \begin {array}{ccccccc} 0&1&0&0&0&0&0\\ 2&0&1&0&0&0&0\\ 0&2&0&1&0&0&0\\ 1&0&2&0&0&1&0\\ 0&0&1&0&0&2&0\\ 0&0&0&0&1&0&2\\ 0&0&0&0&0&0&1\end {array} \right] \\ 
T(38,36) & =  \left[ \begin {array}{cccccccc} 1&0&0&0&0&0&0&0\\ 0&1&0&0&0&0&0&0\\ 2&0&0&1&0&0&0&0\\ 1&0&0&2&0&1&0&0\\ 0&0&1&0&2&0&1&0\\ 0&0&0&1&0&2&0&1\\ 0&0&0&0&1&0&2&0\\ 0&0&0&0&0&0&1&0\end {array} \right] \\ 
T(38,37) & =  \left[ \begin {array}{ccccccc} 1&0&0&0&0&0&0\\ 0&1&0&0&0&0&0\\ 2&0&0&1&0&0&0\\ 1&0&0&2&0&1&0\\ 0&0&1&0&2&0&1\\ 0&0&0&1&0&2&0\\ 0&0&0&0&1&0&2\\ 0&0&0&0&0&0&1\end {array} \right] \\ 
T(39,28) & =  \left[ \begin {array}{ccccccc} 1&0&0&0&0&0&0\\ 0&1&0&0&0&0&0\\ 2&0&1&0&0&0&0\\ 0&2&0&0&1&0&0\\ 0&1&0&0&2&0&1\\ 0&0&0&1&0&2&0\\ 0&0&0&0&1&0&2\\ 0&0&0&0&0&1&0\end {array} \right] \\ 
T(39,29) & =  \left[ \begin {array}{cccccccc} 0&1&0&0&0&0&0&0\\ 2&0&1&0&0&0&0&0\\ 0&2&0&1&0&0&0&0\\ 1&0&2&0&0&1&0&0\\ 0&0&1&0&0&2&0&1\\ 0&0&0&0&1&0&2&0\\ 0&0&0&0&0&1&0&2\\ 0&0&0&0&0&0&1&0\end {array} \right] \\ 
T(39,30) & =  \left[ \begin {array}{ccccccc} 0&1&0&0&0&0&0\\ 2&0&1&0&0&0&0\\ 0&2&0&1&0&0&0\\ 1&0&2&0&0&1&0\\ 0&0&1&0&0&2&0\\ 0&0&0&0&1&0&2\\ 0&0&0&0&0&1&0\\ 0&0&0&0&0&0&1\end {array} \right] \\ 
T(40,33) & =  \left[ \begin {array}{ccccccc} 1&0&0&0&0&0&0\\ 2&0&1&0&0&0&0\\ 0&2&0&1&0&0&0\\ 1&0&2&0&1&0&0\\ 0&1&0&2&0&0&1\\ 0&0&0&1&0&0&2\\ 0&0&0&0&0&1&0\\ 0&0&0&0&0&0&1\end {array} \right] \\ 
T(40,34) & =  \left[ \begin {array}{cccccccc} 0&1&0&0&0&0&0&0\\ 0&2&0&1&0&0&0&0\\ 1&0&2&0&1&0&0&0\\ 0&1&0&2&0&1&0&0\\ 0&0&1&0&2&0&0&1\\ 0&0&0&0&1&0&0&2\\ 0&0&0&0&0&0&1&0\\ 0&0&0&0&0&0&0&1\end {array} \right] \\ 
\end{align*}

The essential class is: [28, 29, 30, 33, 34, 35, 36, 37, 38, 39, 40].
The essential class is of positive type.
An example is the path [28, 34, 38, 36, 40, 34, 38, 37].
The essential class is not a simple loop.
This spectral range will include the interval $[.6172895104, .6202985760]$.
The minimum comes from the loop $[29, 35, 39, 29]$.
The maximum comes from the loop $[28, 33, 28]$.
These points will include points of local dimension [3.873239615, 3.883344935].
The Spectral Range is contained in the range $[.5095977274, .6753430785]$.
The minimum comes from the column sub-norm on the subset ${{3, 4}}$ of length 20. 
The maximum comes from the total column sup-norm of length 10. 
These points will have local dimension contained in [3.696560893, 4.281748469].

There are 4 additional maximal loops.

Maximal Loop Class: [25].
The maximal loop class is a simple loop.
It's spectral radius is an isolated points of .6202985760.
These points have local dimension 3.873239615.

Maximal Loop Class: [19].
The maximal loop class is a simple loop.
It's spectral radius is an isolated points of .6202985760.
These points have local dimension 3.873239615.

Maximal Loop Class: [6].
The maximal loop class is a simple loop.
It's spectral radius is an isolated points of .2500000000.
These points have local dimension 5.761680361.

Maximal Loop Class: [2].
The maximal loop class is a simple loop.
It's spectral radius is an isolated points of .2500000000.
These points have local dimension 5.761680361.

\section{Minimal polynomial $3 x-1$ with $d_i \in [0, 2/9, 4/9, 2/3]$} 
\label{sec:18}
 
Consider $\varrho$, the root of $3 x-1$ and the maps $S_i(x) = \varrho x  + d_i$ with $d_{0} = 0$, $d_{1} = 2/9$, $d_{2} = 4/9$, and $d_{3} = 2/3$.
The probabilities are uniform.
The reduced transition diagram has 5 reduced characteristic vectors.
The reduced characteristic vectors are:
\begin{itemize}
\item Reduced characteristic vector 1: $(1, (0))$ 
\item Reduced characteristic vector 2: $(2/3, (0))$ 
\item Reduced characteristic vector 3: $(1/3, (0, 2/3))$ 
\item Reduced characteristic vector 4: $(1/3, (1/3))$ 
\item Reduced characteristic vector 5: $(2/3, (1/3))$ 
\end{itemize}
See Figure \ref{fig:Pic27} for the transition diagram.
\begin{figure}[H]
\includegraphics[scale=0.5]{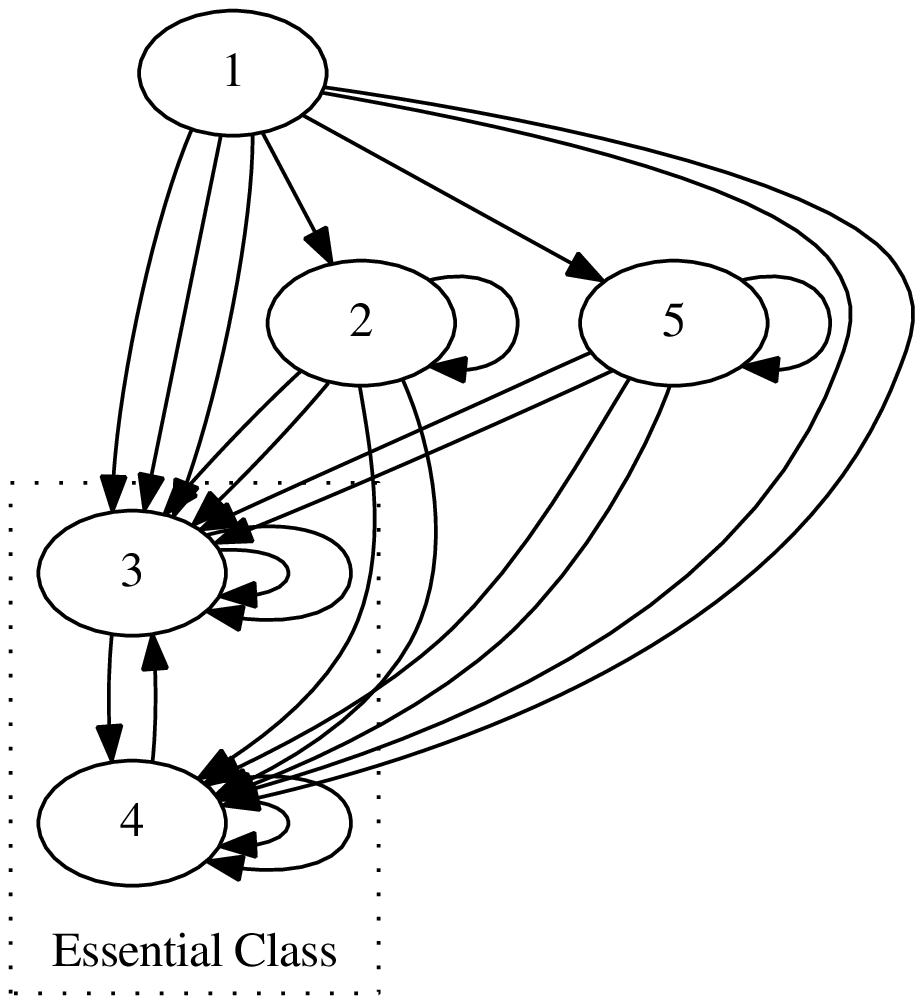}
\caption{$3 x-1$ with $d_i \in [0, 2/9, 4/9, 2/3]$, Full set + Essential class}
\label{fig:Pic27}
\end{figure}
This has transition matrices:
\begin{align*}
T(1,2) & =  \left[ \begin {array}{c} 1\end {array} \right] & 
 T(1,3) & =  \left[ \begin {array}{cc} 1&1\end {array} \right] \\ 
T(1,4) & =  \left[ \begin {array}{c} 1\end {array} \right] & 
 T(1,3) & =  \left[ \begin {array}{cc} 1&1\end {array} \right] \\ 
T(1,4) & =  \left[ \begin {array}{c} 1\end {array} \right] & 
 T(1,3) & =  \left[ \begin {array}{cc} 1&1\end {array} \right] \\ 
T(1,5) & =  \left[ \begin {array}{c} 1\end {array} \right] & 
 T(2,2) & =  \left[ \begin {array}{c} 1\end {array} \right] \\ 
T(2,3) & =  \left[ \begin {array}{cc} 1&1\end {array} \right] & 
 T(2,4) & =  \left[ \begin {array}{c} 1\end {array} \right] \\ 
T(2,3) & =  \left[ \begin {array}{cc} 1&1\end {array} \right] & 
 T(2,4) & =  \left[ \begin {array}{c} 1\end {array} \right] \\ 
T(3,3) & =  \left[ \begin {array}{cc} 1&0\\ 1&1\end {array} \right] & 
 T(3,4) & =  \left[ \begin {array}{c} 1\\ 1\end {array} \right] \\ 
T(3,3) & =  \left[ \begin {array}{cc} 1&1\\ 0&1\end {array} \right] & 
 T(4,4) & =  \left[ \begin {array}{c} 1\end {array} \right] \\ 
T(4,3) & =  \left[ \begin {array}{cc} 1&1\end {array} \right] & 
 T(4,4) & =  \left[ \begin {array}{c} 1\end {array} \right] \\ 
T(5,4) & =  \left[ \begin {array}{c} 1\end {array} \right] & 
 T(5,3) & =  \left[ \begin {array}{cc} 1&1\end {array} \right] \\ 
T(5,4) & =  \left[ \begin {array}{c} 1\end {array} \right] & 
 T(5,3) & =  \left[ \begin {array}{cc} 1&1\end {array} \right] \\ 
T(5,5) & =  \left[ \begin {array}{c} 1\end {array} \right] & 
 \end{align*}

The essential class is: [3, 4].
The essential class is of positive type.
An example is the path [3, 4].
The essential class is not a simple loop.
This spectral range will include the interval $[1., 1.618033989]$.
The minimum comes from the loop $[3, 3]$.
The maximum comes from the loop $[3, 3, 3]$.
These points will include points of local dimension [.8238416275, 1.261859507].
The Spectral Range is contained in the range $[1., 1.670277652]$.
The minimum comes from the total row sub-norm of length 5. 
The maximum comes from the total row sup-norm of length 5. 
These points will have local dimension contained in [.7949160034, 1.261859507].

There are 2 additional maximal loops.

Maximal Loop Class: [5].
The maximal loop class is a simple loop.
It's spectral radius is an isolated points of 1.
These points have local dimension 1.261859507.

Maximal Loop Class: [2].
The maximal loop class is a simple loop.
It's spectral radius is an isolated points of 1.
These points have local dimension 1.261859507.

\section{Minimal polynomial $3 x-1$ with $d_i \in [0, 1/6, 1/3, 1/2, 2/3]$} 
\label{sec:19}
 
Consider $\varrho$, the root of $3 x-1$ and the maps $S_i(x) = \varrho x  + d_i$ with $d_{0} = 0$, $d_{1} = 1/6$, $d_{2} = 1/3$, $d_{3} = 1/2$, and $d_{4} = 2/3$.
The probabilities are uniform.
The reduced transition diagram has 4 reduced characteristic vectors.
The reduced characteristic vectors are:
\begin{itemize}
\item Reduced characteristic vector 1: $(1, (0))$ 
\item Reduced characteristic vector 2: $(1/2, (0))$ 
\item Reduced characteristic vector 3: $(1/2, (0, 1/2))$ 
\item Reduced characteristic vector 4: $(1/2, (1/2))$ 
\end{itemize}
See Figure \ref{fig:Pic28} for the transition diagram.
\begin{figure}[H]
\includegraphics[scale=0.5]{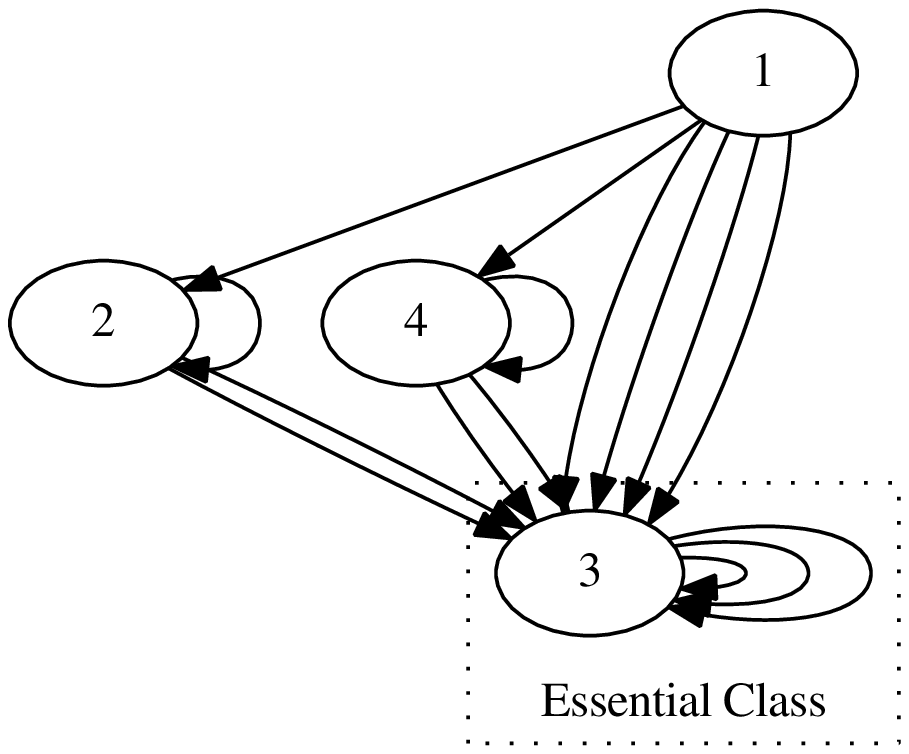}
\caption{$3 x-1$ with $d_i \in [0, 1/6, 1/3, 1/2, 2/3]$, Full set + Essential class}
\label{fig:Pic28}
\end{figure}
This has transition matrices:
\begin{align*}
T(1,2) & =  \left[ \begin {array}{c} 1\end {array} \right] & 
 T(1,3) & =  \left[ \begin {array}{cc} 1&1\end {array} \right] \\ 
T(1,3) & =  \left[ \begin {array}{cc} 1&1\end {array} \right] & 
 T(1,3) & =  \left[ \begin {array}{cc} 1&1\end {array} \right] \\ 
T(1,3) & =  \left[ \begin {array}{cc} 1&1\end {array} \right] & 
 T(1,4) & =  \left[ \begin {array}{c} 1\end {array} \right] \\ 
T(2,2) & =  \left[ \begin {array}{c} 1\end {array} \right] & 
 T(2,3) & =  \left[ \begin {array}{cc} 1&1\end {array} \right] \\ 
T(2,3) & =  \left[ \begin {array}{cc} 1&1\end {array} \right] & 
 T(3,3) & =  \left[ \begin {array}{cc} 1&0\\ 1&1\end {array} \right] \\ 
T(3,3) & =  \left[ \begin {array}{cc} 1&1\\ 1&1\end {array} \right] & 
 T(3,3) & =  \left[ \begin {array}{cc} 1&1\\ 0&1\end {array} \right] \\ 
T(4,3) & =  \left[ \begin {array}{cc} 1&1\end {array} \right] & 
 T(4,3) & =  \left[ \begin {array}{cc} 1&1\end {array} \right] \\ 
T(4,4) & =  \left[ \begin {array}{c} 1\end {array} \right] & 
 \end{align*}

The essential class is: [3].
The essential class is of positive type.
An example is the path [3, 3].
The essential class is not a simple loop.
This spectral range will include the interval $[1., 2.]$.
The minimum comes from the loop $[3, 3]$.
The maximum comes from the loop $[3, 3]$.
These points will include points of local dimension [.8340437666, 1.464973520].
The Spectral Range is contained in the range $[1., 2.000000000]$.
The minimum comes from the total row sub-norm of length 5. 
The maximum comes from the total row sup-norm of length 5. 
These points will have local dimension contained in [.8340437666, 1.464973520].

There are 2 additional maximal loops.

Maximal Loop Class: [4].
The maximal loop class is a simple loop.
It's spectral radius is an isolated points of 1.
These points have local dimension 1.464973520.

Maximal Loop Class: [2].
The maximal loop class is a simple loop.
It's spectral radius is an isolated points of 1.
These points have local dimension 1.464973520.

\section{Minimal polynomial $3 x-1$ with $d_i \in [0, 2/15, 4/15, 2/5, 8/15, 2/3]$} 
\label{sec:20}
 
Consider $\varrho$, the root of $3 x-1$ and the maps $S_i(x) = \varrho x  + d_i$ with $d_{0} = 0$, $d_{1} = 2/15$, $d_{2} = 4/15$, $d_{3} = 2/5$, $d_{4} = 8/15$, and $d_{5} = 2/3$.
The probabilities are uniform.
The reduced transition diagram has 7 reduced characteristic vectors.
The reduced characteristic vectors are:
\begin{itemize}
\item Reduced characteristic vector 1: $(1, (0))$ 
\item Reduced characteristic vector 2: $(2/5, (0))$ 
\item Reduced characteristic vector 3: $(2/5, (0, 2/5))$ 
\item Reduced characteristic vector 4: $(1/5, (0, 2/5, 4/5))$ 
\item Reduced characteristic vector 5: $(1/5, (1/5, 3/5))$ 
\item Reduced characteristic vector 6: $(2/5, (1/5, 3/5))$ 
\item Reduced characteristic vector 7: $(2/5, (3/5))$ 
\end{itemize}
See Figure \ref{fig:Pic29} for the transition diagram.
\begin{figure}[H]
\includegraphics[scale=0.5]{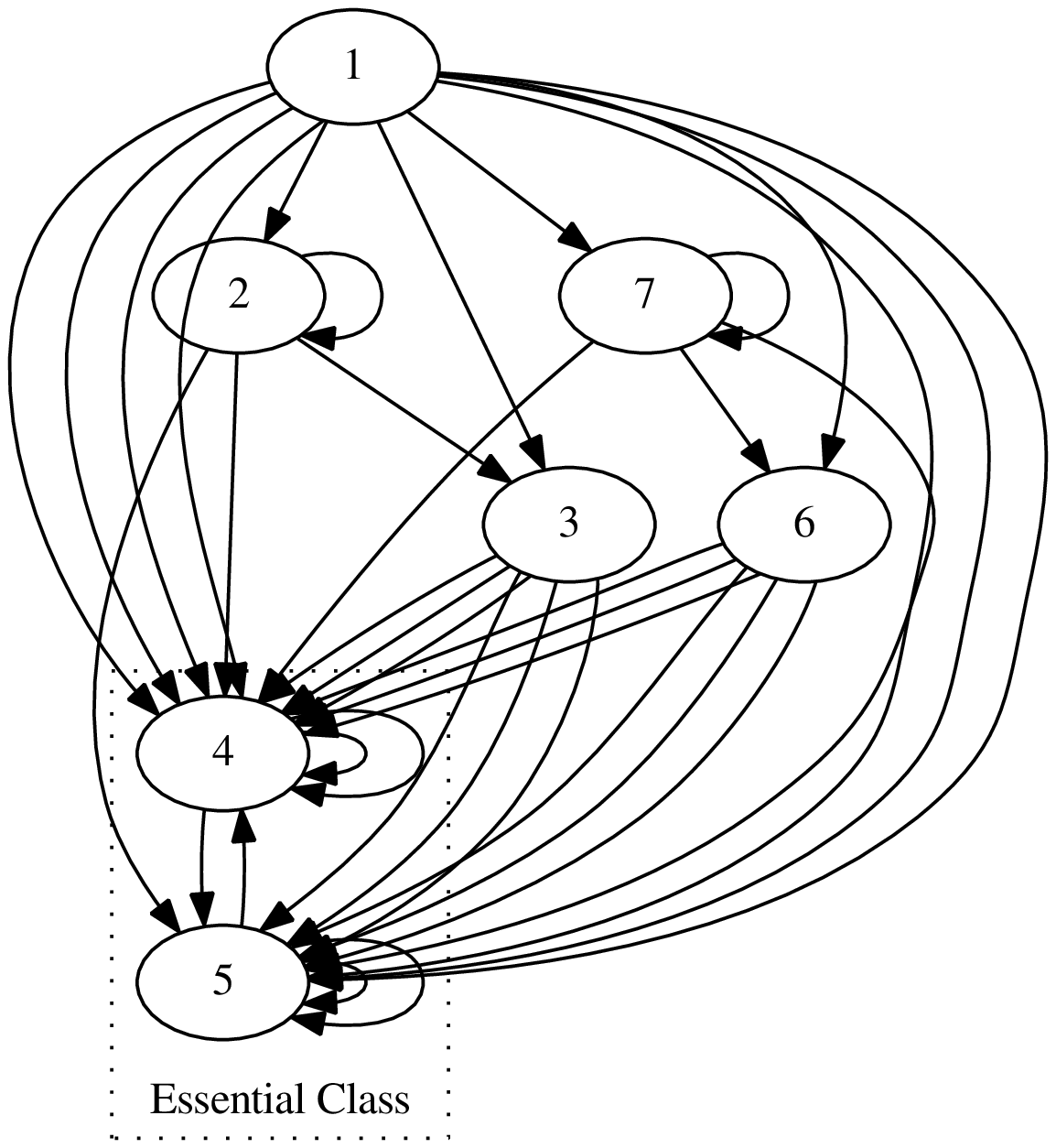}
\caption{$3 x-1$ with $d_i \in [0, 2/15, 4/15, 2/5, 8/15, 2/3]$, Full set + Essential class}
\label{fig:Pic29}
\end{figure}
This has transition matrices:
\begin{align*}
T(1,2) & =  \left[ \begin {array}{c} 1\end {array} \right] & 
 T(1,3) & =  \left[ \begin {array}{cc} 1&1\end {array} \right] \\ 
T(1,4) & =  \left[ \begin {array}{ccc} 1&1&1\end {array} \right] & 
 T(1,5) & =  \left[ \begin {array}{cc} 1&1\end {array} \right] \\ 
T(1,4) & =  \left[ \begin {array}{ccc} 1&1&1\end {array} \right] & 
 T(1,5) & =  \left[ \begin {array}{cc} 1&1\end {array} \right] \\ 
T(1,4) & =  \left[ \begin {array}{ccc} 1&1&1\end {array} \right] & 
 T(1,5) & =  \left[ \begin {array}{cc} 1&1\end {array} \right] \\ 
T(1,4) & =  \left[ \begin {array}{ccc} 1&1&1\end {array} \right] & 
 T(1,6) & =  \left[ \begin {array}{cc} 1&1\end {array} \right] \\ 
T(1,7) & =  \left[ \begin {array}{c} 1\end {array} \right] & 
 T(2,2) & =  \left[ \begin {array}{c} 1\end {array} \right] \\ 
T(2,3) & =  \left[ \begin {array}{cc} 1&1\end {array} \right] & 
 T(2,4) & =  \left[ \begin {array}{ccc} 1&1&1\end {array} \right] \\ 
T(2,5) & =  \left[ \begin {array}{cc} 1&1\end {array} \right] & 
 T(3,4) & =  \left[ \begin {array}{ccc} 1&0&0\\ 1&1&1\end {array} \right] \\ 
T(3,5) & =  \left[ \begin {array}{cc} 1&0\\ 1&1\end {array} \right] & 
 T(3,4) & =  \left[ \begin {array}{ccc} 1&1&0\\ 1&1&1\end {array} \right] \\ 
T(3,5) & =  \left[ \begin {array}{cc} 1&1\\ 1&1\end {array} \right] & 
 T(3,4) & =  \left[ \begin {array}{ccc} 1&1&1\\ 1&1&1\end {array} \right] \\ 
T(3,5) & =  \left[ \begin {array}{cc} 1&1\\ 1&1\end {array} \right] & 
 T(4,4) & =  \left[ \begin {array}{ccc} 1&0&0\\ 1&1&1\\ 0&1&1\end {array} \right] \\ 
T(4,5) & =  \left[ \begin {array}{cc} 1&0\\ 1&1\\ 0&1\end {array} \right] & 
 T(4,4) & =  \left[ \begin {array}{ccc} 1&1&0\\ 1&1&1\\ 0&0&1\end {array} \right] \\ 
T(5,5) & =  \left[ \begin {array}{cc} 1&1\\ 1&1\end {array} \right] & 
 T(5,4) & =  \left[ \begin {array}{ccc} 1&1&1\\ 1&1&1\end {array} \right] \\ 
T(5,5) & =  \left[ \begin {array}{cc} 1&1\\ 1&1\end {array} \right] & 
 T(6,5) & =  \left[ \begin {array}{cc} 1&1\\ 1&1\end {array} \right] \\ 
T(6,4) & =  \left[ \begin {array}{ccc} 1&1&1\\ 1&1&1\end {array} \right] & 
 T(6,5) & =  \left[ \begin {array}{cc} 1&1\\ 1&1\end {array} \right] \\ 
T(6,4) & =  \left[ \begin {array}{ccc} 1&1&1\\ 0&1&1\end {array} \right] & 
 T(6,5) & =  \left[ \begin {array}{cc} 1&1\\ 0&1\end {array} \right] \\ 
T(6,4) & =  \left[ \begin {array}{ccc} 1&1&1\\ 0&0&1\end {array} \right] & 
 T(7,5) & =  \left[ \begin {array}{cc} 1&1\end {array} \right] \\ 
T(7,4) & =  \left[ \begin {array}{ccc} 1&1&1\end {array} \right] & 
 T(7,6) & =  \left[ \begin {array}{cc} 1&1\end {array} \right] \\ 
T(7,7) & =  \left[ \begin {array}{c} 1\end {array} \right] & 
 \end{align*}

The essential class is: [4, 5].
The essential class is of positive type.
An example is the path [5, 4].
The essential class is not a simple loop.
This spectral range will include the interval $[2., 2.]$.
The minimum comes from the loop $[4, 4]$.
The maximum comes from the loop $[4, 4]$.
These points will include points of local dimension [.9999999996, .9999999996].
The Spectral Range is contained in the range $[2.000000000, 2.000000000]$.
The minimum comes from the total row sub-norm of length 5. 
The maximum comes from the total row sup-norm of length 5. 
These points will have local dimension contained in [.9999999996, .9999999996].

There are 2 additional maximal loops.

Maximal Loop Class: [7].
The maximal loop class is a simple loop.
It's spectral radius is an isolated points of 1.
These points have local dimension 1.630929753.

Maximal Loop Class: [2].
The maximal loop class is a simple loop.
It's spectral radius is an isolated points of 1.
These points have local dimension 1.630929753.

\end{document}